
\documentclass[11pt]{amsart}

\setlength{\textwidth}{16.5cm}\oddsidemargin=-.4cm\evensidemargin=-.4cm
\setlength{\textheight}{20cm}
\begin{document}
\numberwithin{equation}{section}

\def\1#1{\overline{#1}}
\def\2#1{\widetilde{#1}}
\def\3#1{\widehat{#1}}
\def\4#1{\mathbb{#1}}
\def\5#1{\frak{#1}}
\def\6#1{{\mathcal{#1}}}
\def\7#1{{\bf{#1}}}

\def\C{{\4C}}
\def\R{{\4R}}
\def\N{{\4N}}
\def\Z{{\4Z}}
\def\P{{\4P}}
\def\Q{{\4Q}}
\def\D{{\4D}}
\def\uk#1{u_k\left(#1 \right)}
\def\ukk#1{u_{k-1}\left( #1 \right)}
\def\oo#1{O\left( #1 \right)}
\def\[{{[\![}}
\def\]{{]\!]}}
\newcommand{\norm}[1]{\lVert #1 \rVert}
\newcommand{\phd}{\Pi_r^i}
\newcommand{\modu}[1]{\left | #1 \right |}
\def\T{{\rm{T}}}
\newcommand{\Et}{E_{\T}^i(r, C_0, C_1)}
\newcommand{\normu}[1]{\left\lVert #1 \right\rVert}
\newcommand{\mlog}[1]{\modu{\log\modu{#1}}}
\newcommand{\xp}{x^{\prime}}
\newcommand{\xs}{x^{\prime\prime}}
\newcommand{\x}[1]{x^{\prime}_{#1}}

\title[On \'Ecalle-Hakim's theorems in holomorphic dynamics]{On \'Ecalle-Hakim's theorems in holomorphic dynamics}
\author[M. Arizzi]{Marco Arizzi}
\address{M. Arizzi} \email{marco.arizzi@gmail.com}
\author[J. Raissy]{Jasmin Raissy*}
\address{J. Raissy: Dipartimento Di Matematica e Applicazioni, Universit\`{a} degli Studi di Milano Bicocca, Via Roberto Cozzi 53,
20125, Milano, Italy. } \email{jasmin.raissy@unimib.it}
\thanks{$^{*}$Supported in part by FSE, Regione Lombardia.}


\def\Label#1{\label{#1}{\bf (#1)}~}


\def\cn{{\C^n}}
\def\cnn{{\C^{n'}}}
\def\ocn{\2{\C^n}}
\def\ocnn{\2{\C^{n'}}}


\let\no=\noindent
\let\bi=\bigskip
\let\me=\medskip
\let\sm=\smallskip
\let\ce=\centerline
\let\ri=\rightline
\let\te=\textstyle
\let\gd=\goodbreak
\let\io=\infty
\def\qqquad{\quad\qquad}

\def\dist{{\rm dist}}
\def\const{{\rm const}}
\def\rk{{\rm rank\,}}
\def\id{{\sf id}}
\def\aut{{\sf aut}}
\def\Aut{{\sf Aut}}
\def\CR{{\rm CR}}
\def\GL{{\sf GL}}
\def\Re{{\sf Re}\,}
\def\Im{{\sf Im}\,}
\def\span{\text{\rm span}}
\def\res{{\rm Res}\,}

\def\codim{{\rm codim}}
\def\crd{\dim_{{\rm CR}}}
\def\crc{{\rm codim_{CR}}}

\def\phe{\varphi}
\def\eps{\varepsilon}
\def\d{\partial}
\def\a{\alpha}
\def\b{\beta}
\def\g{\gamma}
\def\G{\Gamma}
\def\Om{\Omega}
\def\k{\kappa}
\def\l{\lambda}
\def\L{\Lambda}
\def\z{{\bar z}}
\def\w{{\bar w}}
\def\t{\tau}
\def\th{\theta}
\def\ta{\tilde{\alpha}}
\def\epsilon{{\varepsilon}}

\def\sideremark#1{\ifvmode\leavevmode\fi\vadjust{
\vbox to0pt{\hbox to 0pt{\hskip\hsize\hskip1em
\vbox{\hsize1.5cm\tiny\raggedright\pretolerance10000
\noindent #1\hfill}\hss}\vbox to8pt{\vfil}\vss}}}

\def\Dif{{\sf Diff}(\C^n;0)}
\def\Diff{{\sf Diff}}

\emergencystretch15pt \frenchspacing

\newtheorem{theorem}{Theorem}[section]
\newtheorem{lemma}[theorem]{Lemma}
\newtheorem{proposition}[theorem]{Proposition}
\newtheorem{corollary}[theorem]{Corollary}

\theoremstyle{definition}
\newtheorem{definition}[theorem]{Definition}
\newtheorem{example}[theorem]{Example}

\theoremstyle{remark}
\newtheorem{remark}[theorem]{Remark}
\numberwithin{equation}{section}

\def\bl{\begin{lemma}}
\def\el{\end{lemma}}
\def\br{\begin{remark}}
\def\er{\end{remark}}
\def\bp{\begin{Pro}}
\def\ep{\end{Pro}}
\def\bt{\begin{Thm}}
\def\et{\end{Thm}}
\def\bc{\begin{Cor}}
\def\ec{\end{Cor}}
\def\bd{\begin{Def}}
\def\ed{\end{Def}}
\def\be{\begin{Exa}}
\def\ee{\end{Exa}}
\def\bpf{\begin{proof}}
\def\epf{\end{proof}}
\def\ben{\begin{enumerate}}
\def\een{\end{enumerate}}
\def\beq{\begin{equation}}
\def\eeq{\end{equation}}

\begin{abstract}
In this survey we provide detailed proofs for the results by Hakim regarding the dynamics of germs of biholomorphisms tangent to the identity of order $k+1\ge 2$ and fixing the origin. 
\end{abstract}

\maketitle

\section{Introduction}

One of the main questions in the study of local discrete holomorphic dynamics, i.e., in the study of the iterates of a germ of a holomorphic map of~$\C^p$ at a fixed point, which can be assumed to be the origin, is when it is possible to holomorphically conjugate it to a ``simple'' form, possibly its linear term. It turns out (see \cite{Ab1}, \cite{Abate}, \cite{Bracci} and Chapter 1 of \cite{Raissy} for general surveys on this topic) that the answer to this question strictly depends on the arithmetical properties of the eigenvalues of the linear term of the germ. 

This conjugacy approach is not that useful in the so-called {\it tangent to the identity case} that is when the linear part of the germ coincides with the identity, but the germ does not coincide with the identity. Nevertheless, it is possible to study the dynamics of such germs, which is indeed very interesting and rich. The one-dimensional case, was first studied by Leau \cite{Leau} and Fatou \cite{Fa} who provided a complete description of the dynamics in a pointed neighbourhood of the origin. More precisely, in dimension $1$, a tangent to the identity germ can be written as 
\begin{equation}\label{h}
f(z) := z + a z^{k+1} + O(z^{k+2}),
\end{equation}
where the number $k+1\ge 2$ is usually called the {\it order} of $f$.
We define the {\sl attracting directions} $\{v_1,\ldots, v_k\}$ for $f$ as the $k$-th roots of $-\frac{|a|}{a}$, and these are precisely the directions $v$ such that the term $av^{k+1}$ points in the direction opposite to $v$. An {\sl attracting petal} $P$ for $f$ is a simply-connected domain such that $0\in\partial P$, $f(P)\subseteq P$ and $\lim_{n\to\infty}f^{n}(z)=0$ for all $z\in P$, where $f^n$ denotes the $n$-th iterate of $f$. The attracting directions for $f^{-1}$
are called {\sl repelling directions} for $f$ and the
attracting petals for $f^{-1}$ are {\sl repelling petals} for
$h$.
Then the Leau-Fatou flower theorem is the following result (see, {\sl e.g.},
\cite{Abate}, \cite{Bracci}). We write $a\sim b$ whenever there exist
constants $0<c<C$ such that $ca\le b\le Ca$.

\begin{theorem}[Leau-Fatou, \cite{Leau, Fa}]\label{LF}
Let $f$ be as in \eqref{h}. Then for each attracting
direction $v$ of $h$ there exists an attracting petal $P$ for
$f$ (said {\sl centered at $v$}) such that for each $z\in P$
the following hold:
\begin{enumerate}
  \item $f^n(z)\ne0$ for all $n$ and $\lim_{n\to\infty}\frac{f^n(z)}{|f^{n}(z)|}=v$,
  \item $|f^n(z)|^{k_0}\sim \frac{1}{n}$.
\end{enumerate}
Moreover,  the union of all $k$ attracting petals and $k$ repelling petals for $f$ forms a punctured open neighborhood of $0$.
\end{theorem}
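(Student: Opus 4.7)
The plan is to conjugate $f$ to a simple form near infinity and then read off the dynamics there. After the linear change of coordinates $z \mapsto \lambda z$ with $\lambda^k = -1/(ka)$, one may assume
\[
f(z) = z - \tfrac{1}{k} z^{k+1} + O(z^{k+2}),
\]
so the attracting directions become the $k$-th roots of unity. Fix one such direction $v$; by conjugating with a rotation one may take $v = 1$. Introduce the chart $w = 1/(k z^k)$, defined on a sector around $v$. A direct computation gives
\[
F(w) := \frac{1}{k\,f(z)^k} = w + 1 + O(w^{-1/k}),
\]
so $F$ is a perturbation of the unit translation near $w = \infty$.

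Next I would construct the petal as a bounded-curvature region in the $w$-plane which $F$ maps into itself. The natural candidate is a domain of the form $\{w : \Re w > R + \delta |\Im w|^{1/2}\}$, or alternatively the disc complement $\{|w - R| > R\}$, chosen so that the error term $O(w^{-1/k})$ cannot push a boundary point back out. For $R$ large enough one verifies that $\Re F(w) \ge \Re w + 1/2$ on this region, hence $F^n(w) \to \infty$. Pulling back through one of the $k$ branches of $w \mapsto (kw)^{-1/k}$ whose image is tangent to $v$ yields a simply connected domain $P$ with $0 \in \partial P$, $f(P) \subseteq P$, and $f^n \to 0$ on $P$; this is the attracting petal centered at $v$.

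Claims (1) and (2) then follow by tracking iterates in the $w$-chart. The telescoping identity
\[
w_n - w_0 - n = \sum_{j=0}^{n-1} \bigl( F(w_j) - w_j - 1 \bigr)
\]
together with the bound $|F(w) - w - 1| = O(|w|^{-1/k})$ gives $w_n/n \to 1$; translating back through $w_n = 1/(k z_n^k)$ yields $|f^n(z)|^k \sim 1/n$, which is (2). The branch choice made in defining $P$ forces $f^n(z)/|f^n(z)| \to v$, giving (1). Applying the same construction to $f^{-1}$ (whose leading coefficient has the opposite sign) produces $k$ repelling petals centered at the repelling directions.

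The main obstacle I expect is the construction and invariance verification of the petal region in the $w$-plane: a naive right half-plane is not $F$-invariant because of the $O(w^{-1/k})$ error, so the boundary must be curved outward just enough to absorb this perturbation while still corresponding, after pull-back, to a simply connected domain tangent to $v$ at the origin. A secondary delicate point is proving that the union of the $k$ attracting and $k$ repelling petals covers a full punctured neighborhood of $0$: one must show that at every point in a small punctured disc at least one of the $2k$ sector-shaped petals reaches, which follows by opening the petal angles in the $w$-chart slightly beyond $\pi$ so that the images under the $k$ branches overlap in the $z$-plane and leave no uncovered wedge.
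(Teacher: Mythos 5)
The paper does not prove this statement; it is quoted as background (citing Leau, Fatou, and the surveys of Abate and Bracci), so there is no in-paper argument to compare against. Your sketch is the standard and correct route: normalize to $f(z)=z-\tfrac{1}{k}z^{k+1}+O(z^{k+2})$, pass to the chart near $\infty$ where $f$ becomes a perturbation of a translation, build an invariant region whose boundary is curved just enough to absorb the error term, and pull back. The telescoping estimate giving $w_n\sim cn$ (hence $|f^n(z)|^k\sim 1/n$; the paper's $k_0$ is a typo for $k$), and the idea of opening each petal's angle in the $w$-plane slightly past $\pi$ so that the $2k$ pulled-back sectors overlap and cover a punctured disc, are exactly the right ingredients. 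One small arithmetic slip: with this normalization the change $w=1/(kz^k)$ yields $F(w)=w+\tfrac{1}{k}+O(w^{-1/k})$, not $w+1+O(w^{-1/k})$; it is $w=1/z^k$ that gives the unit translation. Either convention works and both yield $|z_n|^k\sim 1/n$, but the constant as written is off by a factor of $k$. Apart from this, the steps you flag as delicate (invariance of the curved petal, the covering via overlapping sectors) are indeed the places where a full proof must do real work, and your indications of how to handle them are sound.
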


By the property (1), attracting [resp. repelling] petals centered at different attracting [resp. repelling] directions must be disjoint.

\sm
For dimension $p\ge 2$ the situation is more complicated and a general complete description of the dynamics in a full neighborhood of the origin is still unknown (see \cite{AT} for some interesting partial results). Analogously to the one-dimensional case, we can write our germ as sum of homogeneous polynomials
$$
F(z) = z + P_{k+1}(z) + O(\|z\|^{k+2}),
$$
where $k+1\ge 2$ is the {\it order} of $F$. Very roughly, \'Ecalle using his resurgence theory \cite{Ec}, and Hakim with classical tools proved that generically, given a tangent to the identity germ of order $k+1$, it is possible to find one-dimensional ``petals'', called {\it parabolic curves}, that is one-dimensional $F$-invariant analytic discs having the origin on the boundary and where the dynamics is of {\it parabolic type}, i.e., the orbits converge to the origin tangentially to a particular direction. Abate, in \cite{Ab}, then proved that in dimension $2$ such parabolic curves always exist. Hakim also gave sufficient conditions for the existence of basins of attraction modeled on such parabolic curves, proving the following result (see Section 3 for the detailed definitions):

\begin{theorem}[Hakim, \cite{Ha}]\label{Hakim_intro}
Let $F$ be a tangent to the identity germ fixing the origin of order $k+1\ge 2$, and let $[v]$ be a non-degenerate characteristic direction. 
If $[v]$ is attracting, then there exist $k$ parabolic invariant domains, where each point is attracted by the origin along a trajectory tangential to $[v]$.  
\end{theorem}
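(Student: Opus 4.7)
\smallskip
\noindent\textbf{Proof proposal.}
The plan is to reduce the $p$-dimensional analysis near $[v]$ to a small perturbation of the one-dimensional Leau--Fatou picture in the $z_1$-coordinate, with the transverse directions controlled by the director linearisation. I would begin by choosing linear coordinates so that $[v] = [1:0:\cdots:0]$ and, after scaling $z_1$, $P_{k+1}(e_1) = -\frac{1}{k} e_1$; write $z = (z_1, z^\prime)$ with $z^\prime = (z_2, \ldots, z_p)$. In these coordinates
\[
F_1(z) = z_1 - \tfrac{1}{k}z_1^{k+1} + O(z_1^k \|z^\prime\|) + O(\|z\|^{k+2}),
\]
and, since $P_{k+1, j}(e_1) = 0$ for $j \geq 2$, the remaining components read $F_j(z) = z_j + z_1^k \ell_j(z^\prime) + O(z_1^{k-1}\|z^\prime\|^2) + O(\|z\|^{k+2})$ with each $\ell_j$ linear. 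The operator $A \in \mathrm{End}(\C^{p-1})$ built out of $(\ell_2, \ldots, \ell_p)$, corrected by the $z_1$-component contribution produced by the blow-up below, is the director of $[v]$, and the attracting hypothesis is precisely that every eigenvalue of $A$ has strictly positive real part.

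\smallskip
Next I would perform the weighted blow-up $z^\prime = z_1 w$ and introduce the Fatou coordinate $t = 1/(k z_1^k)$, in which the one-dimensional iteration becomes $t \mapsto t + 1 + O(t^{-1/k}\log|t|)$ and the transverse iteration takes the schematic form
\[
w \mapsto \bigl(I - \tfrac{1}{t} A\bigr) w + \tfrac{1}{t}\, R(t, w),
\]
where $R(t, w) = O(|t|^{-1/k}) + O(\|w\|) \cdot O(|t|^{-1/k}) + O(\|w\|^2)$ collects the residue produced by the blow-up. The purely linear model $w \mapsto (I - A/t) w$ has solution $w_n \sim t_n^{-A} w_0$, which tends to $0$ because every eigenvalue of $A$ has positive real part; the goal is to show that this decay persists for the full nonlinear iteration.

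\smallskip
To prove this, for each of the $k$ attracting directions in the $z_1$-plane of the one-dimensional germ $z_1 \mapsto z_1 - \frac{1}{k} z_1^{k+1}$ I would construct a region of the form
\[
\Omega_{R,\delta} = \bigl\{(t, w) : \Re t > R,\ |\Im t| < c\, \Re t,\ \|w\|_A < \delta\bigr\},
\]
where $\|\cdot\|_A$ is a Hermitian norm adapted to a Jordan basis of $A$ and chosen so that $\|(I - A/t) w\|_A \leq (1 - \eta/\Re t) \|w\|_A$ for some $\eta > 0$, every $w$, and every $t$ with $\Re t > R$. A triangle-inequality estimate in which this contraction absorbs both the $O(\|w\|^2/|t|)$ self-interaction and the $O(|t|^{-1 - 1/k})$ forcing term, provided $R$ is large and $\delta$ is small, would show that $F$ sends $\Omega_{R,\delta}$ into itself. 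Induction then gives $\|w_n\|_A = O(n^{-\eta^\prime})$ and $t_n = n + O(n^{1 - 1/k} \log n)$, hence $F^n(z) \to 0$ tangentially to $[v]$; undoing the two changes of coordinates converts each $\Omega_{R,\delta}$ into one of the $k$ claimed parabolic domains.

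\smallskip
The main difficulty is the construction of the norm $\|\cdot\|_A$: the standard Hermitian norm fails to produce the crucial estimate $\|(I - A/t) w\|_A \leq (1 - \eta/\Re t) \|w\|_A$ when $A$ has non-trivial Jordan blocks, because the off-diagonal nilpotent entries contribute a term that cannot be absorbed into the eigenvalue contraction. The remedy is to conjugate the Jordan basis by a small diagonal scaling $\mathrm{diag}(1, \epsilon, \epsilon^2, \ldots)$, shrinking the nilpotent part to $O(\epsilon)$ and letting the positivity of the real parts of the eigenvalues dominate. Once this norm is in hand, verifying invariance of $\Omega_{R,\delta}$ and the resulting decay is essentially book-keeping, with the freedom to enlarge $R$ playing the decisive role in closing every estimate.
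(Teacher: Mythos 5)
Your proposal is correct and takes essentially the same route as the paper's proof of Theorem \ref{th:7.1}: blow up so that the transverse iteration reads $(I-x^kA)u$ plus error, show a sectorial domain is invariant via a contraction estimate for $I - x^k A$ obtained by shrinking the Jordan off-diagonal entries (your Hermitian norm $\|\cdot\|_A$ built from $\operatorname{diag}(1,\epsilon,\epsilon^2,\ldots)$ is exactly the paper's choice of Jordan form with small off-diagonal $\eps<\lambda$), and then run a Leau--Fatou argument on $1/x^k$. The only slip is a normalization mismatch: with $P_{k+1}(e_1)=-\tfrac{1}{k}e_1$ one computes $1/z_1^k\mapsto 1/z_1^k+1+O(z_1^k)$, so the Fatou coordinate is $t=1/z_1^k$ rather than $t=1/(k z_1^k)$.
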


Hakim's techniques have been largely used in the study of the existence of parabolic curves (see \cite{Ab}, \cite{B-M}, \cite{Molino}, \cite{R1}), basins of attraction and Fatou-Bieberbach domains, i.e., proper open subset of $\C^p$ biholomorphic to $\C^p$, (see \cite{BRZ}, \cite{Ri}, \cite{R2}, \cite{Liz}). 

\me The aim of this survey is to make available important results and very useful techniques, that were included, up to now, only in \cite{hakim2}, a preprint which is not easily retrievable, and where the case $k>1$ was stated with no detailed proofs.

We shall provide, from Section 3 up to Section 7, the reformulations for any order $k+1\ge 2$, with detailed proofs, of the results published by Hakim in \cite{Ha} (Hakim gave detailed proofs of her results for $k=1$ only), and, in the last three sections, again reformulating definitions, lemmas, propositions and theorems for any order $k+1\ge 2$, we shall provide detailed proofs for the unpublished results, including her construction of Fatou-Bieberbach domains, obtained by Hakim in \cite{hakim2}. 

\me\no{\sl Acknowledgments.} We would like to thank the anonymous referee for useful comments and remarks which improved the presentation of the paper.

\section{Notation}

In the following we shall work in $\C^p$, $p\ge 2$ with the usual Euclidean norm
$$
\| z \|=\left( \sum_{i=1}^{p} |{z_i}|^2 \right)^{\frac{1}{2}}.
$$

We shall denote by $\D_{r,k}$ the following subset of $\C$
$$
\D_{r,k}=\left\{ z\in \C \mid  \lvert z^k-r \rvert <r \right\},
$$
which has exactly $k$ connected components, that will be denoted by $\Pi_{r,k}^1,\ldots,\Pi_{r,k}^k$.

Let $F\colon\C^p \to \C^p$ be a holomorphic map. We shall denote with $F^\prime (z_0)$ the Jacobian matrix of $F$ in $z_0$. If, moreover, we write $\C^p=\C^s\times \C^t$, then $\frac{\partial F}{\partial x}$ and $\frac{\partial F}{\partial y}$ will be the Jacobian matrices of $F(\cdot,y)$ and $F(x,\cdot)$.

Given $f,g_1, \ldots, g_s\colon\C^m \to \C^k$, we shall write 
$$ 
f=\oo{g_1, \ldots, g_s},
$$
if there exist $ C_1,\ldots,C_s >0 $ so that
$$ 
\|{f(w)}\|\leq C_1\|{g_1(w)} \|+\cdots+ C_s\|{g_s(w)}\|;
$$
and moreover, with $f=o(g)$ we mean
$$ 
\frac{\|{f(w)}\|}{\|{g(w)}\|}\to 0 \textrm{ as } w\to 0 . 
$$

Similarly, given a sequence $w_n \in \C^p$, we shall write
\begin{equation*}
\begin{split}
&w=\oo{\frac{1}{n}}\Longleftrightarrow \exists C >0\,:\, |{w_n}|\leq \frac{C}{n};\\
&w=o\left( \frac{1}{n}\right) \Longleftrightarrow \frac{w_n}{1/n}\to 0 \textrm{ as } n\to \infty .
\end{split}
\end{equation*}

Given $\{x_n\}$ a sequence in a metric space$(M,d)$, by $x_n\tilde x$ we mean that, for $n$ sufficiently large, $d(x_n, x) \to 0$. 

Finally, we shall denote with $\Diff(\C^p, 0)$ the space of germs of biholomorphisms of $\C^p$ fixing the origin.


\section{Preliminaries}\label{preli} 

One of the main tools in the study of the dynamics for tangent to the identity germs is 
the blow-up of the origin. In our case, it will suffice one blow-up to simplify our germ. 

\begin{definition}
Let $F\in\Diff(\C^p, O)$ be tangent to the identity. The {\it order} $\nu_0(F)$ of $F$ is the minimum $\nu\ge 2$ so that $P_{\nu}\not\equiv 0$, where we consider the expansion of as sum of homogeneous polynomials
$$
F(z) = \sum_{k\ge 1} P_k(z),
$$  
where $P_k$ is homogeneous of degree $k$ ($P_1(z) = z$). We say that $F$ is {\it non-degenerate} if $P_{\nu_0(F)}(z) = 0$ if and only if $z=0$.
\end{definition}

Let $\2\C^p\subset \C^p\times \C\P^{p-1}$ be defined by
$$
\2\C^p\{ (v, [l]) \in\C^p\times \C\P^{p-1} : v\in [l]\}.
$$
Using coordinates $(z_1,\dots, z_p)\in \C^p$ and $[S_1: \dots : S_p]\in\C\P^{p-1}$, we obtain that $\2\C^p$ is determined by the relations
$$
z_h S_k = z_k S_h
$$
for $h, k\in\{1,\dots, p\}$. It is well-known that $\2\C^p$ is a complex manifold of the same dimension as $\C^p$. Given $\sigma\colon \2\C^p\to \C^p$ the projection, the {\it exceptional divisor} $E:=\sigma^{-1}(0)$ is a complex submanifold of $\2\C^p$ and $\sigma|_{\2\C^p\setminus E}\colon \2\C^p\setminus E\to \C^p\setminus\{0\}$ is a biholomorphism. The datum $(\2\C^p, \sigma)$ is usually called {\it blow-up of $\C^p$ at the origin}. 

Note that an atlas of $\2\C^p$ is given by $\{(V_j, \phe_j)\}_{1\le j\le p}$, where
$$
V_i= \{ (z,[S] )\in \2\C^p \mid S_j\neq 0 \},
$$
and $\phe_j\colon V_j\to \C^p$ is given by
$$
\phe_j (z_1,\dots, z_p, [S_1:\cdots:1:\cdots:S_p])= \left( S_1,\dots , z_j,\dots ,  S_p \right),
$$
since the points in $\{S_j = 1\}$ satisfy $z_k = z_j S_k$ for $k\in\{1,\dots, p\}\setminus\{j\}$.
Moreover we have
$$
\phe^{-1}_j (z_1,\dots, z_p) = (z_1z_j, \dots, z_j, \dots, z_p z_j, [z_1:\cdots:1:\cdots:z_p])\in V_j.
$$
The projection $\sigma\colon\2\C^p\to\C^p$ is given by $\sigma(z, [S]) = z$, and in the charts $(V_j, \phe_j)$ it is given by
$$
\sigma \circ\phe^{-1}_j (z_1,\dots, z_p) = (z_1z_j, \dots, z_j, \dots, z_p z_j).
$$

\begin{proposition}\label{blowup}
Let $F\in\Diff(\C^p, 0)$ be tangent to the identity, and let $(\2\C^p, \sigma)$ be the blow-up of $\C^p$ at the origin. Then there exists a unique lift $\tilde F\in\Diff(\2\C^p, E)$ so that
$$
F\circ \sigma = \sigma\circ \tilde F. 
$$
Moreover, $F$ acts as the identity on the points of the exceptional divisor , i.e., $\tilde F(0, [S]) = (0, [S])$.
\end{proposition}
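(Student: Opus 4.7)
The plan is to first pin down $\tilde{F}$ outside the exceptional divisor by uniqueness, and then verify that the resulting map extends holomorphically across $E$, chart by chart, thanks to the tangency of $F$ to the identity.

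Since $\sigma$ is a biholomorphism of $\2\C^p \setminus E$ onto $\C^p \setminus \{0\}$, any lift of $F$ must satisfy
\[
\tilde{F}|_{\2\C^p \setminus E} = \sigma^{-1} \circ F \circ \sigma,
\]
which takes care of existence, uniqueness, and biholomorphy on the complement of $E$. As $\2\C^p \setminus E$ is dense in $\2\C^p$, any continuous extension is then automatically unique; the whole content of the proposition is therefore to show that this candidate $\tilde{F}$ extends holomorphically across $E$ and acts as the identity there.

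To do this, I would work in the chart $(V_j,\phe_j)$, where $\sigma\circ\phe_j^{-1}(u_1,\dots,u_p) = (u_1u_j,\dots,u_j,\dots,u_pu_j)$. Writing $F = \id + P_{k+1} + \cdots$, every nonlinear term $z^{\alpha}$ with $|\alpha|\ge k+1\ge 2$ pulls back under $\sigma$ to
\[
(u_1u_j)^{\alpha_1}\cdots u_j^{\alpha_j}\cdots(u_pu_j)^{\alpha_p} = u_j^{|\alpha|}\prod_{i\ne j}u_i^{\alpha_i},
\]
so every nonlinear contribution is divisible by $u_j^{k+1}$. Hence there exist holomorphic functions $h_1,\dots,h_p$ near the origin of the chart such that
\[
F_j(\sigma\circ\phe_j^{-1}(u)) = u_j\bigl(1+u_j^{k}h_j(u)\bigr), \qquad F_i(\sigma\circ\phe_j^{-1}(u)) = u_j\bigl(u_i+u_j^{k}h_i(u)\bigr) \quad (i\ne j).
\]
Imposing $\sigma\circ\tilde{F}=F\circ\sigma$ in the same chart $V_j$ forces $\phe_j\circ\tilde{F}\circ\phe_j^{-1}(u)=(v_1,\dots,v_p)$ with $v_j = u_j(1+u_j^{k}h_j(u))$ and $v_jv_i = u_j(u_i+u_j^{k}h_i(u))$. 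Dividing by $1+u_j^{k}h_j(u)$, which is a non-vanishing unit in a neighbourhood of $u=0$, gives
\[
v_i = \frac{u_i+u_j^{k}h_i(u)}{1+u_j^{k}h_j(u)}, \qquad i\ne j,
\]
a formula which is manifestly holomorphic across $\{u_j=0\}$. Setting $u_j=0$ yields $v_j=0$ and $v_i=u_i$, so in this chart $\tilde{F}$ fixes each point of $E\cap V_j$. Since these local formulas agree with $\sigma^{-1}\circ F\circ\sigma$ off $E$, they patch to a well-defined holomorphic map on a neighbourhood of $E$ in $\2\C^p$, and $\tilde{F}|_{E}=\id_E$.

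Finally, to upgrade $\tilde F$ to a biholomorphism of a neighbourhood of $E$, I apply the same construction to the germ $F^{-1}\in\Diff(\C^p,0)$, which is again tangent to the identity and of the same order, obtaining a holomorphic lift $\widetilde{F^{-1}}$. The compositions $\tilde F\circ\widetilde{F^{-1}}$ and $\widetilde{F^{-1}}\circ\tilde F$ are holomorphic lifts of $\id_{\C^p}$ agreeing with $\id_{\2\C^p}$ off $E$, hence equal to $\id_{\2\C^p}$ everywhere by the uniqueness already established. The main technical step is the divisibility observation at the heart of the second paragraph: everything else is bookkeeping. Once that is cleanly phrased, patching and the identity-on-$E$ statement follow immediately, so I expect no serious obstacle — only the mild annoyance of checking that the chart-wise definitions glue, which is automatic because they coincide with $\sigma^{-1}\circ F\circ\sigma$ on the dense open set $\2\C^p\setminus E$.
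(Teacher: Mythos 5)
Your proof is correct and complete. Note that the paper itself does not prove this proposition---it simply refers the reader to Abate \cite{A1}---so there is no in-paper argument to compare against; your chart-by-chart computation is precisely the standard one. The key points are all in place: uniqueness off $E$ via $\sigma^{-1}\circ F\circ\sigma$, the divisibility of every nonlinear term of $F\circ\sigma$ by $u_j^{k+1}$ in the chart $V_j$ (which uses $k+1\ge 2$, i.e.\ the tangency to the identity), the resulting unit $1+u_j^k h_j(u)$ making $v_i$ holomorphic across $\{u_j=0\}$, the identity action on $E$ by setting $u_j=0$, and the elegant biholomorphy argument via lifting $F^{-1}$ and invoking the already-established uniqueness. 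One small stylistic remark: since $F$ is only a germ, you should work on $\sigma^{-1}(U)\setminus E$ for a small neighbourhood $U$ of the origin rather than literally on $\2\C^p\setminus E$, but since this set is still dense in $\sigma^{-1}(U)$, the uniqueness-by-density argument goes through unchanged, and the statement itself is a germ-level one ($\tilde F\in\Diff(\2\C^p,E)$).
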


\sm We omit the proof of the previous result, which can be found in \cite{A1}. It is also possible to prove that there exists a unique lift for any endomorphism $G$ of $(\C^p, 0)$ so that $G(z)=\sum_{k\ge h} P_k(z)$, where $h$ is the minimum integer such that $P_h\not\equiv 0$ and so that $P_h(z) = 0$ if and only if $z=0$, and in such a case the action on the exceptional divisor is $\tilde G(0, [S]) = (0, [P_h(S)])$.

\section{Characteristic directions}

We shall use the following reformulation of Definition~2.1 and Definition~2.2 of \cite{Ha} for the case $k+1\ge 2$.

\begin{definition}
Let $F\in\Diff(\C^p, 0)$ be a tangent to the identity germ of order $k+1$, and let $P_{k+1}$ be the homogeneous polynomial of degree $k+1$  in the expansion of $F$ as sum of homogeneous polynomials (that is, the first non-linear term of the series). We shall say that $v\in\C^p\setminus\{0\}$ is a {\it characteristic direction} if $P_{k+1}(v) = \lambda v$ for some $\lambda\in\C$. Moreover, if $P_{k+1}(v) \ne 0$, we shall say that the characteristic direction is {\it non-degenerate}, otherwise, we shall call it {\it degenerate}. 
\end{definition}

Since characteristic direction are well-defined only as elements in $\C\P^{p-1}$, we shall use the notation $[v]\in\C\P^{p-1}$.

\begin{definition}
Let $F\in\Diff(\C^p, 0)$ be a tangent to the identity germ. A {\it characteristic trajectory} for $F$ is an orbit $\{X_n\}:= \{F^n(X)\}$ of a point $X$ in the domain of $F$, such that $\{X_n\}$ converges to the origin tangentially to a complex direction $[v]\in\C\P^{p-1}$, that is 
\begin{equation*}
\left\{\begin{array}{l l}
\lim\limits_{n\to \infty}X_n = 0, \\
\lim\limits_{n\to \infty}[X_n]= [v].
\end{array}\right.
\end{equation*}
\end{definition}

\sm The concepts of characteristic direction and characteristic trajectory are indeed linked as next result shows. We shall use coordinates, following Hakim \cite{Ha}, $z=(x, y) \in\C\times\C^{p-1}$ and $(x_n, y_n):= (f_1^n(x,y), f_2^n(x,y)) \in\C\times\C^{p-1}$ for the $n$-tuple iterate of $F$. We have the following generalization of Proposition 2.3 of \cite{Ha} for the case $k+1\ge 2$.

\begin{proposition}\label{prop:2}
Let $F\in\Diff(\C^p, 0)$ be a tangent to the identity germ, and let $\{X_n\}$ be a characteristic trajectory tangent to $[v]$ at the origin. Then $v$ is a characteristic direction. Moreover, if $[v]$ is non-degenerate, choosing coordinates so that $[v]=[1: u_0]$, writing $P_{k+1}(z) = (p_{k+1}(z), q_{k+1}(z))\in\C\times\C^{p-1}$, we have
\begin{equation}\label{stima1}
x_n^k \sim -\frac{1}{n k p_{k+1}(1, u_0)}, \textit{ as  } n\to \infty,
\end{equation}
where $X_n=(x_n, y_n)$.
\end{proposition}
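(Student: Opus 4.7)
My plan is to work in the blow-up chart of Proposition~\ref{blowup} adapted to $[v]=[1:u_0]$, with coordinates $(x,u)\in\C\times\C^{p-1}$ related to the original ones by $y=xu$. A direct computation based on the expression of the lift in local coordinates shows that
\begin{equation*}
\tilde F(x,u) = \bigl(x + x^{k+1}A(u) + O(x^{k+2}),\ u + x^k\chi(u) + O(x^{k+1})\bigr),
\end{equation*}
where $A(u):=p_{k+1}(1,u)$ and $\chi(u):=q_{k+1}(1,u) - u\,p_{k+1}(1,u)$. The crucial algebraic observation is that $[v]$ is a characteristic direction precisely when $\chi(u_0)=0$. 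Since $[X_n]\to[1:u_0]$, for $n$ large $x_n\neq 0$ and the trajectory lifts to $(x_n,u_n):=(x_n,y_n/x_n)\to(0,u_0)$.

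I would then split according to the value of $A(u_0)=p_{k+1}(1,u_0)$. If $A(u_0)\neq 0$, the $x$-recursion becomes the classical one-dimensional $x_{n+1}=x_n+A(u_0)x_n^{k+1}+o(x_n^{k+1})$; passing to $z_n:=1/x_n^k$ and Taylor expanding yields $z_{n+1}-z_n=-kA(u_0)+o(1)$, and summation gives the asymptotic $x_n^k\sim -\frac{1}{nk\,p_{k+1}(1,u_0)}$ asserted in \eqref{stima1}. In particular $\sum_{n<N}x_n^k$ grows logarithmically in $N$. Substituting this into $u_{n+1}-u_n=x_n^k\chi(u_n)+O(x_n^{k+1})$ and summing, the unbounded partial sums force $\chi(u_0)=0$: otherwise $|u_N-u_0|$ would grow like $\log N$, contradicting $u_n\to u_0$. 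Since $\chi(u_0)=0$ is equivalent to $P_{k+1}(1,u_0)=A(u_0)(1,u_0)$, this shows $[v]$ is a (non-degenerate) characteristic direction.

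In the remaining case $A(u_0)=0$, the subcase $\chi(u_0)=0$ trivially gives $P_{k+1}(1,u_0)=0$, so $[v]$ is a degenerate characteristic direction. The subtle subcase is $A(u_0)=0$, $\chi(u_0)\neq 0$, where $[v]$ is not a characteristic direction and must be shown to be dynamically impossible. The idea is to introduce the rescaling $\xi_n:=w_n/x_n^k$ with $w_n:=u_n-u_0$; combining the two recursions one obtains $\xi_{n+1}-\xi_n = \chi(u_0)+o(1)$, so $\xi_n\sim n\chi(u_0)$ and therefore $|w_n|\sim n|\chi(u_0)||x_n|^k$. The requirement $w_n\to 0$ then yields the sharpened decay $n|x_n|^k\to 0$; a careful analysis of $|x_{n+1}/x_n-1| = O(|x_n|^k|w_n|) + O(|x_n|^{k+1})$ exploiting this improved rate rules out $x_n\to 0$, giving the desired contradiction.

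The main obstacle is precisely this subcase. The standard one-dimensional Leau--Fatou asymptotic and the logarithmic divergence argument handle the case $A(u_0)\neq 0$ cleanly and give both conclusions simultaneously; but in the subcase $A(u_0)=0$, $\chi(u_0)\neq 0$ the partial sums $\sum x_n^k$ are only conditionally convergent, and a crude summability argument does not suffice to rule out $x_n\to 0$. The rescaling trick $\xi_n=w_n/x_n^k$ is what closes the gap, by converting the $u$-dynamics into a genuinely unbounded sequence and thus forcing the sharp rate of decay on $|x_n|$ needed to reach a contradiction.
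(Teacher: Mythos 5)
Your proof is correct. In the main case $p_{k+1}(1,u_0)\neq 0$ you follow essentially the paper's route: blow up, pass to $z_n=1/x_n^k$ to get $1/(nx_n^k)\to -k\,p_{k+1}(1,u_0)$ (the paper phrases this as Ces\`aro averaging of $p_{k+1}(1,u_j)+O(x_j)$, which is the same computation), yielding \eqref{stima1}; then force $r(u_0)$ (your $\chi(u_0)$) to vanish via the logarithmic divergence of $\sum x_n^k$. Where you genuinely depart is in explicitly handling the subcase $p_{k+1}(1,u_0)=0$, $\chi(u_0)\neq 0$: the paper dismisses $P_{k+1}(v)=0$ as degenerate and, for $P_{k+1}(v)\neq 0$, relies on a choice of coordinates in which the harmonic-divergence step applies, but that step tacitly needs the first coordinate to be nonzero on both $v$ and $P_{k+1}(v)$, which a mere reordering may not arrange (take $v=(1,0)$, $P_{k+1}(v)=(0,1)$), though a generic linear change does. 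Your rescaling $\xi_n=w_n/x_n^k$ handles this subcase directly and correctly: $\xi_{n+1}-\xi_n\to\chi(u_0)$ gives $\|w_n\|\sim n|\chi(u_0)||x_n|^k$, so $w_n\to 0$ forces $n|x_n|^k\to 0$. One simplification is worth flagging: rather than the ``careful analysis of $|x_{n+1}/x_n-1|$'' you propose (which, to be made rigorous, requires a weighted Ces\`aro comparison of $-\log|x_n|$ against $\log n$), it is cleaner to reuse the $z_n$ recursion: since $z_{n+1}-z_n=-k\,p_{k+1}(1,u_n)+O(x_n)\to 0$ in this subcase, Ces\`aro gives $|z_n|/n\to 0$, i.e.\ $n|x_n|^k\to\infty$, a direct clash with the $\xi_n$ estimate. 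In fact, running the $z_n$ and $\xi_n$ recursions together forces $\chi(u_0)=0$ in all cases at once, with no split on whether $p_{k+1}(1,u_0)$ vanishes.
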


\begin{proof}
If $P_{k+1}([v])=0$, then $[v]$ is a degenerate characteristic direction and there is nothing to prove. Hence we may assume $P_{k+1}([v])\neq 0$, and, up to reordering the coordinates, we may assume that  $[v]=[1:u_0]$ and $F$ is of the form 
\begin{equation}
\left\{\begin{array}{l l}
x_1= x+ p_{k+1}(x,y)+ p_{k+2}(x,y)+ \cdots, \\
y_1= y+ q_{k+1}(x,y)+ q_{k+2}(x,y)+ \cdots,
\end{array}\right. 
\end{equation}
where $x_1, \, x,\, p_j(x,y)\in \C$ and $y_1,\, y,\, q_j(x,y)\in \C^{p-1}$. Since $\{X_n\}$ is a characteristic trajectory tangent to $[v]$, we have
$$
\lim_{n\to \infty}\frac{y_n}{x_n}= u_0.
$$
Now we blow-up the origin and we consider a neighbourhood of $[v]$. If the blow-up is $y=ux $, with $u\in \C^{p-1}$, then the first coordinate of our map becomes
\begin{equation}\label{eq:eq2.2}
\begin{split}
x_1
&=x(1+p_{k+1}(1,u)x^k+p_{k+2}(1,u)x^{k+1}+\cdots), 
\end{split}
\end{equation}
whereas the other coordinates become
\begin{equation}\label{eq:eq2.3}
\begin{split}
u_1&= \frac{y_1}{x_1}
= u+ r(u)x^k + O(x^{k+1}),
\end{split}
\end{equation}
where 
$$
r(u):=q_{k+1}(1,u )- p_{k+1}(1,u) u.
$$
As a consequence, the non-degenerate characteristic directions of $F$ of the form $[1:u]$ coincide with the ones so that $u$ is a zero of the polynomial map $r(u)$:
\begin{equation*}
\left\{
\begin{array}{l l}
p_{k+1}(1,u)= \lambda \\
q_{k+1}(1,u)=\lambda u
\end{array}\right. 
\Longleftrightarrow r(u)= q_{k+1}(1,u) -p_{k+1}(1,u) u = 0.
\end{equation*}
It remains to prove that if $u_n=\frac{y_n}{x_n}$ converges to $u_0$, then $r(u_0)=0$. Since $u_n\to u_0$, the series 
\begin{equation}\label{serie}
\sum_{n=0}^\infty \left( u_{n+1}- u_n \right)
\end{equation}
is convergent. Thanks to \eqref{eq:eq2.3}, assuming $r(u_n)\neq 0$, we obtain 
$$
u_{n+1}-u_n = r(u_n)x_n^k+ \oo{x_n^{k+1}} \sim r(u_0)x_n^k.
$$
We can now prove \eqref{stima1}. In fact from
\begin{equation*}
\frac{1}{x_1}=\frac{1}{x}\left( 1-p_{k+1}(1,u)x^k+ O(x^{k+1})\right),
\end{equation*} 
we deduce
\begin{equation*}
\begin{split}
\frac{1}{x_1^k}
&= \frac{1}{x^k} -k p_{k+1}(1,u) +\oo{x},
\end{split}
\end{equation*} 
and hence 
\begin{equation*}
\begin{split}
\frac{1}{n x_n^k}
&= \frac{1}{n x^k} - \frac{k}{n}\sum_{j=0}^{n-1}\left(  p_{k+1}(1,u_{j}) +\oo{x_{j}}\right).
\end{split}
\end{equation*}
Setting $a_j:= p_{k+1}(1, u_j)+ O(x_j)$, since $a_j \to p_{k+1}(1,u_0)$, the average $\frac{1}{n}\sum_{j=0}^{n-1} a_j$ converges to the same limit. It follows that, as $n\to\infty$, $\frac{1}{n x_n^k}$ converges to $- k p_{k+1}(1,u_0)$ and 
$$
x_n^k\sim -\frac{1}{n  k p_{k+1}(1,u_0)}.
$$ 
If $r(u_0)\ne0$, then we could find $C\ne 0$ such that
$$ 
u_{n+1}-u_n\sim \frac{C}{n} \,{r(u_0)}, 
$$
and the series $\sum_{n=0}^\infty \left( u_{n+1}-u_n \right)$ would not converge, contradicting \eqref{serie}; hence $r(u_0) = 0$, and this concludes the proof. 
\end{proof}

\sm 

Unless specified, thanks to the previous results, without loss of generality, we shall assume that any given $F\in\Diff(\C^p, 0)$ tangent to the identity germ of order $k+1\ge 2$, with a non-degenerate characteristic direction $[v]$ is of the form 
\begin{equation}\label{eq:4.2}
\left\{\begin{array}{l l}
x_1=x(1+p_{k+1}(1,u) x^k+O(x^{k+1})), \\
u_1= u + (q_{k+1}(1,u )- p_{k+1}(1,u) u)x^k + O(x^{k+1}) ,
\end{array}\right. 
\end{equation}

\begin{lemma}
Let $F\in\Diff(\C^p, 0)$ be a tangent to the identity germ of order $k+1\ge 2$, of the form \eqref{eq:4.2}, with a non-degenerate characteristic direction $[v] = [1:u_0]$. Then there exists a polynomial change of coordinates holomorphically conjugating $F$ to a germ with first component of the form
$$
x_1=x - \frac{1}{k}x^{k+1} +\oo{x^{k+1}\|u\|, x^{2k+1}}.
$$
\end{lemma}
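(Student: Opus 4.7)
The plan is to achieve the normalization by composing three polynomial changes of coordinates: a translation in $u$ to center the non-degenerate characteristic direction at $u=0$; a rescaling of $x$ to normalize the leading coefficient to $-1/k$; and a polynomial change of the $x$-variable alone to eliminate the intermediate ``pure'' powers $x^{k+2},\dots,x^{2k}$ appearing in the first equation.

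First I would substitute $u\mapsto u-u_0$. Since each $p_m(1,u)$ is a polynomial, its Taylor expansion at $u_0$ splits as $p_m(1,u+u_0)=p_m(1,u_0)+\oo{\|u\|}$, so, writing $\lambda:=p_{k+1}(1,u_0)\neq 0$ (by non-degeneracy) and $a_m:=p_m(1,u_0)$, the first equation becomes
\[
x_1 = x + \lambda x^{k+1} + \sum_{m\ge k+2} a_m x^m + \oo{x^{k+1}\|u\|}.
\]
Then I would choose $\alpha\in\C^*$ with $\alpha^k\lambda=-1/k$ and set $x=\alpha\tilde x$; after dividing by $\alpha$, the leading coefficient becomes $-1/k$ and the remainder preserves its form, giving
\[
\tilde x_1 = \tilde x - \tfrac{1}{k}\tilde x^{k+1} + \sum_{m\ge k+2} c_m\tilde x^m + \oo{\tilde x^{k+1}\|u\|},
\]
with constants $c_m=\alpha^{m-1}a_m$.

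The main step is the third one: I would apply successively the polynomial substitutions $\phi_j(w)=w+b_j w^j$ for $j=2,\dots,k$ (with no $u$-dependence) to kill the constants $c_{k+j}$. A direct expansion of $\phi_j^{-1}\circ f\circ\phi_j$ shows that such a substitution alters the coefficient of $w^{k+j}$ by $-\frac{k+1-j}{k}b_j$ plus terms independent of $b_j$, while leaving the coefficients of $w^m$ with $m<k+j$ unchanged, so earlier cancellations are preserved. Since $k+1-j\neq 0$ for $2\le j\le k$, I can solve inductively for $b_j\in\C$ and annihilate each $c_{k+j}$ in turn; the case $k=1$ is vacuous. Because each $\phi_j$ is a polynomial in $w$ only with unit linear part, the remainder $\oo{\tilde x^{k+1}\|u\|}$ is transformed into another remainder of the same form.

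Composing the translation, the scaling, and the $\phi_j$'s gives the required polynomial conjugation. The main technical obstacle I expect is in step three: the algebraic bookkeeping needed to verify that the $j$-th substitution does not undo the cancellations achieved by $\phi_2,\dots,\phi_{j-1}$, together with the explicit identification of the linear-in-$b_j$ coefficient $-(k+1-j)/k$ as nonzero precisely in the required range $2\le j\le k$.
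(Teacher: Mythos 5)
Your proof is correct and follows essentially the same route as the paper's: Taylor-expand at $u_0$, eliminate the intermediate pure powers $x^{k+2},\dots,x^{2k}$ by successive polynomial conjugations $x\mapsto x+\beta x^{j}$ for $2\le j\le k$ using that the linear coefficient $(k+1-j)/k$ is nonzero in that range, and normalize the leading coefficient by a linear rescaling of $x$. The only differences are cosmetic: you rescale before rather than after killing the intermediate terms, and you write the conjugation as $\phi_j^{-1}\circ f\circ\phi_j$ where the paper uses $g\circ f_h\circ g^{-1}$.
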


\begin{proof}
We shall first prove that it is possible to polynomially conjugate $F$ to a germ whose first coordinate has no terms in $x^h$ for $h= k+2, \dots, 2k$. 
Thanks to \eqref{eq:eq2.2}, expanding $p_{k+1}(1,u)$ in $u_0$, we obtain
$$
x_1 = f(x,u) = x + p_{k+1}(1,u_0)x^{k+1}+\oo{\|u\| x^{k+1}, x^{k+2}}.
$$
Now we use the same argument one can find in \cite[Theorem 6.5.7 p.122]{Beardon}, conjugating $f$ to polynomials $f_h$, for $1\leq h < k $, of the form 
$$
f_h(x,u)= x+ p_{k+1}(1,u_0)x^{k+1} + b_h x^{k+h+1}+\oo{\|u\| x^{k+1}, x^{k+h+2}},
$$
that is, changing polynomially the first coordinate $x$ and leaving the others invariant, up to get
$$
f_k(x,u)= x+ p_{k+1}(1,u_0)x^{k+1} +\oo{\| u\| x^{k+1}, x^{2k+1}}.
$$
Let us consider $g(x)=x + \beta x^{h+1}$, with $\beta := \frac{b_h}{(k-h)p_{k+1}(1,u_0)}$, and set $\Phi = (g,\id_{p-1} )\colon (x,u)\mapsto (g(x), u)$. Then, conjugating $F_h=(f_h, \Psi_h)$ via $\Phi$, we have $F_{h+1}\circ \Phi = \Phi \circ F_h $, which is equivalent to 
\begin{equation}\label{eq:phi-h}
\left\{\begin{array}{l l}
f_{h+1}(g(x),u) = g( f_h(x,u) ),\\
\Psi_{h+1}(g(x), u)=\Psi_h(x,u).
\end{array}\right.
\end{equation} 
Since $\Phi(0)=0$ and the Taylor expansion of $\Phi$ up to order $k+1$ only depends on $d\Phi_0 $, we must have
\begin{equation*}
\left\{\begin{array}{l l}
f_{h+1}(x,u)= x+ \sum_{m=k+1}^\infty A_m x^m +\oo{\| u\| x^{k+1}},\\
\Psi_{h+1}(x,u)= u + r(u)x^k + \oo{x^{k+1}},
\end{array}\right.
\end{equation*}
and in particular these changes of coordinates do not interfere on $\Psi$ in the order that we are considering.

Let us consider the terms up to order $k+h+2$ in the first equation of \eqref{eq:phi-h}. We obtain
\begin{equation*}
\begin{split}
g( f_h(x,u))
&= x+ p_{k+1}(1,u_0) x^{k+1} + b_h x^{k+h+1}+ \beta (x^{h+1}+ (h+1)x^{k+h+1}  ) + \oo{\| u |x^{k+1}, x^{k+h+2} },
\end{split}
\end{equation*}
and
\begin{equation*}
\begin{split}
f_{h+1} (g(x),u)
&= x +\beta x^{k+1} + A_{k+1}x^{k+1}+ \cdots + A_{k+h+1} x^{k+h+1} + A_{k+1}\beta (k+1) x^{k+h+1} +\oo{x^{k+h+2}, \| u\| x^{k+1}}.
\end{split}
\end{equation*}
Hence the coefficients $A_m$ satisfy 
\begin{equation*}
\left\{\begin{array}{l l}
A_{k+1}=p_{k+1}(1,u_0), \, A_{k+2}=0,\, \ldots ,\, A_{k+h}=0,\\
b_h + (h+1)p_{k+1}(1,u_0) \beta = \beta (k+1) A_{k+1} + A_{k+h+1},
\end{array}\right. 
\end{equation*}
yielding $A_{k+h+1}=0$. In particular there exists $b_{h+1}$ such that
\begin{equation*}
f_{h+1}(x,u)= x+ p_{k+1}(1,u_0)x^{k+1} + b_{h+1} x^{k+h+2}+\oo{\| u\| x^{k+1}, x^{k+h+3}}.
\end{equation*}
Repeating inductively this procedure up to $h=k-1$ we conjugate with a polynomial (and hence holomorphic) change of coordinates our original $F$ to a germ with no terms in $x^h$ for $h = k+2, \dots, 2k$, i.e.,
\begin{equation}\label{eq:2k+1}
x_1=f(x,u)=x+p_{k+1}(1,u_0)x^{k+1}+\oo{\| u \|x^{k+1}, x^{2k+1}}.
\end{equation}

Finally, using the change of coordinates acting as $x\mapsto X=\sqrt[k]{-p_{k+1}(1,u_0)k} \,x $ on the first  coordinate, and as the identity on the other coordinates, the germ \eqref{eq:2k+1} is transformed 
into
\begin{equation*}
X_1 = X - \frac{1}{k}X^{k+1} +\oo{\| u\| X^{k+1}, X^{2k+1}},
\end{equation*}
in the first component, whereas the other components, 
become
\begin{equation*}
U_1= U- r(U)\, \frac{X^k}{k\, p_{k+1}(1,u_0)}+ O(X^{k+1}).
\end{equation*}
\end{proof}

\sm 

Up to now, we simply acted on the first component of $F$, mainly focusing on the characteristic direction $[v]$. We shall now introduce a class of $(p-1) \times (p-1)$ complex matrices which takes care of the remaining $p-1$ components of $F$. 
We consider the Taylor expansion of $r$ in $u_0$, and we have
\begin{equation*}
\begin{split}
u_1
&= u- \frac{x^k}{k\, p_{k+1}(1,u_0)}r^\prime (u_0) (u-u_0) + \oo{\|{u-u_0}\|^2 x^k, x^{k+1}} ,
\end{split}
\end{equation*}
where $r^\prime(u_0)= {\rm Jac}(r)(u_0)$. It is then possible to associate to the characteristic direction $[v]=[1:u_0]$ the matrix
$$
A(v)=\frac{1}{ k\, p_{k+1}(1,u_0)}r^{\prime}(u_0),
$$
and hence, assuming without loss of generality $u_0=0$, after the previous reductions, the germ $F$ has the form
\begin{equation}\label{eq:equ}
\left\{\begin{array}{l l}
x_1=x - \frac{1}{k}x^{k+1} +\oo{\| u \|x^{k+1}, x^{2k+1}},\\
u_1= (I- x^k A)  u + \oo{\|{u}\|^2 x^k, x^{k+1}}.
\end{array}\right.
\end{equation}
The next result gives us a more geometric interpretation of this matrix.

\begin{lemma}\label{Matrice}
Let $F\in\Diff(\C^p, 0)$ be a tangent to the identity germ of order $k+1\ge 2$ and let $[v]\in \C\P^{p-1}$ be a non-degenerate characteristic direction for $F$ with associate matrix $A(v)$. Then the projection $\2P_{k+1}$ in $\C\mathbb P^{p-1}$ of the homogeneous polynomial $P_{k+1}$ of degree $k+1$ in the expansion of $F$ as sum of homogeneous polynomials induces $\2P_{k+1}\colon \C\mathbb P^{p-1}\to\C\mathbb P^{p-1}$, defined by
$$
\2P_{k+1}\colon [x]\mapsto [P_{k+1}(x)], 
$$
which is well-defined in a neighbourhood of $v$. Moreover, $[v]$ is a fixed point of $\2P_{k+1}$ and $A(v)$ is the matrix associated to the linear operator
$$
\frac{1}{k}\left( d(\tilde P_{k+1})_{[v]}- \id\right).
$$
\end{lemma}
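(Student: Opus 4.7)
The plan is to compute $d(\2P_{k+1})_{[v]}$ in the standard affine chart of $\C\P^{p-1}$ covering $[v]$, and to verify that the resulting matrix differs from the identity by exactly $k\,A(v)$, with $A(v)$ as defined just before the statement. The first two assertions (well-definedness of $\2P_{k+1}$ near $[v]$ and the fixed-point property) are essentially tautological consequences of the definition of non-degenerate characteristic direction; the core of the argument is then a one-step quotient-rule computation.

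First I would observe that $[v]$ non-degenerate means $P_{k+1}(v)\neq 0$, so that the map $[x]\mapsto[P_{k+1}(x)]$ is holomorphic in a neighbourhood of $[v]$ in $\C\P^{p-1}$. Writing $P_{k+1}(v)=\lambda v$ with $\lambda\in\C$, non-degeneracy forces $\lambda\neq 0$ and gives at once $\2P_{k+1}([v])=[\lambda v]=[v]$. Then, as in the reductions leading to \eqref{eq:equ}, I would choose coordinates so that $[v]=[1:0]$ and work in the affine chart $\{S_1\neq 0\}$ with coordinate $u\in\C^{p-1}$, in which
\begin{equation*}
\2P_{k+1}([1:u]) = \bigl[p_{k+1}(1,u):q_{k+1}(1,u)\bigr] = \left[1:\frac{q_{k+1}(1,u)}{p_{k+1}(1,u)}\right].
\end{equation*}
Because $p_{k+1}(1,0)=\lambda\neq 0$, the quotient $\Psi(u):=q_{k+1}(1,u)/p_{k+1}(1,u)$ is a holomorphic germ near $u=0$ and represents $\2P_{k+1}$ in this chart, so $d(\2P_{k+1})_{[v]}$ is identified with the Jacobian $\Psi'(0)$.

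It remains to compute $\Psi'(0)$ via the identity $q_{k+1}(1,u)=r(u)+p_{k+1}(1,u)\,u$, which is simply the defining relation of $r$ rearranged. Since $u_0=0$ is a characteristic direction, $r(0)=0$; hence $q_{k+1}(1,0)=0$, and the quotient rule at $u=0$ collapses to
\begin{equation*}
\Psi'(0) = \frac{1}{p_{k+1}(1,0)}\,\frac{\partial q_{k+1}}{\partial u}(1,0).
\end{equation*}
Differentiating $q_{k+1}(1,u)=r(u)+p_{k+1}(1,u)\,u$ at $u=0$, the product-rule contribution from $p_{k+1}(1,u)\,u$ reduces to $p_{k+1}(1,0)\,I$ (the other summand is multiplied by $u|_{u=0}=0$), so $\frac{\partial q_{k+1}}{\partial u}(1,0)=r'(0)+p_{k+1}(1,0)\,I$, and therefore
\begin{equation*}
\Psi'(0)-I = \frac{r'(0)}{p_{k+1}(1,0)} = k\,A(v),
\end{equation*}
by the very definition $A(v)=\frac{1}{k\,p_{k+1}(1,u_0)}r'(u_0)$. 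Rearranging yields the claimed $A(v)=\frac{1}{k}\bigl(d(\2P_{k+1})_{[v]}-\id\bigr)$. I expect no real obstacle here; the only point needing care is to respect the vector-valued nature of $u$ when differentiating $p_{k+1}(1,u)\,u$, which is what accounts for the appearance of the identity matrix in the formula, and hence for the $-\id$ in the final statement.
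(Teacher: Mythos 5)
Your argument is correct and follows essentially the same route as the paper's proof: express $\2P_{k+1}$ in the affine chart $\{S_1\ne 0\}$, compute the Jacobian of $u\mapsto q_{k+1}(1,u)/p_{k+1}(1,u)$ by the quotient rule, and compare with $r'(u_0)$. The only (harmless) difference is that you normalize $u_0=0$, which makes the quotient rule collapse and shortens the computation, whereas the paper carries out the same calculation for a general $u_0$.
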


\begin{proof}
The germ $F$ can be written as 
$$
F(z) = z + P_{k+1}(z) + P_{k+1}(z) + \cdots,
$$  
where $P_h$ is homogeneous of degree $h$. Let $[v]$ be a non-degenerate characteristic direction for $F$. 
The $p$-uple $P_{k+1}$ of homogeneous polynomials of degree $k+1$ induces a meromorphic map $\2P_{k+1}\colon \C\mathbb P^{p-1}\to\C\mathbb P^{p-1}$ given by 
$$
\2P_{k+1}\colon [x]\mapsto [P_{k+1}(x)], 
$$
and it is clear that the non-degenerate characteristic directions correspond to the fixed points of such a map, and the degenerate characteristic directions correspond to the indeterminacy points.

We may assume without loss of generality, $v=(1, u_0)$. Then
\begin{equation*}
U= \left\{ [ x_1 : \cdots :x_p ]\in \C\mathbb P ^{p-1} \mid x_1 \neq 0  \right\}
\end{equation*}
is an open neighourhood of $[v]$ and the map $\phe_1\colon U\to\C^{p-1}$ defined as
$$ 
[x_1: \cdots x_p] \mapsto \left( \frac{x_2}{x_1}, \cdots , \frac{x_p}{x_1}\right) =(u_1, \ldots, u_{p-1}),
$$
is a chart of $\C\P^{p-1}$ around $[v]$.

The differential $d(\2P_{k+1})_{[v]}: T_{[v]}\C\P^{p-1}\to T_{[v]}\C\P^{p-1}$ is a linear map, and it is represented, in $u_0=\phe_1([v])$, by the Jacobian matrix of the map
$$ 
g:= \phe_1 \circ \2P_{k+1}\circ \phe_1^{-1}: \phe_1(U)\to \phe_1 (\2P_{k+1}(U)) 
$$
given by
$$ 
u=(u_1,\ldots, u_{p-1})\mapsto \left( \frac{q_{k+1, 1}(1,u_1,\ldots, u_{p-1})}{p_{k+1}(1,u_1,\ldots, u_{p-1})}, \ldots , \frac{q_{k+1,p-1}(1,u_1,\ldots, u_{p-1})}{p_{k+1}(1,u_1,\ldots, u_{p-1})} \right). 
$$ 
We can associate to $[v]$ the linear endomorphism
\begin{equation*}
\6A_F([v])= \frac{1}{k}\left( d(\2P_{k+1})_{[v]}-\id\right) : T_{[v]}\C\mathbb P ^{p-1}\to T_{[v]}\C\mathbb P ^{p-1},
\end{equation*}
and we can then prove that the matrix of $\6A_F([v])$ coincides with $A(v)$. In fact, let $g_1,\ldots, g_{p-1}$ be the components of $g$. Since $g(u_0)=u_0$, we have
\begin{equation*}
\begin{split}
\frac{\partial g_i}{\partial u_j}(u_0)
&=\frac{1}{p_{k+1}(1,u_0)}\left[ \frac{\partial q_{k+1,i}}{\partial u_j}(1,u_0) -\frac{\partial p_{k+1}}{\partial u_j}(1,u_0)u_{0,i} \right],
\end{split}
\end{equation*}
for $i,j=1,\ldots ,p-1$. 
Therefore, it follows from $r_i(u)= q_{k+1,i}(1,u)-p_{k+1}(1,u)u_i$ that
$$ 
\frac{\partial r_i}{\partial u_j}(u_0)= \frac{\partial q_{k+1,i}}{\partial u_j}(1,u_0) - \frac{\partial p_{k+1}}{\partial u_j}u_{0,i} u_{0,i} - p_{k+1}(1,u_0)\delta_{i,j}, 
$$
and hence
$$
A(v)= \frac{1}{k}(g^\prime (u_0)-\id),
$$
concluding the proof.
\end{proof}

\sm
\begin{lemma}\label{lemma:prec}
Let $F\in \Diff(\C^p, 0)$ be a tangent to the identity germ and let $\phe \in \C\[X\]^p$ be an invertible formal transformation of $\C^p$. If $F= I + \sum_{h\ge k+1} P_h$ and $\phe= Q_1+\sum_{j\ge 2 } Q_j$ are the expansion of $F$ an $\phe$ as sums of homogeneous polynomials, then the expansion of $F^*= \phe^{-1}\circ F\circ \phe$ is of the form $I+ \sum_{h\ge k+1} P_h^*$, and:
\begin{equation}
P_{k+1}^* = Q_1^{-1}\circ  P_{k+1}\circ Q_1.
\end{equation}
\end{lemma}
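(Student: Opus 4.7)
The plan is to compute $F^* = \phi^{-1} \circ F \circ \phi$ formally, keeping track only of terms up to degree $k+1$, and to extract the unique nontrivial homogeneous component at that degree.

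First I would observe that $F^*$ is still tangent to the identity: since $dF_0 = I$, the chain rule gives $dF^*_0 = Q_1^{-1} \circ I \circ Q_1 = I$, so $F^* - I$ starts at degree at least $2$. To get the sharper statement that it starts at degree $k+1$, I would compute the ``outer'' composition first:
\begin{equation*}
F\circ\phi(z) \;=\; \phi(z) + \sum_{h\ge k+1} P_h(\phi(z)).
\end{equation*}
Since $\phi(z) = Q_1(z) + O(\|z\|^2)$ and each $P_h$ is homogeneous of degree $h$, one has $P_h(\phi(z)) = P_h(Q_1(z)) + O(\|z\|^{h+1})$. In particular, picking up only the contributions of degree $\le k+1$ gives
\begin{equation*}
F\circ\phi(z) \;=\; \phi(z) + P_{k+1}(Q_1(z)) + O(\|z\|^{k+2}).
\end{equation*}

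Next I would apply $\phi^{-1}$ and use a first-order Taylor expansion around the point $\phi(z)$. Setting $\eta(z) := P_{k+1}(Q_1(z)) + O(\|z\|^{k+2})$, so that $\|\eta(z)\| = O(\|z\|^{k+1})$, I write
\begin{equation*}
\phi^{-1}\bigl(\phi(z)+\eta(z)\bigr) \;=\; z + (d\phi_z)^{-1}\bigl(\eta(z)\bigr) + O(\|\eta(z)\|^2),
\end{equation*}
using that $\phi^{-1}\circ\phi = \id$ exactly and that the differential of $\phi^{-1}$ at $\phi(z)$ is $(d\phi_z)^{-1}$. Because $d\phi_z = Q_1 + O(\|z\|)$, its inverse is $Q_1^{-1} + O(\|z\|)$, so
\begin{equation*}
(d\phi_z)^{-1}\bigl(\eta(z)\bigr) \;=\; Q_1^{-1}\bigl(P_{k+1}(Q_1(z))\bigr) + O(\|z\|^{k+2}),
\end{equation*}
while the quadratic remainder is $O(\|z\|^{2k+2}) = O(\|z\|^{k+2})$ since $k\ge 1$.

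Collecting everything gives
\begin{equation*}
F^*(z) \;=\; z + Q_1^{-1}\bigl(P_{k+1}(Q_1(z))\bigr) + O(\|z\|^{k+2}),
\end{equation*}
which simultaneously proves that $P^*_h\equiv 0$ for $2\le h\le k$ and identifies $P^*_{k+1} = Q_1^{-1}\circ P_{k+1}\circ Q_1$. There is no real obstacle here; the only thing one has to be careful about is the bookkeeping — in particular, verifying that the higher-order corrections $Q_j$ ($j\ge 2$) in the expansion of $\phi$ and the higher-order $P_h$ ($h\ge k+2$) only affect the $O(\|z\|^{k+2})$ tail and do not contaminate the homogeneous degree-$(k+1)$ term. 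The use of the Taylor expansion of $\phi^{-1}$ at the \emph{point} $\phi(z)$ (rather than at the origin) is what makes the computation clean, because it trades the inversion of a power series for a single application of $(d\phi_z)^{-1}$.
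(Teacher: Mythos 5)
Your proof is correct, and it takes a genuinely different route from the paper's. The paper sidesteps inverting $\phe$ altogether: it rewrites the conjugacy as the functional equation $F\circ\phe = \phe\circ F^*$ and compares homogeneous terms of both sides up to degree $k+1$, from which $Q_1\circ P^*_{k+1} = P_{k+1}\circ Q_1$ (and the vanishing of $P^*_h$ for $2\le h\le k$) falls out by induction on the degree. You instead compute $\phe^{-1}\circ F\circ\phe$ directly, but neutralize the difficulty of inverting a power series by Taylor-expanding $\phe^{-1}$ at the moving base point $\phe(z)$, so that only $(d\phe_z)^{-1} = Q_1^{-1}+O(\|z\|)$ is ever needed. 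Both are correct bookkeeping arguments in formal power series; the paper's version is the standard trick for formal conjugation identities and avoids any appeal to an inverse differential, while yours is more self-contained in the sense that it exhibits the degree-$(k+1)$ term of $F^*$ in one pass rather than solving for it. One small cosmetic remark: you invoke $k\ge 1$ to dominate the quadratic remainder $O(\|z\|^{2k+2})$ by $O(\|z\|^{k+2})$, but $k\ge 0$ would already suffice; this is harmless since here $k+1\ge 2$.
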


\begin{proof} 
It is obvious that the linear term of $F^*$ is the identity.
It then suffices to consider the equivalent condition $F\circ\phe = \phe\circ F^*$, and to compare homogeneous terms up to order $k+1$, writing $F^*=\sum_{h\ge 1} P_h^*$.
\end{proof}

We are now able to prove , as in Proposition~2.4 of \cite{Ha}, that we can associate to $[v]$ the class of similarity of $A(v)$ .

\begin{proposition}
Let $F\in\Diff(\C^p, 0)$ be a tangent to the identity germ of order $k+1\ge 2$ and let $[v]=[1:u_0]\in \C\P^{p-1}$ be a non-degenerate characteristic direction for $F$. Then the class of similarity of $A(v)$ is invariant under formal changes of the coordinates.
\end{proposition}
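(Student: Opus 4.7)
The plan is to exploit the intrinsic characterization of $A(v)$ provided by Lemma~\ref{Matrice}: up to the choice of an affine chart around $[v]$, the matrix $A(v)$ represents the endomorphism
$$\frac{1}{k}\bigl(d(\2{P}_{k+1})_{[v]}-\id\bigr)\colon T_{[v]}\C\P^{p-1}\to T_{[v]}\C\P^{p-1}.$$
Once $A(v)$ is read off in this geometric way, its similarity class is manifestly invariant under any biholomorphic change of affine chart on $\C\P^{p-1}$, so the proof will reduce to showing that a formal conjugation of $F$ acts on $\2{P}_{k+1}$ exactly through such a change of chart.

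First I would apply Lemma~\ref{lemma:prec} to a formal change of coordinates $\phe = Q_1 + \sum_{j\geq 2} Q_j$, where $Q_1 = d\phe_0$ is necessarily an invertible linear map. The lemma gives
$$P_{k+1}^{*} = Q_1^{-1}\circ P_{k+1}\circ Q_1,$$
so the higher-order terms $Q_j$ play no role whatsoever in the problem. Projectivising, this identity reads
$$\2{P}_{k+1}^{*} = \2{Q}_1^{-1}\circ \2{P}_{k+1}\circ \2{Q}_1$$
on $\C\P^{p-1}$, where $\2{Q}_1\in\Aut(\C\P^{p-1})$ is the projective automorphism induced by $Q_1$. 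Since $P_{k+1}(v)=\lambda v$ implies $P_{k+1}^{*}(Q_1^{-1}v)=\lambda Q_1^{-1}v$, the non-degenerate characteristic direction of $F^{*}$ corresponding to $[v]$ is $[v^{*}] := \2{Q}_1^{-1}([v])$, with the same eigenvalue.

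Next I would differentiate the conjugation relation at $[v^{*}]$. Since $[v]$ is a fixed point of $\2{P}_{k+1}$ and $\2{Q}_1([v^{*}])=[v]$, the chain rule yields
$$d(\2{P}_{k+1}^{*})_{[v^{*}]} = (d\2{Q}_1)_{[v^{*}]}^{-1}\circ d(\2{P}_{k+1})_{[v]}\circ (d\2{Q}_1)_{[v^{*}]}.$$
Subtracting the identity, which is invariant under this conjugation, and dividing by $k$, I obtain
$$\frac{1}{k}\bigl(d(\2{P}_{k+1}^{*})_{[v^{*}]}-\id\bigr) = (d\2{Q}_1)_{[v^{*}]}^{-1}\circ \frac{1}{k}\bigl(d(\2{P}_{k+1})_{[v]}-\id\bigr)\circ (d\2{Q}_1)_{[v^{*}]}.$$
Reading both sides in affine charts around $[v]$ and $[v^{*}]$ and invoking Lemma~\ref{Matrice}, I conclude that $A(v^{*})$ is conjugate to $A(v)$ by the invertible matrix representing $(d\2{Q}_1)_{[v^{*}]}$, which is exactly the asserted invariance of the similarity class.

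The step requiring the most attention is bookkeeping rather than any substantive estimate: one must carefully track how $[v]$ transports to $[v^{*}]$ under $\phe$, how the natural affine chart around $[v]$ transports to one around $[v^{*}]$, and check that $(d\2{Q}_1)_{[v^{*}]}$ is indeed represented by an invertible $(p-1)\times(p-1)$ matrix in these charts. Once this identification is written down explicitly, no further analytic input is needed, and Lemma~\ref{lemma:prec} together with Lemma~\ref{Matrice} do all the work.
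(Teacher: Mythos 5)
Your argument is correct and follows essentially the same route as the paper: both invoke Lemma~\ref{lemma:prec} to reduce to the linear part of the formal change of coordinates, projectivize to obtain the conjugation $\2P_{k+1}^* = \2{Q}_1^{-1}\circ\2P_{k+1}\circ\2{Q}_1$, differentiate at the transported fixed point, and then apply Lemma~\ref{Matrice} to read the resulting relation as a similarity $A(v^*) = L^{-1}A(v)L$. The only cosmetic difference is that you phrase the conjugating matrix as the chart representation of $(d\2{Q}_1)_{[v^*]}$, while the paper writes it directly as $L$; the content is identical.
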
 

\begin{proof}
We may assume without loss of generality $[v]=[1:0]$, and hence $r(0)=0$. Up to a linear change of the coordinate we have
\begin{equation*}
u_1 = u + x^k r^\prime (0) u + \oo{\|u\|^2 x^k, x^{k+1}}.	
\end{equation*}
It suffices to consider linear changes of the coordinates. Indeed, writing $F$ in its expansion as sum of homogeneous polynomials $F= I + P_{k+1} + \sum_{j\ge k+2}P_j$, if $F$ is conjugated by $\phe\in\Diff(\C^p, 0)$ of the form $\phe = L + \sum_{j\ge 2}Q_j$, by Lemma \ref{lemma:prec} we have
$$
F^*= \phe^{-1} \circ F \circ \phe= I + L^{-1}\circ P_{k+1} \circ L +\cdots,
$$
and hence the expansion of $F^*$ up to order $k+1$ only depends on $d\phe_0$.

The projection of $P_{k+1}^*$ on $\C\P^{p-1}$ is, with the notation of Lemma \ref{Matrice}, $\2P_{k+1}^*= \tilde L^{-1}\circ \2P_{k+1}\circ \tilde L$, where $\tilde L $ is just the linear transformation of $\C\mathbb P^{p-1}$ induced by $L$ and $\2P_{k+1}$ is the projection of $P_{k+1}$. 
Note that $[v^*]$ is a characteristic direction for $F^*$ if and only if $[L v^*]$ is a characteristic direction for $F$.
Since we have
$$ 
d(\2P_{k+1}^*)_{[v^*]} = \tilde L ^{-1}\circ d(\2P_{k+1})_{[v]}\circ \tilde L,
$$
we obtain 
$$ 
\frac{1}{k}\left[ d(\tilde P_{k+1}^*)_{[v^*]} - I\right] = \tilde L ^{-1} \circ\frac{1}{k}\left( d(\tilde P_{k+1})_{[v]}- I \right) \circ \tilde L,$$
yielding, by Lemma \ref{Matrice},
$$
A^*(v^*)= L^{-1} A(v) L,
$$
which is the statement.
\end{proof}

As a corollary, we obtain that the eigenvalues of $A(v)$ are holomorphic (and formal) invariants associated to $[v]$, and so the following definition is well-posed.

\begin{definition}
Let $F\in\Diff(\C^p, 0)$ be a tangent to the identity germ of order $k+1\ge 2$ and let $[v]\in \C\P^{p-1}$ be a non-degenerate characteristic direction for $F$. The class of similarity of the matrix $A(v)$ is called (with a slight abuse of notation) the {\it matrix associated to $[v]$} and it is denoted by $A(v)$. The eigenvalues of the matrix $A(v)$ associated to $[v]$ are called {\it directors} of $v$. 
The direction $[v]$ is called {\it attracting} if all the real parts of its directors are strictly positive.
\end{definition}

\section{Changes of coordinates}

We proved in the previous section that in studying germs $F\in\Diff(\C^p, 0)$ tangent to the identity in a neighbourhood of a non-degenerate characteristic direction $[v]$, we can reduce ourselves to the case $v=(1,0)$ and $F$ of the form:

\begin{equation}\label{eq:eq1}
\left\{ \begin{array}{l l}
x_1 = f(x,u) = x -\frac{1}{k}x^{k+1} + O(\| u\| x^{k+1}, x^{2k+1}), \\
u_1 = \Psi(x,u) = (I-x^k A)u + O(\| u\| ^2 x^k, \| u\| x^{k+1})+ x^{k+1}\psi_1(x),
\end{array}
\right.
\end{equation}
where $A=A(v)$ is the $(p-1)\times (p-1)$ matrix associated to $v$, and $\psi_1$ is a holomorphic function. Moreover, we may assume $A$ to be in Jordan normal form. 

\sm In this section we shall perform changes of coordinates to find $F$-invariant holomorphic curves, tangent to the direction $u=0$, that is, we want to find a function $u$ holomorphic in an open set $U$ having the origin on its boundary, and such that
\begin{equation*}
\left\{ \begin{array}{l l l}
u: U\to \C^{p-1}, \\
u(0)=0,\, u^\prime (0)=0, \\
u(f(x,u(x)))=\Psi(x, u(x)).
\end{array}
\right.
\end{equation*}
If we have such a function, the $F$ invariant curve will just be $\phi(x)=(x, u(x))$. 

\sm 
We now give precise definitions, that generalize Definition~1.2 of \cite{Ha} and Definition~1.5 of \cite{hakim2} for the case $k+1\ge 2$.

\begin{definition}
Let $F\in\Diff(\C^p,0)$ be a tangent to the identity germ. A subset $M\subset\C^p$ is a {\it parabolic manifold of dimension $d$ at the origin tangent to a direction $V$} if:
\begin{enumerate}
\item there exist a domain $S$ in $\C^d$, with $0\in\d S$, and an injective map $\psi\colon S\to \C^p$ such that $\psi(S)=M$ and $\lim_{z\to 0}\psi(z)=0$;
\item for any sequence $\{X_h\}\subset S$ so that $X_h\to 0$, we have $[\psi(X_h)]\to [V]$;
\item $M$ is $F$-invariant and for each $p\in M$ the orbit of $p$ under $F$ converges to $0$.
\end{enumerate}
A parabolic manifold of dimension $1$ will we called {\it parabolic curve}.
\end{definition}

\sm 
We shall search for a function $\psi=(\id_{\C}, u)$, defined on the $k$ connected components of  $\mathbb D_r=\{x\in \C \mid |{x^k-r}|<r\}$, and taking values in $\C^p$, verifying 
$$
u(f(x,u(x)))=\Psi(x, u(x)),
$$
and, taking $r$ sufficiently small, we shall obtain parabolic curves.

\sm The idea is to first search for a formal transformation, and then to show its convergence in a sectorial neighbourhood of the origin. The general obstruction to this kind of procedure is given by the impossibility of proving directly the convergence of the formal series. 

\sm As we said, in this section we shall change coordinates to further simplify $F$, by means of changes defined in domains of $\C^p$, with $0$ on the boundary, and involving square roots and logarithms in the first variable $x$.

\sm
Following Hakim \cite{Ha}, we shall first deal with the 2-dimensional case ($p=2$), generalizing Propositions~3.1 and~3.5 of \cite{Ha} for the case $k+1\ge 2$, to better understand the changes of coordinates that we are going to use.
The equations \eqref{eq:eq1} for $p=2$ are the following:
\begin{equation}\label{eq:equaz2}
\left\{ \begin{array}{l l}
x_1 = f(x,u) = x -\frac{1}{k}x^{k+1} + O( u x^{k+1}, x^{2k+1}) \\
u_1 = \Psi(x,u) = (1-x^k\alpha)u + x^{k+1}\psi_1(x)+ O( u ^2 x^k, u x^{k+1}),
\end{array}
\right.
\end{equation}
where $\alpha\in\C$ is the director, and we shall need to consider separately the case $k \alpha \in \N$ and the case $k \alpha \notin \N$.

\subsection{Case $p=2$ and $k \alpha \notin \N^*$}

\begin{proposition}\label{prop:alfa1}
Let $F=(f,\Psi)\in\Diff(\C^2,0)$ be of the form \eqref{eq:equaz2}. If $k\alpha \notin \N$, then there exists a unique sequence $\{P_h\}_{h\in \N}\subset \C[x]$ of polynomials with $\deg(P_h) = h$ for each $h\in\N$, such that 
\begin{equation}\label{eq:eq_k}
\left\{ \begin{array}{l l}
P_h(0)=0, \\
\Psi \left(x, P_h(x) \right)= P_h\left( f(x,P_h (x)) \right)+ x^{h+k+1}\psi_{h+1}(x).
\end{array}
\right.
\end{equation}
Moreover $P_{h+1}(x)=P_h(x)+ c_{h+1}x^{h+1}$, where $c_{h+1}=\frac{k \psi_{h+1}(0) }{k \alpha-(h+1)}$.
\end{proposition}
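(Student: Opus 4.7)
The plan is to proceed by induction on $h\in\N$. For the base case $h=0$ we take $P_0\equiv 0$: then $P_0(0)=0$ is immediate and equation \eqref{eq:equaz2} gives $\Psi(x,0)=x^{k+1}\psi_1(x)$, which is precisely \eqref{eq:eq_k} with $h=0$ and the holomorphic remainder $\psi_1$ coming directly from the statement of \eqref{eq:equaz2}.

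For the inductive step, assume that $P_h$ has been produced satisfying \eqref{eq:eq_k} with a holomorphic remainder $x^{h+k+1}\psi_{h+1}(x)$. I would look for $P_{h+1}$ in the form $P_{h+1}(x)=P_h(x)+c_{h+1}x^{h+1}$ with $c_{h+1}\in\C$ to be determined, substitute this ansatz into both sides of \eqref{eq:eq_k} at level $h+1$, and expand $\Psi$ and $f$ around $u=P_h(x)$. Using the explicit shape \eqref{eq:equaz2}, one has
$\frac{\partial \Psi}{\partial u}(x,P_h)=1-\alpha x^k+O(x^{k+1})$ and $\frac{\partial f}{\partial u}(x,u)=O(x^{k+1})$; combining this with the inductive identity $\Psi(x,P_h)=P_h(f(x,P_h))+x^{h+k+1}\psi_{h+1}(x)$ and the expansion $\left[f(x,P_h)\right]^{h+1}=x^{h+1}-\frac{h+1}{k}x^{h+k+1}+O(x^{h+k+2})$, a direct computation yields
\begin{equation*}
\Psi(x,P_{h+1})-P_{h+1}\!\left(f(x,P_{h+1})\right)=x^{h+k+1}\psi_{h+1}(x)+c_{h+1}\!\left(\tfrac{h+1}{k}-\alpha\right)x^{h+k+1}+O(x^{h+k+2}).
\end{equation*}
Matching the coefficient of $x^{h+k+1}$ at $x=0$ reduces to the scalar equation $\psi_{h+1}(0)+c_{h+1}\bigl(\tfrac{h+1}{k}-\alpha\bigr)=0$, which is uniquely solvable because the assumption $k\alpha\notin\N$ guarantees $k\alpha-(h+1)\ne 0$ for every $h\in\N$; the solution is $c_{h+1}=\dfrac{k\,\psi_{h+1}(0)}{k\alpha-(h+1)}$, as in the statement. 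With this choice the right-hand side above becomes a holomorphic function divisible by $x^{h+k+2}$, which defines $\psi_{h+2}$ and gives \eqref{eq:eq_k} at level $h+1$. Since $c_{h+1}$ is uniquely determined by the linear equation, so is $P_{h+1}$ given $P_h$, and the full sequence $\{P_h\}$ is unique by induction.

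The main bookkeeping obstacle is to verify, using $P_h(0)=0$ (so that $P_h(x)=O(x)$) and the explicit shape of the remainders in \eqref{eq:equaz2}, that every contribution coming from the quadratic remainder $O(u^2x^k,ux^{k+1})$ of $\Psi$, from the $O(ux^{k+1},x^{2k+1})$ remainder of $f$, and from the Taylor error $P_h(f(x,P_{h+1}))-P_h(f(x,P_h))$, is indeed absorbed into $O(x^{h+k+2})$ and hence does not perturb the linear equation determining $c_{h+1}$. The only use of the arithmetic hypothesis is in inverting the scalar $k\alpha-(h+1)$: this is the expected small-divisor-type obstruction and is precisely what forces the separate treatment of the resonant case $k\alpha\in\N^{\ast}$.
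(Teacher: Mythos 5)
Your approach is the same as the paper's: induction on $h$, the ansatz $P_{h+1}=P_h+c_{h+1}x^{h+1}$, Taylor expansion of $\Psi$ and $f$ around $u=P_h$, and the observation that the equation for the coefficient of $x^{h+k+1}$ is solvable precisely because $k\alpha\notin\N^*$. Your expansion bookkeeping and the resulting formula for $c_{h+1}$ match the paper.

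There is, however, a real gap in the uniqueness part of your inductive step. You show that \emph{if} $P_{h+1}$ is sought in the specific form $P_h+c_{h+1}x^{h+1}$, then $c_{h+1}$ is uniquely determined; but the claim to be proved is that there is a unique polynomial $P_{h+1}$ of degree $\le h+1$ with $P_{h+1}(0)=0$ solving \eqref{eq:eq_k}, and that it happens to have that form. Your sentence ``so is $P_{h+1}$ given $P_h$, and the full sequence is unique by induction'' does not bridge this. To close it, one should write an arbitrary candidate as $P_{h+1}=p_h+c_{h+1}x^{h+1}$ with $p_h$ of degree $\le h$ and $p_h(0)=0$, run the same expansion, and observe that the terms of order $\le h+k$ in $\Psi(x,P_{h+1})-P_{h+1}(f(x,P_{h+1}))$ depend only on $p_h$ and coincide with $\Psi(x,p_h)-p_h(f(x,p_h))$; hence the $O(x^{h+k+2})$ condition forces $p_h$ to solve \eqref{eq:eq_k} at level $h$, and the inductive uniqueness hypothesis gives $p_h=P_h$. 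This is exactly the maneuver the paper performs, and without it your induction establishes existence and a compatible recursion but not uniqueness of $P_{h+1}$ among all admissible polynomials.

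A minor remark on the base case: you start from $P_0\equiv 0$, whereas the paper starts from $P_1=c_1x$. Your choice is arguably cleaner (the step $h=0$ to $h=1$ is then the generic inductive step), but note that $P_0\equiv 0$ does not literally have degree $0$, so one should read the degree condition in the statement as $\deg P_h\le h$; this looseness is already present in the paper (e.g.\ $P_1$ has degree $0$ if $\psi_1(0)=0$), so it is not a defect introduced by you.
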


\begin{proof}
We shall argue by induction on $h$.

If $h=1$, we have to search for $P_1=c_1 x$ satifying \eqref{eq:eq_k}. We have 
\begin{equation*}
\begin{split}
\Psi(x,P_1 (x) )
&=c_1 x\left( 1-\alpha x^k + O(x^{k+1})\right) +x^{k+1} \psi_1(x)
\end{split}
\end{equation*}
and
\begin{equation*}
P_1\left( f(x,P_1(x)) \right)= c_1\left( x-\frac{1}{k}x^{k+1}+O(x^{k+2}) \right).
\end{equation*}
Hence 
\begin{equation*}
\begin{split}
\Psi (x,P_1 (x) )- P_1 \left( f(x,P_1(x)) \right) 
&= c_1 x^{k+1}\left( \frac{1}{k}-\alpha+\frac{\psi_1(0)}{c_1}\right) +O(x^{k+2}).
\end{split}
\end{equation*}
To delete the terms of order less than $k+2$, we must set $c_1=\frac{k \psi_1(0)}{k\alpha -1}$, which is possible since $k\alpha\notin\N^*$. 

Let us now assume that we have a unique polynomial $P_h$ of degree $h$ satisying \eqref{eq:eq_k}. We search for a polynomial $P_{h+1}$ of degree $h+1$ and such that
$$
\Psi \left(x, P_{h+1}(x) \right)= P_{h+1}\left( f(x,P_{h+1} (x)) \right)+ x^{h+k+2}\psi_{h+2}(x).
$$
We can write $P_{h+1}$ as $P_{h+1}(x)=p_h(x)+c_{h+1}x^{h+1}$, where $p_h$ is a polynomial of degree $\leq h$ and $p_h(0)=0$ . In particular, 
\begin{equation*}
\begin{split}
P_{h+1}\left( f(x, P_{h+1}(x)) \right) = p_h \left( f(x, P_{h+1}(x)) \right) + c_{h+1}{ \left( f(x, P_{h+1}(x)) \right)^{h+1}}.
\end{split}
\end{equation*}
Let $x_1=f(x,u)= x- \frac{1}{k} x^{k+1}+ x^{k+1}\phe(x,u)$, with $\phe(x,u)\in\oo{x,u}$. We have
\begin{equation*}
\begin{split}
p_h \left( f(x, P_{h+1}(x)) \right)
&= p_h \left( x-\frac{1}{k}x^{k+1} +x^{k+1}\phe(x,p_h(x) ) \right)+ O(x^{k+h+2})\\
&= p_h \left( f(x,p_h(x)) \right)+ O(x^{k+h+2}), 
\end{split}
\end{equation*}
and
\begin{equation*}
\begin{split}
\left( f(x, P_{h+1}(x)) \right)^{h+1}
&=x^{h+1}\left[ 1-\frac{h+1}{k}x^k +O(x^{k+1}) \right] \\
&=x^{h+1}\left[ 1-\frac{h+1}{k}x^k\right] + O(x^{h+k+2}).
\end{split}
\end{equation*}
It thus follows
\begin{equation*}
P_{h+1}\left( f(x, P_{h+1}(x)) \right) =p_h \left( f(x,p_h(x))\right)+ c_{h+1}x^{h+1} - c_{h+1}\frac{h+1}{k}x^{h+k+1}+O(x^{h+k+2}).
\end{equation*}
By the second equation of \eqref{eq:equaz2}, $u_1=u[1-\alpha x^k+ x^k\phi(x,u)]+x^{k+1}\psi_1(x)$, with $\phi(x,u)\in O(x,u)$, and hence
\begin{equation}\label{eq:equaz3}
\begin{split}
\Psi \left( x,P_{h+1}(x) \right) 
&= \left[ p_h(x)+c_{h+1}x^{h+1} \right]\cdot\left[ 1-\alpha x^k +x^k \phi(x, P_{h+1}(x))\right] +x^{k+1} \psi_1(x)\\
&= \Psi\left( f(x,p_h(x)) \right) + c_{h+1} x^{h+1}- \alpha c_{h+1}x^{h+k+1} +O(x^{h+k+2}).
\end{split}
\end{equation}
Therefore
\begin{equation}\label{eq:3.12}
\begin{split}
&\Psi \left( x, P_{h+1}(x)  \right) - P_{h+1}\left( f(x,P_{h+1} (x)) \right)\\
&\quad\quad\quad\quad\quad = \Psi \left( f(x,p_h(x)) \right) -p_h \left( f(x,p_h(x))\right) + c_{h+1}\left(\frac{h+1}{k}-\alpha\right)x^{h+k+1} +O(x^{h+k+2}).
\end{split}
\end{equation}
To have $P_{h+1}$ satisying \eqref{eq:eq_k}, we need 
\begin{equation*}
\Psi \left( f(x,p_h(x)) \right) -p_h \left( f(x,p_h(x))\right) + c_{h+1}x^{h+k+1}\left(\frac{h+1}{k}-\alpha\right) = O(x^{h+k+2}),
\end{equation*}
that is, $p_h$ has to solve \eqref{eq:eq_k}; and this implies, by our induction hypothesis, $p_h=P_h$. Substituting $P_h$ to $p_h$ in \eqref{eq:3.12} and expanding $\psi_{h+1}$ in a neighbourhood of $0$ we get
\begin{equation*}
\begin{split}
\Psi \left( x, P_{h+1}(x)  \right) - P_{h+1}\left( f(x,P_{h+1} (x)) \right) =  x^{h+k+1}\left[ \psi_{h+1}(0) + c_{h+1}\left(\frac{h+1}{k}-\alpha\right)\right] +O(x^{h+k+2}),
\end{split}
\end{equation*}
and so we have to set the leading coefficient of $P_{h+1}$ to be
$$
c_{h+1}=\frac{k \psi_{h+1}(0)}{k\alpha -(h+1)},
$$
which is possible since $k\alpha\not\in\N^*$, and then we are done.
\end{proof}

The following reformulation of Corollary~3.2 of \cite{Ha} for the case $k+1\ge 2$, shows that we can rewrite the equations of $F$ in a more useful way, with a suitable change of coordinates.

\begin{corollary}
Let $F=(f,\Psi)\in\Diff(\C^2,0)$ be of the form \eqref{eq:equaz2}, with $k\alpha \notin \N$. Then, for any $h\in \N$, there exists a holomorphic change of coordinates conjugating $F$ to 
\begin{equation}\label{eq:equaz2_k}
\left\{ \begin{array}{l l}
x_1 = \tilde{f}(x,u) = x -\frac{1}{k}x^{k+1} + O( u x^{k+1}, x^{2k+1}), \\
u_1 = \2{\Psi}(x,u) = (1-\alpha x^k)u +x^{h+k} \psi_{h}(x)+ O( u ^2 x^k, u x^{k+1}).
\end{array}
\right.
\end{equation}
\end{corollary}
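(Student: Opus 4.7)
My plan is to construct the change of coordinates in two steps: first, a polynomial change of the form $(x,u)\mapsto(x,u+P(x))$ that absorbs the low-order pure-$x$ terms in the second component $\Psi$, and then an $x$-only polynomial change that restores the clean form of the first component. Setting $P:=P_{h-1}$ with the convention $P_0:=0$, Proposition~\ref{prop:alfa1} produces a polynomial of degree $h-1$ with $P(0)=0$ satisfying
$$
\Psi(x,P(x))-P(f(x,P(x)))=x^{h+k}\psi_h(x)
$$
for some holomorphic $\psi_h$ (the relation being trivial for $h=1$).

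Defining the polynomial biholomorphism $\Phi_1(x,u):=(x,u+P(x))$ and setting $\tilde F:=\Phi_1^{-1}\circ F\circ\Phi_1=(\tilde f,\tilde\Psi)$, one reads off $\tilde f(x,u)=f(x,u+P(x))$ and $\tilde\Psi(x,u)=\Psi(x,u+P(x))-P(f(x,u+P(x)))$. Evaluating $\tilde\Psi$ at $u=0$ recovers $x^{h+k}\psi_h(x)$ by the defining relation of $P$. Taylor expanding $\tilde\Psi$ in $u$, and using $\partial_u\Psi(x,u)=(1-\alpha x^k)+O(ux^k,x^{k+1})$, $\partial_u f=O(x^{k+1})$, $P(x)=O(x)$, and $P^\prime$ bounded, the coefficient of $u$ equals $(1-\alpha x^k)+O(x^{k+1})$, while the $u^j$-coefficients for $j\geq 2$ remain $O(x^k)$. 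This yields
$$
\tilde\Psi(x,u)=(1-\alpha x^k)u+x^{h+k}\psi_h(x)+O(u^2x^k,\,ux^{k+1}),
$$
which is the desired form of the second equation. The first component, however, inherits pure-$x$ contributions of degrees $k+2,\ldots,k+h$ coming from the $O(ux^{k+1})$ term of $f$ evaluated at $u=P(x)=O(x)$, and these fall below the target error $x^{2k+1}$ whenever $k\geq 2$.

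To restore the first equation to the form $x-\frac{1}{k}x^{k+1}+O(ux^{k+1},x^{2k+1})$, I would then apply the iterative polynomial conjugation scheme used in the lemma preceding \eqref{eq:equaz2}, namely a finite composition of maps $(x,u)\mapsto(x+\beta_j x^{j+1},u)$ for $j=k+1,\ldots,2k-1$, with the coefficients $\beta_j$ determined inductively so as to cancel the unwanted coefficients one at a time. The essential check, and the step I expect to require the most care, is that this cleanup does not spoil the normal form of $\tilde\Psi$: since each perturbation has the form $x\mapsto x+O(x^{k+2})$, expanding $(1-\alpha x^k)u$ and $x^{h+k}\psi_h(x)$ at the shifted argument produces only corrections of order $O(ux^{k+1})$ and $O(x^{h+k+1})$ respectively, both absorbed into the existing remainder. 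Composing $\Phi_1$ with this finite sequence of $x$-changes then yields a holomorphic change of coordinates conjugating $F$ to the form \eqref{eq:equaz2_k}.
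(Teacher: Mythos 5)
Your two-step plan---perform the $u$-shift $U = u - P_{h-1}(x)$, then restore the first component by an $x$-only polynomial conjugation---is structurally the right one, and you are correct that for $k\ge 2$ the $u$-shift does introduce pure-$x$ terms of degree $k+2,\dots,\min(k+h,2k)$ into the first component, so that a cleanup step is genuinely needed. (The paper's own proof only asserts that ``the first equation of \eqref{eq:equaz2} does not change,'' which for $k\ge 2$ does require the justification you are supplying.)

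However, the degrees in your cleanup step are wrong, and this is not merely cosmetic. To cancel a coefficient of $x^m$ with $k+2\le m\le 2k$ in the first component by conjugating with $\Phi(x,u)=(x+\beta x^{m'},u)$, matching the equation $g(\tilde f(x,u))=f(g(x),u)$ at order $x^{m'+k}$ produces $-\tfrac{m'}{k}\beta$ on the left and $-\tfrac{k+1}{k}\beta$ on the right plus the unwanted coefficient, which forces $m'+k=m$, i.e.\ $m'=m-k\in\{2,\dots,k\}$. This is exactly the range used in the lemma preceding \eqref{eq:equaz2}, where $g(x)=x+\beta x^{j+1}$ for $j=1,\dots,k-1$. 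Your proposed range $j=k+1,\dots,2k-1$ gives changes of degree $m'\in\{k+2,\dots,2k\}$; such changes only affect the coefficients of $x^{m'+k}$ with $m'+k\ge 2k+2$, which are already absorbed in the $O(x^{2k+1})$ error, so as written your cleanup cancels nothing. Consequently your key check---that the cleanup does not spoil $\tilde\Psi$ because ``each perturbation has the form $x\mapsto x+O(x^{k+2})$''---rests on a false premise. The actual perturbations are only $x\mapsto x+O(x^{2})$, and the correct justification is: since $m'\ge 2$, one has $g(x)^k = x^k + O(x^{k+1})$, so $(1-\alpha g(x)^k)u = (1-\alpha x^k)u + O(ux^{k+1})$; and $g(x)^{h+k}\psi_h(g(x)) = x^{h+k}\psi_h(x) + O(x^{h+k+1}) = x^{h+k}\tilde\psi_h(x)$ for a new holomorphic $\tilde\psi_h$. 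Note also that this last pure-$x$ correction is absorbed by redefining $\psi_h$, not by the remainder $O(u^2x^k,ux^{k+1})$ as you state, since that remainder vanishes at $u=0$. With the degrees corrected to $m'\in\{2,\dots,k\}$ and this bookkeeping fixed, your argument goes through.
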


\begin{proof}
It is clear that the change of coordinates will involve only $u$. Let $h\in\N$, and let $P_{h-1}$ be the polynomial of degree $h-1$ of Proposition \ref{prop:alfa1} and consider the change of coordinates
\begin{equation*}
\left\{ \begin{array}{l l}
X= x,\\
U= u- P_{h-1}(x).
\end{array}
\right.
\end{equation*}
The first equation of \eqref{eq:equaz2} does not change, whereas the second one becomes
\begin{equation*}
\begin{split}
U_1 &=u_1 - P_{h-1}(x_1)\\
&={\Psi\left( X, U+P_{h-1}(X)\right)} - {P_{h-1}\left( f(X,U+P_{h-1}(X)) \right)}, 
\end{split}
\end{equation*}
where we have
\begin{equation*}
\begin{split}
\Psi\left( X, U+P_{h-1}(X)\right)
&=U[1-\alpha X^k]+ \Psi(X, P_{h-1}(X)) + O(U^2X^k,U X^{k+1}),
\end{split}
\end{equation*}
and, analogously to the previous proof, we can expand $P_{h-1}\left( f(X,U+P_{h-1}(X)) \right)$ at the first order in $U$ obtaining
\begin{equation*}
\begin{split}
P_{h-1}\left( f(X,U+P_{h-1}(X)) \right)
&=P_{h-1} \left( X-\frac{1}{k}X^{k+1}+X^{k+1}\phe_1(X,P_{h-1}(X))+ O(U X^{k+1}) \right) \\
&= P_{h-1}\left( f(X,P_{h-1}(X)) \right) + O(U X^{k+1}).
\end{split}
\end{equation*}
Therefore we have 
\begin{equation*}
\begin{split}
U_1= X^{h+k}\psi_{h}(X)+ U\left( 1-\alpha X^k + O(U X^k, X^{k+1}) \right),
\end{split}
\end{equation*}
and this concludes the proof.
\end{proof}

\subsection{Case $p=2$ and $k \alpha \in \N^*$}

We now consider the case $k\alpha\in \N^* $, $k\alpha \geq 1$. Proposition~3.3 of \cite{Ha} becomes the following.

\begin{proposition}\label{prop:alfa2}
Let $F=(f,\Psi)\in\Diff(\C^2,0)$ be of the form \eqref{eq:equaz2}, with $k\alpha \in \N$. 
Then there exists a sequence $\{P_h(x,t) \}_{h\in \N}$  of polynomials in two variables $(x,t)$ such that
$$ 
\tilde u_h(x):= P_h\left( x, x^{k\alpha} \log x \right), 
$$
has degree $\le h$ in $x$ (where consider as constant the terms in $\log x$). Moreover,
\begin{equation}\label{eq:relfunz}
\Psi\left( x,\tilde u_h(x) \right) - \tilde u_h\left( f(x,\tilde u_h(x)) \right) = x^{h+k+1}\psi_{h+1}(x),
\end{equation}
where $\psi_{h+1}$ satisfies
\begin{enumerate}
\item $x^{h+k}\psi_{h+1}$ is holomorphic in $x$ and $x^{k\alpha}\log x$;
\item $\psi_{h+1}(x)= R_{h+1}(\log x)+ \oo{x}$, with $R_{h+1}$ a polynomial of degree $p_{h+1}\in \N$, $p_{h+1}\leq h+1$. 
\end{enumerate}
\end{proposition}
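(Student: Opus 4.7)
I proceed by induction on $h$. The base $h=0$ is immediate: take $P_0\equiv 0$, so $\tilde u_0\equiv 0$ and \eqref{eq:relfunz} reduces to $\Psi(x,0)=x^{k+1}\psi_1(x)$, which is exactly the second equation of \eqref{eq:equaz2}; property (1) is trivial (no $t$-dependence) and property (2) holds with $R_1=\psi_1(0)$ of degree $0\le 1$.

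For the inductive step, set $m:=k\alpha\in\N^*$ and $t:=x^m\log x$. Given $P_h$ as in the statement, I look for
\[
P_{h+1}(x,t)=P_h(x,t)+\sum_{\ell=0}^{L} c_\ell\, x^{h+1-m\ell}\, t^\ell,\qquad L:=\lfloor (h+1)/m \rfloor,
\]
so that $\tilde u_{h+1}(x)=\tilde u_h(x)+\sum_\ell c_\ell\, x^{h+1}(\log x)^\ell$ still has $x$-degree $\le h+1$. Using $x_1=x-\frac{1}{k}x^{k+1}+\cdots$ and $\log x_1=\log x-\frac{1}{k}x^k+\cdots$, a direct expansion gives
\[
x_1^{h+1}(\log x_1)^\ell=x^{h+1}(\log x)^\ell-\tfrac{h+1}{k}x^{h+k+1}(\log x)^\ell-\tfrac{\ell}{k}x^{h+k+1}(\log x)^{\ell-1}+\oo{x^{h+k+2}},
\]
while $\Psi(x,\tilde u_{h+1})-\Psi(x,\tilde u_h)=(1-\alpha x^k)\sum_\ell c_\ell\, x^{h+1}(\log x)^\ell+\oo{x^{h+k+2}}$. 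Combining, the additional contribution to the coefficient of $x^{h+k+1}$ in $\Psi(x,\tilde u_{h+1})-\tilde u_{h+1}(f(x,\tilde u_{h+1}))$ is
\[
\sum_{\ell=0}^{L} c_\ell\!\left[\Bigl(\tfrac{h+1}{k}-\alpha\Bigr)(\log x)^\ell+\tfrac{\ell}{k}(\log x)^{\ell-1}\right],
\]
which must equal $-R_{h+1}(\log x)$. Matching like powers of $\log x$ produces a triangular linear system for $(c_0,\dots,c_L)$ with diagonal entry $\frac{h+1}{k}-\alpha$ and first super-diagonal $\frac{\ell+1}{k}$.

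The main obstacle is the unique resonant step $h+1=m$, where the diagonal vanishes. The key observation is that for $h<m-1$ one has $L=0$ throughout, so $P_{m-1}$ has no $t$-dependence and $R_m$ is necessarily a constant; the resonant system then collapses to $\frac{c_1}{k}=-R_m(0)$, solvable by $c_1=-kR_m(0)$ (with $c_0$ free, chosen $0$), which introduces the first logarithmic monomial $c_1 t$ into $\tilde u_m$ -- precisely the mechanism by which $t$ absorbs the resonance. For every $h+1\ne m$ the system is solved uniquely top-down from $\ell=\min(L,p_{h+1})$ downward. The bound $p_{h+1}\le h+1$ is preserved by tracking the maximum log-power that can appear in the residue after expanding $\Psi$ and $f$ at $\tilde u_{h+1}$, whose log-degree is at most $L$. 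Finally, property (1) -- holomorphy of $x^{h+k}\psi_{h+1}$ in $(x,t)$ -- follows because $\Psi$ and $f$ are holomorphic in $(x,u)$ and $\tilde u_{h+1}$ is polynomial in $(x,t)$, so the residue divided by $x^{h+k}$ reassembles as a convergent power series in $(x,t)$.
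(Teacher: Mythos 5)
Your proposal follows essentially the paper's inductive strategy, with one organizational difference: the paper handles the three regimes $h<k\alpha$, $h=k\alpha$, $h>k\alpha$ separately (the last via the first-order linear ODE $(k\alpha-h)Q_h-Q_h'=kR_h$), while you fold all three into a single recursion by expressing the cancellation condition as a triangular linear system in the coefficients $c_0,\dots,c_L$ with diagonal $\tfrac{h+1}{k}-\alpha$ and super-diagonal $\tfrac{\ell+1}{k}$. This is the same equation read coefficient-by-coefficient, so the content is identical; the unified treatment is a bit cleaner in exposing the resonance at $h+1=k\alpha$ as the unique step where the diagonal vanishes, and your observation that $R_{k\alpha}$ is then forced to be constant (since $\tilde u_{k\alpha-1}$ carries no logarithm) exactly reproduces the paper's choice $c=-k\psi_{k\alpha}(0)$.

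There is, however, a gap in the degree-tracking that you should tighten. You cap the number of free coefficients at $L=\lfloor(h+1)/(k\alpha)\rfloor$ (forced by the requirement that $P_{h+1}$ be a genuine polynomial in $(x,t)$ with $\deg_x\tilde u_{h+1}\le h+1$), and then claim to solve the triangular system ``top-down from $\ell=\min(L,p_{h+1})$.'' But if $p_{h+1}>L$ the system is overdetermined and inconsistent: the equation at level $\ell=L+1$ reads $0=-k\,r_{L+1}$ with $r_{L+1}\neq0$ and no unknown to absorb it. So the induction actually requires the sharper bound $p_{h+1}\le \lfloor(h+1)/(k\alpha)\rfloor$, not merely $p_{h+1}\le h+1$ as stated in the proposition; the latter is a consequence, not the hypothesis the construction rests on. The paper asserts the sharper bound ($p_h\le h/(k\alpha)$) in its proof, and that is what makes the ODE solvable within the allowed $t$-degree. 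You should propagate this stronger estimate through the induction, e.g.\ by observing from the analogue of the paper's equation \eqref{eq:analitic} that $p_{h+2}\le p_{h+1}$ when $k\alpha>1$ (and $p_{h+2}\le p_{h+1}+1$ when $k\alpha=1$), together with $p_{k\alpha}=0$ and $p_{k\alpha+1}\le1$; with that in hand, both $p_{h+1}\le L_{h+1}$ and the claimed bound $p_{h+1}\le h+1$ follow. Your verification of property (1) also tacitly relies on this same sharper estimate, so the same fix closes that part too.
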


\begin{proof}
The proof is done by induction on $h$.

If $h<k\alpha$, then the same argument of Proposition \ref{prop:alfa1} holds, since the polynomials $P_h$ are still well-defined. As a consequence, also the change of variables $u\mapsto u-P_{k\alpha-1}(x)$ is well-defined an hence we can assume that the second component of $F$ is of the form 
$$
u_1=u \left( 1-\alpha x^k+O(u x^k,x^{k+1}) \right) + x^{k\alpha+k}\psi_{k\alpha}(x).
$$
It is clear that, for $h< k\alpha$, the functions $\psi_h$ are holomorphic in $x$ and thus they satisfy the conditions  $(1)$ and $(2)$ of the statement. 

We can then assume that $F$ is of the form
\begin{equation}\label{eq:phi1-phi2}
\left\{ \begin{array}{l l}
x_1 = f(x,u) = x -\frac{1}{k}x^{k+1} + x^{k+1}\phe_1(x, u), \\
u_1 = \Psi(x,u)= u \left( 1 - \alpha x^k + x^k\phe_2( x,u)\right) + x^{k\alpha+k}\psi_{k\alpha}(x),
\end{array}
\right.
\end{equation}
where $\phe_1 $ and $\phe_2 $ are holomorphic functions or order at least $1$ in $x$ and $u$.

If $h=k\alpha$, it suffices to consider $P_{k\alpha}(x,t)=ct$, where $c=-k\psi_{k\alpha}(0)$. In fact $\tilde u_{k\alpha}(x)=c x^{k\alpha} \log x$ verifies \eqref{eq:relfunz} if
\begin{equation*}
\begin{split}
\Psi\left( x,\tilde u_{k\alpha}(x) \right) - \tilde u_{k\alpha}\left( f(x,\tilde u_{k\alpha}(x)) \right) 
&={\tilde u_{k\alpha}(x)\left[ 1-{\alpha}x^k +x^k\phe_2(x, \tilde u_{k\alpha}(x)) \right] + x^{k\alpha +k} \psi_{k\alpha} (x)}\\
&\quad \quad \quad \quad \quad\,\,- {\tilde u_{k\alpha}\left( x-\frac{1}{k}x^{k+1} + x^{k+1}\phe_1(x, \tilde u_{k\alpha}(x)) \right)}\\
&=\oo{x^{k\alpha + k +1}(\log x)^{p_h}},
\end{split}
\end{equation*}
for some $p_h\in \N$. Recall that
\begin{equation}\label{eq:derivate}
\begin{split}
\left\{\begin{array}{l l}
\frac{\partial f}{\partial u}=x^{k+1}\frac{\partial \phe_1}{\partial u}=\oo{x^{k+1}},\\
\frac{\partial \Psi}{\partial u}= 1- \alpha x^k + x^k\left(\phe_2(x,u)+u\frac{\partial \phe_2(x,u)}{\partial u}\right) =1- \alpha x^k  +\oo{x^{k+1}, ux^k}.
\end{array}\right. 
\end{split}
\end{equation}
We have
\begin{equation*}
\begin{split}
\tilde u_{k\alpha}(x)&\left[ 1-{\alpha}x^k +x^k\phe_2(x, \tilde u_{k\alpha}(x)) \right] + x^{k\alpha +k} \psi_{k\alpha} (x)\\
&= cx^{k\alpha} \log x - \alpha c x^{k\alpha +k}\log x + x^{k\alpha +k}\log x \cdot\phe_2\left( x, x^{k\alpha}\log x \right) + x^{k\alpha +k}\psi_{k\alpha}(x),
\end{split}
\end{equation*}
and
\begin{equation*}
\begin{split}
\tilde u_{k\alpha}&\left( x-\frac{1}{k}x^{k+1} + x^{k+1}\phe_1(x, \tilde u_{k\alpha}(x)) \right)\\
&= c\left( x-\frac{1}{k}x^{k+1} +O(x^{k\alpha + k+1}\log x, x^{2k+1}) \right) ^{k\alpha} \log\left({x-\frac{1}{k}x^{k+1} +O(x^{k\alpha + k+1}\log x, x^{2k+1})}\right) \\
&=c x^{k\alpha} \log x - c \alpha x^{k\alpha +k}\log x - \frac{c}{k} x^{k\alpha +k} + O(x^{2k\alpha +k}(\log x)^2 , x^{k\alpha +2k}\log x).
\end{split}
\end{equation*}
Therefore 
\begin{equation*}
\begin{split}
\Psi\left( x,\tilde u_{k\alpha}(x) \right) - \tilde u_{k\alpha}\left( f(x,\tilde u_{k\alpha}(x)) \right) 
&= x^{k\alpha +k}\psi_{k\alpha}(0)+\frac{c}{k} x^{k\alpha + k } + x^{k\alpha + k +1} \oo{x^{k\alpha -1}(\log x)^2, \log x}.
\end{split}
\end{equation*}
If $c=-k\psi_{k\alpha}(0)$, then 
$$ 
\Psi\left( x,\tilde u_{k\alpha}(x) \right) - \tilde u_{k\alpha}\left( f(x,\tilde u_{k\alpha}(x)) \right) =x^{k\alpha +k+1}\psi_{k\alpha +1}(x)= { O(x^{k\alpha + k +1}(\log x)^2)}.
$$
In particular, note that
\begin{equation*}
\psi_{k\alpha + 1}(x)= R_{k\alpha+1}(\log x)+ O(x),
\end{equation*}
where $R_{k\alpha+1}$ is a polynomial of degree $1$ or $2$, depending on whether $k\alpha=1$ or $k\alpha>1$. Also in this case $\psi_{k\alpha+1}$ satisfies the conditions $(1)$ and $(2)$ of the statement. Indeed, since $k\alpha + k\geq 2$, we have that $x^{k\alpha+k}R_{k\alpha+1}(\log x)$ is holomorphic in $x^{k\alpha} \log x$ and $x$.


We are left with the case $h> k\alpha$. The inductive hypothesis ensures that \eqref{eq:relfunz} holds for $h-1$ and there exists a polynomial $R_h(t)$ of degree $\le h$ so that $\psi_h(x)=R_h(\log x) + O(x)$. We search for $\tilde u_h$ of the form
\begin{equation}\label{p16}
\tilde u_h(x)=\tilde u_{h-1}(x)+x^h Q_h(\log x),
\end{equation}
where $Q_h$ is a polynomial, and we shall prove that $\tilde u_h$, of the form \eqref{p16}, satisfies \eqref{eq:relfunz} if and only if $Q_h$ is the unique polynomial solution of the following differential equation
$$ 
(h-k\alpha) Q_h(t) - Q_h^{\prime}(t) = kR_h(t).
$$
In fact we have
\begin{equation*}
\begin{split}
\Psi\left( x,\tilde u_h(x) \right)  - \tilde u_h\left( f(x,\tilde u_h(x)) \right) &={ \Psi\left( x,\tilde u_{h-1}(x)+x^h Q_h(\log x) \right) }
 - {\tilde u_{h-1}(f(x,\tilde u_h(x))) } \\
 & \quad\quad- {\left( f(x, \tilde u_h(x)) \right) ^h Q_h(\log (f(x, \tilde u_h(x))))} .
\end{split}
\end{equation*}
Thanks to the inductive hypothesis, in $\tilde u_h$ for $h\geq k\alpha$, the term of lower degree is $c x^{k\alpha}\log x$. We have 
\begin{equation*}
\begin{split}
\Psi( x,&\tilde u_{h-1}(x)+x^h Q_h(\log x) )\\
&= \Psi\left( x, \tilde u_{h-1}(x)+ x^h Q_h(\log x) \right)\\
&= \Psi(x, \tilde u_{h-1}(x)) +\frac{\partial \Psi}{\partial u}(x, \tilde u_{h-1}(x))x^hQ_h(\log x)+ \sum_{n\geq 2} \frac{1}{n!}\dfrac{\partial^n \Psi }{\partial u^n}(x, \tilde u_{h-1}(x))\left( x^hQ_h(\log x) \right)^n\\
&= \Psi(x, \tilde u_{h-1}(x)) + x^hQ_h(\log x)-\alpha x^{k+h}Q_h(\log x) + \oo{x^{h+k+k\alpha}(\log x)^{\deg Q_h +1}, x^{h+k+1}(\log x)^{\deg Q_h}}.
\end{split}
\end{equation*}
Analogously to the previous proof, using the first equation in \eqref{eq:derivate}, we have
\begin{equation*}
\begin{split}
f\left( x, \tilde u_{h-1}(x) + x^hQ_h(\log x)\right)
& = f(x, \tilde u_{h-1}(x))+ \sum_{n\geq 1}\frac{1}{n!}\frac{\partial^n f}{\partial x^n}(f(x,\tilde u_{h-1}(x)))\left( x^hQ_h(\log x) \right)^n\\
&= f(x, \tilde u_{h-1}(x)) +\oo{x^{h+k+1}(\log x)^{\deg Q_h}}.
\end{split}
\end{equation*}
Therefore
\begin{equation*}
\begin{split}
\tilde u_{h-1}(f(x,\tilde u_h(x)))
&= \tilde u_{h-1}\left( f(x, \tilde u_{h-1}(x) )+\oo{x^{h+k+1}(\log x)^{\deg Q_h}} \right)\\
&= \tilde u_{h-1}\left( f(x, \tilde u_{h-1}(x) ) \right) + \oo{x^{h+k+k\alpha}(\log x)^{\deg Q_h+1}}.
\end{split}
\end{equation*}
Finally, expanding $Q_h$ in a neighbourhood of $\log x$, and considering the terms of degree $h+k$ we obtain
\begin{equation*}
\begin{split}
( f(x, &\tilde u_h(x)) )^h Q_h(\log (f(x, \tilde u_h(x))))\\
&= \left[ x-\frac{1}{k}x^{k+1} +O(x^{k+1} \tilde u_h(x), x^{k+2}) \right] ^h  Q_h\left( \log{\left( x-\frac{1}{k}x^{k+1} +O(x^{k+1} \tilde u_h(x), x^{k+2}) \right)} \right) \\
&=\left[ x^h- \frac{h}{k}x^{h+k} +\oo{x^{k+h} \tilde u_h(x), x^{k+h+1} } \right] \\
&\quad \times \left[ Q_h(\log x) - \frac{1}{k}x^k Q_h^{\prime}(\log x) +\oo{x^k \tilde u_h(x)(\log x)^{\deg Q_h -1}, x^{k+1}(\log x)^{\deg Q_h -1}  } \right] \\
&=x^h Q_h(\log x) - \frac{1}{k}x^{h+k}Q_h^{\prime}(\log x) - \frac{h}{k} x^{h+k}Q_h(\log x)+\oo{x^{h+k+k\alpha}(\log x)^{\deg Q_h +1}, x^{h+k+1}(\log x)^{\deg Q_h}}.
\end{split}
\end{equation*}
The inductive hypothesis implies 
$$
\Psi(x,\tilde u_{h-1}(x))-\tilde u_{h-1}(f(x,\tilde u_{h-1}(x)))= x^{k+h}\psi_{h}(x),
$$
with $\psi_h(x)=R_h(\log x)+o(x)$. Reordering the terms, we then obtain
\begin{equation}\label{eq:analitic}
\begin{split}
\Psi\left( x,\tilde u_h(x) \right)  - \tilde u_h\left( f(x,\tilde u_h(x)) \right) 
= x^{h+k}&\left[ R_h(\log x) + \left( \frac{h}{k}- \alpha\right) Q_h(\log x)+ \frac{1}{k}Q_h^{\prime}(\log x) \right]\\
& + \oo{x^{h+k+k\alpha }(\log x)^{\deg Q_h+1}, x^{h+k+1}(\log x)^{\deg Q_h}},
\end{split}
\end{equation}
where $R_h(t)$ is the polynomial of degree $p_h\le h$. Hence $\tilde u_h$ satisfies \eqref{eq:relfunz} if and only if $Q_h$ is the unique solution of 
\begin{equation}\label{eq:5.12}
(k\alpha -h)Q_h(t) - Q_h^{\prime}(t)= kR_h(t).
\end{equation}
Therefore $R_{h+1}$ is a polynomial so that $\deg R_{h+1}\leq h+1$, and we can have $\deg R_{h+1}= h+1$ only if $k\alpha =1$.
Moreover, if $k\alpha =1$, $\deg R_{h+1}$ can be more that $h+1$.

We finally have to verify that $\psi_{h+1}$ is holomorphic, and that $\tilde u_h$ is a polynomial in $x$ and $x^{k\alpha} \log x$ of degree $\le h$ in $x$. Since $Q_h$ solves the differential equation \eqref{eq:5.12}, it has to be a polynomial of the same degree as $R_h$. Moreover, since $x^h R_h(\log x)$ is a polynomial in $x$ and $x^{k\alpha} \log x$, we have $p_h\leq \frac{h}{k\alpha}$. We thus conclude that $\tilde u_h$ is a polynomial in $x$ and $x^{k\alpha} \log x$ of degree $\le h$. 
Thanks to \eqref{eq:analitic}, $x^{h+k}\psi_{h+1}(x)$ is holomorphic in $x$ and $x^{k\alpha} \log x$.

\sm Summarizing, the sequence of polynomials is the following
\begin{equation*}
P_h(x,t)=\left\{
\begin{matrix}
\sum_{i=1}^{h}c_i x^i , c_i=\frac{k\psi_{i}(0)}{k\alpha -(i+1)}& \textrm{ if }h<k\alpha , \\
\psi_{k\alpha}(0)t&\textrm{ if }h =k\alpha ,  \\
P_{h-1}(x,t)+ x^hQ_h(\log x)& \textrm{ if } h>k\alpha . 
\end{matrix}\right.
\end{equation*}
\end{proof}

Similarly to the case $k\alpha\notin\N^*$, we deduce the following reformulation of Corollary~3.4 of \cite{Ha} for the case $k+1\ge 2$. 

\begin{corollary}
Let $F=(f,\Psi)\in\Diff(\C^2,0)$ be of the form \eqref{eq:equaz2}, with $k\alpha \in \N$. Then for any $h\in \N$ so that $h\geq \max \{k,k\alpha \}$ it is possible to choose local coordinates in which $F$ has the form
\begin{equation*}
\left\{ \begin{array}{l l}
x_1 = \tilde{f}(x,u) = x -\frac{1}{k}x^{k+1} + O( u x^{k+1}, x^{2k+1} \log x), \\
u_1 = \2{\Psi}(x,u) = u \left( 1-\alpha x^k + O( u x^k,  x^{k+1}\log x)\right) +x^{h+k} \psi_{h}(x),
\end{array}
\right. 
\end{equation*}
where $\tilde f ,\, \2{\Psi}$ and $x^{h+k-1} \psi_h(x)$ are holomorphic in $x,\, x^{k\alpha} \log x$ and $u$.
\end{corollary}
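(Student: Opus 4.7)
The strategy is entirely analogous to that of the previous corollary, with the polynomial $P_{h-1}(x)$ replaced by the quasi-polynomial $\tilde u_{h-1}(x)=P_{h-1}(x,x^{k\alpha}\log x)$ produced by Proposition \ref{prop:alfa2} (applied with $h-1$ in place of $h$). Fix $h\ge\max\{k,k\alpha\}$ and perform the change of variables
$$
X = x, \qquad U = u - \tilde u_{h-1}(x),
$$
which is holomorphic in $x$, $x^{k\alpha}\log x$ and $u$ by construction of $\tilde u_{h-1}$. For the first coordinate I would use the bound $\tfrac{\partial f}{\partial u}=\oo{x^{k+1}}$ coming from \eqref{eq:derivate} to expand $f(X, U+\tilde u_{h-1}(X))$ in $U$ around $U=0$, obtaining
$$
X_1 = X - \tfrac{1}{k}X^{k+1} + \oo{UX^{k+1},\,X^{2k+1}\log X},
$$
the contributions of $\tilde u_{h-1}(X)$ being absorbed into the stated remainder precisely thanks to $h\ge k$, which controls the lowest-order monomial of $\tilde u_{h-1}$.

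For the second coordinate, writing
$$
U_1 = \Psi(X, U+\tilde u_{h-1}(X)) - \tilde u_{h-1}\bigl(f(X, U+\tilde u_{h-1}(X))\bigr),
$$
I would Taylor-expand in $U$ around $u=\tilde u_{h-1}(X)$, using the estimates $\tfrac{\partial\Psi}{\partial u}=1-\alpha X^k+\oo{X^{k+1},uX^k}$ and $\tfrac{\partial f}{\partial u}=\oo{X^{k+1}}$ from \eqref{eq:derivate}. This gives
$$
\Psi(X, U+\tilde u_{h-1}(X)) = \Psi(X, \tilde u_{h-1}(X)) + U\bigl(1-\alpha X^k\bigr) + \oo{U^2 X^k,\,UX^{k+1}\log X},
$$
$$
\tilde u_{h-1}\bigl(f(X, U+\tilde u_{h-1}(X))\bigr) = \tilde u_{h-1}\bigl(f(X, \tilde u_{h-1}(X))\bigr) + \oo{UX^{k+1}\log X},
$$
the extra $\log X$-factor in the second expansion arising from differentiating $\tilde u_{h-1}$ in its first argument. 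Subtracting and invoking the defining identity
$$
\Psi(X, \tilde u_{h-1}(X)) - \tilde u_{h-1}\bigl(f(X, \tilde u_{h-1}(X))\bigr) = X^{h+k}\psi_h(X)
$$
of Proposition \ref{prop:alfa2} (with $h-1$ in place of $h$) yields
$$
U_1 = U\bigl(1 - \alpha X^k + \oo{UX^k,\,X^{k+1}\log X}\bigr) + X^{h+k}\psi_h(X),
$$
which is the desired form.

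The only genuine difficulty is the bookkeeping of logarithmic factors. Since $\tilde u_{h-1}$ depends explicitly on $x^{k\alpha}\log x$, every differentiation in $x$ produces an additional factor of $\log x$, and one has to verify that each remainder can still be written as a genuinely holomorphic function of the three variables $X$, $X^{k\alpha}\log X$ and $U$, and not merely of $X$, $\log X$ and $U$. The hypothesis $h\ge\max\{k,k\alpha\}$ is imposed precisely to guarantee this: it ensures that every $\log X$-power appearing in the computation is paired with enough powers of $X$ to assemble honest monomials in $X$ and $X^{k\alpha}\log X$, so that the statements about $\tilde f$, $\2{\Psi}$ and $x^{h+k-1}\psi_h(x)$ being holomorphic in $x$, $x^{k\alpha}\log x$ and $u$ follow directly from conditions (1) and (2) of Proposition \ref{prop:alfa2}.
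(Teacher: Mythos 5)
Your strategy is the same as the paper's: translate by $\tilde u_{h-1}$ from Proposition~\ref{prop:alfa2}, Taylor-expand the composed maps in $U$, and invoke~\eqref{eq:relfunz} with $h-1$ in place of $h$. One justification in your write-up is misattributed, though: you say that the $\tilde u_{h-1}(X)$-contributions to $\tilde f$ are absorbed ``precisely thanks to $h\ge k$, which controls the lowest-order monomial of $\tilde u_{h-1}$.'' That is not what $h$ does. After the preliminary polynomial normalisation $u\mapsto u-P_{k\alpha-1}(x)$ built into the proof of Proposition~\ref{prop:alfa2}, the lowest-degree part of $\tilde u_{h-1}$ is $cx^{k\alpha}\log x$, of degree $k\alpha$, \emph{independently} of $h$; it is the hypothesis $k\alpha\ge 1$ that places $X^{k+1}\tilde u_{h-1}(X)$ into the stated remainder class. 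The assumption $h\ge k\alpha$ serves a different purpose: it guarantees that the index $h-1$ is large enough for Proposition~\ref{prop:alfa2} to have produced the logarithmic quasi-polynomial $\tilde u_{h-1}$. With that small correction, the rest of your argument --- including the tracking of $\log X$-factors so that $\tilde f$, $\widetilde{\Psi}$ and $x^{h+k-1}\psi_h$ stay holomorphic in $x$, $x^{k\alpha}\log x$ and $u$, via conditions (1)--(2) of Proposition~\ref{prop:alfa2} --- matches the paper's computation.
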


\begin{proof}
Consider $h\geq \max \{k,k\alpha \}$, and let $\tilde u_{h-1}$ be the polynomial map in $x$ and $x^{k\alpha}\log x$ given by the previous result. With the change of coordinates
\begin{equation*}
\left\{\begin{array}{l l}
X=x,\\
U= u- \tilde u_{h-1},
\end{array}\right.
\end{equation*}
the first equation becomes
\begin{equation*}
\begin{split}
X_1
&= X- \frac{1}{k}X^{k+1} +\oo{U X^{k+1}, X^{2k+1}\log X}.
\end{split}
\end{equation*}
In particular the term $x^{2k+1}\log x $ appears only if $k\alpha =1$. The second equation becomes
\begin{equation*}
\begin{split}
U_1 
&= u_1 - \tilde u_{h-1} (x_1) \\
&= U \left( 1-\alpha X^k\right) +\oo{U^2 X^k, U X^{k+1}\log x}+ X^{k+h}\psi_h(X).
\end{split}
\end{equation*}
Again, the term $U X^{k+1}\log x $ appears only if $k\alpha =1$, otherwise we have $UX^{k+1}$.
\end{proof}

\begin{remark} Note that if $k\alpha \in \N^*$, due to the presence of the logarithms, all the changes of coordinates used are not defined in a full neighbourhood of the origin, but in an open set having the origin on its boundary.
\end{remark}

\subsection{General case: $p>2$}

Now we deal with the general case of dimension $p>2$. Also in this case, the allowed changes of coordinates will depend on the arithmetic properties of the directors associated to the characteristic direction. 

\begin{proposition}\label{propgen}
Let $F=(f,\Psi)\in\Diff(\C^p,0)$ be of the form \eqref{eq:eq1}, let $[v]=[1:0]$ be a non-degenerate characteristic direction, and let $\{a_1,\ldots, a_s\}$ be the directors of $[v]$ so that $k a_j\in\N$. Then, for all $h\in\N$, there exists $\tilde u_{h}:\C\to \C^{p-1}$ so that its components are polynomials in $x,\,x^{k a_1}\log x,\ldots,\,x^{k a_s}\log x $ of degree $\le h$ in $x$, and the change of coordinates $u\mapsto u-\tilde u_h(x)$ conjugates $F$ to
\begin{equation}\label{eq:eqgen}
\left\{ \begin{array}{l l}
x_1 = \tilde{f}(x,U) = x -\frac{1}{k}x^{k+1} + O(\| U\| x^{k+1}, x^{2k+1} \log x), \\
U_1 = \2{\Psi}(x,U) = ( I- A x^k ) U+ \oo{\| U\|^2 x^k, \| U\| x^{k+1} \log x}  +x^{k+h} \psi_{h}(x),
\end{array}
\right. 
\end{equation}
with $\psi_h(x)=R_h(\log x)+\oo{x}$, where $R_h(t)=(R_h^1(t),\ldots,R_h^{p-1}(t))$ is a polynomial map with $\deg R_h^i = p_h^i\leq h$, for each $i=1,\ldots,p-1$.
\end{proposition}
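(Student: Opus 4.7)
The plan is to proceed by induction on $h$, extending the scalar argument of Proposition \ref{prop:alfa2} to the matrix setting, with $A$ fixed in Jordan canonical form. At each step I will seek a correction of the form $\tilde u_h(x) = \tilde u_{h-1}(x) + x^h Q_h(\log x)$, with $Q_h \colon \C \to \C^{p-1}$ a vector polynomial to be determined, and derive a linear matrix ODE for $Q_h$ whose solvability is the heart of the argument.

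First I would set up the induction by assuming that $F$ has already been conjugated into the form \eqref{eq:eqgen} at level $h-1$, with error term $x^{k+h}\psi_h(x)$ where $\psi_h(x) = R_h(\log x) + O(x)$ and $R_h$ a polynomial vector of componentwise degree $\le h$. Substituting the ansatz $\tilde u_h = \tilde u_{h-1} + x^h Q_h(\log x)$ and carrying out the expansion of Proposition \ref{prop:alfa2} verbatim, but now keeping $Q_h$ and $R_h$ vector-valued with the scalar $\alpha$ replaced by the matrix $A$ acting linearly on the $u$-coordinate, one computes that the order-$x^{k+h}$ terms of $\Psi(x,\tilde u_h(x)) - \tilde u_h(f(x,\tilde u_h(x)))$ equal
\begin{equation*}
x^{k+h}\Bigl[R_h(t) + \Bigl(\tfrac{h}{k} I - A\Bigr) Q_h(t) + \tfrac{1}{k} Q_h'(t)\Bigr],\qquad t = \log x.
\end{equation*}
Forcing this bracket to vanish yields the matrix analogue of \eqref{eq:5.12}, namely $(kA - hI)\,Q_h(t) - Q_h'(t) = k\,R_h(t)$.

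The key technical point is to solve this ODE inside the space of polynomial vectors. Since $A$ is block-diagonal in Jordan form, the equation decouples across Jordan blocks. On a block with eigenvalue $a_j$ and size $\nu$, the operator $kA - hI$ restricts to $k(a_j - h/k)I + kN$, with $N$ the nilpotent shift. If $h/k$ is not an eigenvalue of $A$ (the non-resonant case at this step), the restricted operator is invertible and the equation is solved uniquely in the subspace of polynomials of degree $\le \deg R_h$ via a finite Neumann expansion $Q_h = (kA-hI)^{-1}\bigl(kR_h + Q_h'\bigr)$, introducing no new logarithmic factors. If instead $h = ka_j$ for some $j$, then on the corresponding block $kA-hI$ is nilpotent of index $\nu$; solving the resulting triangular system componentwise from the bottom up requires integrating at most $\nu$ times, producing a polynomial solution of degree at most $\deg R_h + \nu$. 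In the $a_j$-eigencomponent these antiderivatives translate into a new factor of $\log x$ premultiplied by $x^h = x^{ka_j}\cdot x^{h - ka_j}$, a term that lies in the polynomial ring generated by $x$ and $x^{ka_1}\log x,\ldots,x^{ka_s}\log x$.

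It remains to check that $\tilde u_h(x)$ retains the claimed polynomial form and that the new error satisfies $\psi_{h+1}(x) = R_{h+1}(\log x) + O(x)$ with $\deg R_{h+1}^i \le h+1$. Both facts are inherited from the 2-dimensional computation \eqref{eq:analitic} applied blockwise, together with the observation above that logarithmic factors enter only at resonant steps $h = ka_j$. The main obstacle in writing out the proof in full is the bookkeeping across generalized eigenspaces: one must carefully track, for each Jordan block, how many antiderivatives have been taken so far and which monomials in the $x^{ka_j}\log x$ actually appear, and ensure in particular that components associated to directors $a_j$ with $ka_j \notin \N$ never acquire any logarithmic dependence on $x$.
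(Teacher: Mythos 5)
Your proof takes essentially the same route as the paper. Both proceed by induction on $h$ with the ansatz $\tilde u_h = \tilde u_{h-1} + x^h Q_h(\log x)$, and both reduce to solving a linear polynomial ODE for $Q_h$; the paper (after reducing to a single Jordan block) unwinds that equation into the triangular system $(h-k\alpha)Q_{h,j}+Q_{h,j}' = k\alpha Q_{h,j+1} - kR_{h,j}$ solved from $j=p-1$ downward, which is exactly the componentwise form of your matrix ODE $(kA-hI)Q_h - Q_h' = kR_h$. Your discussion of the resonant case $h=ka_j$ (nilpotent block, integration, hence new $\log$-factors) and the non-resonant case (invertible block, no new $\log$) matches the paper's dichotomy; the only cosmetic difference is that you phrase solvability in terms of the spectrum of $kA-hI$ rather than writing out the recursion block by block.
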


\begin{proof}
We may assume without loss of generality that $A$ is in Jordan normal form.
For each fixed $h$, the $j$-th component of $\tilde u_h$ is determined by the components from $p-1$ to $j+1$, and each of them is determined with the results proved in dimension $2$.

It suffices to prove the statement when $A$ is a unique Jordan block of dimension $p-1$ with eigenvalue $\alpha$ and with elements out of the diagonal equal to $\alpha$. The equations of $F$ are
\begin{equation*}
\begin{split}
\left\{
\begin{array}{l l l}
x_1\quad\;\;=f(x,u)=x-\frac{1}{k}x^{k+1}+ \oo{x^{2k+1},\|{(u,v)}\| x^{k+1}},\\
u_{1,j}\quad=\Psi_j(x,u)=(1-x^k\alpha)u_j-x^k\alpha u_{j+1} +\oo{\|{u}\|^2 x^k, \|{u}\|x^{k+1}}+x^{k+1}\psi_{j}(x),\quad \hbox{for}~j=1,\dots, p-2\\
u_{1,p-1}=\Psi_{p-1}(x,u)=(1-x^k\alpha)u_{p-1} +\oo{\|{u}\|^2 x^k, \|{u}\|x^{k+1}}+x^{k+1}\psi_{p-1}(x),\\
\end{array}\right.
\end{split}
\end{equation*}
where $\psi_1,\dots,\psi_{p-1}$ are holomorphic bounded functions. 

We proceed by induction on $h$.
If $h=0$, it suffices to consider $\tilde{u}_0\equiv 0$. In fact,
\begin{equation*}
\Psi_j(x,\tilde{u}_0)- \tilde{u}_{0,j}\left( f(x,\tilde{u}_0)\right) = x^{k+1}\psi_j(x),\quad \hbox{for}~j=1,\dots, p-1.
\end{equation*} 
Let us then assume by inductive hypothesis, that there exist $\tilde{u}_{h-1}$ such that
\begin{equation}\label{eq:questa}
\Psi_j(x,\tilde{u}_{h-1})- \tilde{u}_{h-1,j}\left( f(x,\tilde{u}_{h-1})\right) = x^{k+h}\psi_{h,j}(x),\quad \hbox{for}~j=1,\dots, p-1.
\end{equation} 
Analogously to the 2-dimensional case, we want to find polynomials $Q_{h,1},\dots,Q_{h,p-1}$ so that 
$$
\tilde{u}_{h,j}(x)=\tilde{u}_{h-1,j}(x)+Q_{h,j}(\log x)x^h, \quad \hbox{for}~j=1,\dots, p-1,
$$ 
verify \eqref{eq:questa} for $h$. Proposition \ref{prop:alfa2} gives us that $\tilde{u}_{h,p-1}$ is a solution if and only if $Q_{h,p-1}$ verifies
$$
(k\alpha -h)Q_{h,p-1}(t)-(Q_{h,p-1}(t))^\prime(t)= kR_{h,p-1}(t).
$$
Moreover, we have $\deg R_{h,p-1} = p_{h,p-1} \leq h$.
We proceed in the same way for the remaining $\tilde{u}_{h,j}$'s, except for the fact that the equations are a bit different from the ones used before. In particular
\begin{equation*}
\left\{
\begin{array}{l l}
\displaystyle\frac{\partial \Psi_j}{\partial u_j}(x,u)= 1-\alpha x^k+\oo{x^{k+1}, \|{u}\|x^k},\\
\\
\displaystyle\frac{\partial \Psi_j}{\partial u_{j-1}}(x,u)= -\alpha x^k+\oo{x^{k+1}, \|{u}\|x^k}.\\
\end{array}\right.
\end{equation*}
Hence
\begin{equation*}
\begin{split}
{\Psi_j(x,\tilde{u}_h)}&= \Psi_j(x,\tilde{u}_{h-1}+x^hQ_h(\log x))\\
 &=\Psi_j(x,\tilde{u}_{h-1})+ (1-\alpha x^k)x^h Q_{h,j}(\log x)- \alpha x^{k+h} Q_{h,j+1}(\log x)\\
 &\quad\quad+ \oo{x^{k+h+1}(\log x)^{p_h}, \|{\tilde{u}_{h-1}}\|x^{k+h}(\log x)^{p_h}},
\end{split}
\end{equation*}
where $p_h=\max\deg Q_{h,j}$, and 
\begin{equation*}
\begin{split}
\tilde{u}_{h}\left( f(x,\tilde{u}_{h}) \right) &= {\tilde{u}_{h-1}\left( f(x,\tilde{u}_{h}) \right) } +{\left[ f(x,\tilde{u}_h )\right]^h Q_h\left(\log(f(x,\tilde{u}_h)\right)}.
\end{split}
\end{equation*}
We have
\begin{equation*}
{\tilde{u}_{h-1}\left( f(x,\tilde{u}_{h}) \right) } = \tilde u_{h-1}\left( f(x,\tilde u_{h-1}) \right) + \oo{x^{h+k+1}\log x, \norm{\tilde u_{h}} x^{h+k \log x} },
\end{equation*}
and
\begin{equation*}
\begin{split}
{\left[ f(x,\tilde{u}_h )\right]^h Q_h\left(\log(f(x,\tilde{u}_h)\right)}
&= \left[ x\left( 1-\frac{1}{k}x^k+\oo{x^{2k},\norm{\tilde{u}_{h}}x^{k} } \right) \right] ^h \!\!\!Q_h\left( \log x+\log \!\left( \!1-\frac{1}{k}x^k+\oo{x^{2k},\norm{\tilde{u}_{h}}x^{k} }\!\!\right)\!\!\right) \\
&=x^h Q_h(\log x)- x^{k+h}\left( \frac{1}{k}Q_h^\prime(\log x) + \frac{h}{k} Q_h(\log x) \right)\\ &\quad \times\oo{ x^{2k+h}(\log x)^{l_1},\norm{\tilde{u}_h}x^{k+h}(\log x)^{l_2} },
\end{split}
\end{equation*}
for some integer $l_1$ and $l_2$. It follows
\begin{equation*}
\begin{split}
\Psi_j(x,\tilde u_h) - \tilde u_{h,j}(f(x,\tilde u_h))
&=x^{k+h}\biggl[ \psi_{h,j}(x) + \frac{1}{k}Q_{h,j}^\prime(\log x) + \frac{h}{k} Q_{h,j}(\log x)-\alpha Q_{h,j}(\log x)-\alpha Q_{h,j+1}(\log x)\biggr]\\
&\quad +\oo{ x^{2k+h}(\log x)^{l_1},\norm{\tilde{u}_h}x^{k+h}(\log x)^{l_2} }.
\end{split}
\end{equation*}
Hence $\tilde{u}_h$ solves the equations if and only if $Q_{h,j}$ solves
\begin{equation*}
\left[ {h}-k\alpha \right] Q_{h,j}(t) +Q_{h,j}^\prime(t)= k\alpha Q_{h,j+1}(t) - kR_{h,j}(t),\quad\hbox{for}~j=1,\dots,p-2
\end{equation*}
and moreover $\deg R_{h,j}\leq h$. 
\end{proof}
\begin{remark}\label{oss4}
It is clear that in the previous proposition that we have no restrictions on $h$, and hence we can choose $h=k\bar h$, obtaining $F$ of the form
\begin{equation*}
\left\{ \begin{array}{l l}
x_1 = f(x,u) = x -\frac{1}{k}x^{k+1} + O(\norm u x^{k+1}, x^{2k+1} \log x) \\
u_1 = \Psi(x,u) = ( I- A x^k ) u+ \oo{\norm u ^2 x^k, \norm u x^{k+1} \log x}  +x^{k(\bar h+1)} \psi_{h}(x),
\end{array}
\right. 
\end{equation*}
where $\psi_h (x)= R_{k\bar h }(x)+ \oo{x}$. Then, up to changing the degree of the polynomials in  $\log x$, for any $h \in \N$ we can write
$$ 
u_1 = \Psi(x,u) = ( I- A x^k ) u+ \oo{\norm u ^2 x^k, \norm u x^{k+1} \log x}  +x^{k(h+1)} \psi_{h}(x).
$$
\end{remark}

\section{Existence of parabolic curves}

From now on, without loss of generality, we shall assume that non-degenerate characteristic direction is $[1:0]\in\C\P^{p-1}$. Moreover, thanks to Proposition \ref{propgen} and to Remark \ref{oss4}, after blowing-up the origin, it is possible to change coordinates, in a domain having the origin on its boundary, such that $F$, in the coordinates $(x,u)\in \C\times\C^{p-1}$, has the form
\begin{equation}\label{eq:main}
\left\{ \begin{array}{l l}
x_1 = f(x,u) = x -\frac{1}{k}x^{k+1} + O(\norm u x^{k+1}, x^{2k+1} \log x) \\
u_1 = \Psi(x,u) = ( I- A x^k ) u+ \oo{\norm u ^2 x^k, \norm u x^{k+1} (\log x)^{p_h}}  +x^{k(h+1)} \psi_{h}(x),
\end{array}
\right. 
\end{equation}
for an arbitrarily chosen $h \in \N$, and with $p_h\in\N\setminus\{0\}$ depending on $h$.  

\begin{remark}
The existence of parabolic curves $S_1,\ldots,S_k$ tangent to a given direction $[v]$ at $0$ is equivalent to finding $u$ defined and holomorphic on the $k$ connected components $\Pi_r^1,\ldots,\Pi_r^k$ of $\D_r :=\{ x\in \C \mid  |{x^k-r}|< r \}$ and such that
\begin{equation}\label{eq:funz}
\left\{ \begin{array}{l l}
u(f(x,u(x)))=\Psi(x,u(x))\\
\displaystyle\lim _{x\to 0}{u(x)} = \lim _{x\to 0}{u^\prime(x)} = 0.
\end{array}
\right.
\end{equation}
\end{remark}

We are going to prove the existence of such curves finding a fixed point of a suitable operator between Banach spaces. We shall then need to further simplify our equations via a change of coordinates holomorphic that will be holomorphic on $\Re (x^k) >0$.
Let us consider the new coordinates $(x,w)\in \C\times \C^{p-1}$, where $w\in \C^{p-1}$ is defined, on $\Re (x^k) >0$, by
\begin{equation*}
u=x^{kA} w := \exp{(kA \log x)}w.
\end{equation*}
Hence $u_1=x_1 ^{kA} w_1 $. Starting from \eqref{eq:main} we obtain
\begin{equation*}
x_1 - x = -\frac{1}{k}x^{k+1} +\oo{\norm u x^{k+1}, x^{2k+1}\log x}
\end{equation*}
and
\begin{equation}\label{eq:uno}
u_1 - (I- x^k A) u = \oo{\norm u ^2 x^k , \norm u x^{k+1} \log x, x^{k(h+1)}(\log x)^{p_h}} .
\end{equation}
Moreover, we have
\begin{equation}\label{eq:due}
\begin{split}
x_1 ^{kA} 
&= \exp \left( kA \left(\log x + \log \left( 1- \frac{1}{k}x^k + \oo{\norm u x^k, x^{2k} \log x }\right) \right) \right) \\
&= x^{kA} \left[ \left( I - x^k A \right) + \oo{\norm u x^k, x^{2k} \log x }  \right] .
\end{split}
\end{equation}
Using $x^{kA}w= u$, we have $x^{kA}w_1=x^{kA}x_1^{-kA}x_1^{kA}w_1=x^{kA}x_1^{-kA}u_1$. Set
\begin{equation}\label{eq:tre}
H(x,u):= x^{kA} (w-w_1)= u - x^{kA}x_1^{-kA}u_1.
\end{equation}
Thanks to \eqref{eq:due}, we have
\begin{equation}\label{eq:H}
\begin{split}
H(x,u)
&= u - \left[ (I-x^k A) + \oo{\norm u x^k, x^{2k\log x}} \right] ^{-1} u_1\\
&= -\left[ (I-x^k A) + \oo{\norm u x^k, x^{2k\log x}} \right] ^{-1}  \left[ u_1 - \left[ (I-x^k A) + \oo{\norm u x^k, x^{2k\log x}} \right] u \right]\\
&= \oo{\norm u ^2 x^k , \norm u x^{k+1} \log x, x^{k(h+1)}(\log x)^{p_h}}.
\end{split}
\end{equation}
Therefore we can write
\begin{equation*}
w_1 = w- x^{-kA} H(x,u).
\end{equation*}

Now we have all the ingredients to search for parabolic curves tangent to the direction $[v]$. 
For the moment, we only impose that $u$ is at least of order $k+1$. We have the following generalization of Lemma 4.2 of \cite{Ha} .

\begin{lemma}\label{lemma4.2}
Let $f$ be a holomorphic function defined as in the first equation of \eqref{eq:main}. For any $u$ so that $u(x)=x^{k+1} \ell(x)$, for some bounded holomorphic map $\ell\colon\phd\to\C^{p-1}$, let $\{x_n\}$ be the sequence of the iterates of $x$ via
$$ 
x_1 = f_u(x):= f\left( x, u(x) \right).
$$
Then, for $r$ small enough, for any $\ell$ so that $\norm \ell _{\infty}\leq 1$, and any $n\in \N$, if $x\in \phd$ then $x_n\in \phd$, and moreover
\begin{equation*}
\modu{x_n}\leq 2^{1/k} \frac{\modu x}{(\modu{1+nx^k})^{\frac{1}{k}}}.
\end{equation*}
\end{lemma}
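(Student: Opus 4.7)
The plan is to pass to the coordinate $w = 1/x^k$, which is well-defined on $\phd$ (since $0\notin\phd$) and maps $\phd$ biholomorphically onto the half-plane $H_r := \{\Re w > 1/(2r)\}$: indeed, $x\mapsto x^k$ sends $\phd$ biholomorphically onto $\{|z-r|<r\}$ and $z\mapsto 1/z$ sends this disk onto $H_r$. Writing $w_n := 1/x_n^k$, the two conclusions of the lemma translate to (I) $\Re w_n > 1/(2r)$, and (II) $|w_n|\ge \tfrac12|w_0+n|$, using $|x_n|^k\,|w_n| = 1$ and $|1+nx^k| = |x|^k\,|w_0+n|$.

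The first step is to derive the recursion in $w$. Since $\|\ell\|_\infty\le 1$ gives $\|u(x)\|\le|x|^{k+1}$, the first equation of \eqref{eq:main} yields
\begin{equation*}
x_1 = x - \tfrac1k x^{k+1} + R(x), \qquad |R(x)|\le C_1|x|^{2k+1}|\log|x||.
\end{equation*}
Raising to the $k$-th power, expanding in $x^k$, and taking reciprocals, one obtains
\begin{equation*}
w_1 = w + 1 + \eta(x), \qquad |\eta(x)|\le C_2 |x|^k |\log|x||,
\end{equation*}
for constants $C_1, C_2>0$ depending only on $f$. Iterating, $w_{n+1} = w_0 + (n+1) + \sum_{j=0}^{n}\eta(x_j)$.

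I would then prove (I) and (II) by simultaneous induction on $n$. The key observation is that (II) for indices $0,\dots,n$ already forces $|w_m|\ge \tfrac12|w_0|\ge \tfrac{1}{4r}$ (using $|w_0+m|\ge|w_0|$ whenever $\Re w_0>0$ and $m\ge 0$), hence $|x_m|^k = 1/|w_m|\le 4r$. Since $t\mapsto t^k|\log t|$ tends to $0$ as $t\to 0^+$, choosing $r$ small enough makes $|\eta(x_m)|< \tfrac{1}{2}$ uniformly. Then $\bigl|\sum_{j=0}^{n}\eta(x_j)\bigr|<\tfrac{n+1}{2}$, and the triangle inequality combined with $|w_0+n+1|\ge \Re w_0+n+1 > n+1$ gives
\begin{equation*}
|w_{n+1}|\ge |w_0+n+1| - \tfrac{n+1}{2}\ge \tfrac12|w_0+n+1|,
\end{equation*}
propagating (II); and $\Re w_{n+1}\ge \Re w_0+(n+1) - \tfrac{n+1}{2}>1/(2r)$ propagates (I), closing the induction.

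The main technical point, and a priori the main obstacle, is the logarithmic factor $|\log|x||$ in the error $\eta$: inherited from the $x^{2k+1}\log x$ term in \eqref{eq:main} (itself a byproduct of the resonant changes of coordinates for $k\alpha\in\N^*$), it could in principle accumulate faster than the $+1$ drift of the unperturbed recursion $w\mapsto w+1$. What saves the day is the crude bound $|x_m|^k \le 4r$, a direct consequence of (II), together with the sub-unit maximum of $t^k|\log t|$ near the origin: for $r$ small enough, each $|\eta(x_m)|$ is uniformly bounded by $\tfrac12$, matching exactly the drift of one per step. Beyond this the argument is routine bookkeeping.
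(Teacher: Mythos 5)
Your proof is correct, and it takes a genuinely different route from the paper's. Hakim's argument (followed by the paper) absorbs the $O(x^k)$ and $O(x^k\log x)$ contributions to $1/x_1^k - 1/x^k - 1$ via the auxiliary change of variable $1/z = 1/x^k + a\log x + b(\log x)^2$, which reduces the per-step residual to $O(x^{k+1}(\log x)^l)$; this is \emph{summable} along the orbit, so the total deviation from the pure translation $1/z_n = 1/z + n$ is $O(1)$, and the estimate is then extracted by undoing the substitution. You instead stay in the naked coordinate $w = 1/x^k$, accept the larger per-step error $\eta(x) = O(x^k\log x)$ (which is \emph{not} summable, since $|x_n|^k\sim 1/n$), and observe that this does not matter: because $t\mapsto t^k|\log t|$ vanishes at $0$, each $|\eta(x_m)|$ can be made $<\tfrac12$ uniformly once $r$ is small, so the accumulated error is dominated by the drift $n$, which is all that the target inequality $|w_n|\ge\tfrac12|w_0+n|$ requires. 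Your version buys two things: it avoids the somewhat ad hoc logarithmic substitution, and it folds the invariance $x_n\in\phd$ into the same induction via the half-plane picture $\Re w_n>1/(2r)$, whereas the paper invokes ``$f_u$ is an attracting map from $\phd$ into itself'' as an externally supplied fact from one-dimensional dynamics. The trade-off is that the paper's argument gives the sharper asymptotic $1/z_n = 1/z + n + O(1)$, which is more information than the crude two-sided comparability you obtain, but the lemma only asks for the upper bound, so your proof is fully sufficient and arguably cleaner.
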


\begin{proof}
Thanks to the hypothesis on $u$ we can rewrite the first equation of \eqref{eq:main}, obtaining 
\begin{equation*}
x_1= x- \frac{1}{k}x^{k+1} + ax^{2k+1} + bx^{2k+1} \log x +\oo{x^{2k+2}(\log x)^l, x^{2k+2}}.
\end{equation*}
By Proposition \ref{propgen}, we have the term $bx^{2k+1}\log x$ only if $1$ is an eigenvalue of $kA$. Moreover, we have
\begin{equation*}
x_1^k=x^k\left[ 1- x^k + kax^{2k} + kbx^{2k}\log x +\frac{1}{k^2}\binom{k}{2}x^{2k}+ \oo{x^{2k+1}(\log x)^l} \right].
\end{equation*}
Hence
\begin{equation*}
\begin{split}
\frac{1}{x_1^k}
&=\frac{1}{x^k} +1 + (1-a_1)x^{k} - b_1 x^{k} \log x + O(x^{k+1}(\log x)^l),
\end{split}
\end{equation*}
where $O(x^{k+1} (\log x)^l)$ represents a function bounded by $K\modu x ^{k+1} \modu{\log x }^l$, with $K$ not depending on $u$, because $\norm \ell_\infty\leq 1$.
It is thus possible to write
\begin{equation}\label{eq:4.16}
\begin{split}
\frac{1}{x_1^k}
&=\frac{1}{x^k}\left( 1 +x^k+ x^{2k} \left(\frac{a}{k} + \frac{2b}{k} \log x \right) + O(x^{2k+1}(\log x)^l) \right)
\end{split}
\end{equation}
where the same considerations hold as before. We can now define the following change of variable 
on $\Re x^k >0$
\begin{equation*}
\frac{1}{z}=\frac{1}{x^k} + a\log x + b (\log x)^2 .
\end{equation*}
Therefore \eqref{eq:4.16} becomes
\begin{equation*}
\begin{split}
\frac{1}{z_1}
&=\frac{1}{z} + 1 + O(x^{k+1}(\log x)^l),
\end{split}
\end{equation*}
where we used
\begin{equation*}
\begin{split}
\log{x_1}
=\log x - \frac{1}{k}x^k + O(x^{2k}(\log x)^2)
\end{split}
\end{equation*}
and
\begin{equation*}
\left( \log{x_1} \right) ^2 = \left( \log x \right) ^2 - \frac{2}{k}x^k\log x + O(x^{2k}(\log x)^2).
\end{equation*}
We then deduce
\begin{equation*}
\frac{1}{z_n}=\frac{1}{z_{n-1}}+1 + O(x_{n-1}^{k}(\log{x_{n-1}})^l)= \cdots =\frac{1}{z}+ n+ O(1).
\end{equation*}
On the other hand
\begin{equation*}
\begin{split}
\frac{1}{z_n}= \frac{1}{x_n^k} + a \log x_n +b \log ^2 x_n =\frac{1}{x_n^k}\left[ 1+ a x_n^k \log x_n +b x_n^k \log ^2 x_n \right],
\end{split}
\end{equation*}
and hence
\begin{equation*}
\begin{split}
\frac{1}{z} + n + O(1)
&=\frac{1}{x^k}\left( 1+ nx^k \right) \left[ 1+ \frac{ax^k\log x +bx^k\log^2 x +O(x^k)}{1+ nx^k} \right].
\end{split}
\end{equation*}
If $r$ is small enough, $f_u$ is an attracting map from $\phd$ in itself, and hence for any $\epsilon >0$ there exists $\bar n$ so that, for each $n> \bar n$ 
\begin{equation*}
\modu{a x_n^k \log x_n +b x_n^k \log ^2 x_n}< \epsilon , 
\end{equation*}
and
\begin{equation*}
\modu{\frac{ax^k\log x +bx^k\log^2 x +O(x^k)}{1+ nx^k}}< \epsilon . 
\end{equation*} 
Therefore, for $n> \bar n$ and $r$ small enough
\begin{equation*}
\begin{split}
\modu{x_n}^k
&=\modu{ \frac{x^k}{1+ nx^k}}\modu{ 1+ a x_n^k \log x_n +b x_n^k \log ^2 x_n } \modu{ \frac{1}{ 1+ \frac{ax^k\log x +bx^k\log^2 x +O(x^k)}{1+ nx^k} } } 
\leq 2\frac{\modu x ^k}{\modu{1+ nx^k}},
\end{split}
\end{equation*} 
and hence we obtain the statement.
\end{proof}

Analogously to Corollary~4.3 in \cite{Ha}, for the case $k+1\ge 2$ we have the following very useful inequality.

\begin{corollary}\label{cor:4.3}
Let $f$ be a holomorphic function defined as in the first equation of \eqref{eq:main}. For any $u$ so that $u(x)=x^{k+1} \ell(x)$, for some bounded holomorphic map $\ell\colon\phd\to\C^{p-1}$ with $\|\ell\|_\infty\le 1$, let $\{x_n\}$ be the sequence of the iterates of $x$ via
$$ 
x_1 = f_u(x):= f\left( x, u(x) \right),
$$
and let $r$ be sufficiently small. Then for any $\mu > k$ ($\mu \in \R$) and for any $q\in \N$ there exists a constant $C_{\mu,q}$ such that, for any $x\in \phd$, we have
\begin{equation*}
\sum_{n=0}^{\infty}\modu{x_n}^{\mu}\modu{\log x_n}^q\leq C_{\mu,q}\modu x ^{\mu -k}\modu{\log{\modu x}}^q.
\end{equation*}
\end{corollary}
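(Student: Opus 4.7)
The plan is to start from Lemma~\ref{lemma4.2}, which gives the pointwise bound
$$
|x_n| \le M_n := 2^{1/k}\frac{|x|}{|1+nx^k|^{1/k}},
$$
so $|x_n|^\mu \le M_n^\mu$. For the logarithmic factor, the key observation is that the real function $\phi(t) := t^\mu |\log t|^q$ is strictly increasing on $(0, e^{-q/\mu})$, as one sees from $\phi'(t) = t^{\mu-1}|\log t|^{q-1}\bigl(\mu|\log t| - q\bigr)$. Since $r$ can be taken as small as we wish, every $|x_n|\le M_n \le M_0 = 2^{1/k}|x|$ lies in this increasing range, so
$$
|x_n|^\mu |\log|x_n||^q \le \phi(M_n) = M_n^\mu |\log M_n|^q.
$$
Bounding the imaginary part of $\log x_n$ by $\pi$, the same estimate (up to a constant) holds for $|x_n|^\mu |\log x_n|^q$.

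Next I would expand $|\log M_n|$. Since $|x^k-r|<r$ implies $\Re(x^k) > 0$, one has $|1+nx^k| \ge 1+n\Re(x^k) \ge 1$, hence
$$
|\log M_n| \le |\log|x|| + \frac{1}{k}\log|1+nx^k|,
$$
and therefore $|\log M_n|^q \le C\bigl(|\log|x||^q + (\log|1+nx^k|)^q\bigr)$. The whole estimate is thus reduced to showing that the two series
$$
S_0 := \sum_{n \ge 0} \frac{1}{|1+nx^k|^{\mu/k}},\qquad S_1 := \sum_{n \ge 0} \frac{(\log|1+nx^k|)^q}{|1+nx^k|^{\mu/k}}
$$
are each $O(1/|x|^k)$; multiplication by $|x|^\mu$ together with the prefactor $|\log|x||^q$ then yields the claimed bound $C_{\mu,q}|x|^{\mu-k}|\log|x||^q$, the additive constants being absorbed into $|\log|x||^q$ because $|\log|x||$ is large.

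To bound $S_0$ and $S_1$ I would split the summation at $N := \lceil 2/|x|^k \rceil$. For $n < N$ one has $1 \le |1+nx^k| \le 1+n|x|^k \le 3$, so each summand is bounded by a constant and the contribution is $O(N) = O(1/|x|^k)$. For $n \ge N$, the reverse triangle inequality gives $|1+nx^k| \ge n|x|^k-1 \ge n|x|^k/2$, so $1/|1+nx^k|^{\mu/k}\le C/(n|x|^k)^{\mu/k}$ and $\log|1+nx^k| \le \log(2n|x|^k)$. The hypothesis $\mu>k$ makes the tails convergent: an integral comparison yields $\sum_{n\ge N} n^{-\mu/k} = O(N^{1-\mu/k}) = O(|x|^{\mu-k})$, so the tail of $S_0$ is $O(|x|^{-\mu}\cdot|x|^{\mu-k}) = O(1/|x|^k)$, and the same calculation, together with the finiteness of $\int_{2}^{\infty} (\log s)^q s^{-\mu/k}\,ds$, handles $S_1$.

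The main obstacle I expect is exactly the logarithmic factor: one cannot directly bound $|\log x_n|$ from above starting only from $|x_n| \le M_n$, because the iterates $x_n$ have no a priori positive lower bound. The monotonicity of $\phi(t) = t^\mu|\log t|^q$ on a neighborhood of $0$ is what transfers the upper bound on $|x_n|$ into a genuine upper bound on $|x_n|^\mu|\log x_n|^q$; without it, a cruder estimate $|\log t|^q \le C_s t^{-s}$ would destroy the sharpness and produce $|x|^{\mu-k-s}$ instead of $|x|^{\mu-k}|\log|x||^q$. Once this monotonicity step is in place, the remaining work is a routine split-and-compare-to-integrals tail analysis.
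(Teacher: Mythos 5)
Your argument is correct and, in one respect, more careful than the paper's: the monotonicity of $\phi(t)=t^{\mu}\lvert\log t\rvert^{q}$ on $(0,e^{-q/\mu})$ is exactly the fact needed to pass from the pointwise bound $\lvert x_n\rvert\le M_n$ of Lemma~\ref{lemma4.2} to $\lvert x_n\rvert^{\mu}\bigl\lvert\log\lvert x_n\rvert\bigr\rvert^{q}\le M_n^{\mu}\lvert\log M_n\rvert^{q}$, and the paper performs that substitution silently; you make it explicit with the computation of $\phi'$. Where the routes diverge is after that step. The paper uses $\Re x^k>0$ to write $\lvert 1+nx^k\rvert^2\ge 1+\lvert nx^k\rvert^2$, majorizes the resulting series by the integral $\int_0^\infty\bigl(1+\lvert tx^k\rvert^2\bigr)^{-\mu/2k}\bigl\lvert\log\bigl(2^{1/k}\lvert x\rvert/\sqrt[2k]{1+\lvert tx^k\rvert^2}\,\bigr)\bigr\rvert^q\,dt$, and then rescales $s=t\lvert x\rvert^k$, so that the factor $\lvert x\rvert^{\mu-k}$ drops out immediately and the constant $C_{\mu,q}$ is given as a single explicit convergent integral. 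You instead split the log of $M_n$, reduce to the two scalar series $S_0,S_1$, and estimate each by cutting the sum at $N\approx 2/\lvert x\rvert^k$, with the head contributing $O(N)=O(\lvert x\rvert^{-k})$ and the tail controlled via $\lvert 1+nx^k\rvert\ge n\lvert x\rvert^k/2$ and the convergence of $\int_2^\infty(\log s)^q s^{-\mu/k}\,ds$ (this is where $\mu>k$ enters). Both arguments are valid and of comparable length; the paper's change of variables is marginally slicker and yields the constant in closed form, while your split at $N$ is more elementary and does not require the sum-to-integral monotonicity step in the middle of the estimate. One cosmetic point: in the decomposition $\lvert\log M_n\rvert\le\lvert\log\lvert x\rvert\rvert+\tfrac1k\log\lvert 1+nx^k\rvert+\tfrac1k\log 2$ you drop the additive constant $\tfrac1k\log 2$ without comment; as you note for the other constants, it is harmlessly absorbed into $\lvert\log\lvert x\rvert\rvert^q$ for $r$ small, but it is worth stating since your $S_0$ and $S_1$ only cover two of the three resulting terms.
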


\begin{proof}
If $x\in \phd$, then $\Re x^k >0$, and hence
$$
\modu{1+nx^k}^2=1+nx^k+n \bar{ x}^k +n^2 \modu{x^k} ^2\ge 1+ \modu{nx^k}^2.
$$ 
Then the inequality of the previous lemma becomes 
\begin{equation*}
\modu{x_n}\leq 2^{1/k}\frac{\modu x}{\modu{1+ nx}^{\frac{1}{k}}}\leq 2^{1/k} \frac{\modu x}{\sqrt[2k]{1+ \modu{nx^k}^2}}.
\end{equation*}
Recalling that, for $x$ sufficiently small, $\modu{\log x}\leq K_1 \modu{\log{\modu x}}$, for each $\mu > k$ and each $q\in \N$ we have
\begin{equation*}
\begin{split}
\modu{x_n}^{\mu}\modu{\log x_n}^q &\leq K_1 \modu{x_n}^{\mu}\modu{\log \modu{x_n}}^q\leq  K_2  \frac{\modu x ^{\mu}}{\sqrt[2k]{(1+ \modu{nx^k}^2)^{\mu}}}\modu{\log \frac{2^{1/k} \modu x}{\sqrt[2k]{1+ \modu{nx^k}^2}}}^q,
\end{split}
\end{equation*}
where $K_2=K_1 2^{\mu/k}$. We then have that there exists $K$ so that
\begin{equation}\label{eq:6.8}
\begin{split}
\sum_{n=0}^{\infty}\modu{x_n}^{\mu}\modu{\log x_n}^q 
&\leq K \int_0^{\infty}{\frac{\modu x ^{\mu}}{(1+ \modu{tx^k}^2)^{\mu /2k}}\modu{\log \frac{2^{1/k} \modu x}{\sqrt[2k]{1+ \modu{tx^k}^2}}}^q dt}\\
&= K\modu x^{\mu -k} \int_0^{\infty}{\frac{1}{(1+ s^2)^{\mu /2k}}\modu{\log \frac{2^{1/k} \modu x}{\sqrt[2k]{1+ s^2}}}^q ds}.
\end{split}
\end{equation} 
To conclude, it suffices the following estimate
\begin{equation*}
\begin{split}
\modu{\log\frac{2^{1/k}\modu x}{\sqrt[2k]{1+s^2}}}^q 
&\leq \modu{\log\modu x}^q \sum_{j=0}^q \binom{q}{j} \modu{\log\frac{\sqrt[2k]{1+s^2}}{2^{1/k}}}^j.
\end{split}
\end{equation*}
In fact, we have 
\begin{equation*}
\begin{split}
\int_0^{\infty}&{\frac{1}{(1+ s^2)^{\mu /2k}}\modu{\log \frac{2^{1/k} \modu x}{\sqrt[2k]{1+ s^2}}}^q ds} \leq \modu{\log\modu x}^q  \sum_{j=0}^q \binom{q}{j}\int_0^{\infty}{ \modu{\log\frac{\sqrt[2k]{1+s^2}}{2^{1/k}}}^j \frac{1}{(1+ s^2)^{\mu /2k}} ds}.
\end{split}
\end{equation*} 
that, together with \eqref{eq:6.8} yields
\begin{equation*}
\begin{split}
\sum_{n=0}^{\infty}\modu{x_n}^{\mu}\modu{\log x_n}^q &\leq C_{\mu,q}\modu x ^{\mu -k}\modu{\log{\modu x}}^q,
\end{split}
\end{equation*} 
where
\begin{equation*}
C_{\mu, q}:=K \sum_{j=0}^q \binom{q}{j}\int_0^{\infty}{ \modu{\log\frac{\sqrt[2k]{1+s^2}}{2^{1/k}}}^j \frac{1}{(1+ s^2)^{\mu /2k}} ds},
\end{equation*}
concluding the proof.
\end{proof}

We have the following analogous of Lemma~4.4 of \cite{Ha}.

\begin{lemma}\label{lemma4.4}
Let $f$ be a holomorphic function defined as in the first equation of \eqref{eq:main}. For any $u$ so that $u(x)=x^{k+1} \ell(x)$, for some bounded holomorphic map $\ell\colon\phd\to\C^{p-1}$, let $\{x_n\}$ be the sequence of the iterates of $x$ via
$$ 
x_1 = f_u(x):= f\left( x, u(x) \right).
$$
Then, if $r$ is sufficiently small, for any $\ell$ so that $\norm \ell_{\infty}\leq 1$ and $\norm{x\ell^{\prime}}_{\infty}\leq 1$, for each $n\in \N$ and each $x\in \overline{\phd}$, we have
\begin{equation*}
\modu{\frac{d x_n}{d x}}\leq 2 \frac{\modu{x_n}^{k+1}}{\modu x ^{k+1}}.
\end{equation*}
\end{lemma}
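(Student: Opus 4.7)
The plan is to write $dx_n/dx$ as a telescoping product along the orbit and compare term-by-term with $x_n^{k+1}/x^{k+1}$, so that the discrepancy becomes a convergent sum controlled by Corollary~\ref{cor:4.3}.

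First I would apply the chain rule to $x_{j+1} = f_u(x_j) = f(x_j, u(x_j))$, obtaining
$$
\frac{dx_{j+1}}{dx_j} = \frac{\partial f}{\partial x}(x_j, u(x_j)) + \frac{\partial f}{\partial u}(x_j, u(x_j))\, u'(x_j).
$$
Since $f(x,u) = x - \frac{1}{k}x^{k+1} + O(\|u\|x^{k+1}, x^{2k+1}\log x)$, we have $\partial f/\partial x = 1 - \frac{k+1}{k}x^k + O(\|u\|x^k, x^{2k}(\log x)^l)$ and $\partial f/\partial u = O(x^{k+1})$. From $u(x) = x^{k+1}\ell(x)$ and the hypotheses $\|\ell\|_\infty\le 1$, $\|x\ell'\|_\infty\le 1$, we get $|u'(x)| \le (k+2)|x|^k$. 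Putting these together yields
$$
\frac{dx_{j+1}}{dx_j} = 1 - \frac{k+1}{k}x_j^k + O\!\left(x_j^{2k}(\log x_j)^l\right),
$$
uniformly in $j$, where the implicit constant depends only on $f$ and the bounds on $\ell$.

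Next I would compute the corresponding expansion for $(x_{j+1}/x_j)^{k+1}$. Since $x_{j+1}/x_j = 1 - \frac{1}{k}x_j^k + O(x_j^{2k}(\log x_j)^l)$, raising to the power $k+1$ gives exactly the same leading behaviour:
$$
\left(\frac{x_{j+1}}{x_j}\right)^{k+1} = 1 - \frac{k+1}{k}x_j^k + O\!\left(x_j^{2k}(\log x_j)^l\right).
$$
Dividing the two expansions, the leading $x_j^k$ terms cancel, and I obtain
$$
\frac{dx_{j+1}/dx_j}{(x_{j+1}/x_j)^{k+1}} = 1 + O\!\left(x_j^{2k}(\log x_j)^l\right).
$$

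Finally, I would take the product over $j=0,\dots,n-1$, which telescopes on the right-hand side to $x_n^{k+1}/x^{k+1}$:
$$
\frac{dx_n/dx}{x_n^{k+1}/x^{k+1}} = \prod_{j=0}^{n-1}\left(1 + O\!\left(x_j^{2k}(\log x_j)^l\right)\right).
$$
Passing to logarithms and using $|\log(1+z)| \le 2|z|$ for $|z|$ small, the logarithm of the absolute value is bounded by a constant times $\sum_{j=0}^{n-1}|x_j|^{2k}|\log x_j|^l$. Since $\mu = 2k > k$, Corollary~\ref{cor:4.3} gives a bound of the form $C\,|x|^k |\log|x||^l$, which tends to $0$ as $x\to 0$. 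Choosing $r$ sufficiently small ensures this is at most $\log 2$ for every $x\in\overline{\phd}$ and every $n$, yielding the required estimate $|dx_n/dx|\le 2|x_n|^{k+1}/|x|^{k+1}$. The main technical point is verifying that all implicit constants in the expansions of $\partial f/\partial x$ and $\partial f/\partial u\cdot u'$ are uniform under the hypotheses $\|\ell\|_\infty, \|x\ell'\|_\infty \le 1$, so that the same $O$-bound applies along the entire orbit; this is where the assumption on $x\ell'$ enters essentially.
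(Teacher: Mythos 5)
Your proof is correct, and it takes a genuinely different route from the paper's. The paper differentiates the telescoped Fatou-coordinate identity
$\frac{1}{x_n^k} + a\log x_n + b(\log x_n)^2 = \frac{1}{x^k} + n + a\log x + b(\log x)^2 + \sum_{p=0}^{n-1}\phe(x_p,u(x_p))$
obtained in the proof of Lemma~\ref{lemma4.2}, and then argues by induction on $n$: the inductive hypothesis $\lvert dx_p/dx\rvert \le 2\lvert x_p\rvert^{k+1}/\lvert x\rvert^{k+1}$ for $p<n$ is used to control the error sum $\sum_p \bigl\lvert\frac{d}{dx_p}\phe(x_p,u(x_p))\bigr\rvert\lvert dx_p/dx\rvert$ via Corollary~\ref{cor:4.3}. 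Your argument instead writes $dx_n/dx$ as a telescoping product of the one-step derivatives $f_u'(x_j)=\partial_x f+\partial_u f\cdot u'$, compares it factor-by-factor with the telescoping product for $(x_n/x)^{k+1}$, observes that the leading $x_j^k$ term cancels in each ratio, and then bounds $\log$ of the product of the resulting $1+O(x_j^{2k}\log x_j)$ factors directly via Corollary~\ref{cor:4.3}. This cleverly removes the induction (the factors in your product are controlled purely in terms of $x_j$, not $dx_j/dx$), and also bypasses the passage to the coordinate $1/z$. Both routes use the same two inputs---the single-step derivative estimate $f_u'(x_j)=1-\frac{k+1}{k}x_j^k+O(x_j^{2k}\log x_j)$, which is exactly where $\lVert\ell\rVert_\infty\le 1$ and $\lVert x\ell'\rVert_\infty\le 1$ enter, and the summability statement of Corollary~\ref{cor:4.3} with $\mu=2k>k$---so the underlying analysis is the same; yours packages it more transparently. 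One point you should state explicitly to close the argument: $\lvert\log\lvert 1+z\rvert\rvert = \lvert\Re\log(1+z)\rvert\le\lvert\log(1+z)\rvert\le 2\lvert z\rvert$ for $\lvert z\rvert\le 1/2$, and that the hypothesis $\lVert\ell\rVert_\infty\le 1$ together with Lemma~\ref{lemma4.2} keeps all iterates $x_j$ inside $\phd$, which is required for Corollary~\ref{cor:4.3} to apply.
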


\begin{proof}
Arguing as in the proof of Lemma \ref{lemma4.2}, 
we have
\begin{equation}\label{eq:4.19}
\frac{1}{x_1^k}+ a\log x_1 + b (\log x_1)^2 =\frac{1}{x^k} + 1 + a\log x + b(\log x)^2 +\phe(x,u),
\end{equation}
where $\phe$ is holomorphic in $x, u, x^{\ell_j}\log x$ and 
\begin{equation*}
\phe(x,u)= O\left( x^{2k}(\log x)^l, \norm u \right) = O\left( x^{2k} (\log x)^l, x^{k+1} \norm \ell \right).
\end{equation*}
By \eqref{eq:4.19} we therefore have
\begin{equation*}
\frac{1}{x_n^k}+ a\log x_n + b (\log x_n)^2 
=\frac{1}{x^k} + n + a\log x + b(\log x)^2 + \sum_{p=0}^{n-1}\phe(x_p,u(x_p)).
\end{equation*}
Differentiating, we obtain
\begin{equation}\label{eq:dxn_dx}
\begin{split}
-\left[ \frac{k-ax_n^{k} -2b x_n^{k} \log x_n}{x_n^{k+1}} \right] \frac{d x_n}{d x}
=-\left[ \frac{k-ax^{k} -2b x^{k} \log x}{x^{k+1}} \right] + \sum_{p=0}^{n-1} \frac{d}{d x_p}\left[ \phe(x_p, u(x_p)) \right] \frac{d x_p}{d x}.
\end{split}
\end{equation}

We shall now proceed by induction on $n$. We first have to estimate the sum of the remainders $\phe(x_p,u(x_p))$. From the hypotheses for $\ell$ and the form of $\phe$ we deduce the existence of a constant $K$ so that
\begin{equation*}
\modu{\frac{d}{d x}\phe(x,u(x))} \leq K \left( \modu{\log\modu x}+ \norm \ell+ \norm{x \ell^{\prime}} \right) \modu x.
\end{equation*} 
For $n=1$ we have
\begin{equation*}
\begin{split}
\modu{\frac{d x_1}{d x}}&=\modu{\frac{k- ax^{k} - 2bx^{k}\log x + x^{k+1} \frac{d}{d x}\phe(x, u(x))   }{k- ax_1^{k} - 2bx_1^{k}\log x_1}\cdot \frac{x_1^{k+1}}{x^{k+1}}  } 
\leq D\frac{\modu{x_1}^{k+1}}{\modu x^{k+1}},
\end{split}
\end{equation*}
for a constant $D\in\R$, that can be chosen to be $D=2$, if $r$ is small enough.

Let us assume, by inductive hypothesis, $\modu{\frac{d x_p}{d x}}\leq 2 \frac{\modu{x_p}^{k+1}}{\modu x ^{k+1}}$ for any $p< n$. Then, by the previous corollary, we have
\begin{equation*}
\begin{split}
\modu{\sum_{p=0}^{n-1} \frac{d}{d x_p}\left[ \phe(x_p, u(x_p)) \right] \frac{d x_p}{d x} } 
&\leq 2 K(1+ \norm \ell + \norm{x\ell^{\prime}}) \sum_{p=0}^{\infty}\frac{\modu{x_p}^{k+2}}{\modu x ^{k+1}} + \sum_{p=0}^{\infty}\frac{\modu{x_p}^{k+2}\modu{\log\modu{x_p}}}{\modu x ^{k+1}} \\
&\leq 2 K(1+ \norm \ell + \norm{x\ell^{\prime}}) \frac{C_{k+2,0}}{\modu x^{k-1}}+ \frac{C_{k+2,1}}{\modu x^{k-1}}= \frac{K_1}{\modu x^{k-1}}.
\end{split}
\end{equation*}
Therefore, we obtain
\begin{equation*}
\begin{split}
\modu{\frac{d x_n}{d x}}
&\leq \frac{\modu{k- ax^{k} - 2bx^{k}\log x }+ K_1 \modu x^{2}}{\modu{k- ax_n^{k} - 2bx_n^{k}\log x_n}}\cdot \frac{\modu{x_n}^{k+1}}{\modu x^{k+1}}
\leq 2 \frac{\modu{x_n}^{k+1}}{\modu x ^{k+1}},
\end{split}
\end{equation*}
for $r$ small enough, and we are done.
\end{proof}

\subsection{The operator $\T$}

To find our desired holomorphic curve, we shall use, as announced, a certain operator acting on the space of maps $u$ of order $k+1\ge 2$. We saw that, given a map $u(\cdot)=x^{k+1}\ell(\cdot)$, with $\ell:\Pi_r^i\to \C^{p-1}$, the iterates $\{x_n\}$ of $x_0\in\Pi_r^i$ defined via
$$ 
x_{j+1}=f_u(x_j):= f(x_j,u(x_j)) 
$$
are well-defined for $r$ sufficiently small. With this choice for $u$, the operator
$$
\T u(x) =x^{kA} \sum_{n=0}^{\infty} x_n ^{-kA}H(x_n, u(x_n))
$$ 
where $A$ is the matrix associated to the non-degenerate characteristic direction we are studying, 
$$
H(x,u):= x^{kA} (w-w_1)= u - x^{kA}x_1^{-kA}u_1,
$$
and $\{x_n\}$ is the sequence of the iterates of $x$ under $f_u$, is well-defined, since the series converges normally.
We shall now restrict the space of definition of $\T$, to obtain a contracting operator. In particular, we are going to search for positive constants $r,\, C_0$ and $C_1$, so that $\T$ is well-defined on a closed subset of the Banach space of the maps of order $k+1\ge 2$. 

We have the following analogous of Definition~4.7 of \cite{Ha}. 

\begin{definition}
Let $k\in\N\setminus\{0\}$. Let $h,q\in \N$ be such that $hk\geq 3$ and $h\geq 1$, and let $r>0$. For any $i=1,\ldots,k$, let $B_{h,q,r}^i$ be the space of maps $u:\phd\to\C^{p-1}$, of the form $u(\cdot)=x^{kh-1}(\log x)^q t(\cdot)$ with $t$ holomorphic and bounded. The space $B_{h,q,r}^i$ endowed with the norm $\norm u = \norm t_{\infty}$ is a Banach space.
\end{definition}

\begin{definition}\label{defT}
Let $k\in\N\setminus\{0\}$, and let $h,q\in \N$ be such that $hk\geq 3$ and $h\geq 1$. Let $r, C_0$ and $C_1$ be positive real constants and let $\Et\subset B_{h,q,r}^i$ be the closed subset of  $B_{h,q,r}^i$ given by the maps so that
\begin{enumerate}
\item $\norm{u(x)}\leq C_0 \modu x^{kh-1} \modu{\log\modu x}^{q}$, for any $x\in \phd$;
\item $\norm{u^{\prime}(x)}\leq C_1 \modu x^{kh-2} \modu{\log\modu x}^{q}$, for any $x\in \phd$.
\end{enumerate}
Let $\T$ be the operator defined as
\begin{equation}\label{eq:4.14}
\T u(x) =x^{kA} \sum_{n=0}^{\infty} x_n ^{-kA}H(x_n, u(x_n)),
\end{equation}  
where $A$ is the matrix associated to the non-degenerate characteristic direction we are studying,  as in \eqref{eq:tre} we have
$$
H(x,u)= x^{kA} (w-w_1)= u - x^{kA}x_1^{-kA}u_1,
$$
and $\{x_n\}$ is the sequence of the iterates of $x$ under $f_u$.
\end{definition}

We shall devote the rest of the section to proving that the restriction of $\T$ to $\Et$ is a continuous operator and a contraction. It will thus admit a unique fixed point $u$, and we shall prove that the unique fixed point is a solution of the functional equation \eqref{eq:funz}.

We shall need the following reformulation of Lemma~4.1 of \cite{Ha} for the case $k+1\ge 2$.

\begin{lemma}\label{lemma4.1}
Let $\{\alpha_1, \alpha_2,\dots,\alpha_{p-1}\}$ be the directors of $A$, and let $\lambda=\max_j\{\Re \alpha_j\}$. If $\epsilon>0$, then for any $ x\in \Pi_r^i$, with $r$ small enough, we have
\begin{equation*}
\norm{x^{-kA}} \leq \modu{x}^{-k(\lambda + \epsilon)}.
\end{equation*}
\end{lemma}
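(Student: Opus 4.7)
The plan is to reduce to the Jordan normal form of $A$ and then separately estimate the diagonal and nilpotent contributions to $x^{-kA} = \exp(-kA\log x)$. Since the operator norm changes only by a fixed multiplicative constant under conjugation by an invertible matrix, we may assume, as already arranged in the reduction leading to \eqref{eq:main}, that $A$ is in Jordan normal form. Write $-kA = D + N$, where $D$ is diagonal with entries $-k\alpha_1,\dots,-k\alpha_{p-1}$, $N$ is strictly upper triangular with $N^{p-1}=0$, and $DN=ND$. Consequently
$$
x^{-kA} = \exp(D\log x)\,\exp(N\log x),
$$
and it suffices to bound each factor.

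First I would observe that on $\phd$ a single-valued branch of $\log$ is well-defined. Indeed $\phd$ is a connected component of $\D_{r,k} = \{z\in\C : |z^k-r|<r\}$, and since the disk $\{|w-r|<r\}$ is contained in the open right half-plane, $\Re x^k>0$ on $\phd$; hence $\arg x$ is confined to an interval of length $\pi/k$ on $\phd$. Writing $\log x = \log\modu x + i\arg x$, a direct computation gives
$$
\bigl|\exp(-k\alpha_j\log x)\bigr| = \modu{x}^{-k\Re\alpha_j}\,\exp\bigl(k(\Im\alpha_j)\arg x\bigr),
$$
and the last factor is bounded by a constant $K_1=K_1(A,k)$ independent of $x\in\phd$. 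Since $\modu x<1$ for $r$ sufficiently small, and $\Re\alpha_j\le\lambda$ for every $j$, each diagonal entry of $\exp(D\log x)$ is bounded in modulus by $K_1\modu{x}^{-k\lambda}$.

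For the nilpotent factor, since $N^{p-1}=0$,
$$
\exp(N\log x)=\sum_{j=0}^{p-2}\frac{(\log x)^j}{j!}\,N^j,
$$
so $\norm{\exp(N\log x)}\le K_2\bigl(1+\mlog{x}^{p-2}\bigr)$ for some $K_2=K_2(A)$; the passage from $|\log x|$ to $\mlog{x}$ costs only a further bounded multiplicative constant because $\arg x$ is bounded on $\phd$. Combining the two estimates,
$$
\norm{x^{-kA}} \le K\bigl(1+\mlog{x}^{p-2}\bigr)\modu{x}^{-k\lambda}
$$
for some constant $K=K(A,k)$. Given $\epsilon>0$, since $(1+\mlog{x}^{p-2})\modu{x}^{k\epsilon}\to 0$ as $x\to 0$, shrinking $r$ guarantees $K(1+\mlog{x}^{p-2})\le\modu{x}^{-k\epsilon}$ on $\phd$, which yields the stated bound.

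The only points requiring some care are the boundedness of $\arg x$ on $\phd$, which follows from the location of the disk $\{|w-r|<r\}$ in the right half-plane, and the absorption of the polynomial growth in $\log x$ coming from the nilpotent part into the factor $\modu{x}^{-k\epsilon}$; neither step presents a serious obstacle.
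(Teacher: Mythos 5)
Your proof is correct and follows essentially the same route as the paper: decompose $-kA = D+N$ with $D$ diagonal and $N$ nilpotent commuting, bound $\|x^{-kD}\|$ by $K|x|^{-k\lambda}$ and $\|\exp(-kN\log x)\|$ by a polynomial in $|\log x|$ of degree $p-2$, then absorb the constant and the logarithmic factor into $|x|^{-k\epsilon}$ by shrinking $r$. You simply make explicit the intermediate steps (boundedness of $\arg x$ on $\phd$, the exact modulus of the diagonal entries) that the paper leaves implicit.
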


\begin{proof}
We may assume without loss of generality that $A$ is in Jordan normal form, that is
$A=D+N$ where
\begin{equation*}
D=Diag (\alpha_1, \alpha_2,\dots,\alpha_{p-1} ), \, DN=ND,\, N^{p-1}=0.
\end{equation*}
Since $D$ and $N$ commute, we have $x^{-kA}= x^{-k(D+N)}= x^{-kD}\exp{(-kN\log x)}$, and so we have the following estimate
\begin{equation*}
\norm{x^{-kA}}\leq \norm{x^{-kD}} \norm{ \exp{(-kN\log x)} } \leq K \modu{x} ^{-k\lambda} \modu{\log x} ^{p-2} \leq \modu x ^{-k(\lambda +\epsilon)},
\end{equation*}
for $r$ small enough, and we are done.
\end{proof}

\begin{remark}
It follows from \eqref{eq:dxn_dx} that if $u\in B_{h,q,r}^i$, then the operator $H$ verifies
\begin{equation*}
H(x,u(x))=\oo{x^{k(h+1)}(\log x)^{q+1},x^{k(h+1)}(\log x)^{p_h}},
\end{equation*}
mapping $B_{h,q,r}^i$ into intself. We shall see that 
$$
(\T u)(x)=\oo{x^{kh-1}(\log x)^q},
$$
for $q\geq p_h$.
\end{remark}

We have the following generalization of Lemma~4.5 of \cite{Ha} for the case $k+1\ge 2$.

\begin{lemma}\label{lemma4.5}
Let $\T$ be the operator defined in Definition \ref{defT}. Let $\lambda=\max_j \{\Re \alpha_j \}$, where $\alpha_1,\dots,\alpha_{p-1}$ are the directors of the non-degenerate characteristic direction $[v]$, and let $h$ be an integer so that $h> \lambda +\epsilon$. Let $p_h$ be as in \eqref{eq:H}. Then, for $r$ sufficiently small, there exists a constant $C_0$ so that, for any $u$ satisfying 
\begin{equation}\label{eqstar}
\norm{u(x)} \leq C_0 \modu x^{hk-1} \modu{\log\modu x }^{p_h},
\end{equation}
for each $x\in \Pi_r^i$, we have that $\T u$ satisfies the same inequality in $\Pi_r^i$.
\end{lemma}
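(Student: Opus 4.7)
The plan is to estimate $\normu{\T u(x)}$ term by term in the series defining $\T u$, combining three ingredients: the structure of $H$ given in \eqref{eq:H}, the operator-norm bound of Lemma~\ref{lemma4.1} for $x^{-kA}$, and the summation estimate of Corollary~\ref{cor:4.3}.

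First I would bound $\normu{H(x_n,u(x_n))}$. From \eqref{eq:H} one has
$$
H(x,u) = \oo{\normu{u}^2 x^k,\ \normu{u}\, x^{k+1}\log x,\ x^{k(h+1)}(\log x)^{p_h}}.
$$
Plugging in the hypothesis \eqref{eqstar}, the three contributions have orders $\modu{x_n}^{2(hk-1)+k}\mlog{x_n}^{2p_h}$, $\modu{x_n}^{hk+k}\mlog{x_n}^{p_h+1}$ and $\modu{x_n}^{k(h+1)}\mlog{x_n}^{p_h}$. Since $hk\ge 3$ we have $2hk+k-2 > hk+k$, so for $r$ small the first is dominated by the other two, and I expect to conclude
$$
\normu{H(x_n,u(x_n))} \le K_1(1+C_0+C_0^2)\,\modu{x_n}^{k(h+1)}\mlog{x_n}^{p_h+1}
$$
with $K_1$ depending only on $F$.

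Next I would control the matrix factor $x^{kA}x_n^{-kA}=(x/x_n)^{kA}$, using a coherent branch of logarithm on the simply connected domain $\phd$. Mimicking the Jordan-form argument in the proof of Lemma~\ref{lemma4.1} and absorbing the logarithmic factors coming from the nilpotent part of $A$ into $\epsilon$, I would obtain, for $r$ small enough,
$$
\normu{x^{kA}x_n^{-kA}} \le K_2\left(\frac{\modu{x}}{\modu{x_n}}\right)^{k(\lambda+\epsilon)},
$$
with $\lambda = \max_j \Re\alpha_j$ and $\epsilon$ the same one fixed by the hypothesis $h>\lambda+\epsilon$. Combining this with Step~1 inside the definition of $\T u$ yields
$$
\normu{\T u(x)} \le K_1K_2(1+C_0+C_0^2)\modu{x}^{k(\lambda+\epsilon)} \sum_{n\ge 0}\modu{x_n}^{k(h+1-\lambda-\epsilon)}\mlog{x_n}^{p_h+1}.
$$
Since $h>\lambda+\epsilon$, the exponent $\mu:=k(h+1-\lambda-\epsilon)$ is strictly greater than $k$, so Corollary~\ref{cor:4.3} applies with this $\mu$ and $q=p_h+1$, giving
$$
\sum_{n\ge 0}\modu{x_n}^{\mu}\mlog{x_n}^{p_h+1}\le C_{\mu,p_h+1}\,\modu{x}^{k(h-\lambda-\epsilon)}\mlog{x}^{p_h+1},
$$
whence, after the exponents add, $\normu{\T u(x)}\le K_3(1+C_0+C_0^2)\modu{x}^{kh}\mlog{x}^{p_h+1}$. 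Comparing with the target $C_0\modu{x}^{hk-1}\mlog{x}^{p_h}$, the ratio carries an extra factor $\modu{x}\mlog{x}$ that tends to $0$ as $\modu{x}\to 0$; hence for $C_0$ fixed and $r$ sufficiently small the bound \eqref{eqstar} is preserved by $\T$ on $\phd$.

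The main technical obstacle is the operator-norm estimate of the second step: Lemma~\ref{lemma4.1} is stated only for $x^{-kA}$, so I have to verify that the ratio form carries through on $\phd$ with all the nilpotent logarithmic corrections buried inside a single $\epsilon$. Once this is in hand, the rest is a routine combination with Corollary~\ref{cor:4.3}, where the condition $h>\lambda+\epsilon$ gives precisely the room needed both to make $\mu>k$ and to absorb the final factor $\modu{x}\mlog{x}$ by choosing $r$ small.
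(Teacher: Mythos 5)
Your proof follows essentially the same route as the paper's: bound $\|H(x_n,u(x_n))\|$ via the structure in \eqref{eq:H} and the hypothesis \eqref{eqstar}, bound the matrix factor $\|(x_n/x)^{-kA}\|$ via Lemma~\ref{lemma4.1}, and sum with Corollary~\ref{cor:4.3} using $h>\lambda+\epsilon$ so that $\mu=k(h+1-\lambda-\epsilon)>k$. The only cosmetic difference is bookkeeping: you carry the factor $(1+C_0+C_0^2)$ and the exponent $p_h+1$ on the logarithm all the way through and absorb the surplus $\modu{x}\mlog{x}$ at the very end by shrinking $r$, whereas the paper absorbs the $C_0$--dependence already in the intermediate bound $\|H(x,u(x))\|\le K\modu{x}^{k(h+1)}\mlog{x}^{p_h}$ (claiming $K$ independent of $C_0$ for $r$ small) and then drops one power of $|x|$ to pass from $|x|^{kh}$ to $|x|^{kh-1}$. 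Your version is arguably the more careful of the two, particularly in flagging that Lemma~\ref{lemma4.1} must be applied to the quotient $x_n/x$ rather than to a point of $\Pi_r^i$ (both treatments rely on the boundedness of $|x_n/x|$ and the decay of the nilpotent log correction, which the paper uses implicitly).
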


\begin{proof}
By the definition, we have
\begin{equation*}
\T u(x)= \sum_{n=0}^{\infty}\left( \frac{x_n}{x} \right)^{-kA} H(x_n, u(x_n)).
\end{equation*}
Thanks to equation \eqref{eq:H} we know that
$$
H(x,u)= \oo{\norm u^2 x^k, \norm u x^{k+1} \log x, x^{k(h+1)}(\log x)^{p_h}}.
$$
Therefore there exist $K_1,\,K_2,\,K_3$ such that
\begin{equation*}
\norm{H(x,u)}\leq K_1\norm u^2 \modu x^k + K_2 \norm u \modu x^{k+1} \modu{\log x} + K_3 \modu x^{k(h+1)}\modu{\log x}^{p_h}, 
\end{equation*}
in a neighbourhood of $0$. From the hypothesis $\norm{u(x)}\leq C_0 \modu x ^{kh-1} \modu{\log\modu x}^{p_h}$, it follows that for all $x\in\Pi_r^i$
\begin{equation*}
\begin{split}
\norm{H(x, u(x))} 
&\leq  K \modu x^{k(h+1)}\modu{\log\modu x}^{p_h},
\end{split}
\end{equation*}
with $K$ not depending on $C_0$ provided that $r$ is sufficiently small. 
Then we have 
\begin{equation*}
\norm{H(x_n, u(x_n))}\leq K \modu{x_n}^{k(h+1)}\modu{\log\modu{x_n}}^{p_h},
\end{equation*}
for $x\in \Pi_r^i$, and $r$ small. By Lemma \ref{lemma4.1} we have
\begin{equation*}
\normu{\left( \frac{x_n}{x} \right)^{-kA}}\leq \modu{\frac{x_n}{x}}^{-k(\lambda+\epsilon)}.
\end{equation*}
Applying all these inequalities to $\T u(x)$, and using Corollary \ref{cor:4.3} (note that $h>\lambda + \epsilon$), we obtain
\begin{equation*}
\begin{split}
\norm{\T u(x)} 
&\leq K\sum_{n=0}^{\infty}\modu{\frac{x_n}{x}}^{-k(\lambda+\epsilon)}\modu{x_n}^{k(h+1)}\modu{\log\modu{x_n}}^{p_h} \leq K^\prime\modu x^{kh} \modu{\log\modu x}^{p_h}\\
&\leq K^{\prime \prime}\modu x^{kh-1} \modu{\log\modu x}^{p_h},
\end{split}
\end{equation*}
and we are done.
\end{proof}

For our estimates we shall need the following technical result, generalizing Lemma~4.6 of \cite{Ha} for the case $k+1\ge 2$.

\begin{lemma}\label{lemma:dimo}
Let $\T$ be the operator defined as in Definition \ref{defT}. Let $h$, $p_h$ and $C_0$ be as in Lemma \ref{lemma4.5}. Then, for $r$ sufficiently small, there exists a constant $C_1$ such that for any $u$ satisfying \eqref{eqstar} and 
\begin{equation}\label{eq:6.12}
\norm{u^{\prime}(x)} \leq C_1 \modu x^{hk-2} \modu{\log\modu x }^{p_h},
\end{equation}
for each $x\in \Pi_r^i$, then $(\T u)^\prime$ satisfies the same inequality in $\Pi_r^i$.
\end{lemma}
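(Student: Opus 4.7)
The plan is to differentiate the series defining $\T u$ term-by-term and bound each contribution separately. Writing
$$(\T u)'(x) = \frac{kA}{x}\,\T u(x) + x^{kA}\sum_{n=0}^\infty \frac{d}{dx}\Bigl[x_n^{-kA}H(x_n,u(x_n))\Bigr],$$
the first summand is immediately controlled by Lemma~\ref{lemma4.5}, producing a bound of the form $K_0 C_0 |x|^{kh-2}|\log|x||^{p_h}$ for some constant $K_0$ depending only on $A$.

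For the series term, the chain rule gives summands of shape
$$\Bigl[-kA\,x_n^{-kA-1}H(x_n,u(x_n)) + x_n^{-kA}\bigl(H_x(x_n,u(x_n)) + H_u(x_n,u(x_n))u'(x_n)\bigr)\Bigr]\frac{dx_n}{dx}.$$
I would first derive, directly from $H(x,u) = u - x^{kA}f(x,u)^{-kA}\Psi(x,u)$ and \eqref{eq:main}, the estimates
$$\|H_u(x,u)\| = O(\|u\|\,x^k,\,x^{k+1}\log x),\quad \|H_x(x,u)\| = O(\|u\|^2 x^{k-1},\,\|u\|x^k\log x,\,x^{k(h+1)-1}(\log x)^{p_h}),$$
in the same spirit as the estimate for $H$ itself in \eqref{eq:H}. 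Substituting $(x_n,u(x_n))$ and using the hypothesis $\|u(x_n)\|\leq C_0|x_n|^{kh-1}|\log|x_n||^{p_h}$ converts each contribution into a power of $|x_n|$ decorated by logarithms, with exponent strictly greater than $k$ (since $h>\lambda+\epsilon\geq 0$ and $kh\geq 3$).

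The series is then summable thanks to three ingredients: Lemma~\ref{lemma4.1} to bound $\|(x_n/x)^{-kA}\|\leq|x_n/x|^{-k(\lambda+\epsilon)}$, Lemma~\ref{lemma4.4} to bound $|dx_n/dx|\leq 2|x_n|^{k+1}/|x|^{k+1}$, and Corollary~\ref{cor:4.3} to evaluate the resulting sums of type $\sum|x_n|^\mu|\log x_n|^q$. The outcome has the shape $(\alpha(C_0)+\beta(r)\,C_1)\,|x|^{kh-2}|\log|x||^{p_h}$, where $\alpha(C_0)$ collects all contributions independent of $C_1$ (depending on $C_0$ and $C_0^2$ through $H$ and $H_x$), while $\beta(r)$ is the coefficient of $C_1$ coming solely from the $H_u\cdot u'$ term; because $H_u$ has the factor $\|u\|x^k$ or $x^{k+1}\log x$ above, $\beta(r)$ carries an extra $|x_n|^{k+1}|\log x_n|$ whose Corollary~\ref{cor:4.3}-sum tends to zero as $r\to 0$.

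The main subtlety is the self-reference: $C_1$ appears on both sides of the desired inequality. This is resolved by the standard two-step choice: first fix $C_1\geq 2\alpha(C_0)$ (possible because $\alpha$ does not depend on $C_1$), so that the $C_1$-independent part is absorbed into $C_1/2$; then shrink $r$ until $\beta(r)\leq 1/2$. With these choices the total bound is at most $C_1|x|^{kh-2}|\log|x||^{p_h}$ on $\Pi_r^i$, which is \eqref{eq:6.12} for $\T u$.
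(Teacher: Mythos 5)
Your proposal is correct and follows essentially the same route as the paper's proof: differentiate the series, split into the prefactor term, the $\partial_u H\cdot u'$ term, and the $\partial_x(x_n^{-kA}H)$ term, then control each using the $\|H_x\|$, $\|H_u\|$ estimates together with Lemma~\ref{lemma4.1}, Lemma~\ref{lemma4.4}, and Corollary~\ref{cor:4.3}. One minor remark: your explicit two-step resolution of the self-reference in $C_1$ (first fixing $C_1\geq 2\alpha(C_0)$, then shrinking $r$ so that $\beta(r)\leq 1/2$) is actually more careful than the paper's treatment, which asserts that the corresponding constant $D_2$ is independent of $C_1$ without making explicit that the extra factor $|x|\,|\log|x||$ gained in the sum is what absorbs the $C_1$-dependence for $r$ small.
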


\begin{proof}
By the definition of $\T$ we have
\begin{equation*}
\T u(x)= x^{kA}\sum_{n=0}^{\infty}\left( x_n \right)^{-kA} H(x_n, u(x_n)).
\end{equation*}
Then, differentiating, we obtain
\begin{equation*}
\begin{split}
\frac{d }{d x} \T u(x)
&=\underbrace{ \frac{d}{d x} x^{kA} \left( \sum_{n=0}^{\infty}x_n^{-kA}H(x_n, u(x_n)) \right) }_{S_1} + \underbrace{x^{kA} \sum_{n=0}^{\infty} \frac{\partial}{\partial u}\left( x_n^{-kA}H(x_n, u(x_n)) \right) \frac{d u}{d x_n}\frac{d x_n}{d x} }_{S_2}\\
&\quad +\underbrace{x^{kA} \sum_{n=0}^{\infty} \frac{\partial }{\partial x_n}\left( x_n^{-kA}H(x_n, u(x_n)) \right) \frac{d x_n}{d x} }_{S_3}.
\end{split}
\end{equation*}
We then have to estimate $S_1$, $S_2$, and $S_3$. Since 
$$
\frac{d x^{kA}}{dx}=kAx^{-1}x^{kA},
$$ 
we have
\begin{equation*}
S_1 = kAx^{-1} x^{kA} \left( \sum_{n=0}^{\infty} x_n^{-kA} H(x_n, u(x_n)) \right),
\end{equation*}
and thus, using the same inequalities as in the previous proof, we obtain
\begin{equation*}
\begin{split}
\norm{S_1}
&\leq \frac{k \norm A}{\modu x} C_0 \modu x^{kh-1} \modu{\log\modu x}^{p_h}
= D_1 \modu x^{kh-2}\modu{\log\modu x}^{p_h},
\end{split}
\end{equation*}
where $D_1=k \norm{A} C_0$. For the second term, we have
\begin{equation*}
\begin{split}
S_2 &= x^{kA} \sum_{n=0}^{\infty} x_n^{-kA}\frac{\partial H}{\partial u}(x_n, u(x_n)) \frac{d u}{d x_n}\frac{d x_n}{d x}.
\end{split}
\end{equation*}
Since $kh\geq 3$ the hypotheses of Lemma \ref{lemma4.4} are satisfied, and hence 
\begin{equation*}
\modu{\frac{dx_n}{dx}}\leq 2\modu{\frac{x_n}{x}}^{k+1}.
\end{equation*}
Moreover, $H(x,u)= \oo{\norm u^2 x^k, \norm u x^{k+1}\log x,  x^{k(h+1)}(\log x)^{p_h}}$ implies that there exist constants $K_1$ and $K_2$ so that
\begin{equation*}
\normu{\frac{\partial }{\partial u}\left( H(x,u) \right)} \leq K_1 \norm u \modu x^k + K_2\modu x^{k+1}\modu{\log\modu x},
\end{equation*}
and, our hypothesis gives that there is $C_0$ so that $\norm{u(x)}\leq C_0 \modu x^{kh-1} \modu{\log\modu x}^{p_h}$. Therefore
\begin{equation*}
\begin{split}
\normu{\frac{\partial H}{\partial u}(x,u(x)) }&\leq K_1 C_0 \modu x^{kh+k-1} \modu{\log\modu x}^{p_h} + K_2\modu x^{k+1}\modu{\log\modu x}
\leq C \modu x^{k+1} \modu{\log\modu x },
\end{split}
\end{equation*}
for some constant $C$, not depending on $C_0$. 
If $C_1$ is so that $\norm{u^{\prime}(x)}\leq C_1 \modu x^{kh-2} \modu{\log\modu x }^{p_h}$, then 
\begin{equation*}
\begin{split}
\normu{\frac{\partial H}{\partial u}(x_n, u(x_n)) \frac{d u(x_n)}{d x_n}\frac{d x_n}{d x} }
&= \normu{\frac{\partial H}{\partial u}(x_n, u(x_n))}\normu{\frac{d u(x_n)}{d x_n}}\modu{\frac{d x_n}{d x}}\\
&\leq 2C C_1 \modu x^{-(k+1)}\modu{x_n}^{2k+ kh}\modu{\log\modu{x_n}}^{p_h}.
\end{split}
\end{equation*}
Analogously to the proof of the previous result, $\norm{(\frac{x_n}{x})^{-kA}}\leq\modu{\frac{x_n}{x}}^{-k(\lambda+\epsilon)}$ and, by Corollary \ref{cor:4.3}, we have
\begin{equation*}
\begin{split}
\norm{S_2}
&\leq \sum_{n=0}^{\infty} 2C C_1 \modu{\frac{x_n}{x}}^{-k(\lambda+\epsilon)} \modu{\frac{x_n}{x}}^{k+1} \modu{x_n}^{kh+ k-1}\modu{\log\modu{x_n}}^{p_h+1} \\
&\leq D_2 \modu x^{kh-2}\modu{\log\modu x }^{p_h},
\end{split}
\end{equation*}
with $D_2$ not depending on $C_0$ and $C_1$. 

We are left with the third term
\begin{equation*}
S_3= x^{kA} \sum_{n=0}^{\infty} \frac{\partial G}{\partial x}(x_n, u(x_n)) \frac{d x_n}{d x},
\end{equation*}
where $G(x,u)= x^{-kA}H(x,u)$, and hence
\begin{equation*}
\frac{\partial G}{\partial x}=-\frac{kA}{x}x^{-kA} H(x,u)+x^{-kA} \frac{\partial H}{\partial x}(x,u).
\end{equation*}
With the same computations as before, using 
$$
H(x,u)=\oo{\normu u^2 x^k, \normu u x^{k+1}\log x,  x^{k(h+1)}(\log x)^{p_h}}
$$ 
and $\norm{u(x)}\leq C_0\modu x^{kh-1}\modu{\log\modu x}^{p_h}$, we have that there exist constants $K_1$, $K_2$ and $K_3$ so that
\begin{equation*}
\begin{split}
\normu{\frac{\partial H}{\partial x}}\leq K_1 \norm u^2\modu x^{k-1} +K_2 \norm u \modu x^k\modu{\log x} + K_3 \modu  x^{k(h+1)-1}\modu{\log\modu x}^{p_h}
\end{split}
\end{equation*}
and thus there exists $C$, depending of $C_0$, so that
\begin{equation*}
\normu{ x^{kA}\frac{\partial G}{\partial x}(x,u(x)) }\leq C \modu x^{k(h+1)-1} \modu{\log\modu x}^{p_h+1}.
\end{equation*}
Using again Corollary \ref{cor:4.3}, we obtain
\begin{equation*}
\begin{split}
\norm{S_3}
&\leq K_4\sum_{n=0}^\infty \modu{\frac{x_n}{x}}^{-k(\lambda + \epsilon)+k+1} \modu{x_n}^{k(h+1)-1}\modu{\log \modu{x_n}}^{p_h+1} \\
&\leq D_3 \modu x^{kh-2}\mlog x^{p_h},
\end{split}
\end{equation*}
with $D_3$ independent of $C_0$. 
Summing up, we obtain
\begin{equation*}
\normu{\frac{d}{dx}\T u(x)}\leq \norm{S_1}+\norm{S_2}+\norm{S_3}\leq (D_1 + D_2 + D_2)\modu x^{kh-2}\modu{\log\modu x}^{p_h},
\end{equation*}
and setting $C_1=D_1 + D_2 + D_3$ we conclude the proof.
\end{proof}

The previous two lemmas prove that $\T$ is an endomorphism of $\Et$. Now we have to prove that $\T$ is a contraction. We shall need the following reformulation of Lemma~4.9 of \cite{Ha} for the case $k+1\ge 2$.

\begin{lemma}\label{lemma:dimo2}
Let $u(\cdot)=x^{kh-1}(\log x)^{p_h}\ell_1(\cdot)$ and $v(\cdot)=x^{kh-1}(\log x)^{p_h}\ell_2(\cdot)$ be in $\Et$ and let $\{x_n\}$ and $\{x_n^{\prime}\}$ be the iterates of $x$ via $f_u$ and $f_v$. Then there exists a constant $K$ so that
\begin{equation*}
\modu{x_n^{\prime}-x_n} \leq K \modu x^{kh} \modu{\log\modu x}^{p_h}\norm{\ell_2- \ell_1}_{\infty}.
\end{equation*}
for any $n$, and $r$ small enough. 
\end{lemma}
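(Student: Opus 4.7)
The plan is to argue by induction on $n$ with $e_n := x'_n - x_n$, noting that $e_0 = 0$. From the recursions $x_{n+1}=f_u(x_n)$ and $x'_{n+1}=f_v(x'_n)$ one performs the splitting
\begin{equation*}
e_{n+1} = \underbrace{[f(x'_n, u(x'_n)) - f(x_n, u(x_n))]}_{A_n} + \underbrace{[f(x'_n, v(x'_n)) - f(x'_n, u(x'_n))]}_{B_n}.
\end{equation*}
For the second bracket, \eqref{eq:main} gives $\frac{\partial f}{\partial u}(x,u) = O(x^{k+1})$; combined with the pointwise estimate
$$
\norm{v(x'_n)-u(x'_n)} \le \modu{x'_n}^{kh-1}\mlog{x'_n}^{p_h}\norm{\ell_2-\ell_1}_\infty,
$$
this yields $\modu{B_n}\le C\modu{x'_n}^{kh+k}\mlog{x'_n}^{p_h}\norm{\ell_2-\ell_1}_\infty$. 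The first bracket, by the mean value theorem, equals $A_n = (f_u)'(\xi_n)\, e_n$ for some $\xi_n$ on the segment from $x_n$ to $x'_n$.

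Unrolling the recursion from $e_0=0$ gives
\begin{equation*}
e_n = \sum_{j=0}^{n-1}\Bigl(\prod_{i=j+1}^{n-1}(f_u)'(\xi_i)\Bigr) B_j.
\end{equation*}
The key input is Lemma~\ref{lemma4.4} applied to the orbit started at $x_j$:
$$
\Bigl|\prod_{i=j}^{n-1}(f_u)'(x_i)\Bigr| = \modu{\tfrac{dx_n}{dx_j}} \le 2\,\modu{x_n}^{k+1}/\modu{x_j}^{k+1}.
$$
Using the inductive hypothesis to ensure that $\xi_i$ and $x'_j$ stay close to $x_i$ and $x_j$, the product for $\xi_i$ is controlled by the same bound up to a uniform constant; plugging the resulting estimate and the bound on $B_j$ into the sum yields
\begin{equation*}
\modu{e_n}\le C\modu{x_n}^{k+1}\norm{\ell_2-\ell_1}_\infty\sum_{j=0}^{n-1}\modu{x_j}^{kh-1}\mlog{x_j}^{p_h}.
\end{equation*}
Since $kh-1>k$ in the regime where this lemma is applied, Corollary~\ref{cor:4.3} with $\mu=kh-1$ and $q=p_h$ bounds the sum by $C'\modu x^{kh-1-k}\mlog x^{p_h}$; combining with $\modu{x_n}\le 2^{1/k}\modu x$ from Lemma~\ref{lemma4.2} gives $\modu{e_n}\le K\modu{x}^{kh}\mlog{x}^{p_h}\norm{\ell_2-\ell_1}_\infty$ uniformly in $n$, closing the induction.

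The principal technical difficulty is justifying that the mean-value derivatives $(f_u)'(\xi_i)$ contribute the same product bound as the true orbit derivatives $(f_u)'(x_i)$. The inductive hypothesis guarantees that $\modu{\xi_i - x_i}\le \modu{e_i}$ is small enough relative to $\modu{x_i}$ that holomorphy of $(f_u)'$ introduces only a summable multiplicative error, controlled again by Corollary~\ref{cor:4.3}. An alternative avenue, perhaps more in the spirit of the proof of Lemma~\ref{lemma4.4}, would be to differentiate the telescoping identity $1/x_n^k+a\log x_n+b(\log x_n)^2 = 1/x^k + n + \sum_{p<n}\phi(x_p,u(x_p))$ for both orbits, subtract the two, and estimate $1/(x'_n)^k - 1/x_n^k$ directly from the difference of the remainder sums, obtaining the bound on $e_n$ by inversion.
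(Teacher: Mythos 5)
Your decomposition $e_{n+1}=A_n+B_n$ and the unrolling into $e_n=\sum_j\bigl(\prod_i M_i\bigr)B_j$ is a genuinely different route from the paper's, and the estimate on $B_n$ (via $\partial f/\partial u=O(x^{k+1})$ and the pointwise bound on $v-u$) is correct and matches what the paper gets. But the route you chose carries two real costs that the paper avoids. First, as you yourself flag, you need $\prod_i M_i$ (where $M_i=\int_0^1 (f_u)'(x_i+te_i)\,dt$ after the integral mean value theorem) to behave like $\prod_i(f_u)'(x_i)$, which Lemma~\ref{lemma4.4} controls; closing that comparison requires a bootstrap in which the errors $|e_i|$ are already known to decay like $|x_i|^{k+1}$, and the ``summable multiplicative error'' claim is not actually established --- with only the final uniform bound $|e_i|\leq K|x|^{kh}|\log|x||^{p_h}\|\ell_2-\ell_1\|_\infty$ as inductive hypothesis, the relevant series $\sum_i|x_i|^{k-1}|e_i|$ need not converge. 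Second, your intermediate sum has exponent $kh-1$, so Corollary~\ref{cor:4.3} demands $kh-1>k$, i.e.\ $h>1+1/k$; this excludes $h=1$ (allowed by $kh\geq3$ when $k\geq3$), whereas the paper works for all admissible $h$.

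The paper's proof sidesteps both problems by never introducing a product of multipliers. It expands $f_v(x')-f_u(x)$ directly, factors out $(x'-x)$ times the quantity $1-\frac1k\sum_{i=0}^{k}(x')^ix^{k-i}+O(\cdot)$, and observes that for $x,x'\in\Pi_r^i$ this bracket has modulus $\leq1$ (because $\Re\bigl(\sum(x')^ix^{k-i}\bigr)>0$ in the petal). This immediately gives $|e_{n+1}|\leq|e_n|+K|x_n|^{kh+k}|\log|x_n||^{p_h}\|\ell_2-\ell_1\|_\infty$, which telescopes with $e_0=0$ to $|e_n|\leq K\sum_j|x_j|^{k(h+1)}|\log|x_j||^{p_h}\|\ell_2-\ell_1\|_\infty$, and Corollary~\ref{cor:4.3} with $\mu=k(h+1)>k$ finishes with no extra constraint on $h$ and no need for Lemma~\ref{lemma4.4}. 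Your ``alternative avenue'' (subtracting the telescoping identity for $1/x_n^k+a\log x_n+b(\log x_n)^2$) is in fact also viable and closer in spirit to the proof of Lemma~\ref{lemma4.4}, but it too is more work than the bound-the-multiplier-by-one argument the paper actually uses.
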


\begin{proof}
Let $x$ and $x^{\prime}$ be in $\Pi_r^i$. We estimate
\begin{equation*}
f_v(\xp)-f_u(x)= f(\xp, v(\xp))- f(x, u(x)).
\end{equation*}
Thanks to \eqref{eq:main}, we can find constants $a,b,c$ and $m(x,u)$ so that
\begin{equation*}
\left\{\begin{array}{l l}
f_v(\xp)=\xp -\frac{1}{k}(\xp)^{k+1} + (\xp)^{2k+1} (a + b \log \xp) + c(\xp)^{k+1} v(\xp) + m(\xp, v),\\
f_u(x)= x -\frac{1}{k}x^{k+1} + x^{2k+1} (a + b \log x) + c x^{k+1} u(x) + m(x, u) .
\end{array}\right.
\end{equation*} 
Therefore we have
\begin{equation}\label{eq:4.25}
\begin{split}
f_v(\xp)-f_u(x)&= (\xp - x)\left[ 1 + \frac{1}{k} \sum_{i=0}^{k}(\xp)^i x^{k-i} + \oo{\modu{(x^{\prime\prime})}^{2k} \modu{\log{\modu x^{\prime\prime}}}} \right] + (v(\xp)- u(x))\oo{\modu{x^{\prime\prime}}^{k+1}},\\
\end{split}
\end{equation}
where $\xs = \max\{ \modu{\xp},\, \modu x \}$. Lemma \ref{lemma4.2} implies $x_n^k \sim (\x{n})^k \sim \frac{1}{n}$ as $n\to\infty$, then we can replace $\modu{\xs}^k$ with $\modu x^k$. Moreover, since
\begin{equation}\label{eq:dimo2}
\begin{split}
v(x^{\prime})
= v(x)+ \oo{x^{kh-2}(\log x)^{p_h}}(x^{\prime}-x),
\end{split}
\end{equation}
we obtain
\begin{equation*}
\begin{split}
v(\xp)-u(x)
&= v(\xp)- v(x)+ v(x)- u(x)\\
&= (\xp- x)\oo{\modu x^{kh-2}\modu{\log\modu x}^{p_h}}+ \oo{\modu x^{kh-1} \modu{\log\modu x}^{p_h}}\norm{\ell_2 - \ell_1}_{\infty}.
\end{split}
\end{equation*}
Then, substituting in \eqref{eq:4.25}, we have
\begin{equation*}
\begin{split}
f_v(\xp)-f_u(x)
&= (\xp - x)\left[ 1 - \frac{1}{k} \sum_{i=0}^{k}(\xp)^i x^{k-i} +\oo{\modu x^{kh+k-1}\modu{\log\modu x}, \modu x^{2k}\modu{\log\modu x}}\right] \\
&\quad + \oo{\modu x^{kh+k} \modu{\log\modu x}^{p_h}}\norm{\ell_2 - \ell_1}_{\infty}.
\end{split}
\end{equation*}
We are left with estimating $f_v(\xp)-f_u(x)$. For $x$ and $\xp$ in $\Pi_r^i$ and $r$ small enough we have
\begin{equation*}
\modu{ 1 - \frac{1}{k} \sum_{i=0}^{k}(\xp)^i x^{k-i} +\oo{\modu x^{2k}\modu{\log\modu x}} }= 1 + \oo{x^k} \leq 1.
\end{equation*}
Moreover, there exists a constant $K$ such that
\begin{equation*}
\modu{f_v(\xp)-f_u(x)}\leq \modu{\xp-x} + K \modu x^{kh+k}\modu{\log\modu x}^{p_h}\norm{\ell_2-\ell_1}_{\infty}.
\end{equation*}
Iterating, we obtain
\begin{equation*}
\begin{split}
\modu{f_v^n(\xp)-f_u^n(x)}
&\leq \modu{\xp- x}+ K \sum_{i=0}^{n-1} \modu{x_i}^{kh+k}\modu{\log\modu{x_i}}^{p_h}\norm{\ell_2-\ell_1}_{\infty},
\end{split}
\end{equation*}
for any $n$.

In particular, if $x=\xp$, we have
\begin{equation*}
\begin{split}
\modu{\x{n}-x_n} 
&\leq K \sum_{i=0}^{n-1} \modu{x_i}^{kh+k}\modu{\log\modu{x_i}}^{p_h}\norm{\ell_2-\ell_1}_{\infty} \\
&\leq K^{\prime} \modu x^{kh}\modu{\log\modu x}^{p_h}\norm{\ell_2-\ell_1}_{\infty},
\end{split}
\end{equation*}
where we used Corollary \ref{cor:4.3} to deduce the last inequality, and we put $K^{\prime}=K C_{k(h+1), p_h}$.
\end{proof}

We now have all the ingredients to prove, as in \cite[Proposition 4.8]{Ha}, that $\T|_{\Et}$ is a contraction.

\begin{proposition}
Let $\T$ be the operator defined in Definition \ref{defT}. Then for $r$ small enough 
$$
\T|_{\Et}\colon\Et\to\Et
$$
is a contraction.
\end{proposition}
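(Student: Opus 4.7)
The plan is to decompose the difference $(\T v-\T u)(x)$ into four sums corresponding to four natural sources of discrepancy, estimate each using the tools already developed (Lemmas~\ref{lemma4.1}, \ref{lemma4.2}, \ref{lemma4.4}, \ref{lemma4.5}, \ref{lemma:dimo}, \ref{lemma:dimo2} and Corollary~\ref{cor:4.3}), and verify that every resulting bound carries a positive power of $r$, so that the Lipschitz constant of $\T|_{\Et}$ tends to $0$ as $r\to 0$.

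Writing $u(x)=x^{kh-1}(\log x)^{p_h}\ell_1(x)$ and $v(x)=x^{kh-1}(\log x)^{p_h}\ell_2(x)$ with $\ell_1,\ell_2$ satisfying the bounds defining $\Et$, and letting $\{x_n\}$, $\{x_n'\}$ be the iterates of $x$ under $f_u$, $f_v$, the definition of $\T$ gives
\begin{equation*}
(\T v-\T u)(x)=\sum_{n=0}^{\infty}\left[\left(\frac{x_n'}{x}\right)^{-kA}H(x_n',v(x_n'))-\left(\frac{x_n}{x}\right)^{-kA}H(x_n,u(x_n))\right].
\end{equation*}
I would split each summand into four pieces $I_1(n)+I_2(n)+I_3(n)+I_4(n)$ by inserting successively the intermediate terms $\left(\frac{x_n}{x}\right)^{-kA}H(x_n',v(x_n'))$, $\left(\frac{x_n}{x}\right)^{-kA}H(x_n,v(x_n'))$ and $\left(\frac{x_n}{x}\right)^{-kA}H(x_n,v(x_n))$, so that $I_1$ isolates the change in the matrix factor, $I_2$ the change in the first slot of $H$, $I_3$ the change in $v$ via the change of its argument, and $I_4$ the direct change of defining map $u\mapsto v$.

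For $I_1$, a mean value argument on $t\mapsto t^{-kA}$ along the segment joining $x_n$ and $x_n'$ (which lies in $\phd$ for $r$ small, by Lemma~\ref{lemma4.2}), together with Lemma~\ref{lemma4.1} and the bound $\norm{H(x_n',v(x_n'))}\leq K\modu{x_n}^{k(h+1)}\mlog{x_n}^{p_h}$ already obtained in the proof of Lemma~\ref{lemma4.5}, gives
\begin{equation*}
\norm{I_1(n)}\leq K\left|\frac{x_n}{x}\right|^{-k(\lambda+\epsilon)}\modu{x_n}^{k(h+1)-1}\mlog{x_n}^{p_h}\modu{x_n'-x_n}.
\end{equation*}
Applying Lemma~\ref{lemma:dimo2} in the form $\modu{x_n'-x_n}\leq K'\modu{x}^{kh}\mlog{x}^{p_h}\norm{\ell_2-\ell_1}_{\infty}$ and summing via Corollary~\ref{cor:4.3} reduces $\sum_n\norm{I_1(n)}$ to a bound of the form $K''\modu{x}^{kh-1+k}\mlog{x}^{p'}\norm{\ell_2-\ell_1}_\infty$ for some $p'\ge p_h$, which on $\phd$ is at most $K''r\cdot\mlog{x}^{p_h}\modu{x}^{kh-1}\norm{\ell_2-\ell_1}_\infty$ after absorbing the excess logarithmic factor. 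The pieces $I_2$ and $I_3$ are handled in exactly the same spirit, combining the bounds on $\partial_x H$ and $\partial_u H$ derived in the proof of Lemma~\ref{lemma:dimo} with $\modu{v(x_n')-v(x_n)}\leq C_1\modu{x_n'-x_n}\modu{x_n}^{kh-2}\mlog{x_n}^{p_h}$ (the $\Et$ bound on $v'$) and Lemma~\ref{lemma4.4}. For $I_4$ the identity $v(x_n)-u(x_n)=x_n^{kh-1}(\log x_n)^{p_h}\bigl(\ell_2(x_n)-\ell_1(x_n)\bigr)$ combined with the Lipschitz bound on $H$ in $u$ produces the analogous sum, whose bound again carries a factor $r^\delta$ for some $\delta>0$ thanks to the extra power $\modu{x_n}^{k+1}\mlog{x_n}$ coming from $\partial_u H$.

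Summing the four contributions will yield $\norm{(\T v-\T u)(x)}\leq C(r)\modu{x}^{kh-1}\mlog{x}^{p_h}\norm{\ell_2-\ell_1}_\infty$ with $C(r)\to 0$ as $r\to 0$, so that for $r$ sufficiently small $C(r)<1$ and $\T|_{\Et}$ is a contraction. The principal obstacle is the combinatorial bookkeeping of the powers of $\modu{x}$ and $\mlog{x}$: each of the four pieces must be arranged so that after applying Corollary~\ref{cor:4.3} the resulting bound genuinely contains a positive power of $\modu{x}$ (hence of $r$), while the accumulated logarithmic weight remains bounded by the exponent $p_h$ tolerated by the norm of $\Et$. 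Once this accounting is checked, the contraction follows and the proof closes at once.
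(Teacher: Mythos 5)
Your proposal follows essentially the same route as the paper: the paper splits $\T u - \T v$ into two sums $S_1$ (change of both arguments of $H$) and $S_2$ (change in the matrix factor $x_n^{-kA}$), estimates $S_1$ via the partial-derivative bounds on $H$ together with Lemma~\ref{lemma:dimo2} and the $\Et$ bound on $v'$, estimates $S_2$ via the identity $x_n^{-kA}-x_n'^{-kA}=x_n^{-kA}\bigl(I-\exp(-kA\log(x_n'/x_n))\bigr)$, and then sums both with Corollary~\ref{cor:4.3} to land on a bound of the form $K|x|^{kh}|\log|x||^{p_h}\norm{\ell_2-\ell_1}_\infty$. Your four-term telescope simply unpacks the paper's $S_1$ into three finer pieces ($I_2$, $I_3$, $I_4$) while your $I_1$ matches $S_2$, so the ingredients, the lemmas invoked, and the way the extra power of $|x|$ (hence of $r$) emerges are the same; it is a refinement of bookkeeping rather than a different argument.
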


\begin{proof}
We have to prove that given $u(\cdot)=x^{kh-1}(\log x)^{p_h}\ell_1(\cdot)$ and $v(\cdot)=x^{kh-1}(\log x)^{p_h}\ell_2(\cdot)$ in $\Et$, we have
\begin{equation*}
\norm{\T u - \T v }\leq C \norm{u - v}
\end{equation*}
with $C< 1$. 

We have
\begin{equation*}
\T u(x)- \T v(x)= x^{kA} \sum_{n=0}^{\infty} \left[ x_n^{-kA}H(x_n, u(x_n)) -x_n^{\prime -kA}H(\x{n}, v(\x n))\right];
\end{equation*}
hence 
\begin{equation*}
\begin{split}
\T u(x)- \T v(x)=&\underbrace{ x^{kA} \sum_{n=0}^{\infty} x_n^{-kA}\left[ H(x_n, u(x_n))- H(\x n, v(\x n))\right] }_{S_1} +\underbrace{ x^{kA} \sum_{n=0}^{\infty} \left[ x_n^{-kA}- x_n^{\prime -kA}\right] H(\x n, v(\x n))  }_{S_2}.
\end{split}
\end{equation*}
For $S_1$, since $H(x,u)=\oo{\norm u ^2 x^k,\, \norm u x^{k+1} \log x,\,x^{k(h+1)}(\log x)^{p_h}}$, for $u(x)=x^{kh-1}(\log x)^{p_h}\ell_1(x)$, there exist $\alpha(x,u)$ and $\beta(x,u)$ holomorphic in the variables $x,\,u$ and $x^k(\log x)^{p_h}$, so that
\begin{equation*}
H(x,u)= u x^{k+1} (\log x) \alpha(x,u) + x^{k(h+1)}(\log x)^{p_h}\beta(x,u).
\end{equation*}
Therefore, by the inequalities in the proof of Lemma \ref{lemma:dimo}, we obtain 
\begin{equation*}
\begin{split}
\normu{H(x_n, u(x_n)) - H(\x n, v(\x n))}&\leq K \left[ \normu{\frac{\partial H}{\partial x} (x_n,u(x_n)) }\modu{x_n - x_n^\prime } + \normu{ \frac{\partial H}{\partial u} (x_n, u(x_n))  } \normu{u(x_n)- v(x_n^\prime)} \right]\\
&\leq K_1\left[ \norm{u(x_n)- v(\x n)}\modu{x_n}^{k+1}\modu{\log\modu{x_n}} + \modu{x_n -\x n}\modu{x_n}^{k(h+1)-1} \modu{\log\modu{x_n}}^{p_h} \right] .
\end{split}
\end{equation*}
Arguing as in the proof of Lemma \ref{lemma:dimo2}, thanks to \eqref{eq:dimo2}, there exist constants $A^{\prime},\,B^{\prime}$ and $K_2$ such that
\begin{equation*}
\begin{split}
\norm{v(\x n)- u(x_n)}
&\leq A^{\prime} \modu{\x n- x_n}\modu{x_n}^{kh-2}\modu{\log\modu{x_n}}^{p_h}+ B^{\prime}\modu{x_n}^{kh-1} \modu{\log\modu{x_n}}^{p_h}\norm{\ell_2 - \ell_1}_{\infty} \\
&\le K_2 \modu{x_n}^{kh-2}\modu{\log\modu{x_n}}^{p_h} \left[ \modu x^{kh}\modu{\log\modu x}^{p_h}+ \modu{x_n} \right] \norm{\ell_2 - \ell_1}_{\infty},
\end{split}
\end{equation*}
where the last inequality follows form the previous lemma. Then
\begin{equation*}
\begin{split}
\norm{S_1}
&\leq K_1 \sum_{n=0}^{\infty}\modu{\frac{x_n}{x}}^{-k(\lambda +\epsilon)} \left\{ \modu{x_n -\x n}\modu{x_n}^{k(h+1)-1} \modu{\log\modu{x_n}}^{p_h} \right.\\
&\quad +\left.  K_2 \modu{x_n}^{k(h+1)-1}\modu{\log\modu{x_n}}^{p_h+1} \left[ \modu x^{kh}\modu{\log\modu x}^{p_h}+ \modu{x_n} \right] \norm{\ell_2 - \ell_1}_{\infty} \right\}.
\end{split}
\end{equation*}
Moreover, setting
$$
\tilde S:=\modu{x_n -\x n}\modu{x_n}^{k(h+1)-1} \modu{\log\modu{x_n}}^{p_h} +K_2 \modu{x_n}^{k(h+1)-1}\modu{\log\modu{x_n}}^{p_h+1} \left[ \modu x^{kh}\modu{\log\modu x}^{p_h}+ \modu{x_n} \right] \norm{\ell_2 - \ell_1}_{\infty},
$$
we have
\begin{equation*}
\begin{split}
\tilde S
&\leq {K^{\prime} \modu{x_n}^{k(h+1)-1}\modu{\log\modu{x_n}}^{p_h} \modu x^{kh}\modu{\log\modu x}^{p_h} \norm{\ell_2 - \ell_1}_{\infty}}\\
&\quad+ { K_2 \modu{x_n}^{k(h+1)-1}\modu{\log\modu{x_n}}^{p_h+1} \modu x^{kh}\modu{\log\modu x}^{p_h} \norm{\ell_2 - \ell_1}_{\infty}}\\
&\quad +{ K_2 \modu{x_n}^{k(h+1)}\modu{\log\modu{x_n}}^{p_h+1}\norm{\ell_2 - \ell_1}_{\infty}}
\end{split}
\end{equation*}
and applying Corollary \ref{cor:4.3},
\begin{equation*}
\begin{split}
\norm{S_1}
&\leq C_1 \modu x^{2kh-1}\modu{\log\modu x}^{2 p_h} \norm{\ell_2 - \ell_1}_{\infty} + C_2 \modu x^{2kh-1}\modu{\log\modu x}^{2 p_h + 1} \norm{\ell_2 - \ell_1}_{\infty} \\
&\quad +C_3 \modu x^{kh}\modu{\log\modu x}^{p_h+1} \norm{\ell_2 - \ell_1}_{\infty} \\
&\leq K_1 \modu x^{kh}\modu{\log\modu x}^{p_h+1} \norm{\ell_2 - \ell_1}_{\infty}.
\end{split}
\end{equation*}

We now consider $S_2$. We can write
\begin{equation*}
x_n^{-kA}- x_n^{\prime -kA}= x_n^{-kA}\left( I -\exp\left(-A\log\frac{x_n^{\prime}}{x_n} \right) \right).
\end{equation*} 
Therefore
\begin{equation*}
\begin{split}
\bigg\lVert   \bigg( I -\exp\bigg( -kA \log\frac{x_n^\prime}{x_n} \bigg) \bigg) H(x_n^\prime , v(x_n^\prime)) \bigg\rVert 
& \leq C \normu{k A \log \frac{x_n^\prime}{x_n}} \normu{H(x_n^\prime ,v(x_n^\prime))}\\
& \leq C^\prime \frac{\modu{x_n^\prime -x_n}}{\modu{x_n}}\modu{x_n}^{k(h+1)}\mlog{x_n}^{p_h}\\
&\leq C^{\prime\prime}x^{k(h+1)-1}\mlog{x_n}^{p_h}\modu x^{kh}\mlog x^{p_h}\norm{\ell_2 - \ell_1}_{\infty}. 
\end{split}
\end{equation*}
By Corollary \ref{cor:4.3} we have 
$$
\norm{S_2}\leq K_2 \modu x^{2kh -1} \mlog x^{2p_h}\norm{\ell_2 - \ell_1}_{\infty}.
$$
Thus, for $r$ small enough, there exists $K$ such that
\begin{equation*}
\normu{\T u(x)- \T v(x)}\leq K \modu x^{kh}\mlog x^{p_h}\norm{\ell_2 - \ell_1}_{\infty}.	 
\end{equation*}
From the definition of the norm in $E^i_{\T}(r, C_0, C_1)$, we have then that for $r$ small enough there is $c<1$ such that
\begin{equation*}
\norm{\T u - \T v}\leq c \norm{u-v},
\end{equation*}
proving that $\T|_{E^i_{\T}(r, C_0, C_1)}$ is a contraction.
\end{proof}

\begin{corollary}\label{main}
Let  $\T$ be the operator defined in Definition \ref{defT}. Then there exists $u:\Pi_r^i\to \C^{p-1}$ holomorphic and satisfying \eqref{eq:funz}.
\end{corollary}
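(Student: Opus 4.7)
The plan is to invoke the Banach fixed point theorem on $\Et$, which is a closed subset of the Banach space $B^i_{h,q,r}$, and then verify that the resulting fixed point solves the functional equation \eqref{eq:funz}. The previous proposition gives exactly what is needed: $\T$ maps $\Et$ into itself and, for $r$ small enough, it is a strict contraction. Thus there exists a unique $u\in\Et$ with $\T u = u$. Since $\Et\subset B^i_{h,q,r}$ consists of maps of the form $u(x)=x^{kh-1}(\log x)^q t(x)$ with $t$ holomorphic and bounded on $\Pi_r^i$, and the fixed point is the uniform limit of iterates of $\T$ starting from any element of $\Et$, $u$ is holomorphic on $\Pi_r^i$.

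The main step is then to translate $\T u = u$ into the functional equation. Writing $x_1=f_u(x)=f(x,u(x))$ and denoting by $\{x_n\}$ the iterates of $x$ under $f_u$, the iterates of $x_1$ under $f_u$ are $\{x_{n+1}\}$, so
\begin{equation*}
\T u(x_1) = x_1^{kA}\sum_{n=0}^{\infty} x_{n+1}^{-kA} H(x_{n+1},u(x_{n+1})) = x_1^{kA}\sum_{n=1}^{\infty} x_n^{-kA} H(x_n,u(x_n)).
\end{equation*}
Subtracting this (multiplied by $x^{kA}x_1^{-kA}$) from $\T u(x)$ isolates the $n=0$ term and yields
\begin{equation*}
\T u(x) - x^{kA}x_1^{-kA}\,\T u(x_1) = H(x,u(x)).
\end{equation*}
Using $\T u = u$ together with the definition $H(x,u)=u-x^{kA}x_1^{-kA}\Psi(x,u)$ from \eqref{eq:tre}, this simplifies to $x^{kA}x_1^{-kA}u(x_1) = x^{kA}x_1^{-kA}\Psi(x,u(x))$, and multiplying by $x_1^{kA}x^{-kA}$ gives $u(f(x,u(x)))=\Psi(x,u(x))$, which is the first line of \eqref{eq:funz}.

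Finally, the boundary behaviour follows for free from membership in $\Et$: since $u(x)=x^{kh-1}(\log x)^q t(x)$ with $\|t\|_\infty<\infty$ and $kh\ge 3$, both $\|u(x)\|\le C_0|x|^{kh-1}|\log|x||^q$ and $\|u'(x)\|\le C_1|x|^{kh-2}|\log|x||^q$ tend to $0$ as $x\to 0$ in $\Pi_r^i$, giving $\lim_{x\to 0}u(x)=\lim_{x\to 0}u'(x)=0$. The only subtle part of the argument is the algebraic manipulation identifying $\T u-x^{kA}x_1^{-kA}\T u\circ f_u$ with $H(\cdot,u(\cdot))$; once that telescoping identity is in place, the functional equation drops out immediately from the definition of $H$.
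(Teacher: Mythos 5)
Your proposal is correct and follows the same basic strategy as the paper: apply the Banach fixed point theorem to $\T|_{\Et}$ and then use the definition of $H$ from \eqref{eq:tre}, namely $H(x,u)=u-x^{kA}x_1^{-kA}\Psi(x,u)$, to translate $\T u=u$ into the functional equation. Where you differ slightly is in the way the functional equation is extracted. The paper rearranges $\T u=u$ into a telescoping series of the form
$$-x^{kA}x_1^{-kA}\bigl[\Psi(x,u(x))-u(x_1)\bigr]-x^{kA}x_2^{-kA}\bigl[\Psi(x_1,u(x_1))-u(x_2)\bigr]-\cdots=0,$$
and then asserts that this forces each bracket to vanish — a step that, as written, requires a further argument (the vanishing of an infinite sum does not by itself kill each term). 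Your approach instead establishes the exact identity
$$\T u(x)-x^{kA}x_1^{-kA}\,\T u(x_1)=H(x,u(x)),$$
exploiting that the iterates of $x_1$ under $f_u$ are precisely $\{x_{n+1}\}$, and then cancels using $\T u=u$ to obtain $u(x_1)=\Psi(x,u(x))$ directly, with no vanishing-series argument needed. This gives the functional equation at every $x$ in one shot and sidesteps the gap; the two routes buy the same theorem, but yours is logically tighter. Your remaining observations (holomorphy of the fixed point and the boundary conditions $u(x),u'(x)\to0$ from the $\Et$ estimates with $kh\ge3$) are correct and match what the statement requires.
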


\begin{proof}
Thanks to the previous Proposition, $\T$ is a contraction, and hence it has a unique fixed point $u\in \Et$. It suffices to prove that this $u$ satisfies \eqref{eq:funz}. 
The definition of $H$ gives us that $f(x,u)^{-kA} \Psi(x,u)= x^{-kA} u - x^{-kA } H(x,u)$, and hence
\begin{equation*}
H(x,u(x))= u(x)- x^{kA} x_1^{-kA} \Psi(x,u(x)).  
\end{equation*}
We therefore obtain
\begin{equation*}
\begin{split}
\T u(x)
&= x^{kA} \sum_{n=0}^{\infty} x_n^{-kA} H(x_n, u(x_n))\\
&=u(x)-x^{kA}  x_1 ^{-kA}\Psi(x,u(x))+ x^{kA} x_1^{-kA}[u(x_1)-x_1^{kA}x_2^{-kA}\Psi(x_1, u(x_1))]+ \cdots . 
\end{split}
\end{equation*}
This implies that $\T u= u $ if and only if
\begin{equation*}
-x^{kA} x_1^{-kA} [\Psi(x,u(x)) -u(x_1)] - x^{kA} x_2^{-kA}[\Psi(x_1,u(x_1)) -u(x_2)]+ \cdots =0,
\end{equation*}
that is 
$$
\Psi(x_n, u(x_n))= u (f(x_n, u(x_n))) \textrm{ for any } n\geq 0,
$$
and this concludes the proof.
\end{proof}

\section{Existence of attracting domains}

In this section, we shall prove that given non-degenerate attracting characteristic direction $[v]$ it is possible to find not only a curve tangent to $[v]$, but also a open connected set, containing the origin on its boundary and so that each of its points is attracted by the origin tangentially to $[v]$, that is, the following generalization of Theorem~5.1 of \cite{Ha} for the case $k+1\ge 2$.  

\begin{theorem}\label{th:7.1}
Let $F\in\Diff(\C^p,0)$ be a tangent to the identity germ of order $k+1\ge 2$, and let $[v]$ be a non-degenerate characteristic direction. 
If $[v]$ is attracting, then there exist $k$ parabolic invariant domains, where each point is attracted by the origin along a trajectory tangential to $[v]$.  
\end{theorem}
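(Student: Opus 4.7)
The plan is to upgrade the fixed-point construction of Section~6, which yielded a single parabolic curve, into the construction of a full open basin. As in the proof of Corollary~\ref{main}, I assume $F$ has been put in the normal form \eqref{eq:main} with $h$ as large as desired (using Proposition~\ref{propgen} and Remark~\ref{oss4}), so that the non-degenerate characteristic direction $[v]$ becomes $[1:0]$. Since the chart on $\C\P^{p-1}$ uses coordinate $u = (y/x) \in \C^{p-1}$, tangential convergence $[X_n]\to [v]$ is equivalent to $u_n\to 0$ together with $x_n\to 0$.

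For each $i=1,\dots,k$, the plan is to construct the basin $\Omega^i$ in the auxiliary coordinates $(x,w)$ defined on $\Re(x^k)>0$ by $u=x^{kA}w$. In these coordinates the second component of $F$ rewrites (cf.\ \eqref{eq:tre}, \eqref{eq:H}) as
\begin{equation*}
w_1 = w - x^{-kA}H(x,u),\qquad H(x,u)=\oo{\|u\|^2 x^k,\ \|u\|x^{k+1}\log x,\ x^{k(h+1)}(\log x)^{p_h}}.
\end{equation*}
I will take the candidate basin to be
\begin{equation*}
\Omega^i_w := \{\,(x,w)\in\Pi_r^i\times\C^{p-1}\ :\ \|w\|\le M\,\},
\end{equation*}
with $r$ small and $M$ a fixed (small) constant. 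The corresponding set in $(x,u)$ is $\Omega^i=\{(x,x^{kA}w):(x,w)\in\Omega^i_w\}$, which is open in $\C^p$ and has the origin on its boundary.

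The key steps are: \emph{(a)} forward $F$-invariance of $\Omega^i_w$, \emph{(b)} convergence of every orbit in $\Omega^i_w$ to the origin with $u_n\to 0$. For \emph{(a)}, one bounds $\|x^{-kA}H(x,u)\|$ by combining Lemma~\ref{lemma4.1} (so $\|x^{-kA}\|\le|x|^{-k(\lambda+\epsilon)}$ with $\lambda=\max\Re\alpha_j$), the three-term estimate on $H$, and the inequality $\|u\|\le\|x^{kA}\|\,\|w\|\le C|x|^{k(\mu-\epsilon)}M$ with $\mu=\min\Re\alpha_j>0$. This produces a bound of the form $\|x^{-kA}H(x,u)\|\le \tilde C |x|^\nu(\log|x|)^{q}$ whose exponent $\nu>k$ provided $h$ is chosen large enough, and hence summable along orbits by Corollary~\ref{cor:4.3} applied to any $u$ with $\|u\|\le C_0|x|^{k+1}$. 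Telescoping $w_n-w_0=-\sum_{j=0}^{n-1}x_j^{-kA}H(x_j,u_j)$ then shows that for $\|w_0\|\le M/2$ and $r$ small we keep $\|w_n\|\le M$ for all $n$, and moreover $\{w_n\}$ is Cauchy. For \emph{(b)}, Lemma~\ref{lemma4.2} furnishes $x_n\to 0$, and then $u_n=x_n^{kA}w_n\to 0$ since $\|x_n^{kA}\|\le |x_n|^{k(\mu-\epsilon)}\to 0$ (this is the precise place where the hypothesis $\Re\alpha_j>0$ is used).

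The main obstacle is step~\emph{(a)}: controlling the nonlinear contribution $\oo{\|u\|^2 x^k}$ to $H$ after multiplying by $x^{-kA}$, since the exponent produced is $k(2\mu-\lambda+1)-3\epsilon$, which is delicate if the spread $\lambda-\mu$ of the real parts of the directors is large. The remedy is to first translate coordinates along the parabolic curve $u^*=u^*_i$ provided by Corollary~\ref{main} (writing $u=u^*(x)+\tilde u$), which kills the inhomogeneous term $x^{k(h+1)}\psi_h$ altogether and feeds back into $\tilde u_1=(I-x^kA)\tilde u+\oo{\|\tilde u\|^2 x^k,\|\tilde u\|x^{k+1}\log x}$, after which one can further restrict the admissible $M$ so that $\|\tilde u\|^2\le M\|\tilde u\|$ reduces the quadratic term to the same order as the linear one, making the summability estimate go through for arbitrary attracting $A$. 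The final tangential statement is immediate since $u_n\to 0$ gives $[X_n]=[1:u_n]\to[1:0]=[v]$.
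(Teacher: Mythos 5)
Your proposal takes a genuinely different route from the paper, and it has a gap that the proposed remedy does not close. The paper's proof of Theorem~\ref{th:7.1} never introduces the auxiliary coordinates $w=x^{-kA}u$ for the basin construction; it works directly in the $(x,u)$ chart and takes the candidate basin to be the ``tube'' $A^i_{\gamma,\rho,c}=\{x\in S^i_{\gamma,\rho},\ \|u\|\le c\}$. Invariance of the $u$-component is then a one-line contraction estimate: combining $\|I-x^kA\|\le 1-(\lambda+\eps')|x^k|+\eps|x^k|$ (with $\eps$ the small off-diagonal entries of the Jordan form) with the $O(\|u\|^2x^k,\|u\|x^{k+1}\log x)$ remainder yields $\|u_1\|\le\|u\|(1-\lambda|x|^k)$ for $c,\rho$ small. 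Convergence with tangency to $[v]$ follows by iterating, giving $\|u_n\|\le C|x_n|^{k\mu}$ for $\mu<\lambda$, with no need to control $x^{-kA}$ at all. This is structurally different from, and considerably simpler than, the $w$-telescoping scheme you set up.

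The gap in your approach is at the summability step. You want $\sum_n\|x_n^{-kA}H(x_n,u_n)\|<\infty$ on a set where $\|w\|\le M$, i.e.\ $\|u\|\le M\|x^{kA}\|\lesssim M|x|^{k(\mu-\eps)}$ with $\mu=\min\Re\alpha_j$. The term $x^{-kA}\cdot O(\|u\|^2x^k)$ then has exponent $k(2\mu-\lambda+1)-3k\eps$ after using $\|x^{-kA}\|\le|x|^{-k(\lambda+\eps)}$, and Corollary~\ref{cor:4.3} requires exponent $>k$, i.e.\ $2\mu>\lambda+3\eps$. This genuinely fails whenever the spread of the directors is large. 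Your proposed remedy (translating along the parabolic curve and then using $\|\tilde u\|^2\le M\|\tilde u\|$) does not repair it: the inequality $\|\tilde u\|^2\le M\|\tilde u\|$ only trades one power of $\|\tilde u\|$ for a constant, it does not gain any power of $x$, so after multiplying by $x^{-kA}$ the exponent becomes $k(\mu-\lambda+1)-2k\eps$, which again needs $\mu\ge\lambda$ and so fails except in the degenerate case of a single repeated real part. A concrete counterexample is $A=\mathrm{diag}(\alpha_1,\alpha_2)$ with $\alpha_1>1+2\alpha_2>0$ and a remainder whose first component contains a $\tilde u_2^2 x^k$ term: then $(x^{-kA}E)_1\sim x^{k(1+2\alpha_2-\alpha_1)}\tilde w_2^2$ blows up as $x\to0$. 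The problem is structural, because $x^{-kA}$ acts as a matrix on the left while the nonlinear remainder is not $A$-equivariant, so one cannot cancel the unbounded factor $\|x^{-kA}\|$. To fix your scheme you would need component-wise information about which $u$-monomials occur in which component of $H$ (a small-divisor-free normalization, in the spirit of Proposition~\ref{prop:ora}), which is exactly what the paper's direct contraction estimate is designed to avoid.
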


\begin{proof}
Since $[v]$ is a non-degenerate characteristic direction, we can find $r,c>0$ so that we can choose coordinates $(x,y) \in \C\times \C^{p-1}$ holomorphic in the sector 
\begin{equation*}
S_{r,c}^i =\left\{ (x,y)\in \C\times\C^{p-1} \mid x\in\Pi_r^i,\,\norm y \leq c\modu x \right\},
\end{equation*}
where $\Pi_r^i$ is one of the connected components of $\D_r=\left\{ \modu{x^k-r}<r\right\}$, so that, after the blow-up $y=ux$, $F$ is of the form
\begin{equation*}
\left\{
\begin{array}{l l}
x_1 =f(x,u)= x- \frac{1}{k}x^{k+1}+ \oo{\norm u x^{k+1}, x^{2k+1} \log x},\\
u_1= \Psi(x,u) = (I-x^kA) u +\oo{\norm u x^{k+1}\log x, \norm u ^2 x^k}.
\end{array}
\right.
\end{equation*}
In particular, after the blow-up, $\norm u \leq c$. 

Without loss of generality, we may assume that $A$ is in Jordan normal form.
Let $\{\alpha_1 ,\dots,\alpha_{p-1} \}$ be the eigenvalues of $A$. Thanks to the hypthesis, we have 
\begin{equation*}
\Re \alpha_j >0,\quad j=1,\dots,p-1 , 
\end{equation*}
and hence there exists a constant $\lambda>0$ so that $\Re \alpha_j > \lambda$ for all $j=1,\dots, p-1$. We can also assume that the elements off the diagonal in the Jordan blocks are all equal $\epsilon$, with $\eps< \lambda$. 

We shall now restrict our sectorial domain to obtain good estimates for $x_1$ and $u_1$. We define, for $j=1,\dots,p-1$,
\begin{equation*}
\Delta_j:=\{ x\in \C \mid \modu{1-\alpha_j x^k }\leq 1\}.
\end{equation*}
Consider the sector
\begin{equation*}
S_{\gamma,\rho}:=\{ x\in \C \mid \modu{\Im x}\leq \gamma \Re x ,\, \modu x \leq \rho \}.
\end{equation*}
Since $\Re \alpha_j > 0$, there exist positive constants $\gamma$ and $\rho$ so that, setting
for each $i=1,\ldots , k$,
\begin{equation*}
S_{\gamma,\rho}^i := \{ x\in \Pi_r^i \mid x^k\in S_{\gamma, \rho} \} 
\end{equation*}
we have
$$
S_{\gamma,\rho}^i\subset \bigcap_{j=1}^{p-1} \Delta_j \cap \overline{\D}_r \subset \Pi_r^i.
$$
We want to check that, for any $i=1,\ldots,k$, the $k$ sets 
\begin{equation*}
A_{\gamma,\rho, c}^i := \{(x,u)\in\C\times\C^{p-1}\mid x\in S_{\gamma ,\rho}^i,   \norm u \leq c \}
\end{equation*} 
are invariant attractive domains.

Recalling that there is $K$ so that
$$
\norm{u_1- (I-x^k A)u}\leq K(\norm u \modu x^{k+1} \mlog{x}+ \norm u^2 \modu x^k),
$$
for $(x,u)\in A_{\gamma,\rho, c}^i$ we have
\begin{equation*}
\norm{u_1} \leq \norm{(I- x^k A)u}+ K \norm u \modu x^k \left( \modu x \mlog x +\norm u\right),
\end{equation*}
and, provided that $\gamma,\,\rho$ and $c$ are small enough, 
\begin{equation}\label{prima}
\norm{u_1} \leq \norm u \norm{I-x^k A} \leq \norm u (1-\lambda \modu x^k)\le \|u\|,
\end{equation}
where we used that
\begin{equation*}
\begin{split}
\norm{I- x^k A}
&\leq \max_j\modu{1 - \alpha_j x^k } +\modu x ^k \epsilon
\le 1 - (\lambda + \eps')|x^k| + \eps|x^k|.
\end{split}
\end{equation*}
Therefore $\norm{u_1}\le c$. 

To estimate $x_1$, since we know that $x_1= x-\frac{1}{k} x^{k+1} +\oo{\norm u x^{k+1}, x^{2k+1} \log x}$, we have
\begin{equation}\label{eq:7.0}
\begin{split}
\frac{1}{x_1^k}
&=\frac{1}{x^k}+ 1+ \oo{\norm u, x^k\log x}.
\end{split}
\end{equation}
Therefore there is  $\tilde C$, not depending on $u$, so that 
\begin{equation}\label{seconda}
\modu{\frac{1}{x_1^k}- \frac{1}{x^k} -1 }\leq \tilde C\norm u + K \modu x ^k \mlog x\le  \tilde Cc + K \modu x ^k \mlog x.
\end{equation}
We shall use this last inequality to prove that $A_{\gamma,\rho, c}^i$ is an invariant domain. In particular, it suffices to check
\begin{equation*}
\left\{\begin{array}{l l}
\norm{u_1}\leq c \\
\modu{\Im x_1^k}\leq \gamma \Re x_1^k \\
\modu{x_1^k}\leq \rho .
\end{array}\right.
\end{equation*}
We already estimated $u_1$ in \eqref{prima}. 
On the other hand, to prove that $S_{\gamma,\rho, c}^i$ is $f$-invariant it suffices to prove that, for $u$ small enough, $(S_{\gamma,\rho, c}^i)^*=\{x\in \C \mid \frac{1}{x}\in S_{\gamma,\rho} \} $ is $\frac{1}{(f)^k}$-invariant, which follows from \eqref{seconda} using the same argument as in the proof of Leau-Fatou flower Theorem.

To finish, it remains to check that, given a point $(x,u)\in S_{\gamma,\rho, c}^i$, its iterates converge to the origin along the direction $[1:0]$. We shall first show that $x_n^k\sim\frac{1}{n}$ and $\norm{u_n}\leq C\frac{1}{n^{\lambda}}$, for any fixed $0<\lambda < \max_j \Re \alpha_j$. It follows from \eqref{eq:7.0} that 
\begin{equation*}
\begin{split}
\frac{1}{x_n^k}
&=\frac{1}{x^k}+ n +\sum_{i=0}^{n-1}\oo{\norm{u_i}, x_i^k\log x_i},
\end{split}
\end{equation*}
and hence
\begin{equation*}
\frac{1}{n x_n^k}=\frac{1}{n x^k}+ 1 +\frac{1}{n}\sum_{i=0}^{n-1}\oo{\norm{u_i}, x_i^k\log x_i}, 
\end{equation*}
where the sum is bounded. Therefore
$$
\frac{1}{n x_n^k}=\oo{1},
$$
yielding
$$
x_n^k\sim \frac{1}{n}.
$$

Finally, take $\mu < \lambda$ (where $\lambda$ is the positive constant so that $\max_j \Re \alpha_j>\lambda$. Then
\begin{equation*}
\begin{split}
x_1^{-k\mu} 
&= x^{-k\mu} \left[ 1- \frac{1}{k}x^k + \oo{ x^{2k}\log x, \norm u x^k }\right]^{-k\mu}
= x^{-k\mu} \left[ 1+ \mu x^k + \oo{ x^{2k}\log x, \norm u x^k } \right],
\end{split}
\end{equation*}
and hence
\begin{equation*}
\begin{split}
\modu{x_1}^{-k\mu} 
&\leq \modu x ^{-k\mu} \modu{ 1+ \mu x^k +\oo{\norm u x^k, x^{2k}\log x} }
\leq \modu x ^{-k\mu} (1 + \lambda \modu x^k).
\end{split}
\end{equation*}
It thus follows that 
\begin{equation*}
\begin{split}
\norm{u_1}\modu{x_1}^{-k\mu}&\leq \norm u (1-\lambda \modu x ^{k})\modu x ^{-k\mu}(1+\lambda \modu x ^k)
= \norm u \modu x ^{-k\mu} (1 -\lambda ^2 \modu x ^{2k})\\
&< \norm u \modu x ^{-k\mu}.
\end{split}
\end{equation*}
Therefore, there exists $C$ so that 
\begin{equation*}
\norm{u_n} \modu{x_n}^{-k\mu}< \norm u \modu x^{-k\mu}\leq C,
\end{equation*}
implying
\begin{equation*}
\norm{u_n}\leq C \modu{x_n}^{k\mu}.
\end{equation*}
Then, $\norm{u_n}=\displaystyle\oo{\frac{1}{n^{k\lambda}}}$. This shows that each $(x,u)\in A_{\gamma,\rho, c}^i$ converges to the origin along the direction $[1:0]$.
\end{proof}

\section{Parabolic manifolds}

Let $\Phi\in\Diff(\C^p,0)$ be a tangent to the identity germ of order $k+1\ge 2$, and let $[V]=[1:0]$ be a non-degenerate characteristic direction. We can divide the set of the directors of $[V]$ into two sets: the {\it attracting directors}, i.e., the set $\{\lambda_1,\dots,\lambda_a\}$ with $\Re \lambda_j >0$ for $j=1,\dots, a$, and the {\it non-attracting directors}, i.e., the set $\{\mu,\dots,\mu_b\}$ with $\Re \mu_h \le 0$ for $h=1,\dots, b$. Let $d_j$ be the multiplicity of $\lambda_j$ for $j=1,\dots, a$ and let $d:=d_1+\cdots+ d_a$.
We know that, after the blow-up, we can assume that $\Phi$ is of the form
\begin{equation}\label{eq:eqprinc}
\begin{split}
\left\{
\begin{array}{l l l}
x_1=f(x,u,v)=x-\frac{1}{k} x^{k+1} +F(x,u,v),\\
u_1=g(x,u,v)=(I_d- x^kA)u + G(x,u,v),\\
v_1=h(x,u,v)=(I_l- x^kB)v + H(x,u,v),
\end{array}\right. 
\end{split}
\end{equation}
where $A$ is the $d\times d$ matrix in Jordan normal form associated to the attracting directors, $B$ is the $l\times l$ matrix in Jordan normal form associated to the non-attracting directors (where $l:= p-d-1$), and with $F,\,G,\,H$ so that
\begin{equation}\label{eq:acca}
\begin{split}
\left\{
\begin{array}{l l l}
F(x,u,v)= \oo{\norm{(u,v)}x^{k+1}, x^{2k+1}\log x}, \\
G(x,u,v)=\oo{\norm{(u,v)}x^{k+1}\log x, \norm{(u,v)}^2 x^k},\\
H(x,u,v)=\oo{\norm{(u,v)}x^{k+1}\log x, \norm{(u,v)}^2 x^k}.
\end{array}\right. 
\end{split}
\end{equation}
Moreover $F,\,G,\,H$ are holomorphic in an open set of the form
$$
\Delta_{r,\rho}=\left\{(x,u,v)\in\C\times\C^d\times\C^{p-d-1} \;\Bigm\vert\; \big\lvert x^k-r\big\rvert <r, \, \norm{(u,v)}< \rho \right\}, 
$$
and therefore also in the set
$$
S_{\gamma,s, \rho}:=\left\{(x,U)\in\C\times\C^{p-1} \;\Bigm\vert\; \bigr\vert \Im x^k\bigr\vert \leq \gamma\, \Re x^k, \, |{x^k}| < s, \, \norm U < \rho\right\}\subset\Delta_{r,\rho}.
$$

\sm In the next result, the analogous of Proposition~2.2 of \cite{hakim2}, we shall see that it is possible to further modify the last $p-d-1$ components of $\Phi$.
 
\begin{proposition}\label{prop:ora}
Let $\Phi\in\Diff(\C^p,0)$ be a tangent to the identity germ of order $k+1\ge 2$ as in \eqref{eq:eqprinc}, with $[V]=[1:0]$ non-degenerate characteristic direction so that the matrix $A(v)=Diag(A,B)$ satisfies
\begin{equation*}
\begin{array}{l l}
\Re\lambda_j >\alpha >0 ,\,\textrm{ for any $\lambda_j$ eigenvalue of $A$} \\
\Re\mu_j \leq 0, \quad\quad\textrm{ for any $\mu_j$ eigenvalue of $B$}.
\end{array}
\end{equation*}
Then, for any choice of $N,m\geq 2$, it is possible to choose coordinates $(x,u,v)$ in $\Delta_{r,\rho}$, with $H$ satisfying
$$
H(x,u,0)=\oo{\modu x^k \norm u^m+ \modu x^N \norm u}.
$$
\end{proposition}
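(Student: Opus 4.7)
The plan is to eliminate the finitely many ``bad'' monomials in the Taylor expansion of $H(x,u,0)$ by a finite sequence of polynomial changes of coordinates of the form $V=v+P(x,u)$. A monomial $x^{j}u^{\beta}$ in $H(x,u,0)$ is consistent with the target bound $O(|x|^{k}\|u\|^{m}+|x|^{N}\|u\|)$ if and only if $(j\ge k,|\beta|\ge m)$ or $(j\ge N,|\beta|\ge 1)$; by \eqref{eq:acca} every term of $H(x,u,0)$ initially has $|\beta|\ge 1$ and $j\ge k$ (with $j\ge k+1$ when $|\beta|=1$), so the bad set $\{(j,\beta): 1\le|\beta|\le m-1,\ k\le j\le N-1\}$ is finite. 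A direct computation shows that the change $V=v+P(x,u)$ transforms $H$ into
\[
\tilde H(x,u,0) = H\bigl(x,u,-P(x,u)\bigr) + P\bigl(f(x,u,-P),\,g(x,u,-P)\bigr) - (I_{l}-x^{k}B)P(x,u),
\]
and for $P=\eta\,x^{j_{0}}u^{\beta_{0}}$ with $\eta\in\C^{l}$, Taylor expansion via $x_{1}=x-\tfrac{1}{k}x^{k+1}+\cdots$ and $u_{1}=(I_{d}-x^{k}A)u+\cdots$ yields
\[
\tilde H(x,u,0)-H(x,u,0) = x^{j_{0}+k}u^{\beta_{0}}\bigl[B-\rho(j_{0},\beta_{0})I_{l}\bigr]\eta + R(x,u),
\]
with $\rho(j,\beta):=\tfrac{j}{k}+\langle\lambda,\beta\rangle$ (the $\lambda_i$ being the eigenvalues of $A$), weighted degree $w(x^{j}u^{\beta}):=j+k|\beta|$, and every monomial of $R$ having weighted degree strictly greater than $w_{0}+k$, where $w_{0}:=w(P)$.

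The eigenvalue hypothesis makes this cohomological equation solvable: $\Re\rho(j,\beta)\ge |\beta|\alpha>0$ while every eigenvalue $\mu_{h}$ of $B$ satisfies $\Re\mu_{h}\le 0$, so $B-\rho I_{l}$ is invertible, and any monomial $c\,x^{j_{0}+k}u^{\beta_{0}}$ in $H(x,u,0)$ is eliminated by taking $\eta=-[B-\rho I_{l}]^{-1}c$. I would then run the elimination by induction on the weighted degree $w$, at each level simultaneously killing all bad monomials of weighted degree $w$ by solving the linear cohomological system (still solvable when $A$ is in Jordan form, because the nilpotent part of $A$ yields a block upper-triangular operator whose diagonal blocks are the invertible matrices $B-\rho I_{l}$ above). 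The key structural point is that every correction in $R$ has weighted degree strictly greater than $w$: the piece $H(x,u,-P)-H(x,u,0)$ is $O(P\cdot\partial_{v}H(x,u,0))$ with $\partial_{v}H(x,u,0)=O(x^{k+1}\log x,\,\|u\|x^{k})$, yielding weighted degree $\ge w+k+1$; the subleading part of the bracket $P(f,g)-(I_{l}-x^{k}B)P$ similarly lies in weighted degree $\ge w+2k$. Hence previously killed monomials are never resurrected, and the induction terminates after the finitely many bad monomials have been processed.

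The main obstacle is the triangularity bookkeeping: one must verify with some care that every correction term lies strictly above the current weighted degree, accounting for (i) the $\log x$ factors carried by the reduced form of $F$ (which do not shift $j$ or $|\beta|$, hence preserve weighted degree), (ii) the coupling between multi-indices of the same total $u$-degree produced by the nilpotent part of $A$ (absorbed by the block-triangular structure), and (iii) the higher-order terms in $F$, $G$, $H$, which contribute only beyond the leading Taylor order. Once the triangular structure is established, the induction closes and the final coordinate system yields $H(x,u,0)=O(|x|^{k}\|u\|^{m}+|x|^{N}\|u\|)$ on $\Delta_{r,\rho}$, as required.
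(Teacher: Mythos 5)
Your cohomological calculation is correct and the eigenvalue separation argument (that $B - \rho(j,\beta)I_l$ is invertible because $\Re\rho(j,\beta) \ge |\beta|\alpha > 0 \ge \Re\mu_h$) is exactly the one used in the paper, so the core idea is the same as the paper's. However, there is a genuine gap: you write $H(x,u,0)$ as a sum of monomials $x^ju^\beta$ and use changes of the form $V = v + \eta\,x^{j_0}u^{\beta_0}$, but in the normalized form \eqref{eq:eqprinc}--\eqref{eq:acca}, the function $H(x,u,0)$ is \emph{not} analytic in $x$ at $0$; it contains $\log x$ terms, and the paper expands it as
\[
H(x,u,0)=\sum_{k\leq s\leq N,\ t\in E_s} c_{s,t}(u)\,x^{s}(\log x)^{t} + O\bigl(\|u\|\,|x|^{N}|\log|x||^{h_N}\bigr).
\]
A change $V = v + \eta\,x^{j_0}u^{\beta_0}$ with no $(\log x)$ factor cannot cancel a term $c(u)x^s(\log x)^t$ with $t\ge 1$; the paper must, and does, use changes of the form $\tilde v = v - x^{\bar s-k}(\log x)^{\bar t}P(u)$ and iterate over $(s,t)$, first decreasing $t$ for fixed $s$, then increasing $s$, noting that each $E_s$ is finite and the $\log x$ degree is under control. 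Your item (i) in the ``main obstacle'' paragraph acknowledges $\log x$ factors in $F$ but concludes they ``preserve weighted degree'' and are therefore harmless; this is precisely where the argument breaks, because the weighted degree is blind to $t$, so the elimination at fixed weighted degree $w$ would leave all terms with $t\ge 1$ untouched.

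A secondary, lesser point: you reduce to solving the cohomological equation monomial by monomial, which forces you to face the coupling from the nilpotent part of $A$ (your item (ii)). The paper sidesteps this by working with a general homogeneous polynomial $P(u)$ of degree $\bar d$ and proving injectivity of the full operator $R\mapsto\langle\operatorname{grad}R, C_iu\rangle$ via Euler's formula and a double induction on the dimension $d$ and the degree $\bar d$; this makes the block-triangularity you merely assert an actual lemma. Both schemes succeed, but the homogeneous-polynomial version is cleaner once $A$ is a genuine Jordan block. The fix for your argument is straightforward in outline --- allow $P = \eta\,x^{j_0}(\log x)^{t_0}u^{\beta_0}$ and order the elimination lexicographically on $(s, -t, \text{ord}\,c_{s,t})$ as the paper does --- but as written the proposal does not prove the statement.
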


\begin{proof}
Thanks to \eqref{eq:acca}, it is possible to write $H(x,u,v)$ in a more convenient form. Indeed, for any  $N\in \N$ we have 
\begin{equation}\label{eq:acca0}
H(x,u,0)=\sum_{k\leq s\leq N,\, t\in E_s} c_{s,t}(u)x^s (\log x)^t +\oo{\norm{u} \modu x^{N} \modu{\log\modu x}^{h_N}},
\end{equation}
for some $h_N\in \N$ depending on $N$, where for any $s$ we define $E_s$ as the (finite) set of integers $t$ so that the series above contains the term $x^s(\log x)^t$, and where $c_{s,t}(u)$ are holomorphic in $\|u\|\le \rho$ and $c_{s,t}(0)\equiv 0$.  
We shall prove by induction on $s$, $t$ and the order of $c_{s,t}(u)$, that, if $s\leq N$, using changes of coordinates of the form $\tilde v= v- \phe(x,u)$, it is possible to obtain $c_{s,t}$ of order at least $m$. We shall need the following reformulation of Lemma~{2.3} of \cite{hakim2} for the case $k+1\ge 2$.

\begin{lemma}
Let $\Phi\in\Diff(\C^p,0)$ be a tangent to the identity germ of order $k+1\ge 2$ as in \eqref{eq:eqprinc}, with $[V]=[1:0]$ so that $A(v)=Diag(A,B)$ satisfies
\begin{equation*}
\begin{array}{l l}
\Re\lambda_j >\alpha >0 ,\,\textrm{ for any $\lambda_j$ eigenvalue of $A$} \\
\Re\mu_j \leq 0, \quad\quad\textrm{ for any $\mu_j$ eigenvalue of $B$}.
\end{array}
\end{equation*}
Let $H$ be so that \eqref{eq:acca0} holds, let $\bar s$ be the smallest integer in \eqref{eq:acca0}, and let $m\geq 2$; for such an $\bar s$, let $\bar t$ be the greatest integer in $E_{\bar s}$ so that $c_{\bar s,\bar t}$ has order $\bar d$ less than $m$. Then there exists a polynomial map $P(u)$, homogeneous of degree $\bar d$, with values in $\C^l$, such that, after changing $v$ in 
\begin{equation*}
\tilde v = v - x^{\bar s -k}(\log x)^{\bar t} P(u),
\end{equation*}
$c_{\bar s,\bar t}(u)$ has order greater than $\bar d$.
\end{lemma}

\begin{proof}
Since $c_{\bar s,\bar t}(u)$ has order $\bar d$, we can write 
\begin{equation*}
c_{\bar s,\bar t}(u)= Q(u)+ \oo{\norm u ^{\bar d+1}},
\end{equation*}
where $Q(u)$ is a homogeneous polynomial of degree $\bar d$, and takes values in $\C^l$. Moreover, the term $c_{\bar s,\bar t}(u)x^{\bar s}(\log x)^{\bar t}$ in \eqref{eq:acca0} is
\begin{equation}
H(x,u,0)=c_{\bar s, \bar t}(u)x^{\bar s}(\log x)^{\bar t}+\sum_{k\leq s\leq N,\atop t\in E_s,\, (s,t)\neq (\bar s, \bar t)} c_{s,t}(u)x^s (\log x)^t +\oo{\norm{u} \modu x^N \modu{\log\modu x}^{h_N}}.
\end{equation}
Using a change of coordinates of the form
\begin{equation*}
\tilde v = v - x^{\bar s - k}(\log x )^{\bar t}P(u),
\end{equation*}
with $P(u)$ homogeneous polynomial, we have 
\begin{equation*}
\begin{split}
\tilde v _1 &=v_1 - x_1^{\bar s -k}(\log x_1)^{\bar t}P(u_1)\\
&=(I_l-x^k B)(\tilde v + x^{\bar s -k}(\log x )^{\bar t}P(u))+ H(x,u,v)- x_1^{\bar s -k}(\log x_1)^{\bar t}P(u_1)\\
&=(I_l-x^k B)\tilde v +\tilde H (x,u,\tilde v),
\end{split}
\end{equation*}
where $\tilde H (x,u,\tilde v):= (I_l-x^k B)x^{\bar s -k}(\log x )^{\bar t}P(u)+H(x,u,\tilde v +x^{\bar s -k}(\log x)^{\bar t}P(u))- x_1^{\bar s -k}(\log x_1)^{\bar t}P(u_1)$.
Expanding $\tilde H(x,u,0)$ we obtain 
\begin{equation}\label{eq:accatilde}
\begin{split}
\tilde H(x,u,0)&=  x^{\bar s -k}(\log x )^{\bar t}P(u) -B x^{\bar s}(\log x)^{\bar t}P(u)+ {H(x,u, x^{\bar s -k}(\log x)^{\bar t}P(u))}\\
&\quad -{x_1^{\bar s -k}(\log x_1)^{\bar t}P(u_1)}.
\end{split}
\end{equation}
We have
\begin{equation*}
\begin{split}
H(x,u, x^{\bar s -k}(\log x)^{\bar t}P(u))
&=Q(u)x^{\bar s}(\log x)^{\bar t}+ \oo{\norm u ^{\bar d+1}x^{\bar s}(\log x)^{\bar t}, \norm u ^{\bar d} x^{\bar s}(\log x)^{\bar t -1}, \norm{u} \modu x^N \modu{\log\modu x}^{h_N}},
\end{split}
\end{equation*}
and
\begin{equation*}
\begin{split}
x_1^{\bar s -k}(\log x_1)^{\bar t}P(u_1)
&=\left[ x^{\bar s -k}- \frac{\bar s-k}{k}x^{\bar s}+\oo{\norm u x^{\bar s}, x^{\bar s +k}\log x}\right](\log x)^{\bar t}P(u_1)+\oo{x^{\bar s}(\log x)^{\bar t}}P(u_1)\\
&=x^{\bar s -k}(\log x)^{\bar t}P(u)- x^{\bar s -k}(\log x)^{\bar t}\langle \mbox{\rm grad\,}P ; x^k A u \rangle -\frac{\bar s-k}{k}x^{\bar s}(\log x)^{\bar t}P(u)\\
&\quad\quad\quad+\oo{x^s(\log x)^{\bar t},\norm u x^{\bar s}(\log x)^t, x^{\bar s +k}(\log x)P(u_1)},
\end{split}
\end{equation*}
where we used
\begin{equation*}
\begin{split}
P(u_1)
&=P((I_d-x^k A)u)+\oo{x^{\bar s}}\\
&= P(u)+\langle \mbox{\rm grad\,}P , -x^k A u\rangle +\oo{x^{2k},x^{\bar s}}.
\end{split}
\end{equation*}
It is then clear that the terms of order $\bar s-k$ in \eqref{eq:accatilde} cancel each other, whereas we can put in evidence the terms of order $\bar s$ in $x$ and of order $\bar d$ in $u$. In particular, the $l$ homogeneous polynomials of degree $\bar d$ of $\tilde c_{\bar s, \bar t}$ in \eqref{eq:accatilde} vanish identically if and only if $P$ satisfies the following $l$ equations
\begin{equation}
\langle \mbox{\rm grad\,}P_i , A u \rangle - \left( \left( B-\frac{\bar s -k}{k}I_l\right) P(u) \right)_i = -Q_i(u)\quad  i=1,\ldots, l .
\end{equation}
These equations form a square linear system in the coefficients of $P$. Therefore, to prove that such a system has a solution it suffices to prove that
\begin{equation}
\langle \mbox{\rm grad\,}P_i , A u \rangle - \left( \left( B-\frac{\bar s -k}{k}I_l\right) P(u) \right)_i = 0\quad   i=1,\ldots, l~~\Longrightarrow~~P=0 .
\end{equation}
Moreovere, since $B$ is in Jordan normal form, if we denote by $\epsilon_{i,i+1}$ the elements out of the diagonal, we can rewrite the previous equation as 
\begin{equation}
\frac{\partial P_i}{\partial u_1}(Au)_1 + \cdots+ \frac{\partial P_i}{\partial u_q}(Au)_q - \left(\mu_i- \frac{\bar s -k}{k} \right) P_i -\epsilon_{i,i+1}P_{i+1}=0,
\end{equation}
recalling that, for any $1\leq i < l$, we have $\epsilon_{i,i+1}=0$ or $1$, and $\epsilon_{l,l+1}=0$. Therefore, arguing by decreasing induction over $i$ from $l$ to $1$, we reduce ourselves to solve
\begin{equation}
\frac{\partial R}{\partial u_1}(Au)_1 + \cdots+ \frac{\partial R}{\partial u_d}(Au)_d - \left(\mu_i- \frac{\bar s -k}{k} \right) R =0 \Longrightarrow R=0,
\end{equation}
for a homogeneous polynomial $R$ of degree $\bar d$. By Euler formula, we know that
\begin{equation*}
R={\bar d}^{-1}\left[ \frac{\partial R}{\partial u_1}u_1+\cdots+\frac{\partial R}{\partial u_d}u_d \right].
\end{equation*}
We can therefore reduce ourselves to solve
\begin{equation}\label{eq:euler_formula}
\frac{\partial R}{\partial u_1}(C_i u)_1+\cdots+\frac{\partial R}{\partial u_d}(C_i u)_d= 0 \Longrightarrow R=0 , 
\end{equation}
where $C_i= A- {(\mu_i - \bar s + k)}{\bar d}^{-1}I_d$ is invertible, since from our hypotheses $\Re(\alpha - \frac{\mu_i - \bar s + k}{\bar d})>0$. We prove \eqref{eq:euler_formula} with a double induction, on the dimension $d$ and on the degree $\bar d$ of $R$. For any degree $\bar d$, if $d=1$, then there exists a constant $K_i$ so that $R=K_i u_1 ^{\bar d}$; then, since $\alpha - \frac{\mu_i - \bar s+ k}{\bar d}\neq 0$, we have
\begin{equation*}
\frac{\partial R}{\partial u_1}\left(\alpha - \frac{\mu_i - \bar s+ k}{\bar d} \right) u_1 = 0 \Longrightarrow \bar d K_i u_1^{\bar d}= 0 \Longrightarrow K_i=0,
\end{equation*}
implying $R=0$. Similarly, for any dimension $d$, if $\bar d=1$, then there exist constants $a_1, \ldots, a_d$ so that $R=a_1 u_1+ \cdots + a_d u_d$; hence
\begin{equation*}
a_1(C_i u)_1 + \cdots + a_d (C_i u)_d = 0 \Longrightarrow a_1=\cdots =a_d= 0 \Longrightarrow R=0.
\end{equation*}
Assume, by inductive hypothesis, that \eqref{eq:euler_formula} holds for any pair $(d-1,\bar  d) $ and $(d,\bar d-1)$, with $d>1$ and $\bar d>1$, and we shall prove that \eqref{eq:euler_formula} holds also for $(d, \bar d)$.
Assume that $\langle \mbox{\rm grad\,}R , C u \rangle =0 $ for a certain homogeneous polynomial $R$ of degree $\bar d$ in $q$ variables. By inductive hypothesis, setting $\tilde R(u_1,\ldots, u_{d-1}):= R(u_1,\ldots, u_{d-1},0)$, we have 
\begin{equation*}
\langle \mbox{\rm grad\,}\tilde R , C\cdot (u_1, \ldots, u_{d-1}, 0) \rangle =0  \Longrightarrow \tilde R =0,  
\end{equation*}
and so $R(u)= u_d S(u)$, with $S$ homogeneous polynomial of degree $\bar d-1$ in $d$ variables. Therefore
\begin{equation*}
\frac{\langle \mbox{\rm grad\,}R , C u \rangle}{u_d}= 0 \Longrightarrow \langle \mbox{\rm grad\,}S , C u \rangle + \left(\lambda_d -\frac{\mu_i - \bar s+ k}{\bar d}\right)S = 0.
\end{equation*}
Again, by Euler's formula, we can then write
\begin{equation*}
\langle \mbox{\rm grad\,}S , C^{\prime} u \rangle = 0,
\end{equation*}
with $C^{\prime}= C + \frac{\lambda_d -({\mu_i - \bar s+ k})/{\bar d}}{\bar d}I_d$, and applying the inductive hypothesis, we obtain $S=0$, and thus $R=0$. 
\end{proof}

We shall now apply the previous Lemma, for the integers $s$ and $t$, until $c_{s,t}(u)$ has order at least $m$. Then either $E_s=\emptyset$, or the greatest $t^\prime$ in $E_s$ is less than $t$. In this last case, we can apply again the Lemma, with integers $s$ and $t^\prime$, until $E_s=\emptyset$. We can then apply the Lemma with $s+1$ instead of $s$, until we have $s+1=N$.
This proves the proposition. 
\end{proof}

\sm We shall prove, analogously to the way we found a parabolic curve, that we can find parabolic manifolds as fixed points of a certain operator between spaces of functions, proving the following generalization of Theorem~1.6 of \cite{hakim2} for the case $k+1\ge 2$.

\begin{theorem}\label{thm:1.6}
Let $\Phi\in\Diff(\C^p, 0)$ be a tangent to the identity germ of order $k+1\ge 2$. Let $[V]$ be a non-degenerate characteristic direction and let $A=A(V)$ be its associated matrix. If $A$ has exactly $d$ eigenvalues, counted with multiplicity, with strictly positive real parts, then there exists a parabolic manifold of dimension $d+1$, with $0$ on its boundary, and tangent to $\C V \oplus E$ in $0$, where $E$ is the eigenspace associated to the attracting directors, and so that each of its points is attracted to the origin along the direction $[V]$. Moreover, it is possible to find coordinates $(x,u,v)$ in a sector of $\C\times\C^d\times\C^{p-d-1}$ so that the parabolic manifold is locally defined by $\{v=0\}$. 
\end{theorem}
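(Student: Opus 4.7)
The strategy is to construct the parabolic manifold as a graph $M = \{v = \psi(x, u)\}$ where $\psi$ is a holomorphic map defined on a sectorial domain, obtained as the unique fixed point of a contracting operator built in the spirit of Definition~\ref{defT}. First, apply Proposition~\ref{prop:ora} to choose coordinates in which, for integers $N$ and $m$ to be fixed large enough, $H(x, u, 0) = O(\modu{x}^k \norm{u}^m + \modu{x}^N \norm{u})$. The graph $v = \psi(x, u)$ is $\Phi$-invariant if and only if
\begin{equation*}
\psi\bigl(f(x, u, \psi(x, u)),\, g(x, u, \psi(x, u))\bigr) = h(x, u, \psi(x, u)),
\end{equation*}
and the tangency to $\C V \oplus E$ at the origin will follow from requiring $\psi(0, u) \equiv 0$ and $\psi$ vanishing to a prescribed order at $(0,0)$. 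I restrict to the attracting sector $A^i_{\gamma, \rho, c}$ provided by Theorem~\ref{th:7.1} so that the iterates $(x_n, u_n)$ of $(x, u)$ under the reduced map $(x, u) \mapsto (f(x, u, \psi(x, u)), g(x, u, \psi(x, u)))$ remain inside it and satisfy $\modu{x_n}^k \sim 1/n$ together with $\norm{u_n} = O(\modu{x_n}^{k\mu})$ for any $0 < \mu < \min_j \Re \lambda_j$, provided $\psi$ itself is sufficiently small.

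Next, I absorb the linear part of $h$ by the conjugation $w := x^{-kB}\psi$, which is well defined on $\Re x^k > 0$. Setting $K(x, u, v) := v - x^{kB} x_1^{-kB} v_1$ (the natural analogue of $H$ in \eqref{eq:tre}), the invariance equation telescopes into
\begin{equation*}
\T \psi(x, u) := x^{kB}\sum_{n = 0}^{\infty} x_n^{-kB}\, K\bigl(x_n, u_n, \psi(x_n, u_n)\bigr),
\end{equation*}
and the same telescoping argument as in Corollary~\ref{main} shows that a fixed point $\psi = \T\psi$ automatically satisfies the invariance equation. I then let $\T$ act on a closed subset of a Banach space analogous to $\Et$ of Definition~\ref{defT}, consisting of holomorphic $\psi$ on the chosen sector with bounds of the form $\norm{\psi(x, u)} \le C_0(\modu{x}^N + \norm{u}^m)$, together with analogous estimates for $\partial\psi/\partial x$ and $\partial\psi/\partial u$.

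The heart of the proof is to show that, for $N, m$ large enough and $r, \rho, c$ small enough, $\T$ stabilizes this set and is a contraction. This is carried out by replaying the estimates of Lemmas~\ref{lemma4.5}, \ref{lemma:dimo} and \ref{lemma:dimo2}, with two adjustments. First, the factor $\norm{x_n^{-kB}}$ is bounded by $\modu{x_n}^{-k(\mu^* + \varepsilon)}$ with $\mu^* = \max_j \Re \mu_j \le 0$, so this prefactor does not spoil summability; this is precisely where the hypothesis on the non-attracting directors is used. Second, combined with $\modu{x_n}^k \sim 1/n$, the decay of $\norm{u_n}$, and a Corollary~\ref{cor:4.3}-type summation estimate, one checks that $K(x_n, u_n, \psi(x_n, u_n))$ is summable and reproduces the Banach-space bound on $\psi$ once $N$ and $m$ are taken sufficiently large. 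The main obstacle will be to calibrate the quadruple $(N, m, C_0, C_1)$ coherently so that the two independent source terms $\modu{x}^N$ and $\norm{u}^m$ are dominated simultaneously; this is subtler than the one-parameter case of Section~6 because two exponents must be chosen in harmony with both decay rates $\mu^*$ and $\min_j \Re \lambda_j$. Once the fixed point $\psi$ is obtained, the change of coordinates $\tilde v := v - \psi(x, u)$ puts $M$ into the form $\{\tilde v = 0\}$, $M$ is tangent to $\C V \oplus E$ at the origin from the vanishing order of $\psi$, and the convergence of orbits on $M$ to the origin along $[V]$ follows from Theorem~\ref{th:7.1} applied to the reduced $(x, u)$-dynamics.
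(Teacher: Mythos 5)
Your proposal follows essentially the same route as the paper: normalize $H(x,u,0)$ via Proposition~\ref{prop:ora}, conjugate $v\mapsto x^{-kB}v$ to turn the invariance equation into a telescoping series, define the operator $\T$, run a contraction argument on a closed subset of a Banach space, and finish by straightening to $\{\tilde v=0\}$ and applying Theorem~\ref{th:7.1}. The one place you diverge, and where the ``calibration'' you flag as subtle would actually trip you up, is the Banach weight. You propose $\normu{\psi(x,u)}\le C_0\bigl(\modu{x}^N+\normu u^m\bigr)$, whereas the paper uses $\normu{\phi(x,u)}\le K_0\bigl(\normu u^m+\modu{x}^{N-k}\normu u\bigr)$, with a $\normu u$ factor on both terms. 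That factor is dictated by the structure of the forcing: after Proposition~\ref{prop:ora} the source is $H_1(x,u,0)=O\bigl(\modu{x}^k\normu u^m+\modu{x}^N\normu u\bigr)$, and pushing the $\modu{x}^N\normu u$ piece through the telescoped sum using $\normu{u_n}\lesssim\normu u\,\modu{x_n/x}^{k\alpha}$ and Corollary~\ref{cor:4.3} reproduces exactly $\normu u\,\modu{x}^{N-k}$ --- which is \emph{not} dominated by $\modu{x}^N+\normu u^m$ unless you insert an AM--GM step and additionally require $N\ge km$. The paper's weight closes the estimate with no side-condition. Just as importantly, because that weight vanishes identically on $\{u=0\}$, the fixed point automatically satisfies $\phi(x,0)\equiv 0$; this makes $\{u=0\}$ invariant for the reduced $(x,u)$-dynamics $\bigl(x,u\bigr)\mapsto\bigl(f(x,u,\phi),g(x,u,\phi)\bigr)$, so that its second component carries no residual $u$-independent source term, and Theorem~\ref{th:7.1} applies directly. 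With your weight, $\psi(x,0)$ need not vanish, and you would have to either restrict further to maps vanishing on $\{u=0\}$ (a closed $\T$-invariant subspace, so this works) or re-normalize the reduced map before invoking Theorem~\ref{th:7.1}. Replacing your weight by $\normu u^m+\modu{x}^{N-k}\normu u$ resolves both issues at once and brings your plan into full agreement with the paper's proof.
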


\begin{proof}
We may assume that $\Phi$ is of the form \eqref{eq:eqprinc}, with $[V]=[1:0]$ so that $A(v)=Diag(A,B)$ satisfies
\begin{equation*}
\begin{array}{l l}
\Re\lambda_j >\alpha >0 ,\,\textrm{ for any $\lambda_j$ eigenvalue of $A$} \\
\Re\mu_j \leq 0, \quad\quad\textrm{ for any $\mu_j$ eigenvalue of $B$},
\end{array}
\end{equation*}
and
$$
H(x, u, 0) = O(|x|^k\|u\|^m + |x|^N\|u\|),
$$
with $m, N>0$.

\sm

We shall search for $\phi(x,u)$, holomorphic in a sector
\begin{equation}\label{sec}
S_{\gamma,s,\rho}=\{(x,u)\in \C\times \C^d \mid |{\Im x^k}|\leq \gamma \Re x^k,\, \modu{x} \leq s,\, \norm u \leq \rho\},
\end{equation}
so that, for
\begin{equation*}
\begin{split}
\left\{
\begin{array}{l l}
x_1^{\phi}=f(x,u,\phi(x,u)),\\
u_1^{\phi}=g(x,u,\phi(x,u)),
\end{array}\right. 
\end{split}
\end{equation*}
we have 
\begin{equation}\label{eq:eqrel}
\phi(x_1^{\phi},u_1^{\phi})= h(x,u,\phi(x,u)).
\end{equation}

\me
Repeating the same changes of coordinates performed in the Section 6, we first transform $v_1$, for $\Re x>0$ by setting
$$
w=x^{-kB}v,
$$
and we define $ H_1$ as
\begin{equation*}
w- w_1= x^{-kB} H_1(x,u,v).
\end{equation*}
From the definitions of $x_1$ and $u_1$ in \eqref{eq:eqprinc}, we have
\begin{equation*}
\begin{split}
x_1 ^{-kB} 
= x^{-kB} \left[ \left( I + x^k B \right) + \oo{\norm u x^k, x^{2k} \log x }  \right] ,
\end{split}
\end{equation*}
and
\begin{equation*}
\begin{split}
w_1 &= x^{-kB} \left[ \left( I + x^k B \right) + \oo{\norm u x^k, x^{2k} \log x }  \right] \left[ (I- x^kB)x^{kB}w + H(x,u,v)\right] \\
&= \left( I+ \oo{\norm u x^k, x^{2k} \log x } \right) w + x^{-kB}\left( I + O(x^k) \right) H(x,u,v).
\end{split}
\end{equation*}
Hence $H_1(x,u,v)$ satisfies the same estimates as $H(x,u,v)$:
\begin{equation*}
H_1(x,u,v)= \oo{\norm u ^2 x^k, \norm u x^{k+1}\log x},	
\end{equation*}
and
\begin{equation}\label{8.13}
H_1(x,u,0)= \oo{\modu x^k \norm u ^m +\modu x ^N \norm u }.	
\end{equation}
Therefore \eqref{eq:eqrel} is equivalent to  
\begin{equation}\label{eq:relfunz2}
x^{-kB} \phi(x,u) - x_1^{-kB} \phi(x_1^\phi , u_1^\phi)= x^{-kB} H_1(x,u,\phi (x,u)).
\end{equation}

\me\no{\bf Operator $\T$.}
Let $\{(x_n, u_n)\}$ be the iterates defined by 
\begin{equation*}
\begin{split}
\left\{\begin{array}{l l}
x_1^\phi = f(x,u, \phi(x,u))= x-\frac{1}{k}x^{k+1} + F(x,u,\phi(x,u)),\\
u_1^\phi = g(x,u, \phi(x,u))= \left( I_d - x^k A\right) u + G(x,u,\phi(x,u)),
\end{array}\right.
\end{split}
\end{equation*}
with $f$ and $g$ as in \eqref{eq:eqprinc}, and $\phi$ holomorphic from the sector $S_{\gamma, s, \rho}$, defined in \eqref{sec}, to $\C^{p-d-1}$. Now we consider the operator
\begin{equation*}
\T \phi(x,u):= x^{kB} \sum_{n=0}^{\infty} x_n^{-kB} H_1 (x_n, u_n, \phi(x_n, u_n)).
\end{equation*}
We shall prove that this operator, restricted to a suitable closed subset $\mathcal F$ of the Banach space of bounded holomorphic maps $\phi:S_{\gamma,s,\rho}\to \C^{p-d-1}$, is a contraction. Then there exists a unique fixed point in $\mathcal F$, and, by the definition of $\T$, such a fixed point will be a solution of \eqref{eq:relfunz2}. 

We shall proceed as follows:
\begin{enumerate}
\item we shall prove that there exists a constant $K_0>0$ such that
\begin{equation}\label{eq:ccond1}
\begin{split}
\norm{\phi(x,u)}&\leq  K_0 \left( \norm u ^m +\modu x^{N-k} \norm u \right) \Longrightarrow \norm{\T\phi(x,u)}\leq K_0 \left( \norm u ^m +\modu x^{N-k} \norm u \right);
\end{split}
\end{equation}
\item we shall prove that if $K_0>0$ satisfies \eqref{eq:ccond1}, then there exist positive constants $K_1$ and $K_2$ such that
\begin{equation}\label{eq:ccond2}
\begin{split}
\begin{split}
\left\{\!\!\!
\begin{array}{l l}
\modu{\frac{\partial \phi}{\partial x}}\leq K_1 ( \norm u ^m \modu x^ {-1}+ \norm u \modu x ^{N-k-1} ), \medskip\\
\modu{\frac{\partial \phi}{\partial u}}\leq K_2 ( \norm u ^{m-1} + \modu x ^{N-k} ),
\end{array}\right.
\end{split}
\Rightarrow
\begin{split}
\left\{\!\!\!
\begin{array}{l l}
\modu{\frac{\partial \T\phi}{\partial x}}\leq K_1 ( \norm u ^m \modu x^ {-1}+ \norm u \modu x ^{N-k-1} ), \medskip  \\
\modu{\frac{\partial \T\phi}{\partial u}}\leq K_2 ( \norm u ^{m-1} + \modu x ^{N-k} );
\end{array}\right. 
\end{split}
\end{split}
\end{equation}
\item considering the Banach space $(F_0, \norm{\cdot}_0)$ defined as
\begin{equation*}
F_0 = \{\phi:S_{\gamma,s,\rho}\to \C^{p-d-1} \mid \norm{\phi}_0 <+\infty  \}, 
\end{equation*}
with the norm
$$
\norm{\phi}_0 := \sup_{x,u}\left\{ \frac{\norm{\phi(x,u)}}{\norm u ^m + \modu x ^{N-k}\norm u} \right\},
$$
we shall prove that the subset $\mathcal F$ of $F_0$, given by the maps $\phi$ satisfying  \eqref{eq:ccond1} and \eqref{eq:ccond2} with the constants $K_0,\,K_1$ and $K_2$ we found in 
$(1)$ and $(2)$ is closed;
\item we shall finally show that $\T$ is a contraction.
\end{enumerate}

We first prove the following analogous of Proposition~3.2 of \cite{hakim2}.

\begin{proposition}\label{prop:cond1}
If $m$ and $N$ are integers so that $H_1$ satisfies \eqref{8.13}, then there exists a positive constant $K_0$ such that, if
\begin{equation}\label{eq:cond1}
\norm{\phi(x,u)}\leq K_0 \left( \norm u ^m + \modu x^{N-k} \norm u \right) 
\end{equation} 
then
\begin{enumerate}
\item the series defining the operator $\T$ is uniformly convergent in $S_{\gamma, s, \rho}\cap \{(x,u)\in\C^p\mid \norm u \modu x^{-k\alpha}\le 1 \}$;
\item also $\norm{\T \phi(x,u)}$ satisfies the same inequality
\begin{equation*}
\norm{\T \phi(x,u)}\leq K_0 \left( \norm u ^m + \modu x^{N-k} \norm u \right) .
\end{equation*}
\end{enumerate} 
\end{proposition}
\begin{proof}
Since all the eigenvalues of $A$ have strictly positive real parts, as we saw in Theorem \ref{th:7.1}, for any $(x,u)\in S_{\gamma, s, \rho}$ we have 
\begin{equation*}
\lim_{n\to\infty}\norm{u_n}\modu{x_n}^{-k\alpha}=0, 
\end{equation*}
where $\alpha>0$ is strictly less then the real parts of the eigenvalues of $A$.
Therefore, without loss of generality, we may assume that $\norm{u}\modu{x}^{-k\alpha}$ is bounded by $1$. Let $\beta<k\alpha$ be a positive real number so that each eigenvalue $\mu_j$ of $B$ satisfies $\Re \mu_j<\beta$. By Lemma \ref{lemma4.1}, this implies that there exists a constant $C_1>0$ so that
\begin{equation*}
\norm{x^{kB}x_n^{-kB}}\leq C_1 \modu{\frac{x_n}{x}}^{-k\beta}.
\end{equation*}
Moreover, choosing $\gamma ,\,s,\,\rho$ small enough, if $(x,u)\in S_{\gamma,s,\rho}$, then
\begin{equation*}
\modu{x_n^k}\leq \frac{2}{n},\quad \norm{u_n}\leq \norm u \modu x^{-k\alpha}\modu{x_n}^{k\alpha}.
\end{equation*}
Thanks to the hypotheses on $H_1(x,u,v)$, there exist positive constants $K_1$ and $K_2$ so that\begin{equation}\label{eq:eq_3.16}
\begin{split}
\norm{H_1(x,u,v)}\!\leq\! K_1\!\!\left( \norm u ^m \!\modu x^k\! +\! \modu x ^N \!\!\norm u \right) \!\!+\! {K_2 \!\!\left(\norm v \modu x ^{k+1}\mlog x ^q +\norm v ^2 \modu x^k + \norm u \norm v \modu x^k \right)},
\end{split}
\end{equation}
for a certain $q\in\N$. 

\no Let us assume that $\norm{\phi(x,u)}\leq K \left( \norm u ^m + \modu x ^{N-k}\norm u \right)$ for a constant $K>0$. For $v=\phi(x,u)$, we have
\begin{equation*}
\begin{split}
\norm v \modu x ^{k+1}\mlog x ^q +\norm v ^2 \modu x^k + \norm u \norm v \modu x^k=\oo{\norm u + \modu x \mlog x ^q}\left( \modu x^k \norm u ^m + \modu x ^N \norm u \right).
\end{split}
\end{equation*}
Hence, taking $s$ and $\rho$ small enough, we have 
\begin{equation*}
\begin{split}
\norm{H_1(x,u,\phi(x,u))}\leq (K_1 + 1)\left( \modu x^k \norm u ^m+ \modu x ^N \norm u \right),
\end{split}
\end{equation*}
and therefore
\begin{equation*}
\begin{split}
\norm{\T \phi(x,u)} &\leq \sum_{n=0}^\infty \left\|{ \left( \frac{x_n}{x} \right)^{-kB} H_1(x_n,u_n,\phi(x_n, u_n)) }\right\|\\
&\leq (K_1 + 1)\sum_{n=0}^\infty \modu{\frac{x_n}{x}}^{-k\beta} \left( \modu{x_n}^k\norm{u_n}^m + \modu{x_n}^N \norm{u_n} \right) \\
&\leq (K_1 + 1)\sum_{n=0}^\infty \modu{\frac{x_n}{x}}^{-k\beta} \left( \modu{x_n}^{k+ k\alpha m} \norm u ^m \modu x ^{-k\alpha m }+ \modu{x_n}^{N+k\alpha} \norm u \modu x ^{-k\alpha}\right) .
\end{split}
\end{equation*}
Since $k\alpha > \beta$, the series is normally convergent in the set $\{\norm u \modu x ^{-k\alpha}\leq 1\}$. By Corollary \ref{cor:4.3}, there exists a positive constant $K_0$, depending only on $H_1$, so that
\begin{equation*}
\norm{ \T \phi (x,u)}\leq K_0 \left( \norm u ^m + \modu x^{N-k} \norm u \right).
\end{equation*}
Then, to conclude the proof it suffices to take $K=K_0$.
\end{proof}

Let $\mathcal F_0$ be the set of holomorphic maps from $S_{\gamma,s,\rho}$ to $\C^p$, satisfying \eqref{eq:cond1} with the constant $K_0$ of Proposition \ref{prop:cond1}. We just proved that $\T$ maps $\mathcal F_0$ into itself. Since we want $\T$ to be a contraction, we need to restrict this set. We first do it by restricting the domain of definition of the maps in $\mathcal F_0$.

\me\no{\bf Choice of the domain of definition $\mathcal D$.}
In the following, instead of $S_{\gamma,s,\rho}$, we shall use the following domain of definition for the maps $\phi$
\begin{equation*}
\mathcal D := S_{\gamma, s,\rho}\cap \{ (x,u) \in\C^r\mid \norm u \modu x^{-k\alpha}\leq 1 \}, 
\end{equation*}
and we shall denote with $\mathcal F_0$ the set of maps $\phi:\mathcal D \to \C^r$ satisfying \eqref{eq:cond1}. 
We shall prove a result analogous to Proposition \ref{prop:cond1} for the partial derivatives of $\phi$. To do so, we shall need bounds for the series
\begin{equation*}
\sum_{n=0}^\infty \normu{\frac{\partial}{\partial x} \left\{\left( \frac{x_n}{x} \right)^{-kB}H_1(x_n, u_n, \phi(x_n, u_n)) \right\}},
\end{equation*}
and
\begin{equation*}
\sum_{n=0}^\infty \normu{\frac{\partial}{\partial u} \left\{\left( \frac{x_n}{x} \right)^{-kB}H_1(x_n, u_n, \phi(x_n, u_n)) \right\}}.
\end{equation*}
We thus have to control the partial derivatives $\modu{\frac{\partial x_n}{\partial x}}$, $\normu{\frac{\partial u_n}{\partial x}}$, $\normu{\frac{\partial x_n}{\partial u}}$, and $\normu{\frac{\partial u_n}{\partial u}}$.

Following Lemma~3.5 of \cite{hakim2}, we have the following estimates.

\begin{lemma}\label{lemma:8.7}
Let $\delta = min\{k \alpha, k \}$, and let $\epsilon>0$, with $\epsilon < \delta$. Then, for $\gamma$, $s$ and $\rho$ small enough, we have the following inequalities in $\mathcal D$:
\begin{equation*}
\modu{\frac{\partial x_n}{\partial x}}\leq \modu{\frac{x_n}{x}}^{1+\delta -2\epsilon},  \quad \normu{\frac{\partial u_n}{\partial x}}\leq \frac{\norm u \modu{x_n}^{\delta -\epsilon}}{\modu x ^{1+\delta -2 \epsilon}},
\quad\normu{\frac{\partial x_n}{\partial u}}\leq \frac{\modu{x_n}^{1+\delta -2\epsilon}}{\modu x ^{\delta - \epsilon}},
~~{\rm and}~~\normu{\frac{\partial u_n}{\partial u}}\leq \modu{\frac{x_n}{x}}^{\delta -\epsilon}.
\end{equation*}
\end{lemma}

\begin{proof}
We argue by induction over $n$. If $n=1$, deriving $x_1=x-\frac{1}{k}x^{k+1}+ \oo{x^{2k+1}, \norm u x^{k+1}\log x}$ and $u_1=(I-x^k A)u +\oo{\norm u ^2 x^k , \norm u x^{k+1}\log x}$ with respect to $x$ and $u$, we obtain 
\begin{equation*}
\modu{\frac{\partial x_1}{\partial x}}=\modu{1-\frac{k+1}{k}x^k+ o(x^k)}\leq \modu{\frac{x_1}{x} }^{k+1-2\epsilon},
\end{equation*}
because $\modu{\frac{\modu{x_1}}{\modu x}}^{k+1-2\epsilon}=\modu{1-\frac{k+1-\epsilon}{k}x^k+o(x^k)}$, and
\begin{equation*}
\normu{\frac{\partial u_1}{\partial x}}\leq K \norm u\modu x^{k-1}.
\end{equation*}
Moreover
\begin{equation*}
\normu{\frac{\partial x_1}{\partial u}}\leq K \modu x ^{k+1}\leq \modu{\frac{\modu{x_1}^{1+\delta -2\epsilon}}{\modu x^{\delta -\epsilon}}},~~{\rm and}~~\normu{\frac{\partial u_1}{\partial u}}\leq \modu{1-\alpha x^k}\leq \modu{\frac{x_1}{x}}^{\delta-\epsilon},
\end{equation*}
for $\gamma$, $s$ and $\rho$ small enough. By the definition of $\Phi$ we deduce 
\begin{equation*}
\modu{\frac{\partial x_{n+1}}{\partial x}}\leq \modu{1- \frac{k+1}{k}x_n^k +o(x_n^k)}\modu{\frac{\partial x_n}{\partial x}}+ K \modu{x_n}^{k+1}\normu{\frac{\partial u_n}{\partial x}},
\end{equation*}
and
\begin{equation*}
\normu{\frac{\partial u_{n+1}}{\partial x}}\leq K \norm{u_n}\modu{\frac{\partial x_n}{\partial x}}+ \modu{1-\alpha x_n^k} \normu{\frac{\partial u_n}{\partial x}}.
\end{equation*}
Hence, by inductive hypothesis, 
\begin{equation*}
\begin{split}
\modu{\frac{\partial x_{n+1}}{\partial x}}&\leq \modu{\frac{x_n}{x}}^{1+\delta -2 \epsilon}\left( 1- \frac{k+1}{k}\Re x_n^k +o(x_n^k)+ K \norm u \modu{x_n}^{1+\epsilon}\right)\\
&\leq \modu{\frac{x_{n+1}}{x}}^{1+\delta-2\epsilon}= \modu{1- \frac{1+\delta -2\epsilon}{k}x_n^k+ o(x_n^k) },
\end{split}
\end{equation*}
because $1+\delta -2\epsilon < \frac{k+1}{k}$. On the other side, using the inductive hypothesis and the inequality $\norm{u_n}\leq \norm u \modu x ^{-k\alpha}\modu{x_n}^{k\alpha}$, we obtain 
\begin{equation*}
\normu{\frac{\partial u_{n+1}}{\partial x}}\leq \frac{\norm u }{\modu x ^{1+\delta -2\epsilon}} \modu{x_n}^{\delta - \epsilon} \left( 1-\alpha \Re x_n^k + o(x_n^k) + K \modu x^{-k\alpha}\modu{x_n}^{1+k\alpha -\epsilon}\right) ,
\end{equation*}
which is less than $\frac{\norm u }{\modu x^{1+\delta -2\epsilon }}\modu{x_{n+1}}^{\delta -\epsilon}$, because $\delta -\epsilon < k\alpha$. Arguing analogously by induction, we prove also the inequalities for the partial derivatives with respect to $u$. In fact,
\begin{equation*}
\begin{split}
\normu{\frac{\partial x_{n+1}}{\partial u}}
&\leq \modu{1- \frac{k+1}{k}x_n^k +o(x_n^k)}\modu{\frac{\partial x_n}{\partial u}}+ K \modu{x_n}^{k+1}\norm{\frac{\partial u_n}{\partial u}}\\
&\leq \modu{ \frac{\modu{x_n}^{k+1-2\epsilon}}{\modu x^{\delta -\epsilon}} }\left[ 1-\frac{k+1}{k}x_n^k +o(x_n^k)+ K\modu{x_n}^{\delta +\epsilon} \right]\\
&\leq \frac{\modu{x_n}^{k+1-2\epsilon}}{\modu x^{\delta -\epsilon}},
\end{split}
\end{equation*}
because $\delta + 1- \epsilon < k+1$, and
\begin{equation*}
\begin{split}
\normu{\frac{\partial u_{n+1}}{\partial u}}&\leq K \norm{u_n}\normu{\frac{\partial x_n}{\partial u}}+ \modu{1-\alpha x_n^k}\normu{\frac{\partial u_n}{\partial u}}\leq \modu{\frac{x_n}{x}}^{\delta -\epsilon}\left[ 1-\alpha \Re x_n^k +o(x_n^k) K\norm u \frac{\modu{x_n}^{2k+k\alpha -\delta}}{\modu x^{k\alpha}} \right]\\
&\leq \modu{\frac{x_{n+1}}{x}}^{\delta - \epsilon},
\end{split}
\end{equation*}
because $\delta-\epsilon < k\alpha$. This concludes the proof.
\end{proof}

We can now prove the following reformulation of Proposition~3.4 of \cite{hakim2} for the case $k+1\ge 2$.

\begin{proposition}\label{prop:cond2}
Let $\phi $ be in $\mathcal F_0$. There exist positive constants $K_1$ and $K_2$ so that, if we have
\begin{equation}\label{eq:cond2}
\left\{\begin{array}{l l l}
\modu{\frac{\partial \phi}{\partial x}}\leq K_1 \left( \norm u ^m \modu x^{-1} + \modu x ^{N-k-1}\norm u \right),\\
\vspace{-0.2cm}\\
\modu{\frac{\partial \phi}{\partial u}}\leq K_2 \left( \norm u ^{m-1} + \modu x ^{N-k} \right),
\end{array}\right.
\end{equation}
than the same inequalities hold for $\norm{\frac{\partial \T \phi}{\partial x}}$ and $\norm{\frac{\partial \T \phi}{\partial u}}$.
\end{proposition}
\begin{proof}
We first deal with the partial derivative of $H_1$. There exist positive constants $C_1$ and $C_2$ so that
\begin{equation*}
\begin{split}
\normu{H_1(x,u,v)}&\leq C_1\left( \norm u ^m \modu x ^k + \modu x^N \norm u  \right) +C_2 \left( \norm v \modu x^{k+1}\mlog x^q +\norm v ^2 \modu x^k +\norm u \norm v \modu x ^k \right) . 
\end{split}
\end{equation*}
Then there exist positive constants $C_3$ and $C_4$ such that
\begin{equation*}
\begin{split}
\normu{\frac{\partial H_1}{\partial x} }&\leq C_3 \left( \norm u ^m \modu x^{k-1} + \modu x^{N-1}\norm u \right) + C_4\left( \norm v \modu x^k \mlog x^q  +\norm v ^2 \modu x ^{k-1} +\norm u \norm v \modu x^{k-1} \right) . 
\end{split}
\end{equation*}
On the other side,
\begin{equation*}
\begin{split}
\normu{\frac{\partial H_1}{\partial u} }&\leq C_5 \left( \norm u ^{m-1}\modu x^k + \modu x^N\right) + C_6 \norm v \modu x^k ,
\end{split}
\end{equation*}
for some positive constants $C_5$ and $C_6$. Finally, there exist positive constants $C_7$ and $C_8$ such that
\begin{equation*}
\begin{split}
\normu{\frac{\partial H_1}{\partial v} }&\leq C_7 \left( \modu x^{k+1} \mlog x ^q + \norm v \modu x^k + \norm u \modu x ^k \right) \leq C_8\modu x^k .
\end{split}
\end{equation*}
Let us assume that there exist positive constants $K$ and $K^{\prime}$ such that
\begin{equation*}
\left\{\begin{array}{l l l}
\normu{\frac{\partial \phi}{\partial x}}\leq K \left( \norm u ^m \modu x^{-1} + \modu x ^{N-k-1}\norm u \right),\\
\vspace{-0.2cm}\\
\normu{\frac{\partial \phi}{\partial u}}\leq K^\prime \left( \norm u ^{m-1} + \modu x ^{N-k} \right).
\end{array}\right.
\end{equation*} 
Then we have
\begin{equation*}
\begin{split}
\normu{\frac{\partial \T\phi}{\partial x}}&\leq \sum_{n=0}^\infty \normu{\frac{\partial}{\partial x} \left\{\left( \frac{x_n}{x} \right)^{-kB}H_1(x_n, u_n, \phi(x_n, u_n)) \right\}}\\
&\leq \sum_{n=0}^\infty \Bigg[{\normu{ \frac{\partial }{\partial x}\left( \left( \frac{x_n}{x} \right)^{-kB}\right)} \normu{ H_1(x_n,u_n, \phi(x_n,u_n)) }}\\
&\quad\quad \quad + \normu{ \left(\frac{x_n}{x}\right)^{-kB}  } \Bigg[ {\left\|{\frac{\partial H_1}{\partial x}}\right\|\modu{\frac{\partial x_n}{\partial x}}} + {\left\|{\frac{\partial H_1}{\partial u}}\right\|\modu{\frac{\partial u_n}{\partial x}} }\Bigg]\\
&\quad \quad \quad +\normu{ \left(\frac{x_n}{x}\right)^{-kB}  } {\left\|{\frac{\partial H_1}{\partial v}}\right\| \left[ \modu{\frac{\partial\phi}{\partial x}}\modu{\frac{\partial x_n}{\partial x}} + \modu{\frac{\partial \phi}{\partial u}}\modu{\frac{\partial u_n}{\partial x}} \right]} \Bigg].
\end{split}
\end{equation*}
We now use Lemma \ref{lemma:8.7} to give estimates. We have
\begin{equation*}
\begin{split}
\left\|\frac{\partial}{\partial x}\left( \left( \frac{x_n}{x} \right)^{-kB}\right)\right\|& \normu{ H_1(x_n,u_n, \phi(x_n,u_n)) } \\
&\leq \norm{-kB} \modu{\frac{x_n}{x}}^{-k\beta}\left[ \frac{1}{\modu{x_n}}\modu{\frac{x_n}{x}}^{1+\delta-2\epsilon}+ \frac{1}{\modu x} \right]
 \left( \norm{u_n}^m \modu{x_n}^{-1}+ \modu{x_n}^{N-k-1}\norm{u_n} \right)\\
 &\quad \times \modu{x_n}^{k+1}\left[ C_1 +C_2 K_0 \left( \modu{x_n}\mlog{x_n}^q + \norm{\phi(x_n,u_n)}+ \norm{u_n}\right) \right]. 
\end{split}
\end{equation*}
Similarly
\begin{equation*}
\begin{split}
\left\|{\frac{\partial H_1}{\partial x}}\right\|\modu{\frac{\partial x_n}{\partial x}}
&\leq \left( \norm{u_n} ^m \modu{x_n}^{-1} + \modu{x_n}^{N-k-1}\norm{u_n} \right) \modu{x_n}^k\\
&\quad \times\Bigg[ C_3+ C_4K_0\left( \modu{x_n} \mlog{x_n}^q  +\norm{\phi(x_n,u_n)}+\norm{u_n} \right)\Bigg] \modu{\frac{x_n}{x}}^{1+\delta -2\epsilon},
\end{split}
\end{equation*}
and
\begin{equation*}
\begin{split}
\left\|{\frac{\partial H_1}{\partial u}}\right\|\modu{\frac{\partial u_n}{\partial x}} 
&\leq  \left( \norm{u_n} ^m \modu{x_n}^{-1} + \modu{x_n}^{N-k-1}\norm{u_n} \right) \modu{x_n}^k\Bigg[ C_5 \frac{\modu{x_n}}{\norm{u_n}} + C_6 \modu{x_n}\Bigg]\frac{\norm u \modu{x_n}^{\delta -\epsilon}}{x^{1+ \delta -2\epsilon}}.
\end{split}
\end{equation*}
Finally 
\begin{equation*}
\begin{split}
\left\|{\frac{\partial H_1}{\partial v}}\right\| \left[ \modu{\frac{\partial\phi}{\partial x}}\modu{\frac{\partial x_n}{\partial x}} + \modu{\frac{\partial \phi}{\partial u}}\modu{\frac{\partial u_n}{\partial x}} \right]
&\leq C_8 \left( \norm{u_n}^m\modu{x_n}^{-1}+ \modu{x_n}^{N-k-1}\norm{u_n} \right) \modu{x_n}^k
\\
&\quad \quad \quad\quad \quad \quad \quad  \quad\times \Bigg[ K\modu{\frac{x_n}{x}}^{1+\delta -2\epsilon} +K^\prime \frac{\norm u \modu{x_n}^{1+\delta -\epsilon}}{\norm{u_n}\modu x^{1+\delta-2\epsilon}}  \Bigg].
\end{split}
\end{equation*}
By Corollary \ref{cor:4.3}, we have the following estimate
$$
\sum_{n=0}^{\infty}\modu{x_n}^{\mu}\modu{\log x_n }^q\leq C_{\mu, q} \modu x^{\mu-k}\modu{\log x}^q,
$$
for a constant $C_{\mu,q}>0$, and hence there exists a positive constant $K_1$, depending only on $H_1$, so that
\begin{equation*}
\begin{split}
\normu{\frac{\partial \T\phi}{\partial x}}
&\leq K_1\left( \norm{u}^m\modu{x}^{-1}+ \modu{x}^{N-k-1}\norm{u} \right).
\end{split}
\end{equation*}
Setting $K=K_1$, we proved the first inequality.  
In a similar way, we estimate $\normu{\frac{\partial \T\phi}{\partial u}}$ obtaining
\begin{equation*}
\begin{split}
\normu{\frac{\partial \T\phi}{\partial u}}
&\leq \sum_{n=0}^\infty {\normu{\frac{\partial}{\partial u}\left(\left(\frac{x_n}{x}\right)^{-kB}\right)} \normu{ H_1(x_n,u_n, \phi(x_n,u_n)) }}+ \normu{\left(\frac{x_n}{x}\right)^{-kB} } {\normu{\frac{\partial }{\partial u}\left( H_1(x_n,u_n,\phi(x_n,u_n)) \right) }}.
\end{split}
\end{equation*}
For the first term , we have
\begin{equation*}
\begin{split}
\normu{\frac{\partial}{\partial u}\left(\left(\frac{x_n}{x}\right)^{-kB}\right)} \normu{ H_1(x_n,u_n, \phi(x_n,u_n)) }&\leq \normu{ -kB \frac{1}{x_n}\frac{\partial x_n}{\partial u}\left( \frac{x_n}{x} \right)^{-kB} }\left( \norm{u_n}^{m-1} +\modu{x_n}^{N-k} \right)\modu{x_n}^k \\
&\quad\times\underbrace{\Big[ C_1 \norm{u_n} + C_2\norm{u_n}\left( \modu{x_n}\mlog{x_n}^q + \norm{\phi(x_n,u_n)} +\norm{u_n}\right) \Big]}_{\tilde K (x_n,u_n)}\\
&\leq \tilde C \modu{\frac{1}{x_n}}\modu{\frac{\partial x_n}{\partial u}}\modu{\frac{x_n}{x}}^{-k\beta}\left( \norm{u_n}^{m-1} +\modu{x_n}^{N-k} \right)\modu{x_n}^k \tilde K (x_n,u_n).
\end{split}
\end{equation*}
The second term contains the partial derivatives of $H_1$ with respect to $x,u$ and $v$, and we have
\begin{equation*}
\begin{split}
\normu{\frac{\partial }{\partial u}\left( H_1(x_n,u_n,\phi(x_n,u_n)) \right) }
&\le \left( \norm{u_n}^{m-1} + \modu{x_n}^{N-k} \right)\\
&\quad\times \modu{x_n}^k \Bigg[ \Big[ C_3\norm{u_n}+ C_4 K_0 \norm{u_n} \left( \modu{x_n} \mlog{x_n}^q  +\norm{\phi(x_n,u_n)}+\norm{u_n} \right)\Big]\frac{\modu{x_n}^{\delta -\epsilon}}{\modu x^{\delta -\epsilon}}\\
&\quad + \Big[ C_5+ C_6 K_0\norm{u_n} \Big]\modu{\frac{x_n}{x}}^{\delta -\epsilon} + C_8 \Big[ K\norm{u_n}\modu{x_n}^{-1}\frac{\modu{x_n}^{1+\delta -2\epsilon}}{\modu x^{\delta -\epsilon}}+ K^\prime\modu{\frac{x_n}{x}}^{\delta -\epsilon}  \Big] \Bigg]\\
&\leq \overline K(x_n,u_n,x) \left( \norm{u_n}^{m-1} + \modu{x_n}^{N-k} \right) \modu{x_n}^k .
\end{split}
\end{equation*}
Therefore 
\begin{equation*}
\begin{split}
\normu{\frac{\partial \T\phi}{\partial u}}
&\leq K_2\left( \norm u^{m-1} + \modu x^{N-k} \right)  ,	
\end{split}
\end{equation*}
and the constant $K_2$ only depends on $H_1$. Taking $K^\prime = K_2$ we conclude the proof.
\end{proof}

\me\no{\bf Definition of $\mathcal F$.}
We are left with finding a suitable subset of maps, such that $\T$ is a contraction. Let $m$ and $N$ be integers satisfying \eqref{8.13}. Let $F_0$ be the Banach space of the holomorphic maps $\phi$, defined on $S_{\gamma,s, \rho}$, such that
\begin{equation*}
\norm{\phi}_0:= \sup_{x,u}\left\{ \frac{\norm{\phi(x,u)}}{\norm u ^m + \modu x ^{N-k} \norm u} \right\}
\end{equation*}
is bounded, endowed with the norm $\norm\phi _0$. Define $\mathcal F$ as the closed subset of $F_0$ given by the maps satisfying \eqref{eq:cond1} and \eqref{eq:cond2}, with the constants $K_0,K_1$ and $K_2$ given by Propositions \ref{prop:cond1} and \ref{prop:cond2}. 

\begin{proposition}
If $\mathcal F$ is the subset defined above, then $\T|_{\mathcal F}$ is a contraction.
\end{proposition}

\begin{proof}
Let $\phi$ and $\psi$ be in $\mathcal F$. We need to control 
\begin{equation*}
S:= \normu{ \sum_{n=0}^\infty \left( \frac{x_n}{x} \right)^{-kB} H_1(x_n,u_n,\phi(x_n,u_n)) -\sum_{n=0}^\infty \left( \frac{x_n^\prime}{x} \right)^{-kB} H_1(x_n^\prime, u_n^\prime,\psi(x_n^\prime, u_n^\prime))},
\end{equation*}
where $(x_n, u_n)$ and $(x_n^\prime, u_n^\prime )$ are the iterates of $(x,u)$ via \eqref{eq:eqprinc} respectively with $\phi $ and $\psi$. We can bound $S$ with the sum of $S_1$ and $S_2$, where
\begin{equation*}
S_1:= \normu{ \sum_{n=0}^\infty \left( \frac{x_n}{x} \right)^{-kB} H_1(x_n,u_n,\phi(x_n,u_n)) -\sum_{n=0}^\infty \left( \frac{x_n}{x} \right)^{-kB} H_1(x_n, u_n,\psi(x_n, u_n))},
\end{equation*}
and
\begin{equation*}
S_2:= \normu{ \sum_{n=0}^\infty \left( \frac{x_n}{x} \right)^{-kB} H_1(x_n,u_n,\psi(x_n,u_n)) -\sum_{n=0}^\infty \left( \frac{x_n^\prime}{x} \right)^{-kB} H_1(x_n^\prime, u_n^\prime,\psi(x_n^\prime, u_n^\prime))}.
\end{equation*}
It is easy to control the term $S_1$. From \eqref{eq:eq_3.16}, we have
\begin{equation*}
S_1\leq C \sum_{n=0}^\infty \modu{\frac{x_n}{x}}^{-k\beta} \left( \norm{u_n}^m + \modu{x_n}^{N-k} \norm{u_n}  \right) \left( \modu{x_n}^{k+1}\modu{\log x_n}^q + \modu{x_n}^k\|u_n\|\right) \norm{\phi -\psi}_0,
\end{equation*}
for some integer $q$.
By Corollary \ref{cor:4.3}, since $\norm{u_n}\leq \norm u \modu{x_n}^{k\alpha}\modu x^{-k\alpha}$, we obtain 
\begin{equation*}
S_1 \leq C^\prime \left( \norm u^m + \modu x^{N-k}\norm u \right) \left( \modu x \modu{\log x}^q  +\norm u \right)\norm{\phi - \psi}_0.
\end{equation*}
To estimate $S_2$, we have to estimate the dependence of $\{(x_n,u_n)\}$ on $\phi$ in \eqref{eq:eqprinc}. We have the following reformulation of Lemma~3.7 of \cite{hakim2} .

\begin{lemma}\label{lemma:lemma3.7}
Let $\delta=\min \{k\alpha , k\}$. Let $\epsilon$ be a positive real number, with $\epsilon < \delta$, and $\Re \lambda_j> \alpha + \epsilon$ for each eigenvalue $\lambda_j$ of $A$. Let $\phi$ and $\psi $ be in $\mathcal F$, and let $\{(x_n, u_n)\}$ and $\{(x_n^\prime, u_n)^\prime\}$ be the iterates via \eqref{eq:eqprinc} associated to $\phi$ and $\psi$. Then for $\gamma, s$ and $\rho$ small enough, the following estimates hold in $S_{\gamma, s , \rho}$:
\begin{equation*}
\modu{x_n-x_n^\prime}\leq \modu{x_n}^{1+\delta -\epsilon}\modu x^{-\delta}(\norm u ^m + \modu x^ {N-k-1}\norm u)\norm{\phi - \psi}_0,
\end{equation*} 
and
\begin{equation*}
\norm{u_n- u_n^\prime} \leq \modu{x_n}^{\delta}\modu x ^{-\delta} (\norm u ^m +\modu x^{N-k-1}\norm u)\norm{\phi- \psi}_0.
\end{equation*}
\end{lemma}

\begin{proof}
We use the following notation: $\Delta x_n := \modu{x_n - x_n^\prime}$, $\Delta u_n := \normu{u_n - u_n^\prime}$, and $\Delta \phi(x,u):= (\norm u ^m + \modu x^{N-k-1}\norm u)\norm{\phi - \psi}_0$. We argue by induction over $n$. If $n=1$, thanks to \eqref{eq:eqprinc}, there exists $K>0$ such that
\begin{equation*}
\left\{\begin{array}{l l}
\Delta x_1 \leq K\modu x^{k+1} \Delta\phi(x,u), \\
\Delta u_1 \leq K \left( \modu x^{k+1}+ \modu x^k\norm u \right) \Delta\phi(x,u),
\end{array}\right.
\end{equation*}
for $\gamma$ and $s$ small enough. Let us assume that the inequalities hold for $n$, and we prove that they hold also for $n+1$. Since $x_n^k$ and $(x_n^\prime)^k$ are equivalent to $\frac{1}{n}$, we have $(x_n^\prime)^k = x_n^k + o(x_n^k)$. From the definition of $\Phi$ it follows
\begin{equation*}
\begin{split}
\Delta x_{n+1}&= \modu{x_n^k\left( 1- x_n^k + \oo{x_n^{2k}, \norm {u_n} x_n^k} \right) -(x_n^\prime)^k\left( 1- (x_n^\prime)^k + \oo{(x_n^\prime)^{2k}, \norm{u_n^\prime} (x_n^\prime)^k} \right) }\\
&\leq \Delta x_n \modu{1- x_n^k + o(x_n^k) } + K\modu{x_n}^{k+1} \Delta u_n + K \modu{x_n}^{k+1} \Delta\phi(x_n,u_n),
\end{split}
\end{equation*} 
and
\begin{equation*}
\begin{split}
\Delta u_{n+1}&\leq K \norm{u_n} \Delta x_n + \modu{ 1- (\alpha + \epsilon)x_n + o(x_n) }\Delta u_n + K \left( \modu{x_n}^{k+1}+\norm{u_n}\modu{x_n}^k \right)\Delta \phi(x_n,u_n).
\end{split}
\end{equation*}
Thanks to $\norm{u_n}\leq \norm u \modu{\frac{x_n}{x}}^{k\alpha}$, we have
\begin{equation*}
\Delta \phi(x_n,u_n)\leq \left( \norm u ^m +\modu x^{N-k-1} \norm u\right) \left( \modu{\frac{x_n}{x}}^{m k \alpha} +\modu{\frac{x_n}{x}}^{k\alpha +N -k-1} \right)\norm{\phi- \psi}_0,
\end{equation*}
and, since $\modu{\frac{x_n}{x}}^{\gamma}\leq \modu{\frac{x_n}{x}}^{\delta}$ when $\gamma > \delta$, we have
\begin{equation*}
\modu x^\delta \Delta \phi(x_n,u_n) \leq 2 \modu{x_n}^\delta \Delta \phi(x,u).
\end{equation*}
By inductive hypothesis, we may bound $\modu x^{\delta}\frac{\Delta x_{n+1}}{\Delta \phi}$ and $\modu x^{\delta}\frac{\Delta u_{n+1}}{\Delta \phi}$. We thus obtain
\begin{equation*}
\begin{split}
\modu x^{\delta}\frac{\Delta x_{n+1}}{\Delta \phi}&\leq \modu{ 1-\epsilon x_n^k +o(x_n^k) }\modu{x_{n+1}}^{1+\delta- \epsilon}+ K\modu{x_n}^{2+\delta}+ 2K\modu{x_n}^{2+\delta}\\
&\leq  \modu{ 1-\epsilon x_n^k +o(x_n^k) }\modu{x_{n+1}}^{1+\delta- \epsilon} \leq \modu{x_{n+1}}^{1+\delta- 2\epsilon},
\end{split}
\end{equation*}
and
\begin{equation*}
\begin{split}
\modu x^{\delta}\frac{\Delta u_{n+1}}{\Delta \phi}&\leq K\norm{u_n}\modu{x_n}^{1+\delta-\epsilon}+ \modu{ 1- \epsilon x_n^k +o(x_n^k) }\modu{x_{n+1}}^{\delta}+ K\left( \modu{x_n} +\norm{u_n} \right)\modu{x_n}^{1+\delta}\\
&\leq \modu{ 1- \epsilon x_n^k +o(x_n^k) }\modu{x_{n+1}}^{\delta} \leq \modu{x_{n+1}}^\delta.
\end{split}
\end{equation*}
Since $\norm{u_n}\modu{x_n}^{-k\alpha}=o(1)$, we can now prove the last inequality
\begin{equation*}
\norm{u_n}\modu{x_n}^{1+\delta-\epsilon} = o(\modu{x_n}^{1+k\alpha +\delta - \epsilon})= \modu{x_n}^\delta o (\modu{x_n}),
\end{equation*}
for $\epsilon$ small enough. 
\end{proof}

We can now estimate $S_2$ as follows.
\begin{equation*}
\begin{split}
S_2 
&\leq \sum_{n=0}^\infty K_1 \modu{\frac{x_n}{x}}^{-k\beta-\epsilon} \!\!\left( \norm{u_n}^m +\modu{x_n}^{N-k-1}\norm{u_n} \right)\! \Delta x_n + K_2 \modu{\frac{x_n}{x}}^{-k\beta-\epsilon}\!\!\left( \norm{u_n}^{m-1}\modu{x_n}^k +\modu{x_n}^{N-k-1} \right)\! \Delta u_n .
\end{split}
\end{equation*}
By Lemma \ref{lemma:lemma3.7}, Corollary \ref{cor:4.3} and the fact that $\norm{u_n}\modu{x_n}^{-\epsilon}=o(1)$, we thus obtain
\begin{equation*}
S_2\leq K \left( \norm u ^{m-1} +\modu{x_n}^{N-k-1} \right)\left( \norm u ^m +\modu x^{N-k-1}\norm u \right)\norm{\phi- \psi}_0.
\end{equation*}
Therefore $\T$ is a contraction.
\end{proof} 

Taking $\phi$ the unique fixed point of $T$, we can use the following change of coordinates: $\tilde v= v-\phi (x,u)$. Then
\begin{equation*}
\begin{split}
\tilde v_1&= v_1- \phi(x_1, u_1)=(I-x^k B)v+ H(x,u,v)- \phi(x_1,u_1)\\
&=(I-x^k B)(\tilde v + \phi(x,u))+ H(x,u,\tilde v + \phi(x,u))- \phi(x_1,u_1)\\
&=(I-x^k B)\tilde v + \underbrace{(I-x^k B)\phi(x,u)+ H(x,u,\phi(x,u))}_{=\phi(x_1^{\phi},u_1^{\phi})} + \sum_{n\geq 1} \left[\frac{1}{n!}\frac{\partial^n H}{\partial v^n}(x,u,\phi(x,u)) \tilde v ^n\right] - \phi(x_1,u_1)\\
&=(I-x^k B)\tilde v +\phi( x_1^{\phi},u_1^{\phi} )-\phi(x_1^{\phi},u_1^{\phi})+ \cdots \\
&=(I-x^k B)\tilde v + \tilde H(x,u,\tilde v),
\end{split}
\end{equation*}
with $\tilde H(x,u,\tilde v)= O(\|\tilde v\|)$, and hence $H(x,u,0)=0$. Therefore we can apply Theorem \ref{th:7.1} to $\Phi|_{\{\tilde v=0\}}$ and this concludes the proof of Theorem \ref{thm:1.6}.
\end{proof}

We then deduce the following reformulation of Corollary~3.8 of \cite{hakim2} .

\begin{corollary}
Let $\Phi\in\Diff(\C^p,0)$ be a tangent to the identity germ of order $k+1\ge 2$ and let $[V]$ be a non-degenerate characteristic direction. Let $\{\lambda_1,\dots,\lambda_h \}$ be the directors associated to $[V]$ with strictly positive real parts and assume that
\begin{equation*}
\alpha_1 > \alpha_2 > \cdots > \alpha_h >0, 
\end{equation*}
where $\alpha_j= \Re \lambda_j$. Then there exists an increasing sequence
\begin{equation*}
M_1 \subset M_2 \subset \cdots \subset M_h 
\end{equation*}
of parabolic manifolds, defined in a sector, attracted by the origin along the direction $[V]$. Moreover, for any $1\leq i \leq h$, the dimension of $M_i$ is $1 +\sum_{\Re \lambda_j\geq \alpha_i}m_{alg}(\lambda_j)$ and $M_i$ is tangent at the origin to $\C V \bigoplus_{\Re \lambda_j \geq \alpha_i} E_{\lambda_j}$, where $E_{\lambda_j}$ is the eigenspace associated to the eigenvalue $\lambda_j$.
\end{corollary}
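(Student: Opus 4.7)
The plan is to construct the nested family $M_1\subset M_2\subset \cdots \subset M_h$ by downward induction on $i$, starting from $M_h$. For the base case, one applies Theorem~\ref{thm:1.6} directly: since $\alpha_h$ is the smallest positive real part among the directors, the matrix $A(V)$ has exactly $\sum_{j=1}^h m_{alg}(\lambda_j)$ eigenvalues with strictly positive real parts, so Theorem~\ref{thm:1.6} yields a parabolic manifold $M_h$ of dimension $1+\sum_{j=1}^h m_{alg}(\lambda_j)$, tangent at $0$ to $\C V\oplus E_{\lambda_1}\oplus\cdots\oplus E_{\lambda_h}$, and furnishes coordinates $(x,u,v)$ on a sector of $\C\times\C^{d_h}\times\C^{p-d_h-1}$ in which $M_h=\{v=0\}$ and the restriction $\Phi|_{M_h}$ has the standard form~\eqref{eq:eqprinc} with matrix $A=\operatorname{Diag}(A_1,\ldots,A_h)$ in Jordan normal form, each $A_j$ having unique eigenvalue $\lambda_j$.

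For the inductive step, assume $M_{i+1}$ has been constructed, together with coordinates $(x,u^{(1)},\ldots,u^{(i+1)})$ on $M_{i+1}$ in which the restricted germ $\Phi|_{M_{i+1}}$ has the form~\eqref{eq:eqprinc}. Group the coordinates as $u=(u^{(1)},\ldots,u^{(i)})$ and $v=u^{(i+1)}$, so that the block on $u$ carries eigenvalues $\lambda_1,\ldots,\lambda_i$ with real parts~$\ge \alpha_i$ and the block on $v$ carries the eigenvalue $\lambda_{i+1}$ with real part~$\alpha_{i+1}<\alpha_i$. The desired manifold $M_i$ will be obtained as the graph $\{v=\phi(x,u)\}$ of a holomorphic map $\phi$, in complete analogy with the proof of Theorem~\ref{thm:1.6}: we seek $\phi$ solving~\eqref{eq:eqrel}, and produce it as the unique fixed point of the operator $\mathcal T$ in a suitably chosen closed subset of a Banach space of bounded holomorphic maps.

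To make this work we need a mild generalization of Theorem~\ref{thm:1.6} in which the non-attracting block $B$ is only required to have eigenvalues with real parts strictly less than some fixed threshold~$\alpha$, rather than non-positive; in our setting we pick any $\alpha\in(\alpha_{i+1},\alpha_i)$ and argue that each step of the proof of Theorem~\ref{thm:1.6} goes through. Specifically, the polynomial-equation argument in Proposition~\ref{prop:ora} reduces to checking that $C_i:=A-\frac{\mu_i-\bar s+k}{\bar d}I_d$ is invertible and that the Euler-type induction $\langle\operatorname{grad}R,C_iu\rangle=0\Rightarrow R=0$ holds; both are consequences of $\Re\alpha_j>\alpha>\Re\mu_{i+1}$ combined with $\bar s\ge k$ and $\bar d\ge 1$, so normal forms $H(x,u,0)=O(|x|^k\|u\|^m+|x|^N\|u\|)$ can still be achieved. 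The decay estimate of Lemma~\ref{lemma4.1} applied to the new $B$ gives $\|x^{kB}x_n^{-kB}\|\leq C|x_n/x|^{-k\beta}$ for any $\beta>\Re\lambda_{i+1}$, and one can still choose $\beta$ with $k\alpha>\beta$; this is precisely what is used in Proposition~\ref{prop:cond1} to guarantee normal convergence of the series $\mathcal T\phi$, and in Proposition~\ref{prop:cond2} and Lemma~\ref{lemma:lemma3.7} for the partial derivative estimates. The fixed-point $\phi$ of $\mathcal T$ then defines $M_i\subset M_{i+1}$ as its graph, which is a parabolic manifold of dimension $1+d_i$, tangent at $0$ to $\C V\oplus E_{\lambda_1}\oplus\cdots\oplus E_{\lambda_i}$, with the required attraction property. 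The inclusion $M_i\subset M_{i+1}$ is automatic by construction.

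The main obstacle is the bookkeeping needed to confirm that every quantitative estimate in the proof of Theorem~\ref{thm:1.6} depends only on the gap $\alpha-\max\Re\mu_j>0$ and not on $\Re\mu_j\leq 0$. In particular one has to re-examine the exponent comparisons used in the proofs of Lemma~\ref{lemma:8.7} and Lemma~\ref{lemma:lemma3.7} (where $\delta=\min\{k\alpha,k\}$ appears) to ensure they remain favourable when the non-attracting eigenvalues $\mu_j$ may now be positive; the key inequalities $\delta-\epsilon<k\alpha$ and the decay $\|u_n\||x_n|^{-k\alpha}=o(1)$ persist because $\alpha$ is strictly less than every $\Re\lambda_j$, so no step is substantially harder—only the verification of the gap inequalities must be redone.
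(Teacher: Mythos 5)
The paper states this corollary immediately after Theorem~\ref{thm:1.6} without a separate proof, treating it as a direct consequence; your proposal supplies what is essentially the intended argument, and it is correct. The key observation you isolate — that the proof of Theorem~\ref{thm:1.6} (via Proposition~\ref{prop:ora}, Propositions~\ref{prop:cond1}–\ref{prop:cond2}, Lemma~\ref{lemma:8.7}, Lemma~\ref{lemma:lemma3.7}) depends only on a strict gap $\Re\lambda_j>\alpha>\Re\mu_i$ between the two blocks, not on $\Re\mu_i\le 0$ — is exactly what makes the downward induction work. Your verification of the invertibility of $C_i=A-\frac{\mu_i-\bar s+k}{\bar d}I_d$ is right: with $\bar s\ge k$ and $\bar d\ge 1$ one has $\Re\frac{\mu_i-\bar s+k}{\bar d}\le\Re\mu_i<\alpha<\Re\lambda_a$, so all eigenvalues of $C_i$ keep strictly positive real parts, and the Euler-type double induction goes through unchanged; the resonance difficulties the paper later confronts in the Fatou-coordinates section arise only when the roles of the big and small blocks are swapped, which does not happen here since you always normalize the smaller-real-part block $v=u^{(i+1)}$ against the larger one $u=u^{(\le i)}$. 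The remaining estimates (the choice of $\beta$ with $\max\Re\mu_j<\beta<k\alpha$ in Proposition~\ref{prop:cond1}, and the derivative bounds) only need the same gap, as you note. Two small remarks: in the sentence ``both are consequences of $\Re\alpha_j>\alpha>\Re\mu_{i+1}$'' you should write $\Re\lambda_{i+1}$ (or $\alpha_{i+1}$), since $\mu$ is reserved in the paper for the non-attracting directors; and it would be slightly cleaner to say explicitly that $\Phi|_{M_{i+1}}$ is not a germ at a fixed point but a sector-map already in the normal form \eqref{eq:eqprinc}, so the fixed-point machinery is applied to that form directly rather than to a germ.
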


We can also deduce a partial converse of Theorem \ref{th:7.1}, using the following result, which holds for germs of biholomorphisms and hence also for global biholomorphisms.

\begin{lemma}\label{lemma5.1}
Let $\Phi\in\Diff(\C^p, 0)$ be a tangent to the identity germ of order $k+1\ge 2$. If $X=(x,y)\in\C^p\setminus\{(0,0)\}$ is so that $X_n=\Phi^n(x,y)$ converges to the origin and $[X_n]$ converges to $[1:0]$, then there exist constants $\gamma$, $s$ and $\rho$ so that, for any $n>n_0$, with $n_0$ large enough, we have $x_n\neq 0$ and $X_n=(x_n,y_n)\in S_{\gamma,s, \rho}$, where, for $x\neq 0$ and $U=\frac{y}{x}$, we set
$$
S_{\gamma,s, \rho}=\left\{(x,U)\in\C\times\C^{p-1} \;\Bigm\vert\; \bigr\vert \Im x^k\bigr\vert \leq \gamma\, \Re x^k, \, \modu{x^k} < s, \, \norm U < \rho\right\}.
$$
\end{lemma}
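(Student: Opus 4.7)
The plan is to verify separately the three inequalities defining $S_{\gamma,s,\rho}$ for the iterates $X_n=(x_n,y_n)$ with $n$ large, assuming throughout (via the reductions of Sections 4--5) that $\Phi$ is in the normalized form \eqref{eq:main} and that $[1:0]$ is the non-degenerate characteristic direction. Two of the three conditions are immediate: since $\Phi$ is a biholomorphism, it is injective, so $X_n\neq 0$ for every $n$; the hypothesis $[X_n]\to[1:0]$ in $\C\P^{p-1}$ then forces $x_n\neq 0$ eventually and $U_n:=y_n/x_n\to 0$, giving $\|U_n\|<\rho$ for any fixed $\rho>0$ and $n\gg 0$; and from $X_n\to 0$ we get $|x_n^k|\to 0$, so $|x_n^k|<s$ for any fixed $s>0$ and $n\gg 0$.

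The main step is the angular condition $|\Im x_n^k|\le\gamma\,\Re x_n^k$. Since $\{X_n\}$ is a characteristic trajectory converging tangentially to the non-degenerate direction $[1:0]$, Proposition \ref{prop:2} applies and yields
\begin{equation*}
x_n^k\sim -\frac{1}{n\,k\,p_{k+1}(1,0)}\quad\text{as } n\to\infty.
\end{equation*}
In the normalized form \eqref{eq:main} we have $p_{k+1}(1,0)=-1/k$, so $n x_n^k\to 1\in\R_{>0}$. In particular $\arg x_n^k\to 0$ and $\Re x_n^k>0$ for $n$ large, which implies $|\Im x_n^k|/\Re x_n^k\to 0$; hence, for every fixed $\gamma>0$, the angular inequality holds for $n\gg 0$.

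Combining the three steps, there exists $n_0$ such that $X_n\in S_{\gamma,s,\rho}$ for every $n>n_0$. The only nontrivial point is the third one, and it reduces entirely to having the asymptotic formula of Proposition \ref{prop:2}: in the normalized coordinates this formula forces $x_n^k$ to approach $0$ tangentially to the positive real axis, and once this is granted the inclusion in the sector is automatic.
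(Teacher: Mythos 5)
Your proof is correct and follows essentially the same route as the paper's: both reduce the nontrivial angular inequality to the asymptotic $x_n^k\sim 1/n$ given by Proposition \ref{prop:2} in the normalized coordinates, and treat the remaining two conditions as immediate consequences of $X_n\to 0$ and $[X_n]\to[1:0]$. The small variation of invoking injectivity of $\Phi$ to ensure $X_n\neq 0$ is fine, but does not constitute a different argument.
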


\begin{proof}
Since $X_n=(x_n,y_n)$ converges to $0$ and $[X_n]$ converges to $[1:0]$, we have that $x_n$ is definitively different from $0$. Moreover $X_n$ definitively lies in $D_{s,\rho}:=\{(x,U) \mid \modu x\leq s,\, \norm U\leq \rho\}.$
Thanks to Proposition \ref{prop:2}, the first component of $\Phi$ is of the form $x_1=x-\frac{1}{k}x^{k+1}+ \oo{\norm U x^{k+1}, x^{2k+1}}$, and $x_n^k\sim \frac{1}{n}$. Therefore, for any $\gamma$ arbitrarily small and any $n$ large enough, we have $\modu{\Im x_n^k}\leq \gamma\, \Re x_n^k $, and hence $X_n$ definitively lies in $S_{\gamma,s, \rho}$.
\end{proof}

\begin{corollary}
Let $\Phi\in\Diff(\C^p,0)$ be a tangent to the identity germ of order $k+1\ge 2$, and let $[V]$ be a non-degenerate characteristic direction. If there exists an attracting domain $\Omega$ where all the orbits converge to the origin along $[V]$, then all the directors of $[V]$ have non-negative real parts. 
\end{corollary}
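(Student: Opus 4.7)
The plan is by contradiction. Suppose some director $\alpha_{j_0}$ of $[V]$ satisfies $\Re\alpha_{j_0}<0$, and let $E^-\subset\C^{p-1}$ denote the generalized eigenspace of $A([V])$ associated to all directors with strictly negative real part. Given any $X\in\Omega$, by Lemma~\ref{lemma5.1} its orbit eventually enters a sector $S_{\gamma,s,\rho}^i$; by continuity of $\Phi^{n_0}$ one can find an open $\Omega'\subseteq\Omega$ whose $n_0$-th iterate lies in this sector. Replacing $\Omega$ by $\Phi^{n_0}(\Omega')$ and applying the normal-form reductions of Sections~3--5, I may assume that on the image $\Phi$ takes the form \eqref{eq:main}, namely
$$x_1 = x - \tfrac{1}{k}x^{k+1} + O\bigl(\|U\|\,|x|^{k+1}\bigr),\qquad U_1 = (I-x^k A)U + R,$$
with $A$ in Jordan normal form and $\|R\|=O(\|U\|^2|x|^k,\,\|U\|\,|x|^{k+1}(\log|x|)^q)$, and with $\|U_n\|\to 0$ for every orbit of $\Omega$.

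The key step is a linear algebra estimate mirroring the one in the proof of Theorem~\ref{th:7.1} but with the opposite sign of the real parts. Putting $A|_{E^-}$ in Jordan form with sufficiently small off-diagonal entries, and shrinking the aperture $\gamma$ so that $x^k$ lies in a narrow sector about the positive real axis, every eigenvalue $\alpha$ of $A|_{E^-}$ satisfies $\Re(\alpha x^k)\le -\eta|x|^k$ for some uniform $\eta>0$, whence $|1-\alpha x^k|\geq 1+\eta'|x|^k$ for some $\eta'>0$. The same norm computation as in Theorem~\ref{th:7.1}, applied now to the operator $I-x^k A|_{E^-}$ rather than to its counterpart on the attracting eigenspace, yields the uniform expansion
$$\|(I-x^k A|_{E^-})V\|\geq (1+c|x|^k)\|V\|\qquad\text{for all }V\in E^-,\, x\in S_{\gamma,s,\rho}^i,$$
for some $c>0$.

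To derive the contradiction, let $\pi^-\colon\C^{p-1}\to E^-$ be the projection along the complementary $A$-invariant subspace and set $W_n:=\pi^-(U_n)$. Since $\Omega$ is open and $\{W_0=0\}$ is a proper complex analytic subset, I pick $X\in\Omega$ with $W_0\neq 0$. Applying $\pi^-$ to the $U$-equation and combining with $\|\pi^-(R_n)\|\leq K\|U_n\|(\|U_n\|+|x_n|\cdot|\log x_n|^q)|x_n|^k$, the hypothesis $U_n\to 0$ makes the subtracted error $o(|x_n|^k)\cdot\|U_n\|$; assuming for the moment that $\|W_n\|\asymp\|U_n\|$, this yields
$$\|W_{n+1}\|\geq\bigl(1+\tfrac{c}{2}|x_n|^k\bigr)\|W_n\|$$
for all sufficiently large $n$. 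Since $|x_n|^k\sim 1/n$ by Lemma~\ref{lemma4.2}, iterating gives $\|W_n\|\gtrsim\prod_{m=N}^{n}(1+c/(2m))\to\infty$, contradicting $U_n\to 0$.

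\textbf{Main obstacle.} The delicate point is controlling the ratio $\|W_n\|/\|U_n\|$: if $\|W_n\|$ decayed disproportionately fast relative to $\|U_n\|$, the nonlinear perturbation could swamp the expansion. This is handled by a genericity/bootstrap argument: one chooses the initial point inside the open cone $\{\|W\|\ge \tfrac{1}{2}\|U\|\}\cap\Omega$ (non-empty unless $E^-=\{0\}$), and one verifies along the iteration that the components of $U_n$ outside $E^-$ do not grow faster than $W_n$. This inductive ratio control, together with the uniform expansion on $E^-$, is the core analytic content of the argument.
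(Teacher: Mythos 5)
Your overall strategy (a linear-algebraic expansion on the generalized eigenspace $E^{-}$ associated to the directors with negative real part, applied in the normal-form coordinates after \eqref{eq:main}) is the right idea, and the cone-invariance computation you outline does go through for large $n$ once the error coefficient $\epsilon_n:=\|U_n\|+|x_n|\,|\log x_n|^{q}$ is small. The problem is the step you defer to ``genericity'': you choose the initial point in $\{\|W\|\ge\tfrac12\|U\|\}\cap\Omega$ and assert this set is non-empty unless $E^{-}=\{0\}$. That assertion is false. The hypothesis only gives that $\Omega$ is \emph{some} non-empty open subset of $\C^{p}$ all of whose orbits converge tangentially to $[V]$; after passing to the sector $S^{i}_{\gamma,s,\rho}$ via Lemma~\ref{lemma5.1}, the image $\Phi^{n_{0}}(\Omega')$ is a small open set whose position relative to the cone $\{\|W\|\ge\|U'\|\}$ is completely uncontrolled, and it may well lie entirely in $\{\|W\|<\|U'\|\}$. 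Openness only guarantees that $\{W=0\}\cap\Omega$ has empty interior (so one can choose $W_{0}\ne 0$), not that the much stronger cone condition can be met; and if $\|W_n\|<\|U'_n\|$ for all $n$, your expansion inequality $\|W_{n+1}\|\ge (1+\tfrac{c}{2}|x_n|^{k})\|W_n\|$ is never available, since the subtracted error $K\epsilon_n|x_n|^{k}\|U_n\|$ then dominates $c|x_n|^{k}\|W_n\|$. So the argument as written does not produce a contradiction for a general open $\Omega$.

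Closing this gap needs either (i) the sharper normalization of Section~8 — namely, after Theorem~\ref{thm:1.6} one has $H(x,u,0)=0$, so the error in the non-attracting block is $O(\|v\|\,\epsilon_n|x|^{k})$ rather than $O(\|U\|^{2}|x|^{k})$, which reduces the ratio to be controlled from $\|W\|/\|U\|$ to $\|v^{-}\|/\|v^{0}\|$, still leaving the cone-entry question when there are directors with vanishing real part — or, more decisively, (ii) a derivative/normal-families argument: take a one-complex-parameter disc $D\ni v^{-}_{0}$ inside $\Omega$ with $x_{0},u_{0},v^{0}_{0}$ frozen; by Vitali, $v^{-}_{n}\to 0$ locally uniformly on $D$, so by the Cauchy estimates $\partial_{v^{-}_{0}}v^{-}_{n}\to 0$, whereas the linearized flow $\prod_{m}(I-x_{m}^{k}B^{-})$ forces $\|\partial_{v^{-}_{0}}v^{-}_{n}\|\to\infty$ (using Lemma~\ref{lemma:8.7}-type bounds to control the cross-derivatives). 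This works for arbitrary open $\Omega$ without having to enter a cone, and is the version of the expansion argument that is actually robust. As submitted, your proof has a genuine gap at the cone-entry step and does not yet establish the statement.
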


\begin{remark}
It is not true that if $[V]$ be a non-degenerate characteristic direction and there exists an attracting domain $\Omega$ where all the orbits converge to the origin along $[V]$, then all the directors of $[V]$ have strictly positive real parts. In fact, as shown by  Vivas in \cite{Liz2}, it is possible to find examples of germs having attracting domains along non-degenerate characteristic direction even when the directors have vanishing real parts. 
\end{remark}

\section{Fatou Coordinates}

We have the following analogous of Theorem~1.9 of \cite{hakim2}.

\begin{theorem}\label{thm:1.9}
Let $\Phi\in\Diff(\C^p, 0)$ be a tangent to the identity germ. Let $[V]$ be an attracting non-degenerate characteristic direction. Then there is an invariant domain $D$, with $0\in\partial D$, so that every point of $D$ is attracted to the origin along the direction $[V]$, and such that $\Phi|_D$ is holomorphically conjugated to the translation
$$
\left\{
\begin{array}{l l}
\displaystyle{1\over x_1} = {1\over x} + 1, \\
U_1= U,
\end{array}
\right.
$$
with $(x,U) \in \C\times\C^{p-1}$.
\end{theorem}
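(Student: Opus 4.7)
The plan is to construct Fatou-type coordinates on the attracting invariant sectors furnished by Theorem~\ref{th:7.1}. First I would apply the reductions of the previous sections to place $\Phi$ in the normal form \eqref{eq:main} with the order parameter $h$ chosen arbitrarily large, and then invoke Theorem~\ref{th:7.1} to obtain one of the $k$ attracting invariant sectorial domains $A_{\gamma,\rho,c}^i$. On this domain every orbit $\{(x_n,U_n)\}$ satisfies $x_n^k\sim 1/n$ and $\|U_n\|=O(|x_n|^{k\mu})$ for any $\mu<\min_j\Re\alpha_j$.

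To build the longitudinal coordinate I imitate the computation in Lemma~\ref{lemma4.2}: setting $1/z=1/x^k+a\log x+b(\log x)^2$ one obtains $1/z_1=1/z+1+O(|x|^{k+1}|\log x|^l)$ along each orbit, and then I define
$$
\varphi(x,U):=\lim_{n\to\infty}\left(\frac{1}{z_n(x,U)}-n\right).
$$
Convergence of this limit, uniformly on compacta of $A_{\gamma,\rho,c}^i$, follows from Corollary~\ref{cor:4.3} with exponent $k+1>k$, since the telescoping differences $1/z_{n+1}-1/z_n-1$ are $O(|x_n|^{k+1}|\log x_n|^l)$. The identity $\varphi\circ\Phi=\varphi+1$ is then immediate, so the first coordinate of the conjugation must be $X:=1/\varphi$.

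For the transverse invariant coordinate, I set $W_n:=x_n^{-kA}U_n$ on a simply-connected subdomain of $A_{\gamma,\rho,c}^i$ where $\Re x^k>0$ (so that $x^{-kA}=\exp(-kA\log x)$ is single-valued) and define
$$
W(x,U):=\lim_{n\to\infty}W_n(x,U).
$$
From \eqref{eq:H}, the differences $W_{n+1}-W_n=-x_n^{-kA}H(x_n,U_n)$ are controlled, via Lemma~\ref{lemma4.1} and the bound $\|U_n\|\le C|x_n|^{k\mu}$, by a sum of three terms whose exponents in $|x_n|$ tend to large values as $\mu$ approaches $\min_j\Re\alpha_j$ and $h$ grows. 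Choosing $\mu$ and $h$ appropriately, and if necessary applying Proposition~\ref{prop:ora} with $m$ large to refine $H$ (in analogy with the proof of Theorem~\ref{thm:1.6}), Corollary~\ref{cor:4.3} gives summability; hence $W$ exists and the shift-invariance of its defining sum yields $W\circ\Phi=W$.

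Finally, the conjugation candidate is $\Psi(x,U):=(1/\varphi(x,U),W(x,U))$. The functional equations from the preceding steps automatically give $\Psi\circ\Phi=T\circ\Psi$ where $T(X,W)=(X/(1+X),W)$, equivalent to $1/X_1=1/X+1$ and $W_1=W$. The main obstacle will be proving that $\Psi$ is a biholomorphism from a suitable subdomain $D\subset A_{\gamma,\rho,c}^i$ onto its image: one checks that $\varphi(x,U)\sim 1/x^k$ and $W(x,U)\sim U$ in the appropriate asymptotic regime, so the Jacobian of $\Psi$ is a small perturbation of the invertible block-diagonal form $\mathrm{diag}(-k\,x^{-k-1},\,I_{p-1})$ and hence non-singular on $D$ (possibly after shrinking $\gamma$, $s$, $\rho$). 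Global injectivity is then obtained by iterating $\Phi$ forward: if $\Psi(p)=\Psi(q)$, then $\Psi(\Phi^n p)=\Psi(\Phi^n q)$ for every $n$, and combining this with the local injectivity of $\Psi$ near the ideal boundary $0\in\partial D$ forces $p=q$. The image $D'=\Psi(D)$ is then the translation domain on which $\Phi$ is conjugated to the translation of the statement.
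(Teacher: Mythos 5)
Your high-level plan — construct a transverse invariant $W(x,U)=\lim_n x_n^{-kA}U_n$ and a longitudinal Fatou coordinate and combine them — does coincide structurally with the paper's proof. But the convergence argument you give for $W$ has a genuine gap that the paper spends most of the ``Fatou coordinates'' section repairing.

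The problem is the mismatch between the two exponents you appeal to. On the one hand, Lemma~\ref{lemma4.1} bounds $\norm{x_n^{-kA}}$ by $\modu{x_n}^{-k(\lambda+\epsilon)}$ where $\lambda=\max_j\Re\alpha_j$; on the other hand, the global decay estimate from Theorem~\ref{th:7.1} is $\norm{u_n}\le C\modu{x_n}^{k\mu}$ only with $\mu<\min_j\Re\alpha_j$. If the directors do not all have the same real part, then $\max_j\Re\alpha_j>\min_j\Re\alpha_j$, and already the product $\norm{x_n^{-kA}}\,\norm{u_n}=O\bigl(\modu{x_n}^{-k(\lambda-\mu+\epsilon)}\bigr)$ blows up, so $x_n^{-kA}u_n$ is not bounded, let alone normally convergent. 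The same defect propagates to the telescope: the $\norm{u}^2x^k$ and $\norm{u}x^{k+1}\log x$ pieces of $H$ give exponents $-k(\lambda+\epsilon)+2k\mu+k$ and $-k(\lambda+\epsilon)+k\mu+k+1$, which exceed $k$ (as Corollary~\ref{cor:4.3} requires) only if $2\mu>\lambda+\epsilon$ and $\mu+1/k>\lambda+\epsilon$, i.e., only when the spread of the real parts of the directors is small. Your claim that ``the exponents tend to large values as $\mu$ approaches $\min_j\Re\alpha_j$ and $h$ grows'' is true solely for the inhomogeneous $x^{k(h+1)}$ term; increasing $h$ does nothing to the $\|u\|$-dependent terms, and $\mu$ is capped by $\min_j\Re\alpha_j$. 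Invoking Proposition~\ref{prop:ora} does not close the gap either: that result normalizes the coupling into the \emph{non-attracting} block $v$, and in the attracting case there is no $v$-block; what is actually needed is a refinement of the coupling between the attracting Jordan blocks of $A$ themselves.

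The paper resolves this precisely as follows: it decomposes $A$ into blocks $J_1,\dots,J_h$ by decreasing real parts $\alpha_1>\cdots>\alpha_h>0$; proves a new normal form (the proposition analogous to Proposition~4.1 of \cite{hakim2}) killing the low-order cross-terms so that $P_N^j(x,(0,u^{>j}))=O(x^k\|u^{>j}\|^m)$; deduces the \emph{block-dependent} decay $\|u_n^{\le j}\|\le |x_n|^{k(\alpha_j-\eps)}(\|u^{\le j}\||x|^{-k(\alpha_j-\eps)}+K|x|^k)$ (Corollary~\ref{co:4.3-4}); and only then proves that $\{x_n^{-kA}u_n\}$ converges normally on the sub-domain $\6D$ defined by \eqref{4.13}, where for each block the decay rate $|x_n|^{k(\alpha_j-\eps)}$ of $u_n^{\le j}$ exactly compensates the blow-up $|x_n|^{-k(\alpha_j+\eps)}$ of $x_n^{-kJ_j}$. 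The block decomposition and the refined normal form are therefore not optional; they are what makes the transverse conjugation converge. A secondary point: you invoke Lemma~\ref{lemma4.2} to build $\varphi$ directly along orbits, but that lemma is stated for $u(x)=x^{k+1}\ell(x)$, a hypothesis the orbit component $u_n$ does not satisfy (it only decays like $|x_n|^{k\mu}$). The paper avoids this by constructing $U$ first and applying the one-dimensional Fatou argument parametrically in $U$ to the reduced map $x_1=x-\tfrac{1}{k}x^{k+1}+\tilde F(x,U)$.
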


We may assume that $[V]=[1:0]$, and that its associated matrix $A$ is in Jordan normal form, with the non-zero elements out of the diagonal equal to $\eps>0$ small.

Let $\lambda_1,\dots, \lambda_h$ be the distinct eigenvalues of $A$, and up to reordering, we may assume that, setting $\alpha_j = \Re(\lambda_j)$, we have
$$
\alpha_1\ge \alpha_2\ge \cdots\ge \alpha_h>\alpha>0.
$$
Let $J_1,\dots, J_h$ be the Jordan blocks of $A$, where $J_l$ is the block relative to $\lambda_l$ for $1\le l\le h$, and let $u= (u^1,\dots, u^h)\in\C^{p-1}$ be the splitting of the coordinates of $\C^{p-1}$ associated of the splitting of $A$ in Jordan blocks.
Therefore we can write
\begin{equation}\label{eq:eqprinc9}
\Phi(x,u)= \left\{
\begin{array}{l l l l l}
x_1= x- \frac{1}{k}x^{k+1} +F(x,u),\\
u_1^1=(I^1- x^k J_1)u^1 + G^1(x,u),\\
u_1^2=(I^2- x^k J_2)u^2 + G^2(x,u),\\
\quad\,\,\,\vdots\\
u_1^h=(I^h- x^k J_h)u^h + G^h(x,u),
\end{array}\right.
\end{equation}
where $I^l$ is an identity matrix of same dimension of the block $J_l$ for $1\le l\le h$, and
$$
G^j(x,u) = O(\|u\|^2 x^k, \|u\| x^{k+1}|\log x|).
$$

Set
$$
u^{\le j} := (u^1, \dots, u^j) \quad\hbox{and}\quad u^{> j} := (u^{j+1}, \dots, u^h),
$$
and analogous definitions for $u^{< j}$ and $u^{\ge j}$.

Given $N\in\N$ with $N\ge k+1$, thanks to \eqref{eq:acca0}, for every $1\le j\le h$, we can write
\begin{equation}\label{4.5}
G^j(x,u) = P_N^j(x,u) + \oo{\norm{u} \modu x^{N} \modu{\log\modu x}^{h_N}},
\end{equation}
with
$$
P_N^j(x,u)= \sum_{k\leq s\leq N,\, t\in E_s} c^j_{s,t}(u)x^s (\log x)^t, 
$$
for some $h_N\in \N$ depending on $N$, where for any $s$ we defined $E_s$ as the (finite) set of integers $t$ so that the series above contains the term $x^s(\log x)^t$, and where $c_{s,t}(u)$ are holomorphic in $\|u\|\le \rho$ and $c_{s,t}(0)\equiv 0$.  

The following result is the analogous of Proposition~4.1 of \cite{hakim2}.

\begin{proposition}
Let $\Phi\in\Diff(\C^p,0)$ be a tangent to the identity germ of order $k+1\ge 2$ as in \eqref{eq:eqprinc9}, with $[V]=[1:0]$ attracting non-degenerate characteristic direction. 
For any positive integers $N\ge k+1$ and $m$, there exist local holomorphic coordinates (defined in a sector) such that  
\eqref{4.5} holds, and moreover
\begin{equation}\label{4.6}
P_N^j(x,(0,u^{>j})) = O(x^k\|u^{>j}\|^m)
\end{equation}
for $1\le j\le h$.
\end{proposition}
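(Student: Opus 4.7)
The plan is to mimic the proof of Proposition~\ref{prop:ora} block by block, performing an iterative sequence of polynomial changes of coordinates of the form
$$
\tilde u^j = u^j - x^{s-k}(\log x)^t P(u^{>j}), \qquad 1\le j\le h,
$$
where $P$ is a homogeneous polynomial of some degree $\bar d\in\{1,\dots,m-1\}$ in the variables $u^{>j}$, chosen to kill the homogeneous component of degree $\bar d$ of the coefficient $c^j_{s,t}(0,u^{>j})$ in the expansion~\eqref{4.5}. I would proceed by induction on a suitable multi-index, for instance lexicographically on $(\bar d,s,t,j)$, so that at each step earlier reductions are not destroyed by higher-order corrections; the terms of order $\geq N+1$ in $x$ appearing as by-products are absorbed into the remainder $O(\|u\|\,\modu x^N\,\mlog x^{h_N})$ after relabeling $h_N$.

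The central computation at each elimination step is completely parallel to the auxiliary lemma inside the proof of Proposition~\ref{prop:ora}. Substituting $\tilde u^j$ into~\eqref{eq:eqprinc9} and expanding $x_1^{s-k}$, $(\log x_1)^t$, and $P(u_1^{>j})$ to the relevant order, one finds that the new coefficient of $x^s(\log x)^t$ in $\tilde G^j(x,(0,\tilde u^{>j}))$ at the $\bar d$-homogeneous level differs from the old one exactly by
$$
\bigl\langle \nabla P,\, J_{>j}\, u^{>j}\bigr\rangle \;-\; J_j\, P(u^{>j}) \;+\; \tfrac{s-k}{k}\, P(u^{>j}),
$$
modulo strictly higher-order terms. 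Thus killing the old $\bar d$-homogeneous part $Q(u^{>j})$ of $c^j_{s,t}(0,u^{>j})$ reduces to solving the linear system $\mathcal L_j P = -Q$ on homogeneous polynomials of degree $\bar d$. By Euler's formula, exactly as in Proposition~\ref{prop:ora}, injectivity of $\mathcal L_j$ is equivalent to injectivity of $P\mapsto \langle \nabla P, C_j\, u^{>j}\rangle$ on $\bar d$-homogeneous polynomials, where
$$
C_j = J_{>j} - \tfrac{\lambda_j - s + k}{\bar d}\, I.
$$

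The main obstacle, and the place where the present proof diverges most from Proposition~\ref{prop:ora}, is verifying the non-degeneracy of $C_j$, that is, the non-resonance condition
$$
\sum_{l>j} n_l\, \lambda_l + (s-k) \;\ne\; \lambda_j
$$
for every tuple of non-negative integers $(n_l)_{l>j}$ with $\sum n_l = \bar d < m$ and every $k\le s\le N$. In Proposition~\ref{prop:ora} this condition was granted automatically because the real parts of the ``source'' $\alpha$ and the ``target'' $\mu_i$ had opposite signs; here both live in the open right half-plane, so the argument is more delicate. One exploits the ordering $\alpha_1\ge\alpha_2\ge\cdots\ge\alpha_h>\alpha>0$ together with the distinctness $\lambda_l\ne\lambda_j$ of the eigenvalues: the case $\bar d=1$, $s=k$ is ruled out immediately by distinctness, and the remaining finitely many possibly resonant $(j,s,t,\bar d)$ force very restrictive arithmetic identities among the $\lambda_i$; whenever such a genuine resonance occurs, the corresponding monomial cannot be killed by a polynomial change of coordinates alone, and one instead allows $P$ to carry an extra $\log x$ factor (exactly as in Proposition~\ref{prop:alfa2} of the one-dimensional resonant case), which after expansion produces the required cancellation, with the leftover contribution absorbed into the $O(\|u\|\,\modu x^N\,\mlog x^{h_N})$ remainder by taking $N$ large. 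Iterating this reduction over all $(j,s,t,\bar d)$ in the prescribed order yields the desired normal form, establishing both~\eqref{4.5} and~\eqref{4.6}.
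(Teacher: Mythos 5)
Your proposal is essentially the same argument as the paper's: polynomial changes $\tilde u^j=u^j-x^{s-k}(\log x)^tP(u^{>j})$ killing the low-degree terms of $c^j_{s,t}$, reduction via Euler's formula to the injectivity of $P\mapsto\langle\nabla P, C\,u^{>j}\rangle$, and --- the genuinely new issue relative to Proposition \ref{prop:ora} --- resolving resonances among the positive-real-part eigenvalues by augmenting the change with an extra $(\log x)$ factor, exactly as the paper does with $\tilde u^j=u^j-av^Tx^{s-k}(\log x)^{t+1}$. One small arithmetic slip: the shift in your $C_j$ should be $\bigl(\lambda_j-\tfrac{s-k}{k}\bigr)/\bar d$ rather than $(\lambda_j-(s-k))/\bar d$, since the first coordinate evolves by $x_1=x-\tfrac1k x^{k+1}+\cdots$ so that $x_1^{s-k}$ picks up a factor $1-\tfrac{s-k}{k}x^k+\cdots$; correspondingly your non-resonance condition reads $\sum_{l>j}n_l\lambda_l+\tfrac{s-k}{k}\ne\lambda_j$.
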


\begin{proof}
We want to change coordinates holomorphically to delete in $P_N^j(x,(0,u^{>j}))$ the terms in $u^{>j}$ with degree less than $m$. We use holomorphic changes of coordinates of the form $\tilde u^j = u^j - q_j(x, u^{>j})$, where the $q_j$'s are polynomials in $x$, $\log x$ and $u^{>j}$ with $q_j(x, 0) = 0$; if we obtain \eqref{4.6} for $j=1,\dots, j_0$, then changing the variables $u^j$ for $j>j_0$ will provide no effect on the first $j_0$ variables. We shall then perform the construction by induction on $j$, considering only changes on $u^{\ge j}$ with $u^{<j} = 0$, which allow us to forget about the first $j-1$ coordinates.

Let $v= u^{>j}$, and let us consider the matrix $B_j$ defined as
$$
B_j= Diag(A_{j+1},\dots, A_h).
$$
We now have to prove a statement similar to the one in Proposition \ref{prop:ora} but with the opposite notation, i.e., we look for changes of coordinates of the form $\tilde u = u - \phe(x,v)$ such that
$$
G^j(x, 0, v) = O(|x|^k\|v\|^m + |x|^N\|v\|),
$$
and hence the r\^oles of $A(=J_j)$ and $B(=B_j)$ are here exchanged. 

Note that, if $1,\lambda_1,\dots,\lambda_h$ are rationally independent, then we can prove the statement exactly as in the proof of Proposition \ref{prop:ora}. 

Otherwise, let $Q(v)x^s(\log x)^t$ be the lower degree term in $P_N^j(x, (0, v))$, with $Q(v)$ homogeneous polynomial of degree $d$ in $v$. The change of coordinates
$$
\tilde u^j = u^j - x^{s-k} (\log x)^t P(v),
$$
deletes the term $Q(v)x^s(\log x)^t$ if $P$ solves
$$
P((I_q-x^kB)v) - (I_r - x^k J_j)P(v) - \frac{s-k}{k} x^k P(v) = x^k Q(v) + O(\|v\|x^{k+1}),
$$ 
where $r$ and $q$ are the dimensions, respectively, of $u^j$ and $v$. Therefore, by decreasing induction on the indices of the components of $u^j$, we may reduce ourselves to solve, for the $r$ components $P_i$ of $P$, equations of the form
\begin{equation}
\frac{\partial P_i}{\partial v_1}(Bv)_1 + \cdots+ \frac{\partial P_i}{\partial v_q}(Bv)_q - \left(\lambda_j -\frac{s-k}{k}\right) P_i = \tilde Q_i(v),  
\end{equation} 
that is, by Euler formula, of the form
\begin{equation}\label{4.9}
\frac{\partial P_i}{\partial v_1}(Cv)_1 + \cdots+ \frac{\partial P_i}{\partial v_q}(Cv)_q  = \tilde Q_i(v),  
\end{equation} 
where $C = B - \frac{\lambda_j -{(s-k)}/{k}}{d} I_q$. We solve these equations component by component, by comparing the coefficients of the monomials $v^T$, where $T\in\N^q$ in both sides. For any $T:=(t_1,\dots, t_q)\in\N^q$, we define the weight of $T$ as
$$
w(T)= t_1+ 2t_2+ \cdots + qt_q.
$$
If $P_i(v) = a v^T$ and $\tilde Q_i(v)= c v^T$, equation \eqref{4.9} is reduced, modulo terms of greater weight, to
$$
a(\nu_1t_1+\cdots+ \nu_q t_q) = c,
$$
where $\nu_1,\dots, \nu_q$ are the eigenvalues of $C$. Now, if $\nu_1t_1+\cdots+ \nu_q t_q\ne 0$, then we can solve the equation; otherwise, we can consider the change of coordinates
$$
\tilde u^j = u^j - a v^T x^{s-k} (\log x)^{t+1},
$$
under which the terms in $v^T x^{s-k}(\log x)^{t+1}$ and in $v^T x^{s}(\log x)^{t+1}(\log x)^{t+1}$ in the left-hand side vanish and the equation is reduced to $a(t+1) = -c$; this change introduces new terms in $x^s(\log x)^{t+1}$, but it can happen only finitely many times, and hence it is not a problem. Iterating this procedure on the weight of $T$, given a degree $d$, we have to solve the case of $T$ of maximal weight, i.e., $v^T + v_q^d$. In this case, the equation is simply $a\nu_q d = c$, and it is solvable if $\nu_q\ne 0$; if $\nu_q = 0$, as before, we can consider the change 
$$
\tilde u^j = u^j - a v_q^d x^{s-k} (\log x)^{t+1},
$$
and we are done.
\end{proof}

We can then deduce the following reformulations of Corollaries~4.2,~4.3, and~4.4 of \cite{hakim2} for the case $k+1\ge 2$.

\begin{corollary}
Let $\Phi\in\Diff(\C^p,0)$ be a tangent to the identity germ of order $k+1\ge 2$ as in \eqref{eq:eqprinc9}, with $[V]=[1:0]$ attracting non-degenerate characteristic direction. 
For any positive integers $N\ge k+1$ and $m$, there exist local holomorphic coordinates such that  
\begin{equation}\label{4.10}
G^{\le j}\left(x, (0, u^{>j})\right) = O\left(|x|^k\|u^{>j}\|^m + |x|^N\|u^{>j}\|\right),
\end{equation}
for $1\le j\le h$.
\end{corollary}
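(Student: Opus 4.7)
The strategy is to deduce the Corollary directly from the Proposition by a restriction argument, with essentially no extra work. The Proposition provides coordinates in which, for every $1\le j\le h$ simultaneously, the low-degree polynomial part $P_N^j$ of $G^j$ satisfies $P_N^j(x,(0,u^{>j}))=O(|x|^k\|u^{>j}\|^m)$; combined with \eqref{4.5} this already gives, for each single $j$,
$$G^j(x,(0,u^{>j}))=O(|x|^k\|u^{>j}\|^m+|x|^N\|u^{>j}\|).$$
What has to be shown in addition is that the \emph{whole} vector $G^{\le j}=(G^1,\dots,G^j)$ satisfies the same estimate on the subspace $u^{\le j}=0$, not only its last component $G^j$.

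To handle an arbitrary index $i\le j$, the plan is to apply the estimate above with index $i$ in place of $j$, obtaining
$$G^i(x,(0_{\le i},u^{>i}))=O(|x|^k\|u^{>i}\|^m+|x|^N\|u^{>i}\|),$$
and then to specialize the $u^{>i}$-argument by enforcing $u^{i+1}=\cdots=u^j=0$. Under this specialization $u^{>i}$ reduces to $(0,\dots,0,u^{>j})$, so that in particular $\|u^{>i}\|=\|u^{>j}\|$, and the right-hand side above becomes $O(|x|^k\|u^{>j}\|^m+|x|^N\|u^{>j}\|)$. This reads off exactly the $i$-th component of \eqref{4.10}. Performing the restriction for each $i=1,\dots,j$ assembles the full statement for $G^{\le j}$.

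The only point requiring care is that the coordinates produced by the Proposition genuinely realize \eqref{4.6} for \emph{all} $j$ at once; this is already built into the Proposition's proof, since its induction on $j$ uses changes of variables of the form $\tilde u^j=u^j-q_j(x,u^{>j})$, which leave the first $j-1$ components of $u$ untouched and hence do not spoil the normal forms achieved at earlier stages. Once this is granted, the Corollary is a purely formal restriction, and there is no substantial obstacle to overcome.
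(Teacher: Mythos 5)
Your proposal is correct and takes the natural approach, which is the only reasonable one given how the Proposition is phrased. The key observation — that evaluating $G^i$ on the subspace $\{u^{\le j}=0\}$ is the same as evaluating it on $\{u^{\le i}=0\}$ after specializing the free argument $u^{>i}$ so that its blocks from $i+1$ to $j$ vanish, and that this specialization leaves $\|u^{>i}\|=\|u^{>j}\|$ — is exactly what makes \eqref{4.6} for every single index $j$ automatically bootstrap into \eqref{4.10} for the whole vector $G^{\le j}$. You also correctly identify the one hidden requirement, namely that the Proposition deliver coordinates realizing \eqref{4.6} for all $j$ simultaneously, and correctly observe that this is what the inductive triangular changes $\tilde u^j=u^j-q_j(x,u^{>j})$ guarantee. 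The only cosmetic omission is that \eqref{4.5} carries a factor $|\log|x||^{h_N}$ in the remainder, so strictly one should apply the Proposition with a slightly larger $N'>N$ and shrink the sector to absorb the logarithm into $|x|^N$; but the paper glosses over this in the same way, so it is not a gap peculiar to your argument.
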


\begin{corollary}\label{co:4.3-4}
Let $\Phi\in\Diff(\C^p,0)$ be a tangent to the identity germ of order $k+1\ge 2$ as in \eqref{eq:eqprinc9}, with $[V]=[1:0]$ attracting non-degenerate characteristic direction. 
Let $0<\eps<\alpha$, and assume that the local coordinates are chosen so that the non-zero coefficients out of the diagonal in $A$ are equal to $\eps_0>0$ small enough, and \eqref{4.10} is satisfied with $m$ and $N$ such that
\begin{equation}\label{4.11}
m\alpha_h -  \alpha_1 \ge 1\quad\hbox{and}\quad N +k( \alpha_h -\alpha_1) \ge k+1.
\end{equation}
Then, for every $j$ and for each $(x,u)\in S_{\gamma, s, \rho}$ with $\gamma, s, \rho$ small enough, there exists a constant $K>0$ such that
\begin{equation}\label{4.12}
\|u_n^{\le j}\|\le |x_n|^{k(\alpha_j - \eps)}(\|u^{\le j}\||x|^{-k(\alpha_j - \eps)} + K|x|^k),
\end{equation}
and moreover, $\|u_n^{\le j}\||x_n|^{-k(\alpha_j - \eps)}$ converges to zero as $n$ tends to infinity.
\end{corollary}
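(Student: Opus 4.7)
The plan is to argue by decreasing induction on $j$, from $j=h$ down to $j=1$, and then to derive the convergence claim by re-applying the bound with a slightly smaller $\eps$. For the base case $j=h$, $u^{\le h}=u$ and $u^{>h}$ is empty, so \eqref{4.10} provides no information; instead I would replay the final part of the proof of Theorem~\ref{th:7.1}. Since every eigenvalue of $A$ has real part $\ge\alpha_h>\alpha$ and the off-diagonal entry $\eps_0$ of the Jordan form is arbitrarily small, one has $\|I-x^kA\|\le 1-(\alpha_h-\eps/2)|x|^k$ in $S_{\gamma,s,\rho}$ for $\gamma,s,\rho$ small; combined with $\|G(x,u)\|=O(\|u\|^2|x|^k+\|u\||x|^{k+1}|\log x|)$, this forces $\|u_n\||x_n|^{-k(\alpha_h-\eps)}$ to be non-increasing along the orbit, which is \eqref{4.12} for $j=h$ with $K=0$.

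For the inductive step, assume \eqref{4.12} for all indices $>j$; in particular $\|u_n^{>j}\|\le C|x_n|^{k(\alpha_h-\eps)}$. Splitting
\begin{equation*}
G^{\le j}(x_n,u_n)=G^{\le j}(x_n,(0,u_n^{>j}))+\bigl[G^{\le j}(x_n,u_n)-G^{\le j}(x_n,(0,u_n^{>j}))\bigr],
\end{equation*}
the first summand is bounded by \eqref{4.10} and the induction by $C(|x_n|^{k+km(\alpha_h-\eps)}+|x_n|^{N+k(\alpha_h-\eps)})$, and the hypotheses in \eqref{4.11} together with $\alpha_1\ge\alpha_j$ make both exponents at least $k(1+\alpha_j-\eps)$ once $\eps$ is taken small. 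The second summand is the Taylor remainder in the $u^{\le j}$-direction, hence linear in $u_n^{\le j}$ with coefficient $O(|x_n|^{k+1}|\log x_n|+|x_n|^k\|u_n\|)=o(|x_n|^k)$. Adding the contracting main part $(I-x_n^kA_{\le j})u_n^{\le j}$, whose operator norm is at most $1-(\alpha_j-\eps')|x_n|^k$ (again using the Jordan form with small off-diagonal), I reach the key recursion
\begin{equation*}
\|u_{n+1}^{\le j}\|\le\bigl(1-(\alpha_j-\eps')|x_n|^k\bigr)\|u_n^{\le j}\|+C|x_n|^{k(1+\alpha_j-\eps)}.
\end{equation*}

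Setting $v_n:=\|u_n^{\le j}\||x_n|^{-k(\alpha_j-\eps)}$ and using the expansion $|x_{n+1}|^{-k(\alpha_j-\eps)}=|x_n|^{-k(\alpha_j-\eps)}(1+(\alpha_j-\eps)|x_n|^k+o(|x_n|^k))$, the recursion becomes $v_{n+1}\le v_n(1+O(|x_n|^{2k}))+C|x_n|^k$; since $|x_n|^k\sim 1/n$, the multiplicative factors telescope to a bounded product, while Corollary~\ref{cor:4.3} bounds $\sum_n|x_n|^k$ by a constant multiple of $|x|^k$. This yields $v_n\le v_0+K|x|^k$, which is precisely \eqref{4.12}. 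For the convergence $v_n\to 0$, I would observe that \eqref{4.11} does not depend on $\eps$, so the already-proved bound applies with any $\eps_1\in(0,\eps)$, giving $\|u_n^{\le j}\|\le C|x_n|^{k(\alpha_j-\eps_1)}$ and hence $v_n\le C|x_n|^{k(\eps-\eps_1)}\to 0$.

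The hard part will be the exponent bookkeeping in the inductive step: one has to check simultaneously that the worst-case decay $\alpha_h-\eps$ of the higher blocks, combined with the thresholds $m\alpha_h-\alpha_1\ge 1$ and $N+k(\alpha_h-\alpha_1)\ge k+1$, is sufficient to fit the forcing inside the $|x_n|^{k(1+\alpha_j-\eps)}$ budget for every intermediate $j$, and that the linear correction absorbed into the contraction factor still leaves a net rate of the form $1-(\alpha_j-\eps')|x_n|^k$. Once this arithmetic is verified, the telescoping is routine and the convergence of $v_n$ is an immediate consequence of the bound.
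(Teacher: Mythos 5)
Your overall strategy (controlling $V_n=\|u_n^{\le j}\||x_n|^{-k(\alpha_j-\eps)}$ via the contraction $\|I-x^kA\|$, the bound on $G^{\le j}$, and then telescoping) matches the paper's, and your base case $j=h$ and the final convergence-to-zero step (re-run with a smaller $\eps_1$) are fine; the decreasing induction on $j$ is harmless but unnecessary, since only the $j=h$ bound $\|u_n^{>j}\|\le\|u_n\|\le C|x_n|^{k(\alpha_h-\eps)}$, which is exactly the content of Theorem~\ref{th:7.1}, is used. The problem is in the exponent bookkeeping in the inductive step, which is off by one power of $|x_n|$ in a way that breaks the telescoping, and in the appeal to Corollary~\ref{cor:4.3} at the end, which is not valid.

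Concretely: you record the two exponents in the bound for $\|G^{\le j}(x_n,(0,u_n^{>j}))\|$ as being at least $k(1+\alpha_j-\eps)$, so that after dividing by $|x_{n+1}|^{k(\alpha_j-\eps)}$ the forcing in the $v$-recursion is $C|x_n|^k$. But the role of \eqref{4.11} is precisely to produce one extra power: $k+km(\alpha_h-\eps)-k(\alpha_j-\eps)\ge k+k(m\alpha_h-\alpha_1)-k\eps(m-1)\ge 2k-k\eps(m-1)\ge k+1$ (for $\eps$ small), and $N+k(\alpha_h-\eps)-k(\alpha_j-\eps)=N+k(\alpha_h-\alpha_j)\ge N+k(\alpha_h-\alpha_1)\ge k+1$, so the correct forcing is $K_1|x_n|^{k+1}$, not $C|x_n|^k$. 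This is not cosmetic. Since $|x_n|^k\sim 1/n$, the series $\sum_n|x_n|^k$ \emph{diverges}, and your claim that Corollary~\ref{cor:4.3} bounds $\sum_n|x_n|^k$ by a multiple of $|x|^k$ is false: that corollary requires $\mu>k$, and even formally at $\mu=k$ the right-hand side would be $|x|^{\mu-k}=1$, not $|x|^k$. With the corrected forcing $|x_n|^{k+1}$ Corollary~\ref{cor:4.3} yields $\sum_n|x_n|^{k+1}\le C|x|$, but that is still weaker than $K|x|^k$ when $k\ge 2$; the paper instead obtains the stated bound from a telescoping identity $|x_n|^{k+1}\le\frac{1}{c}\bigl(|x_n|^k-|x_{n+1}|^k\bigr)$, which makes $V_n+K|x_n|^k$ non-increasing. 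You would need to restore the $+1$ in the exponent and replace the Corollary~\ref{cor:4.3} step with this Lyapunov-type telescoping to obtain \eqref{4.12} as stated.
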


\begin{proof}
From the proof of Theorem \ref{th:7.1}, we know that, taking $\alpha= \alpha_h-\eps$ we have
$$
\|u_n\|\le |x_n|^{k\alpha}\|u\| |x|^{-k\alpha}.
$$
Hence, from \eqref{4.10} and \eqref{4.11} we obtain
$$
|x_{n+1}|^{-k(\alpha_j-\eps)}\left\|G^{\le j}\left(x, (0, u^{>j})\right) \right\|\le K_1|x_n|^{k+1}.
$$
Arguing as in the proof of Theorem \ref{th:7.1}, choosing $\eps$ and $\eps_0$ small enough, since the eigenvalues of $J_j - k(\alpha_j-\eps)I^j$ have positive real parts, for each $x\in S_{\gamma, s, \rho}$ with $\gamma, s, \rho$ small enough, we have
$$
\|I^j - (J_j - k(\alpha_j-\eps)I^j)x^k + o(x^k)\|\le 1.
$$
We have
$$
\begin{aligned}
|x_{n+1}|^{-k(\alpha_j-\eps)}\|u_{n+1}^{\le j}\|
&\le \|I^j - (J_j - k(\alpha_j-\eps)I^j)x^k + o(x^k)\| |x_{n}|^{-k(\alpha_j-\eps)}\|u_{n}^{\le j}\|\\
&\quad\;+ |x_{n+1}|^{-k(\alpha_j-\eps)}\|G^{\le j}(x, (0, u^{>j}))\|.
\end{aligned}
$$
Hence, setting $V_n^{\le j} := |x_n|^{-k(\alpha_j-\eps)}\|u_n^{\le j}\|$, we obtain
$$
V_{n+1}^{\le j}\le V_n^{\le j} + K_1|x_n|^{k+1}.
$$
Since for any $(x,u)\in S_{\gamma, s, \rho}$, for $\gamma, s, \rho$ small enough, there exists $0<c<1$ such that $|x_{n+1}|^k\le |x_n|^k (1-c |x_n|)$, we have
$$
|x_n|^{k+1}\le \frac{|x_n|^k- |x_{n+1}|^k}{c},
$$
implying that there exists a positive constant $K>0$ such that
$$
V_{n+1}^{\le j} + K|x_{n+1}|^k \le V_n^{\le j} + K|x_n|^k\le \cdots\le V^{\le j}+ K|x|^{k},
$$ 
proving \eqref{4.12}. Moreover, this proves that there exists $\eps_0<<\eps_1<\eps$ such that $\|u_n^{\le j}\| |x_n|^{-k(\alpha_j - \eps)} \le \eps_1$, and so $\|u_n^{\le j}\| |x_n|^{-k(\alpha_j - \eps)}$ converges to zero as $n\to+\infty$.
\end{proof}

Thanks to the last corollary, we may assume without loss of generality that our germ $\Phi$ is of the form \eqref{eq:eqprinc9} and in the hypotheses of Corollary \ref{co:4.3-4}. Define the set
\begin{equation}\label{4.13}
\6 D: = \{(x,u)\in S_{\gamma, s, \rho} : \|u_n^{\le j}\| |x_n|^{-k(\alpha_j - \eps)} \le 1\}.
\end{equation}

To prove Theorem \ref{thm:1.9}, we shall need this reformulation of Lemma~4.5 of \cite{hakim2} .

\begin{lemma}
Let $\Phi\in\Diff(\C^p, 0)$ be a tangent to the identity germ. Let $[V]$ be an attracting non-degenerate characteristic direction. Consider local holomorphic coordinates where $\Phi$ satisfies the hypotheses of Corollary \ref{co:4.3-4} with $\eps$ such that $3\eps<\min(\alpha_1, 1)$. Then the sequence $\{x_n^{-kA} u_n\}$ converges normally on the set $\6D$ defined by \eqref{4.13}.
\end{lemma}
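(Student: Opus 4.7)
The plan is to prove normal convergence of $\{x_n^{-kA}u_n\}_n$ on $\6D$ by showing that the series of consecutive differences $T_n := x_{n+1}^{-kA}u_{n+1} - x_n^{-kA}u_n$ converges normally on $\6D$. I would first derive a clean expression for $T_n$. Writing $x_{n+1}^{-kA} = x_n^{-kA}\exp\!\bigl(-kA(\log x_{n+1} - \log x_n)\bigr)$ and using
\begin{equation*}
\log x_{n+1} - \log x_n = -\tfrac{1}{k}x_n^k + \oo{x_n^{2k}\log x_n,\, \norm{u_n}x_n^{k+1}},
\end{equation*}
a Taylor expansion (in which the $I$- and $x_n^k$-order terms cancel exactly against $(I - x_n^k A)$) yields
\begin{equation*}
x_{n+1}^{-kA}(I - x_n^k A) = x_n^{-kA}\bigl(I + \oo{x_n^{2k},\, \norm{u_n}x_n^{k+1}}\bigr).
\end{equation*}
Substituting $u_{n+1} = (I - x_n^k A)u_n + G(x_n,u_n)$ into the definition of $T_n$ then gives
\begin{equation*}
T_n \;=\; x_n^{-kA}\,\oo{x_n^{2k},\, \norm{u_n}x_n^{k+1}}\,u_n \;+\; x_{n+1}^{-kA}\,G(x_n,u_n).
\end{equation*}

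Next I would work blockwise. Decomposing $A = \mathrm{diag}(J_1,\ldots,J_h)$ and $u = (u^1,\ldots,u^h)$, it suffices to bound each block $T_n^{j}$ separately. On $\6D$, Corollary \ref{co:4.3-4} supplies the crucial inequality $\norm{u_n^{\le j}} \le \modu{x_n}^{k(\alpha_j - \eps)}$ (and in particular $\norm{u_n} \le C\modu{x_n}^{k(\alpha_h - \eps)}$ and $\norm{u_n^{>j}} \le C\modu{x_n}^{k(\alpha_h - \eps)}$), while Lemma \ref{lemma4.1} applied to the block $J_j$ yields $\norm{x_n^{-kJ_j}} \le \modu{x_n}^{-k(\alpha_j + \eps)}$. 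The first summand of $T_n^{j}$ is then controlled by a sum of terms of the form $\modu{x_n}^{\mu}\modu{\log x_n}^{q}$ with $\mu \ge \min\{2k - 2k\eps,\, k+1+k\alpha_h - 3k\eps\} > k$, once $\eps$ is taken small enough so that the binding constraint $3\eps < \alpha_h$ holds (which can be arranged within the hypotheses of Corollary \ref{co:4.3-4}, since $\eps < \alpha < \alpha_h$, by shrinking $\alpha$). Corollary \ref{cor:4.3} then makes the corresponding series converge uniformly on $\6D$.

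The main obstacle is the $G$-term $x_{n+1}^{-kJ_j}G^j(x_n,u_n)$. Here I would exploit the normal form from the preceding corollary, namely $G^j(x, (0, u^{>j})) = \oo{\modu x^k\norm{u^{>j}}^m + \modu x^N\norm{u^{>j}}}$ with $m$ and $N$ chosen so that \eqref{4.11} holds, together with the generic estimate $G^j = \oo{\norm u^2 x^k,\, \norm u x^{k+1}\log x}$, which forces $\partial_{u^{\le j}}G^j(x,u) = \oo{\norm u \modu x^k,\, \modu x^{k+1}\modu{\log x}}$. By the mean value theorem,
\begin{equation*}
G^j(x_n,u_n) - G^j\bigl(x_n, (0, u_n^{>j})\bigr) = \Bigl(\int_0^1 \partial_{u^{\le j}}G^j\bigl(x_n,(tu_n^{\le j}, u_n^{>j})\bigr)\,dt\Bigr)\cdot u_n^{\le j},
\end{equation*}
which is bounded by $\norm{u_n^{\le j}}\bigl(\norm{u_n}\modu{x_n}^k + \modu{x_n}^{k+1}\modu{\log x_n}\bigr)$. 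Multiplying the four resulting monomials (corresponding to $\norm{u_n^{>j}}^m$, $\norm{u_n^{>j}}$, $\norm{u_n^{\le j}}\norm{u_n}\modu{x_n}^k$, and $\norm{u_n^{\le j}}\modu{x_n}^{k+1}\modu{\log x_n}$) by $\norm{x_{n+1}^{-kJ_j}} \le C\modu{x_n}^{-k(\alpha_j + \eps)}$ and substituting the $\6D$-bounds produces exponents in $\modu{x_n}$ equal to $2k - (m+1)k\eps$, $k+1 - 2k\eps$, $k + k\alpha_h - 3k\eps$, and $k+1 - 2k\eps$ respectively, each strictly greater than $k$ precisely thanks to the conditions $m\alpha_h - \alpha_1 \ge 1$, $N + k(\alpha_h - \alpha_1) \ge k+1$, and the smallness of $\eps$. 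A final application of Corollary \ref{cor:4.3} yields the uniform summability of every block of $T_n$, and hence the normal convergence of $\{x_n^{-kA}u_n\}$ on $\6D$.
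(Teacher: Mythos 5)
Your proposal is correct and follows essentially the same strategy as the paper: bound the consecutive differences $x_{n+1}^{-kJ_j}u_{n+1}^j - x_n^{-kJ_j}u_n^j$ blockwise, feed in the $\6D$-bounds $\|u_n^{\le j}\|\le|x_n|^{k(\alpha_j-\eps)}$ together with $\|x_n^{-kJ_j}\|\le|x_n|^{-k(\alpha_j+\eps)}$ and the normal form $G^j(x,(0,u^{>j}))=\oo{|x|^k\|u^{>j}\|^m+|x|^N\|u^{>j}\|}$, and conclude via Corollary \ref{cor:4.3} once all exponents exceed $k$. (Minor cosmetic note: your $\oo{\|u_n\|x_n^{k+1}}$ in the $\log$-expansion should be $\oo{\|u_n\|x_n^{k}}$, but this only lowers one exponent by $1$ and does not affect the conclusion; you also correctly flag that the binding smallness condition is really $3\eps<\alpha_h$, which the paper leaves implicit in the choice of $\eps<\alpha$.)
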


\begin{proof}
Given $(x,u)\in \6D$, we shall bound $\|x_{n+1}^{-kJ_j}u_{n+1}^j - x_{n}^{-kJ_j}u_{n}^j\|$, for each $j=1,\dots, h$. We have
\begin{equation*}
\begin{aligned}
\|x_{n+1}^{-kJ_j}u_{n+1}^j - x_{n}^{-kJ_j}u_{n}^j\|
&\le K\|x_{n}^{-kJ_j}\| \|u_{n}^{\le j}\|\left( \|u_{n}\||x_{n}|^{k} + |x_{n}|^{k+1}|\log x_n|^q\right)\\
&\quad + K'\|x_{n}^{-kJ_j}\|\|G^j(x, (0, u^{>j}))\|,
\end{aligned}
\end{equation*}
for some positive integer $q$. Since $(x,u)\in\6D$, we have $\|u_n^{\le j}\| \le  |x_n|^{k(\alpha_j - \eps)}$, and hence, using the inequality $\|x_{n}^{-kJ_j}\|\le |x_n|^{-k\alpha_j -k \eps}$, we obtain
\begin{equation*}
\begin{aligned}
\|x_{n+1}^{-kJ_j}u_{n+1}^j - x_{n}^{-kJ_j}u_{n}^j\|
&\le
K_1|x_{n}|^{-k\alpha_j-k\eps} \left(|x_{n}|^{k(1+ \alpha+\alpha_j -2\eps)} + |x_{n}|^{k+1+k\alpha_j - k\eps}|\log x_n|^q\right) + K_2|x_{n}|^{k+1},
\end{aligned}
\end{equation*}
and hence there exists $K>0$ such that
$$
\|x_{n+1}^{-kJ_j}u_{n+1}^j - x_{n}^{-kJ_j}u_{n}^j\|
\le
K\left(|x_{n}|^{k(1+\alpha_1 -3\eps)} + |x_{n}|^{k+1-2 k\eps}|\log x_n|^q\right),
$$
and hence we are done.
\end{proof}

\sm We now have all the ingredients to prove Theorem \ref{thm:1.9}.

\begin{proof} {\sl of Theorem \ref{thm:1.9}.}
Thanks to the previous lemma, we can define in $\6D$ the following holomorphic bounded map
\begin{equation}\label{4.14}
H(x,u) := \sum_{n=0}^\infty \left(x_{n+1}^{-k A}u_{n+1} - x_{n}^{-k A}u_{n}\right),
\end{equation}
which satisfies
\begin{equation}\label{4.15}
\|H(x,u)\| 
\le
K \left(|x|^{k(\alpha_1 -3\eps)} + |x_{n}|^{k-2 k\eps}|\log x_n|^q\right)\le K \left(|x|^{k(\alpha_1 -3\eps)} + |x_{n}|^{k- 3k\eps}\right).
\end{equation}
Therefore, the holomorphic map
\begin{equation}\label{4.16}
U(x,u) := x^{-k A} u + H(x,u) = \lim_{n\to+\infty}x_n^{-k A} u_n 
\end{equation}
is invariant. The main term near to the origin is $x^{-k A} u$, and the level sets $\{U(x,u) = c\}$ with $c\in\C$ are complex invariant analytic curves. 
Therefore, taking $(x, U)$ as new coordinates, $\Phi$ becomes
\begin{equation}
\left\{\begin{array}{l l}
x_1= x - \frac{1}{k}x^{k+1} + \tilde F(x, U), \\
U_1= U,
\end{array}\right. 
\end{equation}
where $\tilde F$ is a holomorphic function of order at least $k+2$ in $x$, and $U$ behaves as a parameter. We can thus argue as in Fatou \cite{Fa}, and change coordinates, in $\6D$, in the first coordinate $x$, with a change depending on $U$, to obtain $\Phi$ of the form
\begin{equation*}
\left\{\begin{array}{l l}
\displaystyle\frac{1}{z_1}= \frac{1}{z} + 1, \\
U_1= U,
\end{array}\right. 
\end{equation*}
and this concludes the proof.
\end{proof}

We thus deduce the following reformulation of Corollary~4.6 of \cite{hakim2} .

\begin{corollary}
Let $\Phi\in\Diff(\C^p,0)$ be a tangent to the identity germ of order $k+1\ge 2$ and let $[V]$ be a non-degenerate characteristic direction. Assume that $[V]$ has exactly $d$ (counted with multiplicity) directors with positive real parts. Let $M$ be the parabolic manifold of dimension $d+1$ provided by Theorem \ref{thm:1.6}. Then there exist local holomorphic coordinates $(x,u, v)$ such that $M= \{v=0\}$, and $\Phi|_M$ is holomorphically conjugated to:
\begin{equation*}
\left\{\begin{array}{l l}
\displaystyle\frac{1}{z_1}= \frac{1}{z} + 1, \\
U_1= U.
\end{array}\right. 
\end{equation*}
\end{corollary}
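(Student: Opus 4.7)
The plan is to obtain the conclusion as an essentially immediate combination of Theorem~\ref{thm:1.6} and Theorem~\ref{thm:1.9}. First, I apply Theorem~\ref{thm:1.6}, which already provides local holomorphic coordinates $(x,u,v)\in \C\times\C^d\times\C^{p-d-1}$ on a sectorial domain in which the parabolic manifold is exactly $M=\{v=0\}$, and in which $\Phi$ is of the form \eqref{eq:eqprinc} with the Jordan block $A$ corresponding to the $d$ attracting directors and $B$ corresponding to the non-attracting ones. This handles the first half of the statement without further work.

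Next, I restrict $\Phi$ to $M=\{v=0\}$. Since $M$ is $\Phi$-invariant, $\Phi|_M$ is a biholomorphism of a sectorial domain in $\C\times\C^d$ whose equations, obtained by setting $v=0$ in \eqref{eq:eqprinc}, read
\begin{equation*}
\left\{\begin{array}{l}
x_1= x-\tfrac{1}{k}x^{k+1} + F(x,u,0),\\
u_1=(I_d - x^k A)u + G(x,u,0).
\end{array}\right.
\end{equation*}
Thus $\Phi|_M\in\Diff(\C^{d+1},0)$ is again tangent to the identity of order $k+1$, with $[V]=[1:0]$ a non-degenerate characteristic direction. The key point here is that, because $M$ is tangent to $\C V \oplus E$ where $E$ is the eigenspace of the attracting directors, the matrix associated to $[V]$ for $\Phi|_M$ is precisely $A$, whose eigenvalues are exactly the attracting directors of $[V]$ for $\Phi$. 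Hence all directors of $[V]$ as a characteristic direction of $\Phi|_M$ have strictly positive real parts, i.e., $[V]$ is an attracting non-degenerate characteristic direction for $\Phi|_M$.

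Finally, I apply Theorem~\ref{thm:1.9} to $\Phi|_M$. This yields, on an invariant sectorial subdomain of $M$, a further holomorphic change of variables (of the form $U := x^{-kA}u + H(x,u)$ as in \eqref{eq:4.16}, followed by the one-variable Fatou rectification in the $x$-direction as in the proof of Theorem~\ref{thm:1.9}) bringing $\Phi|_M$ into the normal form
\begin{equation*}
\frac{1}{z_1}=\frac{1}{z}+1,\qquad U_1 = U.
\end{equation*}
Extending this change of coordinates trivially in the $v$-direction (i.e.\ leaving $v$ untouched) produces the desired coordinate system $(z,U,v)$ on a sectorial neighbourhood of $0$ in $\C^p$, in which $M$ is still defined by $\{v=0\}$ and $\Phi|_M$ has the stated form. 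No substantial obstacle arises: the only point that needs verification is that the hypotheses of Theorem~\ref{thm:1.9} hold for $\Phi|_M$, and this is immediate from the description of $A(V)|_M$ given above.
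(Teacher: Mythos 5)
Your proof is correct and follows essentially the same route as the paper: apply Theorem~\ref{thm:1.6} to straighten $M=\{v=0\}$, observe that the restricted germ $\Phi|_M$ is tangent to the identity of the same order with associated matrix $A$ having all eigenvalues of strictly positive real part, and then apply Theorem~\ref{thm:1.9}. The only thing the paper does that you elide is to explicitly choose integers $m$, $N$ with $m\alpha_h - \alpha_1\ge 1$ and $N+k(\alpha_h-\alpha_1)\ge k+1$ and verify that $G(x,u,0)$ admits the Taylor expansion of the form required by the proof of Theorem~\ref{thm:1.9}; this is a routine hypothesis check rather than a new idea, so your summary that it is "immediate" is acceptable.
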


\begin{proof}
Thanks to Theorem \ref{thm:1.6} there exist local holomorphic coordinates $(x,u, v)$ defined in a sector $S_{\gamma,s, \rho}$ such that the parabolic manifold $M$ is defined by $M= \{v=0\}$, and $\Phi$ is defined by \eqref{eq:eqprinc} with $F$, $G$, and $H$ satisfying \eqref{eq:acca}, and $H(x,u, 0)= 0$. Then $\Phi|_M$ is given by
$$
\left\{
\begin{array}{l l}
x_1= x-\frac{1}{k} x^{k+1} +F(x,u,0),\\
u_1=(I_d- x^kA)u + G(x,u,0),
\end{array}\right. 
$$
where all the eigenvalues of $A$ have positive real parts. Let $\lambda_1,\dots,\lambda_h$ be the distinct eigenvalues of $A$, and let $\alpha_j=\Re(\lambda_j)$. Up to reordering, we may assume  $\alpha_1>\cdots >\alpha_h>\alpha>0$. 

Let $m$ and $N\ge k+1$ be positive integers such that $m\alpha_h -  \alpha_1 \ge 1$ and $N +k(\alpha_h -\alpha_1) \ge k+1$. We can thus write the Taylor expansion of $G$ as
$$
G(x,u,0) = \sum_{1\le s\le N\atop t\in E_s} c_{s,t}(u)x^s(\log x)^t + O(|x|^k\|u\|^m + |x|^N\|u\|),
$$
where $c_{s,t}(u)$ is a polynomial and $\deg(c_{s,t}(u))\le m$. Therefore we can apply Theorem \ref{thm:1.9} to $\Phi(x,u,0)$ and we are done.
\end{proof}

\section{Fatou-Bieberbach domains}

In this section we shall assume that $\Phi$ is a global biholomorphism of $\C^p$ fixing the origin and tangent to the identity of order $k+1\ge 2$.

\begin{definition}
Let $\Phi$ be a global biholomorphism of $\C^p$ fixing the origin and tangent to the identity of order $k+1\ge 2$. Let $[V]$ be a non-degenerate characteristic direction of $\Phi$ at $0$. The {\it attractive basin to $(0, [V])$} is the set
\begin{equation}\label{5.1}
\Omega_{(0, [V])}:= \{X\in\C^p\setminus\{0\} : \Phi^n(X)\to 0, [\Phi^n(X)]\to [V]\}.
\end{equation}
\end{definition}

We shall study the attractive basin $\Omega_{(0, [V])}$ when some of the directors of $[V]$ have positive real parts. 

We can assume that, writing $X=(x,y)\in\C\times\C^{p-1}$, $[V] = [1:0]$ and $\Phi$ is of the form
$$
\left\{
\begin{array}{l l}
x_1= x+ p_{k+1}(x, y) +p_{k+2} (x, y) + \cdots,\\
y_1= y + q_{k+1}(x, y) +q_{k+2} (x, y) + \cdots,
\end{array}\right. 
$$
with $ p_{k+1}(1, 0) = -1/k$ and $q_{k+1}(1,0) = 0$.

Thanks to Lemma \ref{lemma5.1}, we have
\begin{equation}\label{5.2}
\Omega_{(0, [V])}=\bigcup_{n\ge 0} \Phi^{-n}\left( \Omega_{(0, [V])}\cap S_{\gamma,s, \rho}\right),
\end{equation}
and we can restrict ourselves to study $\Omega_{(0, [V])}\cap S_{\gamma,s, \rho}$.

Since $S_{\gamma,s, \rho}\cap\{x=0\}=\emptyset$, we can use the blow-up $y = xu$ and we can assume that, in the sector, $\Phi$ has the form
\begin{equation}\label{5.3}
\left\{
\begin{array}{l l}
x_1= x-\frac{1}{k} x^{k+1} +O(\|u\|x^{k+1},x^{k+2}),\\
u_1=(I_{p-1}- x^k A)u + O(\|u\|x^{k},\|u\|x^{k+1}),
\end{array}\right. 
\end{equation}
where $A=A([V])$ is the matrix associated to $[V]$, and we can perform all the changes of coordinates used to prove Theorem \ref{thm:1.6} and Theorem \ref{thm:1.9}.

\sm We thus can prove the following generalization of Theorem~5.2 of \cite{hakim2} for the case $k+1\ge 2$.

\begin{theorem}\label{thm:5.2}
Let $\Phi$ be a global biholomorphism of $\C^p$ fixing the origin and tangent to the identity of order $k+1\ge 2$ and let $[V]$ be a non-degenerate characteristic direction of $\Phi$ at $0$. If $[V]$ is attracting, then the attractive basin $\Omega_{(0, [V])}\subset\C^p$ is a domain isomorphic to $\C^p$, i.e., it is a Fatou-Bieberbach domain.
\end{theorem}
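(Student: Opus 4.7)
The plan is to extend the Fatou coordinate of Theorem \ref{thm:1.9} from the invariant domain $D$ to a global biholomorphism $\Psi\colon\Omega_{(0,[V])}\to\C^p$. Let $\psi\colon D\to\psi(D)\subset\C^p$ denote the biholomorphism supplied by Theorem \ref{thm:1.9}, conjugating $\Phi|_D$ to the translation $T(\zeta,U)=(\zeta+1,U)$ in the coordinates $(\zeta,U)\in\C\times\C^{p-1}$ appearing there. By Lemma \ref{lemma5.1}, any orbit starting in $\Omega_{(0,[V])}$ eventually enters the sector $S_{\gamma,s,\rho}$; this, combined with Corollary \ref{co:4.3-4}, which forces $\|u_n^{\le j}\|\,|x_n|^{-k(\alpha_j-\eps)}\to 0$, implies that $\Phi^n(X)\in D=\6D$ for every $X\in\Omega_{(0,[V])}$ and every $n\ge n_0(X)$. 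This legitimises the definition
$$
\Psi(X):=T^{-n}\bigl(\psi(\Phi^n(X))\bigr),\qquad n\ge n_0(X),
$$
which is independent of the choice of such $n$ thanks to $\psi\circ\Phi=T\circ\psi$, coincides with $\psi$ on $D$, and is holomorphic on all of $\Omega_{(0,[V])}$.

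The functional equation $\Psi\circ\Phi=T\circ\Psi$ yields at once that $\Psi$ is a local biholomorphism (being locally equal to $T^{-n}\circ\psi\circ\Phi^n$) and that $\Psi$ is injective: if $\Psi(X)=\Psi(Y)$, then choosing $n$ large enough that $\Phi^n(X),\Phi^n(Y)\in D$ gives $\psi(\Phi^n(X))=T^n(\Psi(X))=T^n(\Psi(Y))=\psi(\Phi^n(Y))$, whence $\Phi^n(X)=\Phi^n(Y)$ because $\psi$ is injective on $D$, and finally $X=Y$ because $\Phi$ is a global biholomorphism of $\C^p$. Thus $\Psi$ biholomorphically identifies $\Omega_{(0,[V])}$ with the open, $T$-invariant set $\Psi(\Omega_{(0,[V])})\subset\C^p$.

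The most delicate step is to show that $\Psi$ is surjective, that is, $\bigcup_{n\ge 0}T^{-n}(\psi(D))=\C^p$. Since $T$ translates only the $\zeta$-coordinate, it suffices to prove that, for every $(\zeta_0,U_0)\in\C^p$, there exists $n\in\N$ with $(\zeta_0+n,U_0)\in\psi(D)$, for then $X:=\Phi^{-n}(\psi^{-1}(\zeta_0+n,U_0))$ lies in $\Omega_{(0,[V])}$ and satisfies $\Psi(X)=(\zeta_0,U_0)$. I would establish this by describing $\psi(D)$ explicitly: by~\eqref{4.13}, $D=\6D$ is cut out by the inequalities $\|u^{\le j}\|\le|x|^{k(\alpha_j-\eps)}$, and $\psi(x,u)=(1/x,U(x,u))$ with $U(x,u)=x^{-kA}u+H(x,u)$; since Lemma \ref{lemma4.1} applied to each Jordan block gives $\|x^{-kJ_j}\|\le|x|^{-k(\alpha_j+\eps)}$, the admissible $U$-range at a given $x$ has size of order $|x|^{-c\eps}$ for some $c>0$, which grows unboundedly as $|x|\to 0$, i.e.\ as $\Re\zeta=\Re(1/x)\to+\infty$. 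Consequently the $U$-slice of $\psi(D)$ over $\{\Re\zeta>M\}$ covers any prescribed compact subset of $\C^{p-1}$ once $M$ is large enough, so the required integer $n$ exists for every $(\zeta_0,U_0)$. The main obstacle I anticipate is making this geometric picture rigorous: one must turn the above heuristic into an inclusion of the form $\psi(D)\supset\{(\zeta,U):\Re\zeta>M(\|U\|)\}$ by carefully controlling the error term $H(x,u)$, the image of the sector $S_{\gamma,s,\rho}$ under the inversion $x\mapsto 1/x$, and the interaction between distinct Jordan blocks of $A$.
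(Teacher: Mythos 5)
Your proposal takes essentially the same route as the paper: extend the Fatou coordinate $\psi$ of Theorem \ref{thm:1.9} from $\6D$ to all of $\Omega_{(0,[V])}$ by $\Psi=T^{-n}\circ\psi\circ\Phi^n$, deduce injectivity from the conjugacy relation, and obtain surjectivity by showing that the $U$-fiber of $\psi(\6D)$ over $\zeta$ contains polydiscs whose radius grows like a small power of $|\zeta|$ as $\Re\zeta\to+\infty$. The one adjustment needed in your sketch of the last step is that the controlling estimate should be the upper bound $\|x^{kJ_j}\|\le C\,|x|^{k(\alpha_j-\eps)}$ applied to $u^j:=x^{kJ_j}U^j$ (which gives admissibility in $\6D$ whenever $\|U^j\|\le|x|^{-k\eps/2}$, hence $W\supset P_{(Z,|Z|^{\eps/2})}$ as stated in the paper), not the bound on $\|x^{-kJ_j}\|$ you quote, which only controls the slice size from above rather than from below.
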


\begin{proof}
We can reduce ourselves to consider $\Phi$ as in \eqref{5.3}, with $A$ in Jordan normal form. Let $\lambda_1,\dots,\lambda_h$ be the distinct eigenvalues of $A$, and let $\alpha_j=\Re(\lambda_j)$. Up to reordering, we may assume  $\alpha_1>\cdots >\alpha_h>\alpha>0$. 
Let $\eps>0$ be small and such that
$$
\alpha_1>\alpha_1-\eps>\alpha_2>\alpha_2-\eps>\cdots >\alpha_h>\alpha_h-\eps>0.
$$
Thanks to Theorem \ref{thm:1.9} and Corollary \ref{co:4.3-4}, the coordinates $u= (u^1, \dots, u^h)$ adapted to the structure in blocks of $A$ can be chosen such that, for $n$ large enough, we have
\begin{equation}\label{5.4}
\|u_n^j\|\le |x_n|^{k(\alpha_j -\eps)},  
\end{equation}
and we know that on 
\begin{equation}\label{5.5}
\6D = \{(x, u)\in S_{\gamma, s, \rho}: \|u_n^j\|\le |x_n|^{k(\alpha_j -\eps)},~\hbox{for}~j=1,\dots, h\},  
\end{equation}
we can conjugate holomorphically $\Phi$ to the translation
\begin{equation*}
\left\{\begin{array}{l l}
\displaystyle\frac{1}{z_1}= \frac{1}{z} + 1, \\
U_1= U,
\end{array}\right. 
\end{equation*}
with a change of the form $(z(x,u), U(x,u))$ such that
\begin{equation}\label{5.6}
U(x,u) = x^{-kA} u + O(x^\eta),
\end{equation}
for some positive $\eta$, and $z(x,u)\sim x^k$ as $x\to0$.

Let $\psi\colon \6D\to\C^p$ be defined by
$$
\psi(x, u) := (Z(x,u),U(x,u)) = \left(\frac{1}{z(x,u)}\,,U(x,u)\right),
$$
and let $\tau\colon\C^p\to\C^p$ be the translation $\tau(Z, U) := (Z+1, U)$. We know that $\6D$ is $\Phi$-invariant
\begin{equation}\label{5.7}
\tau\circ\psi = \psi\circ\Phi.
\end{equation}
Let us consider $W:=\psi(\6D)$. For $\gamma$ small enough, and $R>0$ big enough, the projection $Z(W)$ of $W$ on $\C$ contains the set
\begin{equation}\label{5.8}
\Sigma_{\gamma,R}:= \{Z\in\C : |\Im Z|<\gamma \Re Z, |Z|>R\}.
\end{equation}
For any fixed $Z\in\C$ and $r>0$, consider the generalized polydisc
$$
P_{(Z,r)} : = \{(Z,U)\in\C^p: \|U^j\|\le r~\hbox{for}~j=1,\dots, h\}.
$$
The definition \eqref{5.5} of $\6D$, and the form \eqref{5.6} of $U(x,u)$ imply that for $Z\in\Sigma_{\gamma,R}$ and $R$ big enough, $W$ contains the generalized polydisc $P_{(Z, |Z|^{\eps/2})}$. For $|Z|$ tending to infinity, the fiber of $W$ above $Z$ contains generalized polydisc $P_{(Z,r)}$ of radius arbitrarily large. Hence we have
\begin{equation}\label{5.9}
\bigcup_{n\ge 0} \tau^{-n}(W) = \C^p.
\end{equation}

The end of the argument is then as in Fatou \cite{Fa2, Fa3}, as follows.

Since $\6D\subset \Omega_{(0,[V])}$, and, thanks to \eqref{5.4}, for $n$ large enough, for every $X\in\Omega_{(0,[V])}$, $X_n\in\6D$, we also have
\begin{equation}\label{5.9}
\Omega_{(0,[V])} = \bigcup_{n\ge 0} \Phi^{-n}(\6D).
\end{equation}
Therefore, we can extend the isomorphism $\psi\colon\6D \to W$, to 
$$
\tilde\psi\colon\Omega_{(0,[V])}\to\C^p
$$
as follows: given $X\in\Omega_{(0,[V])}$, consider $n_0$ such that $\Phi^{n_0}(X)\in\6D$, and define
$$
\tilde\psi(X): = \tau^{-n_0} \circ \psi\circ \Phi^{n_0}(X). 
$$
Thanks to \eqref{5.7}, the definition does not depend on $n$. It is immediate to check that $\tilde\psi$ is injective, whereas its surjectivity follows from \eqref{5.9}.
\end{proof}

This last result is the generalization of Theorems~1.10 and~1.11 of \cite{hakim2} for the case $k+1\ge 2$.

\begin{theorem}\label{thm:1.10}
Let $\Phi\in\Diff(\C^p, 0)$ be a tangent to the identity germ. Let $[V]$ be a non-degenerate characteristic direction, and assume it has exactly $d$ directors, counted with multiplicities, with strictly positive real parts, greater than $\alpha>0$. Then
\begin{enumerate}
\item if the remaining directors have strictly negative real parts, the attractive basin $\Omega_{(0,[V])}$ is biholomorphic to $\C^{d+1}$;
\item otherwise, considering coordinates such that $[V] = [1:0]$, the set
$$
\widetilde\Omega_{(0,[V])}:= \{X\in\Omega_{(0,[V])}: \lim_{n\to +\infty} X_n^{-k\alpha} u_n = 0\}
$$ 
is biholomorphic to $\C^{d+1}$, and moreover its definition does not depend on $\alpha$.
\end{enumerate}
\end{theorem}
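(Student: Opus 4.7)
The strategy is to reduce the statement to Theorem \ref{thm:5.2} applied to the parabolic manifold of dimension $d+1$ provided by Theorem \ref{thm:1.6}, together with an analysis of the transverse dynamics. By Theorem \ref{thm:1.6}, there exists a parabolic manifold $M$ of dimension $d+1$ tangent at $0$ to $\C V\oplus E$, where $E$ is the direct sum of the eigenspaces of the $d$ attracting directors. Choosing local coordinates $(x,u,v)\in\C\times\C^d\times\C^{p-d-1}$ as in \eqref{eq:eqprinc} in a sector $S_{\gamma,s,\rho}$ so that $M=\{v=0\}$, the restriction $\Phi|_M$ becomes a $(d+1)$-dimensional tangent-to-identity germ for which $[V]=[1:0]$ is a non-degenerate characteristic direction with all directors having real parts greater than $\alpha>0$. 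Applying the construction in the proof of Theorem \ref{thm:5.2} to $\Phi|_M$, we obtain an invariant set $\6D_M\subset M\cap S_{\gamma,s,\rho}$ and a biholomorphism $\psi\colon\6D_M\to W$ conjugating $\Phi|_M$ to the translation $\tau(Z,U)=(Z+1,U)$, with $\bigcup_{n\ge 0}\tau^{-n}(W)=\C^{d+1}$.

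For case~(1), the remaining directors $\mu_1,\dots,\mu_{p-d-1}$ satisfy $\Re\mu_j<0$. An estimate dual to \eqref{prima} (with the reversed sign on the $B$-contribution) yields, for $(x,u,v)\in S_{\gamma,s,\rho}$ with $\gamma,s,\rho$ small enough,
$$
\|v_1\|\ge\bigl(1+c\,|x|^k\bigr)\|v\|-K\,\bigl(\|u\|+\|v\|\bigr)|x|^k\|v\|,
$$
for some $c>0$. Iterating this inequality along an orbit that remains in the sector and using $|x_n|^k\sim 1/n$, one sees that $\|v_n\|$ grows unboundedly whenever $v\neq 0$, contradicting the assumption $[\Phi^n(X)]\to[V]$. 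Hence every orbit in $\Omega_{(0,[V])}$ that eventually enters $S_{\gamma,s,\rho}$ must lie on $M$. Combined with Lemma \ref{lemma5.1} and identity \eqref{5.2}, this shows $\Omega_{(0,[V])}=\bigcup_{n\ge 0}\Phi^{-n}(\6D_M)$. Defining $\tilde\psi(X):=\tau^{-n_0}\circ\psi\circ\Phi^{n_0}(X)$ for $n_0$ such that $\Phi^{n_0}(X)\in\6D_M$, and arguing as at the end of the proof of Theorem \ref{thm:5.2}, one obtains a biholomorphism $\tilde\psi\colon\Omega_{(0,[V])}\to\C^{d+1}$.

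For case~(2), some directors $\mu_j$ have $\Re\mu_j=0$, so the transverse dynamics in those directions is only neutral and the previous stable-manifold argument fails. However, the additional condition $x_n^{-k\alpha}u_n\to 0$ built into the definition of $\widetilde\Omega_{(0,[V])}$ forces orbits to approach $M$ faster than $|x_n|^{k\alpha}$, which is precisely the decay rate required to adapt the estimates of Lemma \ref{lemma:8.7} and Corollary \ref{co:4.3-4} uniformly on the corresponding invariant domain $\widetilde{\6D}\subset S_{\gamma,s,\rho}$. Once this asymptotic bound is secured, the series defining the Fatou coordinate in \eqref{4.14} converges normally on $\widetilde{\6D}$, and the same scheme as in case~(1) produces a biholomorphism $\widetilde\Omega_{(0,[V])}\simeq\C^{d+1}$. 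The independence of $\widetilde\Omega_{(0,[V])}$ from $\alpha$ follows from the fact that, for attracting components $u^j$, Corollary \ref{co:4.3-4} automatically gives $x_n^{-k\alpha'}u_n^j\to 0$ for every $\alpha'<\alpha_j$, while the neutral components $v$ satisfy $v_{n+1}=(I-x_n^kB)v_n+\oo{\|(u_n,v_n)\|^2|x_n|^k}$, so that, once $x_n^{-k\alpha}v_n\to 0$ holds for one $\alpha\in(0,\alpha_h)$, it propagates to every $\alpha'\in(0,\alpha_h)$ by iterating along the orbit.

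The main technical obstacle, most delicate in case~(2), is verifying that the Fatou-coordinate construction of Theorem \ref{thm:5.2} extends to the non-attracting directions. Specifically, one must check that the normal convergence of the series \eqref{4.14} on $\widetilde{\6D}$, together with the estimates of the proof of Theorem \ref{thm:1.9}, still yields a well-defined biholomorphism onto the entire $\C^{d+1}$ rather than onto some smaller set, exploiting the fact that the restrictive decay condition in the definition of $\widetilde\Omega_{(0,[V])}$ substitutes for the uniform transverse contraction available in case~(1).
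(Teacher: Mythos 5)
Your treatment of case~(1) is close to the paper's: you correctly show that strict transverse repulsion ($\Re\mu_j<0$ forces $\|v_{n+1}\|>\|v_n\|$ inside the sector) makes every convergent orbit eventually land on $M=\{v=0\}$, and then you apply the Fatou-coordinate argument of Theorem~\ref{thm:5.2} to $\Phi|_M$.

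Case~(2), however, contains a genuine gap. You argue that the decay condition in the definition of $\widetilde\Omega_{(0,[V])}$ merely forces orbits to \emph{approach} $M$ at rate $|x_n|^{k\alpha}$, and you then try to extend the Fatou-coordinate construction (the series \eqref{4.14}, Lemma~\ref{lemma:8.7}, Corollary~\ref{co:4.3-4}) to a full-dimensional domain $\widetilde{\6D}\subset S_{\gamma,s,\rho}$ containing points with $v\neq 0$. This cannot work: the map built from \eqref{4.14}--\eqref{4.16} on a $p$-dimensional domain has target $\C\times\C^{p-1}$, so it cannot yield a biholomorphism onto $\C^{d+1}$ — the dimensions do not match — and nothing in your argument explains how a $p$-dimensional $\widetilde{\6D}$ is reduced to something $(d+1)$-dimensional. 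The point the paper uses, and which is missing here, is that case~(2) reduces to case~(1) because on $\widetilde\Omega_{(0,[V])}$ one still has $v_n\equiv 0$ eventually. Indeed, writing $w_n:=x_n^{-k\alpha}v_n$, the equation $v_{n+1}=(I-x_n^k B)v_n+H$ together with $x_{n+1}^{-k\alpha}=x_n^{-k\alpha}(1+\alpha x_n^k+\cdots)$ gives $w_{n+1}=\bigl(I+(\alpha I-B)x_n^k+\cdots\bigr)w_n+\cdots$; since every eigenvalue of $\alpha I-B$ has real part $\ge\alpha>0$ and $\Re x_n^k>0$ in the sector, one gets $\|w_{n+1}\|>\|w_n\|$ for $n$ large, so the defining condition $w_n\to 0$ forces $v_n=0$. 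Hence $\widetilde\Omega_{(0,[V])}\cap S_{\gamma,s,\rho}\subset\{v=0\}$ and the proof concludes exactly as in case~(1), by applying Theorem~\ref{thm:5.2} to $\Phi|_M$. This also immediately gives the independence from $\alpha$: for every $\alpha\in(0,\alpha_h)$ the condition forces $v_n=0$, so the set $\widetilde\Omega_{(0,[V])}$ is always $\bigcup_{n\ge 0}\Phi^{-n}\bigl(M\cap S_{\gamma,s,\rho}\bigr)$, whereas your propagation argument for independence is both more complicated and incomplete.
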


\begin{proof}
Thanks to the previous results we can apply Lemma \ref{lemma5.1} and property \eqref{5.2}. We can thus choose local holomorphic coordinates in a sector, such that, after the blow-up, $\Phi$ has the form
$$
\left\{
\begin{array}{l l l}
x_1=f(x,u,v)=x- \frac{1}{k}x^{k+1} +F(x,u,v),\\
u_1=g(x,u,v)=(I_{d}- x^kA)u + G(x,u,v),\\
v_1=h(x,u,v)=(I_{p-d-1}- x^kB)v + H(x,u,v),
\end{array}\right.
$$
where $A$, and $B$ are in Jordan normal form, $A$ has eigenvalues with strictly positive real parts, $B$ has eigenvalues with non-positive real parts, and $F$, $G$, and $H$ satisfying \eqref{eq:acca}. Moreover, thanks to Theorem \ref{thm:1.6}, we may assume $H(x,u,0)=0$.

If $X\in\Omega_{(0,[V])}$, for $\gamma, s, \rho$ arbitrarily small positive numbers, then $X_n\in S_{\gamma, s,\rho}$, for $n$ big enough. 

Assume that $B$ has only eigenvalues with strictly negative real parts. Therefore, thanks to the previous equations, we have $\|v_{n+1}\|> \|v_n\|$ for $n$ big enough, so $v_n$ cannot converge to $0$ unless we have $v_n=0$. Hence
$$
\Omega_{(0,[V])}\cap S_{\gamma, s,\rho} \subset \{v=0\},
$$
and we can apply the same argument as in Theorem \ref{thm:5.2} to $\Phi|_{S_{\gamma, s,\rho} \cap \{v=0\}}$.

If $B$ has eigenvalues with non-positive real parts, since in $\widetilde\Omega_{(0,[V])}$, for $n$ big enough, we have $\|x_{n+1}^{-k\alpha} v_{n+1}\|>\|x_{n}^{-k\alpha} v_{n}\|$, we cannot have $x_{n}^{-k\alpha} v_{n}$ converging to $0$ unless $v_n=0$. Therefore we argue as before, but considering $\widetilde\Omega_{(0,[V])}$.

\end{proof}

\bibliographystyle{alpha}

\begin{thebibliography}{ABT2}

\bibitem{A1} M.  Abate, {\sl Diagonalization of non-diagonalizable discrete holomorphic dynamical systems}.  Amer. J. Math. 122 (2000), 757-781.

\bibitem{Ab} M.  Abate, {\sl The residual index and the dynamics of holomorphic maps
tangent to the identity}, Duke Math. J., {\bf 107}, (2001), 173--207.

\bibitem{Ab1} M. Abate, {\sl Discrete local holomorphic dynamics}. Proceedings of 13th. Seminar on Analysis and Its Applications, Isfahan 2003, Eds. S. Azam et al., University of Isfahan, Iran, (2005), 1–32.

\bibitem{Abate} M. Abate, {\sl Discrete local holomorphic dynamics}. In ``Holomorphic dynamical systems'', Eds. G. Gentili, J. Guenot, G. Patrizio, Lect. Notes in Math. 1998, Springer, Berlin, (2010), 1-55.

\bibitem{AT} M. Abate\and F. Tovena: {\sl Poincar\'-Bendixson theorems for meromorphic connections and holomorphic homogeneous vector fields.} to appear in J. Diff. Eq., (2011). 

\bibitem{Beardon} A. Beardon. {\bf Iteration of rational functions}. Springer, New York, 1995.


\bibitem{Bracci} F. Bracci, {\sl Local dynamics of holomorphic diffeomorphisms}. Boll. UMI (8) 7-B (2004), 609-636.

\bibitem{B-M} F. Bracci\and  L. Molino, {\sl The dynamics near quasi-parabolic fixed points of holomorphic diffeomorphisms in $\C^2$}. Amer. J. Math. 126 (2004), 671-686.

\bibitem{BRZ} F. Bracci\and  J. Raissy\and D. Zaitsev, {\sl Dynamics of multi-resonant biholomorphisms}. In preparation, (2011).

\bibitem{Ec} J. \'Ecalle, {\sl Les fonctions r\'esurgentes, Tome III: L'\'equation du pont et la classification analytiques des objects locaux}. Publ. Math. Orsay, 85-5, Universit\'e de
Paris-Sud, Orsay, 1985.

\bibitem{Fa} P. Fatou, {\sl Substitutions analytiques et equations fonctionelles de deux variables}. Ann. Sc. Ec. Norm. Sup. (1924), 67-142.

\bibitem{Fa2} P. Fatou, {\sl Sur les fonctions uniformes de deux variables}. C. R. Acad. Sci. Paris {\bf 175}, (1922), 862--865.

\bibitem{Fa3} P. Fatou, {\sl Sur certaines fonctions uniformes de deux variables}. C. R. Acad. Sci. Paris {\bf 175}, (1922), 1030--1033.

\bibitem{Ha} M. Hakim, {\sl Analytic transformations of $(\C^p,0)$ tangent to the identity}, Duke Math. J. 92 (1998), 403-428.

\bibitem{hakim2} M. Hakim, {\sl Transformations tangent to identity. Stable pieces of manifolds}. Preprint.

\bibitem{Leau} L. Leau, {\sl \'Etude sur les equations fonctionelles \`a une ou plusieurs variables}, Ann. Fac. Sci. Toulouse, {\bf 11} (1897) E1--E110.

\bibitem{Molino} L. Molino, {\sl The dynamics of maps tangent to the identity and with nonvanishing index}, Trans. Amer. Math. Soc. 361 (2009), 1597–-1623.

\bibitem{Raissy} J. Raissy, {\bf Geometrical methods in the normalization of germs of biholomorphisms}, http://etd.adm.unipi.it/theses/available/etd-02112010-094712/, Ph.D. Thesis, Univeristy of Pisa, (2010)

\bibitem{Ri} M. Rivi, {\sl Parabolic manifolds for semi-attractive
holomorphic germs}. Michigan Math. J. 49, 2, (2001), 211-241.

\bibitem{R1} F. Rong, {\sl Quasi-parabolic analytic transformations of
$\mathbb C^n$} J. Math. Anal. Appl. 343, No. 1, (2008), 99-109.

\bibitem{R2} F. Rong, {\sl Quasi-parabolic analytic transformations of $\C^n$. Parabolic manifolds.}, Ark. Mat. 48, (2010), 361–-370.

\bibitem{Liz} L. Vivas, {\sl Fatou-Bieberbach domains as basins of attraction of automorphisms tangent to the identity}, arXiv:0907.2061v1, (2009).

\bibitem{Liz2} L. Vivas, {\sl Degenerate characteristic directions for maps tangent to the Identity }, 	arXiv:1106.1471v1, (2011).


\end{thebibliography}

\end{document}